\title[Chain-level equivariant string topology: algebra versus analysis]{Chain-level equivariant string topology: algebra versus analysis}
\author{K.~Cieliebak, P.~H\'ajek and E.~Volkov}
\theoremstyle{plain}
\newtheorem{theorem}{Theorem}[section]
\newtheorem{thm}[theorem]{Theorem}
\newtheorem{corollary}[theorem]{Corollary}
\newtheorem{cor}[theorem]{Corollary}
\newtheorem{proposition}[theorem]{Proposition}
\newtheorem{prop}[theorem]{Proposition}
\newtheorem{lemma}[theorem]{Lemma}
\newtheorem{lem}[theorem]{Lemma}
\newtheorem{definition}[theorem]{Definition}
\theoremstyle{remark}
\newtheorem{question}[theorem]{Question}
\newtheorem{remark}[theorem]{Remark}
\newtheorem{rem}[theorem]{Remark}
\newcommand{\Id}{{{\mathchoice {\rm 1\mskip-4mu l} {\rm 1\mskip-4mu l}
{\rm 1\mskip-4.5mu l} {\rm 1\mskip-5mu l}}}}
\newcommand{\ol}{\overline}
\newcommand{\p}{\partial}
\newcommand{\Om}{\Omega}
\newcommand{\into}{\hookrightarrow}
\newcommand{\onto}{\twoheadrightarrow}
\newcommand{\la}{\langle}
\newcommand{\ra}{\rangle}
\newcommand{\wt}{\widetilde}
\newcommand{\wh}{\widehat}
\newcommand{\N}{{\mathbb{N}}}
\newcommand{\Z}{{\mathbb{Z}}}
\newcommand{\R}{{\mathbb{R}}}
\newcommand{\C}{{\mathbb{C}}}
\newcommand{\Q}{{\mathbb{Q}}}
\newcommand{\m}{{\bf m}}
\newcommand{\bg}{{\bf g}}
\newcommand{\im}{{\rm im\,}}        %
\newcommand{\const}{{\rm const}}
\newcommand{\inn}{{\rm int\,}}
\newcommand{\can}{{\rm can}}
\newcommand{\ks}{{\rm KS}}
\newcommand{\pt}{{\rm pt}}
\newcommand{\IBL}{{\rm IBL}}
\newcommand{\dIBL}{{\rm dIBL}}
\newcommand{\DGA}{{\rm DGA}}
\newcommand{\CDGA}{{\rm CDGA}}
\newcommand{\dPD}{{\rm dPD}}
\newcommand{\PDGA}{{\rm PDGA}}
\newcommand{\ext}{{\rm ext}}
\newcommand{\ev}{{\rm ev}}
\newcommand{\cyc}{{\rm cyc}}
\newcommand{\Hom}{{\rm Hom}}
\newcommand{\Bl}{{\rm Bl}}
\newcommand{\alg}{{\rm alg}}
\newcommand{\ana}{{\rm ana}}
\newcommand{\MM}{\mathcal{M}}
\newcommand{\HH}{\mathcal{H}}
\newcommand{\PP}{\mathcal{P}}
\newcommand{\QQ}{\mathcal{Q}}
\renewcommand{\SS}{\mathcal{S}}
\newcommand{\TT}{\mathcal{T}}
\newcommand{\fp}{{\mathfrak p}}
\newcommand{\fq}{{\mathfrak q}}
\newcommand{\ff}{{\mathfrak f}}
\newcommand{\fm}{{\mathfrak m}}
\newcommand{\fn}{{\mathfrak n}}
\newcommand{\fe}{{\mathfrak e}}
\newcommand{\Flag}{{\rm Flag}}
\newcommand{\rmint}{{\rm int}}
\newcommand{\rmext}{{\rm ext}}
\newcommand{\SqueezeMath}[1]{\resizebox{!}{.7\baselineskip}{$#1$}}
\newcommand{\SOneEquiv}[1]{{#1}^{S^1}\!}
\newcommand{\dcbc}{B^{\text{\rm cyc}*}}
\newcommand{\rdcbc}{\ol{\dcbc}}
\newcommand{\cbc}{B^{\text{\rm cyc}}}
\newcommand{\Alg}{A}
\newcommand{\dprime}{{\prime\prime}}
\newcommand{\IBLinfty}{\IBL_\infty}
\newcommand{\Ainfty}{\mathrm{A}_\infty}
\newcommand{\fIBL}{\SqueezeMath{\wh{\IBL}}}
\newcommand{\fdcbc}{\SqueezeMath{\wh{\dcbc}}}
\newcommand{\rmbin}{\mathrm{bin}}
\newcommand{\dd}{\mathrm{d}}
\newcommand{\bb}{\mathrm{b}}
\newcommand{\HdR}{H_{\mathrm{dR}}}
\tikzset{
	point/.style = {draw, circle, fill=black, minimum size=4pt,inner sep=0pt},
	leaf/.style = {draw, circle, fill=white, minimum size=4pt,inner sep=0pt},
	root/.style = {draw, circle, fill=lightgray, minimum size=4pt,inner sep=0pt},
	}
\def\edgelen{3cm}
\def\leaflen{1cm}
\def\brancheangle{60}
\begin{document}

\begin{abstract}
We prove that on $2$-connected closed oriented manifolds, the analytic and algebraic constructions of an $\IBL_\infty$ structure associated to a closed oriented manifold coincide. The corresponding structure is invariant under orientation preserving homotopy equivalences and induces on homology the involutive Lie bialgebra structure of Chas and Sullivan.
\end{abstract}

\maketitle
\tableofcontents

\section{Introduction}
Let $M$ be a closed oriented manifold of dimension $n$ and $LM=C^\infty(S^1,M)$ its free loop space, equipped with the $S^1$-action by reparametrisation.
In their seminal 1999 paper~\cite{Chas-Sullivan99}, Chas and Sullivan described algebraic structures on the non-equivariant and equivariant homology of $LM$ which are commonly known under the name {\em string topology}:
\begin{itemize}
\item The degree shifted homology $H_{*+n}(LM)$ carries the structure of a Batalin-Vilkovisky algebra.
Moreover, the homology relative to the constant loops $H_*(LM,\const)$ carries a coproduct of degree $1-n$.
\item The degree shifted equivariant homology relative to the constant loops $H_{*+n-2}^{S^1}(LM,\const)$ carries the structure of an involutive Lie bialgebra (short {\em $\IBL$ algebra}), where the bracket has degree $0$ and the cobracket has degree $2(2-n)$.
\end{itemize}
The products on $H_{*+n}(LM)$ and $H_{*+n-2}^{S^1}(LM)$ are homotopy invariants~\cite{Cohen-Klein-Sullivan,Gruher-Salvatore,Crabb}, whereas the coproduct on $H_*(LM,\const)$ with $\Z$-coefficients is not~\cite{Naef}.

Since the work of Stasheff in the 1960s~\cite{Stasheff}, topologists have recognized the importance of studying the chain-level structures underlying operations on homology.
So it is natural to ask for the chain-level structures underlying string topology operations.
These structures are also important for applications in symplectic topology described by Fukaya and others, see, e.g., \cite{Fukaya-application,Cieliebak-Latschev}.
In the non-equivariant case, the chain level structure underlying the Batalin-Vilkovisky structure on $H_{*+n}(LM)$ has been described and constructed by Irie~\cite{Irie}. Here and in the sequel we adhere to the

{\bf Convention. }{\em The manifold $M$ is closed and oriented of dimension $n$, and all homology groups are with $\R$-coefficients}. 

In this paper we focus on the $S^1$-equivariant case.
Here the chain level structure underlying an involutive Lie bialgebra structure on homology is that of an {\em $\IBLinfty$ algebra} introduced in~\cite{Cieliebak-Fukaya-Latschev}.
We recall this notion in Section~\ref{sec:IBLinfty} below.
See also~\cite{Hoffbeck-Leray-Vallette} for further discussion of $\IBLinfty$ algebras from a properadic perspective.

If the manifold $M$ is simply connected, then it is known from Sullivan's groundbreaking work in the 1970s (see, e.g., \cite{Sullivan-infinitesimal}) that the real homotopy type of $M$ is determined by the minimal model of its de Rham complex $(\Om^*(M),\dd,\wedge)$.
Work of Chen, Jones and others (see, e.g., \cite{Chen73,Jones,Cieliebak-Volkov-cyclic})
expresses the homology of $LM$ in terms of the Hochschild and reduced cyclic cohomology of the de Rham complex,
\begin{equation}\label{eq:cychom-loophom}
   HH^*(\Om^*(M))\cong H_*(LM),\qquad  \ol{CH}^*(\Om^*(M))\cong H_*^{S^1}(LM,\pt).
\end{equation}
In his thesis~\cite{Basu-thesis}, Basu uses Sullivan's minimal model to compute string topology operations. 

In view of these results, it is natural to ask whether we can construct the $\IBLinfty$ algebra underlying equivariant string topology from the de Rham complex.
It is argued in~\cite{Cieliebak-Fukaya-Latschev} that this should be possible if we enhance the de Rham complex by its intersection pairing $\la a,b\ra=\int_Ma\wedge b$ to an {\em oriented Poincar\'e $\DGA$} 
\[
   \bigl(\Om^*(M),\dd,\wedge,\la\cdot,\cdot\ra\bigr).
\]
See Section~\ref{sec:PDGA} for the precise definition of an oriented Poincar\'e $\DGA$; besides graded commutativity, the main requirement is that the pairing descends to a nondegenerate pairing on cohomology.
A special case of this is a {\em differential Poincar\'e duality algebra}
$(\Alg,\dd,\wedge,\la\cdot,\cdot\ra)$ for which $\Alg$ is finite dimensional and the pairing is nondegenerate on $\Alg$.
Without graded commutativity, this corresponds to a {\em cyclic $\DGA$} in the terminology of~\cite{Cieliebak-Fukaya-Latschev}. 
In this algebraic situation we have the following result:

\begin{thm}[\cite{Cieliebak-Fukaya-Latschev}]\label{thm:CFL-intro}
Let $(\Alg,\dd,\wedge,\la\cdot,\cdot\ra)$ be a cyclic $\DGA$ of degree $n$ with cohomology $H=H(\Alg,\dd)$.
Then $\dcbc H[2-n]$ carries an $\IBLinfty$ structure which is $\IBLinfty$ homotopy equivalent to the twisted $\dIBL$ structure $\fp^{\fm}$ on $\dcbc \Alg[2-n]$.
In particular, its homology equals the cyclic cohomology of $(\Alg,\dd,\wedge)$.
\end{thm}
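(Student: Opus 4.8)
The plan is to produce the $\IBLinfty$ structure on $\dcbc H[2-n]$ by homotopy transfer of $\fp^{\fm}$ along a homotopy retraction of dual cyclic bar complexes, keeping enough compatibility with the cyclic pairing that the transferred structure is again a twisted $\dIBL$ structure, i.e.\ the canonical $\dIBL$ structure on $\dcbc H[2-n]$ twisted by a Maurer--Cartan element. Since we work over the field $\R$ and $\Alg$ is finite dimensional, such a retraction exists; the homotopy transfer theorem for $\IBLinfty$ algebras from \cite{Cieliebak-Fukaya-Latschev} then yields both the structure on $\dcbc H[2-n]$ and the claimed $\IBLinfty$ homotopy equivalence to $\fp^{\fm}$ on $\dcbc \Alg[2-n]$. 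The ``in particular'' will follow by inspecting arity $(1,1)$.

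First I would fix a cyclic Hodge decomposition of $\Alg$. Choosing a splitting $\Alg = \HH \oplus \dd\Alg \oplus C$ with $\HH$ a space of cocycle representatives for $H$ and $\dd|_C\colon C \xrightarrow{\ \sim\ } \dd\Alg$, one gets a projection $P\colon \Alg \onto \HH$, an inclusion $I\colon \HH \into \Alg$, and a homotopy $\eta\colon \Alg \to \Alg$ with
\[
\id_\Alg - I P = \dd\eta + \eta\dd, \qquad \eta^2 = 0, \quad P\eta = 0, \quad \eta I = 0 .
\]
The essential refinement is to pick this splitting compatibly with $\la\cdot,\cdot\ra$: being $\dd$-invariant, the pairing descends to a nondegenerate pairing on $H$, and a ``cleaning'' argument as in \cite{Cieliebak-Fukaya-Latschev} arranges $\HH$ to carry exactly this pairing and $\eta$ to be graded self-adjoint, $\la \eta a, b\ra = \pm \la a, \eta b\ra$. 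This is the one delicate step, and the one I expect to be the main obstacle: all of the $\dIBL$-type structure on $\dcbc H[2-n]$ — in particular the identification of its bracket and cobracket with those built from the Poincar\'e pairing on $H$, and the very existence of the $\IBLinfty$ homotopy equivalence — hinges on the retraction being strictly pairing-compatible rather than merely a chain homotopy retraction.

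With $(P,I,\eta)$ in hand, cyclic homotopy transfer of $A_\infty$ structures turns $(\Alg,\dd,\wedge,\la\cdot,\cdot\ra)$ into a cyclic $A_\infty$ structure $(\mu^H_k)_{k \ge 1}$ on $H$ with the induced Poincar\'e pairing, together with a cyclic $A_\infty$ quasi-isomorphism $H \to \Alg$; the operations $\mu^H_k$ are the usual sums over planar trees with leaves labelled by $I$, internal edges by $\eta$, root by $P$ and vertices by $\wedge$. Since $\dd_H = 0$, the canonical $\dIBL$ structure $\fp$ on $\dcbc H[2-n]$ has vanishing differential, and $(\mu^H_k)$ assembles into a Maurer--Cartan element $\fm_H \in \dcbc H[2-n]$ for $\fp$; the twisted structure $\fp^{\fm_H}$ is the desired $\IBLinfty$ structure. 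By the transfer/invariance principle for dual cyclic bar complexes of \cite{Cieliebak-Fukaya-Latschev}, the cyclic $A_\infty$ quasi-isomorphism $H \to \Alg$ induces an $\IBLinfty$ homotopy equivalence $(\dcbc H[2-n],\fp^{\fm_H}) \to (\dcbc \Alg[2-n],\fp^{\fm})$, which is the asserted equivalence.

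Finally, for the homology statement: the arity $(1,1)$ component $\fp^{\fm_H}_{1,1}$ is exactly the differential on the dual cyclic bar complex of the $A_\infty$-algebra $(H,\mu^H_\bullet)$ obtained by twisting the zero differential by $\fm_H$, and its homology is by definition the (reduced) cyclic cohomology $\ol{CH}^*(H,\mu^H_\bullet)$; since reduced cyclic cohomology is invariant under $A_\infty$ quasi-isomorphism, this equals $\ol{CH}^*(\Alg,\dd,\wedge)$. Equivalently, the arity $(1,1)$ part of the $\IBLinfty$ homotopy equivalence above is a quasi-isomorphism, so the homology of $\fp^{\fm}$ on $\dcbc \Alg[2-n]$ with respect to its arity $(1,1)$ part is the same. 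The remaining work is sign and degree bookkeeping in the shifted dual cyclic bar complex and checking that the tree-level transfer formulas do assemble into a genuine Maurer--Cartan element, both routine given the framework of \cite{Cieliebak-Fukaya-Latschev}.
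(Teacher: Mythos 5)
There is a genuine gap. Your construction transfers only the \emph{tree-level} data: the cyclic $\Ainfty$ homotopy transfer produces the Kontsevich--Soibelman Maurer--Cartan element $\fm^{\ks}_P$, whose only nonzero component lies in $\wh E_1\dcbc H$. The construction in~\cite{Cieliebak-Fukaya-Latschev} that the theorem refers to is different: one first builds an $\IBLinfty$ homotopy equivalence $\ff^P\colon\dIBL(\Alg)\to\dIBL(H)$ as a sum over \emph{ribbon graphs of all genera and all numbers of boundary components} labeled by the propagator (Theorem~\ref{thm:homotopyequiv}), and the transferred structure on $\dcbc H[2-n]$ is the twist by the full pushforward $\ff^P_*\fm^\can_\Alg$, whose components $(\ff^P_*\fm^\can_\Alg)_{\ell,g}$ for $(\ell,g)\neq(1,0)$ do \emph{not} vanish in general. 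Only $(\ff^P_*\fm^\can_\Alg)_{1,0}=(\fm^{\ks}_P)_{1,0}$ holds unconditionally (Lemma~\ref{lem:alg=KS}); killing the higher components is exactly the content of the vanishing results in Proposition~\ref{prop:alg-vanishing} and Corollary~\ref{cor:alg-vanishing}, which need extra hypotheses (special propagator, $B^0=\R\cdot 1$, simply connected cohomology, $n\neq 2$) that are not part of the theorem. Indeed the paper records that for $M=S^1$ one has $\fq^{\fm^\ana_P}_{1,2,0}\neq\fq^{\fm^{\ks}_P}_{1,2,0}$, and whether the tree-level and full structures are even $\IBLinfty$ homotopy equivalent in general is posed as an open question at the end of Section 5.3. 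So your final step --- ``the cyclic $\Ainfty$ quasi-isomorphism $H\to\Alg$ induces an $\IBLinfty$ homotopy equivalence of the twisted structures'' --- is not a theorem you can invoke; the actual equivalence is the twisted morphism $(\ff^P)^{\fm^\can_\Alg}$, which has nontrivial components in all signatures and is not induced by an $\Ainfty$ morphism.

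A second, smaller gap: even granting the tree-level route, the cyclic $\Ainfty$ relations for $(\mu^H_k)$ only give the \emph{first} Maurer--Cartan equation $\fp_{1,1,0}(\m^{H+})+\tfrac12\fp_{2,1,0}(\m^{H+},\m^{H+})=0$ (Proposition~\ref{MCAinfty}). The second equation $\fp_{1,2,0}(\m^{H+})=0$ fails for a general cyclic $\Ainfty$ algebra and is established in the paper only via strict unitality and connectedness of the transferred structure (Lemma~\ref{lem:a-infinity-mc}, Proposition~\ref{prop:ks-maurer-cartan}). So ``checking that the tree-level transfer formulas assemble into a genuine Maurer--Cartan element'' is not routine bookkeeping; without those hypotheses it is false. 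By contrast, the pushforward $\ff^P_*\fm^\can_\Alg$ is automatically a Maurer--Cartan element by Proposition~\ref{prop:MC}(ii), which is one reason the paper's route goes through ribbon graphs rather than trees.
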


See Section~\ref{sec:IBLinfty} for the notions appearing in this theorem such as the dual cyclic bar complex $\dcbc \Alg$. 
In view of the second isomorphism in~\eqref{eq:cychom-loophom},  Theorem~\ref{thm:CFL-intro} suggests two approaches to chain level string topology.

{\em Algebraic approach: }Apply Theorem~\ref{thm:CFL-intro} to a suitable finite dimensional model of the de Rham complex 
$\Om^*(M)$. 

{\em Analytic approach: }Extend Theorem~\ref{thm:CFL-intro} to the de Rham complex $\Om^*(M)$, replacing finite sums by integrals over configuration spaces. 

The algebraic approach is based on the following theorem of Lambrechts and 
Stanley.

\begin{thm}[\cite{Lambrechts-Stanley}]\label{thm:Lambrechts-Stanley} 
For every simply connected Poincar\'e $\DGA$ $\Alg$ there exists a differential Poincar\'e duality algebra $\Alg^{\prime}$ which is connected to $\Alg$ by a zigzag of quasi-isomorphisms of commutative $\DGA$s.
\end{thm}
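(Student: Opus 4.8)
The plan is to follow Lambrechts and Stanley. Since $\Alg$ is simply connected and $\la\cdot,\cdot\ra$ is nondegenerate on cohomology, $H(\Alg)$ is finite dimensional and vanishes in degrees above $n$. The first move is to pass to a convenient finite model. Replace $\Alg$ by its minimal Sullivan model $(\Lambda V,\dd)$, a finite-type commutative $\DGA$, and then pass to the quotient by the differential ideal spanned by $(\Lambda V)^{>n}$ together with a graded complement of the cocycles in degree $n$. Using $H^{>n}(\Alg)=0$ one checks this quotient map is a quasi-isomorphism, so $\Alg$ is connected through commutative $\DGA$s to a finite-dimensional commutative $\DGA$ $B$ concentrated in degrees $[0,n]$ with $B^0=\R$ and $B^1=0$. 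As $B^{n+1}=0$, any linear form $\eps\colon B^n\to\R$ is a cocycle of $\Hom(B,\R)$; choosing $\eps$ so that it annihilates $\dd B^{n-1}$ and induces the isomorphism $H^n(B)\cong\R$, the assignment $\la a,b\ra=\eps(ab)$ is a graded-symmetric pairing of degree $n$ on $B$ descending to a nondegenerate pairing on $H(B)$.

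Next I would pin down the failure of strict Poincar\'e duality. Let $\mathfrak{r}=\{a\in B:\eps(ab)=0\text{ for all }b\in B\}$ be the radical. Since $\eps$ kills $\dd B$ and is supported in top degree, $\mathfrak{r}$ is a homogeneous differential ideal, and $B/\mathfrak{r}$ is a finite-dimensional commutative $\DGA$ whose induced pairing is nondegenerate in every degree --- in other words a differential Poincar\'e duality algebra. The projection $B\onto B/\mathfrak{r}$ is a quasi-isomorphism exactly when $\mathfrak{r}$ is acyclic, and this is the only obstacle: a cocycle of $\mathfrak{r}$ is orthogonal to every cocycle of $B$, hence vanishes in $H(B)$ by nondegeneracy, but it need not be a coboundary \emph{within} $\mathfrak{r}$. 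Let $\phi\colon B\to B^\vee$, $\phi(a)=\la a,\cdot\ra$, be the map of dg $B$-modules into the degreewise linear dual shifted so as to be degree preserving; it is a quasi-isomorphism because $H(\phi)$ is the isomorphism on cohomology induced by the Poincar\'e pairing, and one has a four-term exact sequence $0\to\mathfrak{r}\to B\xrightarrow{\phi}B^\vee\to\mathfrak{r}^\vee\to0$. Dualising it shows that $H(\mathfrak{r})$ is finite dimensional and self-dual in total degree $n+2$, so its classes come in dual pairs.

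The heart of the proof, and the step I expect to be the main obstacle, is to enlarge $B$ so that its radical becomes acyclic. I would build a commutative $\DGA$ $\wh{B}\supseteq B$ with $B\into\wh{B}$ a quasi-isomorphism, again finite dimensional with $\wh{B}^0=\R$, $\wh{B}^1=0$ and $\wh{B}^{>n}=0$ and with an orientation extending $\eps$, whose radical $\wh{\mathfrak{r}}$ is acyclic. This is done inductively by a handle-attachment procedure: to cancel a class of $H(\mathfrak{r})$ one adjoins a generator $y$ with $\dd y$ a representing cocycle of the radical, a Poincar\'e-dual generator $y^*$ whose differential represents the dual class together with the prescription $\eps(yy^*)=\pm1$ and the forced values of the remaining new products, and finitely many auxiliary generators so that the total cohomology stays unchanged; the dual generator is precisely what keeps the pairing --- hence the radical --- under control, and truncation is needed to preserve $\wh{B}^{>n}=0$. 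Running over a basis of $H(\mathfrak{r})$ organised into dual pairs, the process terminates. The delicate point is the bookkeeping: checking at each step that one still has a commutative $\DGA$ with the required properties, that $B\into\wh{B}$ stays a quasi-isomorphism, and that $\wh{\mathfrak{r}}$ ends up acyclic.

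Finally, set $\Alg':=\wh{B}/\wh{\mathfrak{r}}$. By the second step applied to $\wh{B}$, this is a differential Poincar\'e duality algebra, and $\wh{B}\onto\Alg'$ is a quasi-isomorphism since $\wh{\mathfrak{r}}$ is acyclic. Concatenating with the zigzag produced in the first step gives the desired zigzag of quasi-isomorphisms of commutative $\DGA$s connecting $\Alg$ to $\Alg'$.
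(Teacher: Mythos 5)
This statement is quoted from Lambrechts--Stanley and the paper gives no proof of it, so the comparison has to be against the original argument (and against the variant in \cite{Pavel-Hodge} that the paper actually relies on). Your outline is structurally the Lambrechts--Stanley proof: reduce to a finite-dimensional, simply connected, oriented $\CDGA$ concentrated in degrees $[0,n]$; observe that the radical (their ``orphan'' ideal) $\mathfrak r$ is a differential ideal, that $B/\mathfrak r$ is a $\dPD$ algebra, and that the projection is a quasi-isomorphism iff $\mathfrak r$ is acyclic; establish the duality $H^j(\mathfrak r)\cong H^{n+2-j}(\mathfrak r)^*$ from the four-term sequence; then make $\mathfrak r$ acyclic by cell attachment. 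All of the steps you actually carry out are correct.

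The gap is that the third step --- which is the entire content of the theorem --- is only asserted. Concretely: (a) adjoining a free $\CDGA$ generator $y$ of even degree produces a polynomial algebra, so you must truncate, and truncation interacts with the pairing and can create new orphans; (b) an orphan cocycle $a$ is already exact in $B$, say $a=\dd\beta$, so setting $\dd y=a$ creates a new cohomology class $[y-\beta]$ in degree $k-1$ which must itself be paired off or killed without disturbing $H(B)$; (c) every attachment changes $\mathfrak r$, so ``running over a basis of $H(\mathfrak r)$'' is not a terminating induction as stated --- Lambrechts and Stanley must induct on degree, prove that killing orphan cohomology in degree $k$ creates none in degrees $\le k$, and use the $(n+2)$-duality to stop at the middle dimension, where a separate argument is needed; (d) defining the products of $y$ and $y^*$ with all of $B$ compatibly with Leibniz, associativity, commutativity and the orientation is precisely the ``delicate bookkeeping'' and occupies most of their paper. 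It is also worth noting that the present paper bypasses this mechanism entirely via \cite{Pavel-Hodge}: one extends the Sullivan minimal model to a finite-type $\PDGA$ of \emph{Hodge type}, for which the degenerate subspace is automatically acyclic (Lemma~\ref{lem:nondeg-quotient}(a)), so the nondegenerate quotient is a quasi-isomorphism with no cell attachment needed; for Hodge-type algebras this is made explicit by the small subalgebra construction of Proposition~\ref{prop:existence-PDmodel}.
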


Applying Theorem~\ref{thm:Lambrechts-Stanley} to $A=\Om^*(M)$ for simply connected $M$, and then Theorem~\ref{thm:CFL-intro} to the differential Poincar\'e duality algebra $A'$, one obtains an $\IBLinfty$ structure on $\dcbc H_{\rm dR}^*(M)[2-n]$ whose reduced homology equals the reduced cyclic cohomology of $\Om^*(M)$ and therefore $H_*^{S^1}(LM,\pt)$. 
The algebraic approach is now completed by the following theorem of Naef and Willwacher.

\begin{thm}[\cite{Naef-Willwacher}]\label{thm:Naef-Willwacher}
Let $M$ be a simply connected closed manifold.
Then the involutive Lie bialgebra structure
on the reduced cyclic homology of $H=H_{\rm dR}^*(M)$
induced by the $\IBL_\infty$ structure 
on $\dcbc H[2-n]$ obtained {\em algebraically} 
is isomorphic to the involutive Lie bialgebra structure on $H_*^{S^1}(LM,\pt)$ due to Chas and Sullivan.
\end{thm}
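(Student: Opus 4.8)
The plan is to carry out the comparison at the chain level, using Chen's iterated integrals, and to match the Chas--Sullivan operations with the algebraic ones term by term. First I would fix a finite-dimensional differential Poincar\'e duality model $\Alg'$ of $\Om^*(M)$ as in Theorem~\ref{thm:Lambrechts-Stanley} and apply Theorem~\ref{thm:CFL-intro} to it, obtaining the $\IBLinfty$ structure on $\dcbc H[2-n]$ with $H=H_{\mathrm{dR}}^*(M)$, whose homology is $\ol{CH}^*(\Om^*(M))$ and hence $H_*^{S^1}(LM,\pt)$ by~\eqref{eq:cychom-loophom}. Two preliminary reductions: first, combining the $\IBLinfty$ homotopy equivalence of Theorem~\ref{thm:CFL-intro} with the invariance of the $\dIBL$ construction under quasi-isomorphisms of cyclic $\DGA$s, the induced involutive Lie bialgebra on $\ol{CH}^*(\Om^*(M))$ is independent of the chosen model $\Alg'$, so one may compute it from whichever model is convenient; second, among the operations of $\fp^{\fm}$ only the two leading genus-$0$ ones --- with two inputs and one output, and with one input and two outputs --- descend to the homology of $\dcbc H[2-n]$, where they induce precisely the Lie bracket and the Lie cobracket, the remaining operations and $\fm$-corrections affecting only the $\IBLinfty$ homotopy type. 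It therefore suffices to identify these two operations with the Chas--Sullivan bracket and cobracket.

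The heart of the matter is this identification inside Chen's model. One uses Chen's iterated integral quasi-isomorphism, which is $S^1$-equivariant and compatible with the cyclic structure, to realise the isomorphism $\ol{CH}^*(\Om^*(M))\cong H_*^{S^1}(LM,\pt)$. The Chas--Sullivan loop bracket arises from the transverse intersection of equivariant chains in $LM\times LM$ with the codimension-$n$ locus of pairs of loops through a common marked point, followed by concatenation and passage to $S^1$-coinvariants; pulled back through iterated integrals, intersection with this locus becomes a cap with the diagonal class of $M\times M$, i.e.\ the insertion into the two bar words of the diagonal element $\sum_i e_i\otimes e^i\in\Alg'\otimes\Alg'$ dual to the Poincar\'e pairing, while concatenation becomes the merging of the two cyclic words at the insertion points --- and this is exactly the genus-$0$ operation of $\fp^{\fm}$ with two inputs and one output. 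Dually, the Chas--Sullivan cobracket comes from restricting a loop class along the evaluation $(\gamma,s,t)\mapsto(\gamma(s),\gamma(t))$ to loops with a self-intersection and splitting the loop in two; through iterated integrals this becomes cutting a cyclic word at two positions and inserting the same diagonal element, which is the genus-$0$ operation of $\fp^{\fm}$ with one input and two outputs. The subtlety is that in $\Om^*(M)$ the diagonal is a current rather than an element, so this dictionary requires either a smoothing (producing correction terms that one shows die on homology) or --- preferably --- running the argument throughout in the finite-dimensional model $\Alg'$; equivalently, one encodes the coproduct through the real homotopy type of the Fulton--MacPherson compactification of the configuration space $\mathrm{Conf}_2(M)$, which for simply connected $M$ is determined by the minimal model of $M$ together with the diagonal class, exactly the data carried by the diagonal element of $\Alg'$.

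It remains to assemble the pieces: that Chen's map intertwines the cyclic structures and $S^1$-actions finely enough to induce an \emph{isomorphism} on the homologies in question; that the degree shifts match, so that after the shift by $2-n$ the bracket has degree $0$ and the cobracket degree $2(2-n)$; that the orientation of $M$ fixes the sign of the intersection pairing consistently with the Poincar\'e pairing of $\Alg'$; and that involutivity holds on the nose on both sides. The main obstacle is the chain-level identification of the previous paragraph --- making the ``intersection with the diagonal $=$ insertion of the diagonal element'' correspondence precise $S^1$-equivariantly and with all signs, handling the transversality and regularisation forced by the singularity of the diagonal, and checking that the resulting formulas \emph{literally} agree with the two genus-$0$ operations of $\fp^{\fm}$ rather than merely being chain-homotopic to them. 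A secondary difficulty is setting up Chen's iterated integrals with enough cyclic functoriality to justify the reduction from $\Om^*(M)$, where the diagonal is ill-defined, to the Lambrechts--Stanley model, where it is an honest algebraic element.
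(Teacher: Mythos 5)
This statement is quoted from Naef--Willwacher and the paper offers no proof of it, so there is nothing internal to compare your attempt against; I can only assess the proposal on its own terms. What you outline is in fact not the Naef--Willwacher route (which works with configuration-space models and identifies the operations combinatorially) but rather the \emph{analytic} route via Chen's iterated integrals that the authors attribute to the companion paper \cite{Cieliebak-Volkov-string} in a remark after Corollary~\ref{cor:homotopy-invariance-simpconn-intro}. That route is legitimate, but as written your argument has genuine gaps beyond the one you flag yourself.

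First, the reduction step is not justified. For a twisted structure one has $\fp^{\fm}_{1,2,0}=\fp_{1,2,0}+\fp_{2,1,0}\circ_1\fm_{2,0}$, and the $\fm_{2,0}$ term \emph{does} contribute to the cobracket induced on homology; it is not merely a matter of ``$\IBLinfty$ homotopy type''. Dismissing it requires the vanishing results (Proposition~\ref{prop:alg-vanishing}, Corollary~\ref{cor:alg-vanishing}), which hold for simply connected cohomology and $n\neq 2$ but must be invoked, not assumed. Second, Chen's iterated integral maps equivariant chains of $LM$ into the dual cyclic bar complex of $\Om^*(M)$, not of the Lambrechts--Stanley model $\Alg'$; your suggestion to ``run the argument throughout in $\Alg'$'' has no direct meaning, since the only bridge between $\Alg'$ and the loop space passes through $\Om^*(M)$ and a zigzag of quasi-isomorphisms. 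Making that bridge compatible with the $\IBLinfty$ operations is precisely the content of Theorem~\ref{thm:comparison-intro} of this paper (and requires the hypothesis $H^2_{\rm dR}=0$ there, which the statement you are proving does not assume). Finally, the central identification ``intersection with the diagonal equals insertion of the diagonal element,'' carried out $S^1$-equivariantly, with signs, and with the regularisation forced by the diagonal being a current rather than a form, is the entire analytic difficulty; your proposal describes it accurately but does not prove it. As it stands the proposal is a plausible programme, not a proof.
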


However, the algebraic approach raises the issue that the resulting $\IBLinfty$ structure may depend on the chosen model and thus not be canonical.
This issue has been addressed by the second author in~\cite{Pavel-Hodge}.
To describe the results,
let us abbreviate ``differential Poincar\'e duality algebra'' by ``$\dPD$ algebra''.
Two Poincar\'e $\DGA$s are called \emph{weakly equivalent} if they are connected by a zigzag of Poincar\'e $\DGA$ quasi-isomorphisms (see Section~\ref{ss:PDGA} for more details). 
Recall that a graded $\R$-vector space $\Alg=\bigoplus_{k\geq 0}\Alg^k$ is called {\em $m$-connected} if $\Alg^0=\R$ and $\Alg^1=\cdots=\Alg^m=0$. 
Then we have the following improvement of Theorem~\ref{thm:Lambrechts-Stanley}.

\begin{thm}[\cite{Pavel-Hodge}]\label{thm:Pavel-Hodge-intro}
(a) Every Poincar\'e $\DGA$ $\Alg$ is weakly equivalent to a $\dPD$ algebra $\Alg_1$.
If the homology of $\Alg$ is $m$-connected, then 
$\Alg_1$ can be chosen to be $m$-connected. 

(b) Let $\Alg_1,\Alg_2$ be $2$-connected $\dPD$ algebras which are weakly equivalent as Poincar\'e $\DGA$s.  
Then there exists a $1$-connected $\dPD$ algebra $\Alg_3$ and quasi-isomorphisms of $\dPD$ algebras $\Alg_1\longleftarrow \Alg_3\longrightarrow \Alg_2$. 
\end{thm}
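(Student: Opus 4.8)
The plan is to prove (a) by upgrading an arbitrary finite-dimensional $\CDGA$ model of $\Alg$ to one whose orientation pairing is already nondegenerate, and to prove (b) by running a relative version of that construction over a common model of $\Alg_1$ and $\Alg_2$. For (a), I would first replace $\Alg$ by a quasi-isomorphic $\CDGA$ that is finite-dimensional and of the same connectivity: starting from a free (Sullivan) model of $\Alg$, whose cohomology $H=H(\Alg)$ is finite-dimensional, $m$-connected and --- by Poincar\'e duality --- concentrated in degrees $0,\dots,n$, one truncates by dividing out a suitable differential ideal to obtain a finite-dimensional $\CDGA$ $B$ with $B^0=\R$, $B^1=\dots=B^m=0$, $B^{>n}=0$ and a quasi-isomorphism $B\xrightarrow{\sim}\Alg$. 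Choosing a degree $-n$ cocycle $\epsilon\colon B\to\R$ representing the fundamental class and setting $\la a,b\ra:=\epsilon(ab)$ yields a graded symmetric, $\dd$-invariant, multiplicative pairing whose radical $K:=\{a\in B:\epsilon(ab)=0\text{ for all }b\in B\}$ is a differential ideal; hence $B/K$ is a $\CDGA$ carrying a nondegenerate pairing of the same type. The long exact sequence of $0\to K\to B\to B/K\to 0$, together with the observation that the image of $H(K)$ in $H(B)$ lies in the radical of the induced nondegenerate pairing on $H(B)\cong H$ and therefore vanishes, shows that $B\to B/K$ is a quasi-isomorphism if and only if $K$ is acyclic. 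Thus (a) reduces to choosing the finite-dimensional model $B$, with the prescribed connectivity, so that $K$ is acyclic.

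I would achieve this by the Lambrechts--Stanley surgery of \cite{Lambrechts-Stanley}, reorganised so as not to spoil connectivity: inductively one adjoins to $B$ a single generator $\xi$ whose differential $\dd\xi$ is a chosen cocycle of $K$ representing a nonzero class in $H(K)$, while extending the multiplication so that $\xi$ lies in the radical of the enlarged algebra, so that this class becomes exact. For a suitable choice of $B$ the radical $K$ vanishes in degrees $\le m+1$, so every such $\xi$ has degree between $m+1$ and $n-1$; hence the enlarged $B$ is again finite-dimensional and $m$-connected, and after finitely many steps $K$ is acyclic. Then $\Alg_1:=B/K$ is an $m$-connected $\dPD$ algebra, weakly equivalent to $\Alg$ via $\Alg\xleftarrow{\sim}B\xrightarrow{\sim}B/K$. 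The delicate point, which carries most of the work, is that the extensions of the product and of $\epsilon$ over the new generators must not create new radical cohomology; this is arranged by a suitable ordering of the induction and a careful, Hodge-theoretic choice of representatives.

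For (b), I would first collapse the zigzag of Poincar\'e $\DGA$ quasi-isomorphisms joining $\Alg_1$ and $\Alg_2$: forgetting the pairings and using the model structure on $\CDGA$s, it can be replaced by a span $\Alg_1\xleftarrow{f_1}B\xrightarrow{f_2}\Alg_2$ of $\CDGA$ quasi-isomorphisms with $f_1,f_2$ surjective, and (as in (a)) one may take $B$ finite-dimensional. The $\dPD$ structures pull back to degree $-n$ orientation cocycles $\epsilon_i:=\epsilon_{\Alg_i}\circ f_i$ on $B$ which, being dual fundamental classes, agree up to a scalar and up to a coboundary; absorbing this discrepancy by one further elementary modification of $B$, one may assume $\epsilon_1=\epsilon_2=:\epsilon$. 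Since the $f_i$ are surjective and the pairings on $\Alg_1,\Alg_2$ are nondegenerate, the radical $K$ of $(B,\epsilon)$ is contained in $\ker f_1\cap\ker f_2$, so the surgery of (a) can be performed relative to these kernels --- adjoining generators only inside $K\cap\ker f_1\cap\ker f_2$. The outcome $\Alg_3:=B/K$ is then a $\dPD$ algebra with $H(\Alg_3)\cong H(\Alg_1)$ onto which $f_1,f_2$ descend as surjective quasi-isomorphisms intertwining the orientation maps, hence as $\dPD$-algebra quasi-isomorphisms $\Alg_3\to\Alg_1$, $\Alg_3\to\Alg_2$; the reconciliation step and the truncation are what force $\Alg_3$ to be only $1$-connected --- they introduce classes in degree $2$ --- but no worse.

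The step I expect to be the main obstacle is the surgery in both parts: one must kill $H(K)$ by adjoining acyclic generators while simultaneously keeping the $\CDGA$ finite-dimensional, preserving the prescribed connectivity, and --- in (b) --- carrying it out inside $\ker f_1\cap\ker f_2$ so that the comparison maps survive. Controlling the extensions of the product and of $\epsilon$ over the new generators, so that the chosen radical cocycles actually become exact without new radical cohomology appearing, is the technical heart of \cite{Pavel-Hodge}, and the Hodge-theoretic choice of harmonic representatives is what makes the successive choices mutually consistent.
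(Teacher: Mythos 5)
Your part (a) follows the Lambrechts--Stanley surgery route: truncate a Sullivan model to a finite-dimensional oriented $\CDGA$ $B$, repeatedly adjoin generators to kill $H(K)$ for the radical $K$, and then pass to $B/K$. The paper (following \cite{Pavel-Hodge}) takes a genuinely different route that avoids the surgery altogether. One first produces a finite-type model $\Alg_1$ of $\Alg$ which is of \emph{Hodge type} --- in the Hodge-type case concretely the small subalgebra $\SS_P$ generated from a harmonic subspace by a special propagator $P$ and the product (Proposition~\ref{prop:existence-PDmodel}, Lemma~\ref{lem:small-subalgebra}) --- and then invokes Lemma~\ref{lem:nondeg-quotient}(a): for a Hodge-type complex whose cohomology pairing is nondegenerate the degenerate subspace (your $K$) is \emph{automatically} acyclic, so $\Alg_1\onto\QQ(\Alg_1)=\Alg_1/K$ is a quasi-isomorphism with no further modification. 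This buys two things: since no generators are ever adjoined, $m$-connectivity is inherited for free (products of elements of degree $\ge m+1$ followed by the degree $-1$ operator $P$ stay in degree $\ge m+1$); and the model comes with a canonical two-step zigzag and compatible Hodge decompositions, which the rest of the paper relies on. Your route is viable for existence (it is essentially \cite[Theorem 1.1]{Lambrechts-Stanley}), but the assertion that the adjoined generators can always be kept in degrees $>m$ without creating new radical cohomology is precisely the technical content you would have to supply; the Hodge-type argument sidesteps it entirely.

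Part (b) has a genuine gap. A quasi-isomorphism of $\dPD$ algebras preserves the chain-level nondegenerate pairings and is therefore automatically \emph{injective} --- the paper emphasizes this explicitly. Your maps $\bar f_i\colon B/K\to\Alg_i$ are surjective (being descended from the surjective $f_i$) and pairing-preserving, hence injective, hence isomorphisms; so if your relative surgery could be completed you would have proved $\Alg_1\cong\Alg_2$. That is strictly stronger than the theorem and false in general: weakly equivalent $2$-connected $\dPD$ algebras need not even have the same total dimension, which is exactly why the theorem asserts only a zigzag (and why Remark~\ref{rem:vanishing-in-low-dimensions}'s neighbour, Remark 2.18, records non-isomorphic $\dPD$ models of a single $\PDGA$). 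Consequently some step of your plan must fail; the obstruction sits in the relative surgery, where one cannot in general kill $H(K)$ by adjoining generators inside $\ker f_1\cap\ker f_2$ without creating new radical cohomology. Note also that the precise form proved in \cite{Pavel-Hodge} (Theorem~\ref{thm:existence-uniqueness-PDmodel}(b)) is built the other way around: $\Alg_1$ and $\Alg_2$ are embedded by injective $\dPD$ quasi-isomorphisms into a common \emph{larger} $1$-connected $\dPD$ algebra; the comparison object is a target receiving both, not a quotient of a common surjective model, and any correct proof of the span in the statement must exhibit $\Alg_3$ as a common $\dPD$ \emph{subalgebra} of $\Alg_1$ and $\Alg_2$ rather than a common quotient.
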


The analytic approach has been outlined in~\cite{Cieliebak-Fukaya-Latschev}.
It is completed by the following result.

\begin{thm}[\cite{Cieliebak-Volkov-Chern-Simons}]\label{thm:CV-intro}
Let $M$ be a closed oriented manifold of dimension $n$ and $H=\HdR(M)$ its de Rham cohomology.
Then the following hold.

(a) Integrals over configuration spaces give rise to an  $\IBLinfty$ structure on $\dcbc H[2-n]$ whose homology equals the cyclic cohomology of the de Rham complex of $M$.

(b) This structure is independent of all choices up to $\IBLinfty$ homotopy equivalence.
\end{thm}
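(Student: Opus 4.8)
The plan is to carry the construction of the twisted $\dIBL$ structure $\fp^{\fm}$ of Theorem~\ref{thm:CFL-intro} over from a finite dimensional cyclic $\DGA$ to the infinite dimensional oriented Poincar\'e $\DGA$ $\bigl(\Om^*(M),\dd,\wedge,\la\cdot,\cdot\ra\bigr)$. The one ingredient in that construction that uses finite dimensionality is the dual of the pairing --- the Casimir element $\sum_i e_i\otimes e^i$ --- which decorates the edges of the graphs defining the operations, and in the de Rham setting it must be replaced by a \emph{propagator}. So I would first fix a Riemannian metric on $M$ and, from the Green kernel of the Hodge Laplacian, build a form $P$ on the Axelrod--Singer (Fulton--MacPherson) compactification $\wt{C_2(M)}$ of the configuration space of two points of $M$: of degree $n-1$, smooth up to the boundary, antisymmetric under the swap involution, restricting on the boundary sphere bundle to the global angular form, and satisfying a propagator equation $\dd P=-\sum_\alpha\pi_1^*h_\alpha\wedge\pi_2^*h^\alpha$ for dual bases $\{h_\alpha\},\{h^\alpha\}$ of the harmonic forms. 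This $P$ is the analytic incarnation of the Poincar\'e dual of the diagonal, i.e.\ of the inverse of the intersection pairing.

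With $P$ fixed I would proceed in four steps. \emph{(1) The operations.} To each genus $g$ labelled ribbon graph $\Gamma$ with $k$ input and $\ell$ output boundary circles, associate the operation obtained by integrating, over the compactified configuration space of the internal vertices of $\Gamma$ in $M$, the product of the pullbacks of $P$ along the internal edges and of the harmonic input forms along the legs, and pushing the result forward to the output circles; the integral converges because $P$ extends smoothly to $\wt{C_2(M)}$, so the whole integrand extends smoothly to the compactified configuration space. Summing over such $\Gamma$ yields operations $\fp_{k,\ell,g}$ on $\dcbc H[2-n]$, $H=\HdR(M)$. \emph{(2) The relations.} Prove the $\IBLinfty$ identities by Stokes' theorem on the compactified configuration spaces: the codimension one boundary faces are collisions of two vertices (which by the propagator equation reproduce the internal differential and the terms contracting two strands), ``hidden'' faces where three or more vertices collide, and faces where the configuration space splits in two (which produce the quadratic composition terms, including the involutivity term). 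Showing that all hidden faces vanish --- Kontsevich-type dimension and symmetry vanishing lemmas --- then leaves exactly the terms of the defining $\IBLinfty$ relation. \emph{(3) The homology.} By construction the tree part of $\fp$ is the analytic realization of homotopy transfer and equips $H$ with an $\Ainfty$-structure that is $\Ainfty$-quasi-isomorphic to $(\Om^*(M),\dd,\wedge)$, while the linear part $\fp_{1,1,0}$ is the dual of the corresponding cyclic bar differential; since cyclic cohomology is a quasi-isomorphism invariant, the homology of $\bigl(\dcbc H[2-n],\fp_{1,1,0}\bigr)$ is the cyclic cohomology of the de Rham complex of $M$, which by~\eqref{eq:cychom-loophom} is $H_*^{S^1}(LM,\pt)$; this proves (a). \emph{(4) Independence of choices.} Given two metrics with associated propagators $P_0,P_1$, choose an interpolating propagator $\widetilde P$ on $\wt{C_2(M)}\times[0,1]$ and run the same integrals; Stokes' theorem, now also picking up the contributions at $t=0,1$, shows that the resulting maps constitute an $\IBLinfty$ homotopy between the two structures. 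As the propagator (together with the metric) was the only choice made, this proves (b).

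The hard part will be step (2): one must control the singularities of $P$ --- and of products of several pulled-back copies of $P$ along overlapping edges --- uniformly near every stratum of the higher configuration spaces, so that all the integrals converge, Stokes' theorem applies with no anomalous contribution, and the hidden faces really do vanish; the vanishing lemmas are delicate, and the positive genus and the multiple output circles make the bookkeeping of boundary faces substantially heavier than in the tree-level homotopy transfer construction. A genuine secondary difficulty is orientations and signs: reconciling the natural orientations of the compactified configuration spaces, the ribbon structure, and the cyclic/$S^1$ symmetry with the sign conventions of the $\IBLinfty$ operad so that the relations hold on the nose rather than up to sign.
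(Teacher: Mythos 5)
The overall skeleton of your argument (a propagator on the compactified two--point configuration space, graph integrals over Fulton--MacPherson compactifications, Stokes' theorem with vanishing of hidden faces, and an interpolating propagator for independence of choices) is indeed the engine behind the cited result. But you have misidentified \emph{which part of the structure} is produced by configuration space integrals, and this creates a genuine gap. The paper (following \cite{Cieliebak-Fukaya-Latschev,Cieliebak-Volkov-Chern-Simons}) does \emph{not} define the operations $\fp_{k,\ell,g}$ directly by integrals over graphs with $k$ input boundary circles. Since $H=\HdR(M)$ is finite dimensional with a perfect pairing, the complex $\dcbc H[2-n]$ already carries the purely algebraic $\dIBL$ structure $\dIBL(H)=\{\fq_{1,1,0}=0,\fq_{2,1,0},\fq_{1,2,0}\}$ of Proposition~\ref{prop:structureexists}; the analysis enters only through a Maurer--Cartan element $\fm=\{\fm_{\ell,g}\}$, i.e.\ a collection of \emph{elements} of $\wh E_\ell \dcbc H$ with no inputs, given by integrals over trivalent ribbon graphs whose interior vertices carry the triple intersection product and whose interior edges carry the propagator kernel (Theorem~\ref{thm:CV-MC}). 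The $\IBLinfty$ structure of the theorem is then the algebraic twist $\fq^{\fm}$, whose only operation with two or more inputs is the canonical $\fq_{2,1,0}$, all new operations being of the form $\fq_{2,1,0}\circ_1\fm_{\ell,g}$. In your proposal the role of the $k$ input functionals $\varphi_1,\dotsc,\varphi_k\in\dcbc H$ in the integral is never specified --- ``integrating the harmonic input forms along the legs and pushing forward to the output circles'' does not parse for functionals on cyclic words in cohomology --- and the quadratic $\IBLinfty$ relations for a structure with genuinely nontrivial multi-input operations are a different (and much harder) system of identities than the Maurer--Cartan equation that Stokes' theorem actually verifies. This is precisely the difficulty that forces the Maurer--Cartan formulation: the naive chain-level construction on $\dcbc\Om$ fails because the intersection pairing on $\Om$ is not perfect, and the cure is to keep all operations algebraic on $H$ and put the analysis into $\fm$ alone.

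Two smaller points. First, the propagator must be taken \emph{special} ($P\circ P=0$, $P\circ\pi_\HH=\pi_\HH\circ P=0$) for the vanishing arguments and the comparison with homotopy transfer to go through; your Green-kernel propagator is not automatically special and must be corrected as in Lemma~\ref{lem:propagator}/Lemma~\ref{lem:existence-of-analytic-propagator}. Second, for independence of choices the paper establishes the stronger statement that the Maurer--Cartan elements for different propagators are \emph{gauge equivalent} (via a Weyl/BV formalism), from which the $\IBLinfty$ homotopy equivalence of the twists follows by Proposition~\ref{prop:MC}; your interpolation-of-propagators argument is the right geometric input for this, but it produces a gauge equivalence of Maurer--Cartan elements rather than directly an $\IBLinfty$ homotopy of operations. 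Your step (3) on identifying the homology with cyclic cohomology via the tree-level homotopy transfer is correct and matches Lemma~\ref{lem:ana=KS}.
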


Note that Theorem~\ref{thm:CV-intro}(b) resolves the issue of potential dependence on choices for the analytic approach.

\medskip

{\bf Results of this paper. }
Our first new result is

\begin{thm}\label{thm:alg-intro}
Let $\Alg,\Alg^{\prime}$ be Poincar\'e $\DGA$s with $2$-connected homology and $\Alg_1,\Alg_1^{\prime}$ associated $2$-connected $\dPD$ algebras as in Theorem~\ref{thm:Pavel-Hodge-intro}(a).
Assume that $\Alg$ and $\Alg^{\prime}$ are weakly equivalent.
Then the $\IBLinfty$ structures on $\dcbc H[2-n]$ obtained by applying Theorem~\ref{thm:CFL-intro} to $\Alg_1$ and $\Alg_1^{\prime}$ are $\IBLinfty$ homotopy equivalent. 
\end{thm}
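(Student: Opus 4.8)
The plan is to dominate $\Alg_1$ and $\Alg_1^{\prime}$ by a single cyclic $\DGA$ and to transport the construction of Theorem~\ref{thm:CFL-intro} along the connecting quasi-isomorphisms, using that $\IBLinfty$ homotopy equivalence is an equivalence relation. First I would make the formal reduction: by Theorem~\ref{thm:Pavel-Hodge-intro}(a), $\Alg_1$ is weakly equivalent to $\Alg$ and $\Alg_1^{\prime}$ to $\Alg^{\prime}$, so together with the hypothesis that $\Alg$ and $\Alg^{\prime}$ are weakly equivalent, $\Alg_1$ and $\Alg_1^{\prime}$ are weakly equivalent Poincar\'e $\DGA$s. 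Being $2$-connected $\dPD$ algebras, they satisfy the hypotheses of Theorem~\ref{thm:Pavel-Hodge-intro}(b), which produces a $1$-connected $\dPD$ algebra $\Alg_3$ and genuine $\dPD$ quasi-isomorphisms $\psi\colon\Alg_3\to\Alg_1$ and $\psi^{\prime}\colon\Alg_3\to\Alg_1^{\prime}$; this is the only place the $2$-connectedness is used. The isomorphisms $H(\Alg_1)\cong H(\Alg_3)\cong H(\Alg_1^{\prime})$ they induce give the identification of $H$ implicit in the statement.

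The crux is a functoriality property of Theorem~\ref{thm:CFL-intro}: a $\dPD$ (i.e.\ pairing-preserving) quasi-isomorphism $f\colon\Alg\to\Alg^{\prime\prime}$ of cyclic $\DGA$s of degree $n$ induces an $\IBLinfty$ homotopy equivalence between the twisted $\dIBL$ structures $\fp^{\fm}$ on $\dcbc\Alg[2-n]$ and $\fp^{\fm^{\prime\prime}}$ on $\dcbc\Alg^{\prime\prime}[2-n]$. I would prove this by factoring $f$. Since the pairings are nondegenerate and $f$ preserves them, $f$ is injective and $\langle\cdot,\cdot\rangle_{\Alg^{\prime\prime}}$ restricts nondegenerately to the subalgebra $f(\Alg)$; since $d$ is skew-adjoint for the pairing, $\Alg^{\prime\prime}=f(\Alg)\oplus f(\Alg)^{\perp}$ as complexes, and $f(\Alg)^{\perp}$ is acyclic because $f(\Alg)\into\Alg^{\prime\prime}$ is a quasi-isomorphism. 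Now $f\colon\Alg\xrightarrow{\cong}f(\Alg)$ is an isomorphism of cyclic $\DGA$s, hence induces a strict isomorphism $\fp^{\fm}\cong\fp^{\fm_{f(\Alg)}}$; and the isometric split inclusion $f(\Alg)\into\Alg^{\prime\prime}$, together with the orthogonal projection and a pairing-compatible contracting homotopy on $f(\Alg)^{\perp}$, feeds into the homological perturbation lemma for $\dIBL$ structures of~\cite{Cieliebak-Fukaya-Latschev} to give a filtered $\IBLinfty$ morphism $\fp^{\fm^{\prime\prime}}\to\fp^{\fm_{f(\Alg)}}$ whose linear part is the dual projection; as the projection and inclusion are quasi-isomorphisms, this morphism is an $\IBLinfty$ quasi-isomorphism and hence a homotopy equivalence. (The perturbation in fact terminates: since $f(\Alg)$ is a subalgebra and the homotopy kills $f(\Alg)$, the element $\fm^{\prime\prime}$ pushes forward exactly to $\fm_{f(\Alg)}$.) I expect this functoriality --- in particular making the $\dIBL$ homological perturbation lemma precise in the presence of the cobracket --- to be the main obstacle; everything else is formal.

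Finally I would assemble the proof. Applying the functoriality to $\psi$ and $\psi^{\prime}$ shows that $\fp^{\fm_1}$ and $\fp^{\fm_1^{\prime}}$ are both $\IBLinfty$ homotopy equivalent to $\fp^{\fm_3}$. By Theorem~\ref{thm:CFL-intro} applied to $\Alg_1$ and to $\Alg_1^{\prime}$ (its hypotheses hold a fortiori for these $\dPD$ algebras, $1$-connectedness of $\Alg_3$ being more than enough), the $\IBLinfty$ structure on $\dcbc H[2-n]$ obtained from $\Alg_1$ is $\IBLinfty$ homotopy equivalent to $\fp^{\fm_1}$, and the one obtained from $\Alg_1^{\prime}$ to $\fp^{\fm_1^{\prime}}$. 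Since filtered $\IBLinfty$ morphisms compose and filtered $\IBLinfty$ homotopy equivalences are invertible (see~\cite{Cieliebak-Fukaya-Latschev}), concatenating these equivalences yields the desired $\IBLinfty$ homotopy equivalence between the two $\IBLinfty$ structures on $\dcbc H[2-n]$ associated with $\Alg_1$ and $\Alg_1^{\prime}$, compatible with the identification of $H$ fixed above. This completes the argument modulo the functoriality statement, which is the only step requiring real work.
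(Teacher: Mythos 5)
Your overall architecture coincides with the paper's: reduce via Theorem~\ref{thm:Pavel-Hodge-intro}(b) to two simply connected $\dPD$ models joined by a zigzag of injective $\dPD$ quasi-isomorphisms, and then invoke a functoriality statement saying that an inclusion of a quasi-isomorphic cyclic sub-$\DGA$ induces, via the ribbon-graph homotopy transfer of~\cite{Cieliebak-Fukaya-Latschev} (Theorem~\ref{thm:homotopyequiv}) and twisting, an $\IBLinfty$ homotopy equivalence of the canonically twisted $\dIBL$ algebras. This is exactly the paper's Proposition~\ref{prop:functoriality}, and you correctly single it out as the crux.

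The gap is in your justification of the parenthetical claim that $\fm^{\prime\prime}$ pushes forward exactly to $\fm_{f(\Alg)}$. The pushforward Maurer--Cartan element $(\ff^P_*\fm^{\prime\prime})_{\ell,g}$ is a sum over trivalent ribbon graphs of \emph{all} signatures $(k,\ell,g)$, and the reason you give --- $f(\Alg)$ is a subalgebra and the contracting homotopy kills $f(\Alg)$ --- only disposes of the trees other than the $Y$-tree (it excludes the A-vertices in the sense of Proposition~\ref{prop:alg-vanishing}(ii)). It says nothing about graphs with loops or with several boundary components, i.e.\ the components with $(\ell,g)\neq(1,0)$, which would destroy the identity $\ff^P_*\fm^{\prime\prime}=\fm^\can_{f(\Alg)}$ and hence the comparison with the structure that Theorem~\ref{thm:CFL-intro} assigns to the sub-$\DGA$. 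Those contributions are killed by a separate degree count (Proposition~\ref{prop:alg-vanishing}(i),(iii),(iv)): specialness of the propagator and $B^0=\R\cdot 1$ give positivity of degrees, and combined with $\deg\alpha=(n-3)\chi+s$ this forces every exterior label on a surviving loop graph to have degree one, which is impossible precisely because the models are \emph{simply connected}. Without that connectivity hypothesis the claim is false --- for $n=1$ the circular graphs contribute Bernoulli numbers (Remark~\ref{rem:vanishing-in-low-dimensions}) --- and even with it the case $n=2$ requires the separate observation that $P=0$ for degree reasons. So your assertion that the connectivity hypotheses enter only through Theorem~\ref{thm:Pavel-Hodge-intro}(b) is inaccurate: they are used a second time, in the vanishing results that make the pushforward terminate, and supplying that argument is the real content of the step you flagged as the main obstacle.
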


In particular, we can apply this theorem to the de Rham complexes of closed oriented connected $n$-dimensional manifolds $M,M^{\prime}$.
We call  a homotopy equivalence $f\colon M\to M^{\prime}$ {\em orientation preserving} if the induced isomorphism $f_*\colon H_n(M)\to H_n(M^{\prime})$ maps the fundamental class $[M]$ to $[M^{\prime}]$.
This ensures that the pullback $f^*\colon \Om^*(M^{\prime})\to\Om^*(M)$ is a quasi-isomorphism of Poincar\'e $\DGA$s.
Hence, Theorem~\ref{thm:alg-intro} implies

\begin{cor}\label{cor-alg-intro}
Let $M,M^{\prime}$ be closed oriented connected $n$-dimensional manifolds with vanishing first and second de Rham cohomology.
Let  $\Alg_1,\Alg_1^{\prime}$ be $2$-connected $\dPD$ algebras associated to the de Rham complexes $\Om^*(M),\Om^*(M^{\prime})$ as in Theorem~\ref{thm:Pavel-Hodge-intro}(a).
Assume that there exists an orientation preserving homotopy equivalence between $M$ and $M^{\prime}$.
Then the  $\IBLinfty$ structures on $\dcbc H[2-n]$ obtained  by applying Theorem~\ref{thm:CFL-intro} to $\Alg_1$ and $\Alg_1^{\prime}$ are $\IBLinfty$ homotopy equivalent.
\hfill$\square$
\end{cor}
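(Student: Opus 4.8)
The plan is to prove Theorem~\ref{thm:alg-intro}; Corollary~\ref{cor-alg-intro} then follows immediately, since an orientation preserving homotopy equivalence $f\colon M\to M'$ induces a quasi-isomorphism of Poincar\'e $\DGA$s $f^*\colon\Om^*(M')\to\Om^*(M)$ (orientation preservation is exactly what makes $f^*$ compatible with the integration functionals), and vanishing of $H^1_{\mathrm{dR}}$ and $H^2_{\mathrm{dR}}$ is $2$-connectedness of the homology. So assume $\Alg,\Alg'$ are weakly equivalent Poincar\'e $\DGA$s with $2$-connected homology and $\Alg_1,\Alg_1'$ associated $2$-connected $\dPD$ algebras. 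First I would reduce to a single quasi-isomorphism of $\dPD$ algebras. Weak equivalence of Poincar\'e $\DGA$s is an equivalence relation and $\Alg_1\sim\Alg\sim\Alg'\sim\Alg_1'$ (the end relations by Theorem~\ref{thm:Pavel-Hodge-intro}(a)), so $\Alg_1$ and $\Alg_1'$ are weakly equivalent $2$-connected $\dPD$ algebras; by Theorem~\ref{thm:Pavel-Hodge-intro}(b) there is a $1$-connected $\dPD$ algebra $\Alg_3$ with quasi-isomorphisms of $\dPD$ algebras $\Alg_1\longleftarrow\Alg_3\longrightarrow\Alg_1'$. Every algebra here is in particular a cyclic $\DGA$, so Theorem~\ref{thm:CFL-intro} applies throughout; and since $\IBLinfty$ homotopy equivalence is an equivalence relation, it suffices to prove that one quasi-isomorphism $\phi\colon\Alg_3\to\Alg_1$ of $\dPD$ algebras induces an $\IBLinfty$ homotopy equivalence of the structures that Theorem~\ref{thm:CFL-intro} attaches to $\Alg_3$ and to $\Alg_1$ --- both of which live on $\dcbc H[2-n]$, as $\phi$ identifies the cohomologies.

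By the last clause of Theorem~\ref{thm:CFL-intro}, the structure associated to $\Alg_i$ is $\IBLinfty$ homotopy equivalent to the twisted $\dIBL$ structure $\fp^{\fm_i}$ on $\dcbc\Alg_i[2-n]$ for $i\in\{1,3\}$, so it is enough to show $\fp^{\fm_1}\simeq\fp^{\fm_3}$. Next I would record the relevant linear algebra coming from $\phi$: a morphism of $\dPD$ algebras commutes with the orientation cochains, hence $\langle\phi a,\phi b\rangle_{\Alg_1}=\langle a,b\rangle_{\Alg_3}$, and nondegeneracy of the pairing on $\Alg_3$ forces $\phi$ to be injective; writing $V:=\phi(\Alg_3)^{\perp}\subseteq\Alg_1$ one obtains an orthogonal splitting of complexes $\Alg_1=\phi(\Alg_3)\oplus V$ in which $V$ is $\mathrm d$-acyclic and the pairing restricts nondegenerately to each summand. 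Dualising, this is a filtered deformation-retract datum from $\dcbc\Alg_1$ onto $\dcbc\phi(\Alg_3)=\dcbc\Alg_3$.

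The remaining work is homological perturbation inside the $\dIBL$/Maurer--Cartan formalism of~\cite{Cieliebak-Fukaya-Latschev}, as refined in~\cite{Pavel-Hodge,Cieliebak-Volkov-Chern-Simons}. Applying the perturbation lemma to the datum of the previous paragraph, with the $\fm_1$-term as perturbation, transfers $\fp^{\fm_1}$ to a twisted $\dIBL$ structure on $\dcbc\Alg_3$ which is $\IBLinfty$ homotopy equivalent to $\fp^{\fm_1}$, whose underlying $\dIBL$ part is the standard one (the copairing transfers without correction because $V\perp\phi(\Alg_3)$), and whose twisting is the Maurer--Cartan element $\fm_3'\in\dcbc\Alg_3$ of the cyclic $A_\infty$ structure produced by cyclic homotopy transfer of the cyclic $\DGA$ structure of $\Alg_1$ onto the subcomplex $\phi(\Alg_3)$. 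Since both $(\Alg_3,\fm_3)$ and $(\Alg_3,\fm_3')$ are cyclically $A_\infty$ quasi-isomorphic to $\Alg_1$, a Whitehead-type uniqueness for cyclic $A_\infty$ structures with fixed underlying complex, differential and pairing makes $\fm_3$ and $\fm_3'$ gauge equivalent, so $\fp^{\fm_3}$ and $\fp^{\fm_3'}$ are $\IBLinfty$ homotopy equivalent. Chaining these equivalences, then using the equivalences of the previous paragraph in reverse, and finally repeating the whole argument for $\Alg_3\to\Alg_1'$, proves the theorem.

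I expect the main obstacle to be the third paragraph in its entirety. The naive dual map $\dcbc\phi\colon\dcbc\Alg_1\to\dcbc\Alg_3$ is \emph{not} a strict morphism of the twisted $\dIBL$ structures --- the copairing of $\Alg_1$ is the copairing of $\phi(\Alg_3)$ plus the copairing of the acyclic summand $V$, and the $V$-contribution survives restriction --- so the perturbation argument cannot be bypassed. Carrying it out rigorously means verifying convergence of the relevant geometric series in the (exhaustive and complete) word-length and genus filtrations on the dual cyclic bar complexes, assembling the homotopy-transfer and gauge-invariance statements for $\dIBL$ and $\IBLinfty$ structures in exactly the form needed, and checking that the cyclic $A_\infty$ transfer can be made compatible with the pairing. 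That bookkeeping is where the bulk of the technical effort lies.
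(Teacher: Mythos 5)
Your setup coincides with the paper's: reduce via Theorem~\ref{thm:Pavel-Hodge-intro}(b) to a single quasi-isomorphism of $\dPD$ algebras, observe that such a map is an inclusion of a quasi-isomorphic cyclic sub-$\DGA$ with orthogonal acyclic complement, and compare the twisted structures through the associated transfer. The gap is in your third paragraph, and it is not merely bookkeeping. The pushforward of the canonical Maurer--Cartan element $\fm^\can_{\Alg_1}$ along the ribbon-graph homotopy equivalence $\ff^P\colon\dIBL(\Alg_1)\to\dIBL(\Alg_3)$ of Theorem~\ref{thm:homotopyequiv} a priori has components $(\ff^P_*\fm^\can_{\Alg_1})_{\ell,g}$ for \emph{all} $(\ell,g)$ --- formula~\eqref{eq:pushforward-mc} sums over trivalent ribbon graphs of every genus and every number of boundary components --- so the transferred twisting is \emph{not} automatically ``the cyclic $\Ainfty$ structure produced by cyclic homotopy transfer''; that would already require proving that every contribution except the $Y$-tree vanishes. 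This vanishing is the actual mathematical content of the paper's argument (Proposition~\ref{prop:alg-vanishing} feeding into Proposition~\ref{prop:functoriality}): specialness of $P$ together with $B^0=\R\cdot 1$ gives positivity of degrees; $B\wedge B\subset B$ kills every A-vertex and hence every tree except $Y$; and positivity of degrees combined with the Euler-characteristic/degree count and simple connectedness kills all graphs with at least one loop. The upshot is the exact identity $\ff^P_*\fm^\can_{\Alg_1}=\fm^\can_{\Alg_3}$, after which the twisted morphism $(\ff^P)^{\fm^\can_{\Alg_1}}$ is the desired $\IBLinfty$ homotopy equivalence and no gauge-equivalence step is needed at all.

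Your proposed substitute for this step --- a ``Whitehead-type uniqueness for cyclic $\Ainfty$ structures with fixed underlying complex, differential and pairing'' implying gauge equivalence of $\fm_3$ and $\fm_3'$ --- is not established in the paper and is itself delicate: gauge equivalence of Maurer--Cartan elements in $\dIBL(\Alg_3)$ concerns the full collection $\{\fm_{\ell,g}\}$, not only the $(1,0)$ component (which, as above, is not the only nonzero one until the vanishing is proved), and even in the $(1,0)$ sector one would need a cyclic decomposition theorem to upgrade ``both cyclically quasi-isomorphic to $\Alg_1$'' to ``gauge equivalent over the identity of $\Alg_3$''. So as written the argument does not close; the missing ingredient is precisely the graph-by-graph vanishing analysis that you flag as an obstacle but do not supply.
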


This accomplishes the algebraic construction of a canonical (up to $\IBLinfty$ homotopy equivalence) $\IBLinfty$ algebra associated to a closed oriented connected manifold with $H^1_{\rm dR}(M)=H^2_{\rm dR}(M)=0$.
Our second result says that this agrees with the analytic construction in Theorem~\ref{thm:CV-intro}. 

\begin{thm}\label{thm:comparison-intro}
Let $M$ be a closed oriented connected manifold and $H=\HdR(M)$ its de Rham cohomology.
If $H^1_{\rm dR}(M)=H^2_{\rm dR}(M)=0$,  then the $\IBLinfty$ structures on $\dcbc H[2-n]$ arising from Theorem~\ref{thm:CV-intro} and Corollary~\ref{cor-alg-intro}
are $\IBLinfty$ homotopy equivalent.  
\end{thm}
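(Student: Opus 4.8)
The plan is to present both $\IBLinfty$ structures on $\dcbc H[2-n]$ as homotopy transfers of ``large'' structures on the dual cyclic bar complexes of genuine algebras, and then to compare the large structures, where a zigzag of quasi-isomorphisms is available. Fix, by Theorem~\ref{thm:Pavel-Hodge-intro}(a), a $2$-connected $\dPD$ algebra $\Alg_1$ weakly equivalent to $\Om^*(M)$, so that $H(\Alg_1)\cong\HdR(M)=H$; by Corollary~\ref{cor-alg-intro} the resulting algebraic $\IBLinfty$ structure is independent of this choice, and by Theorem~\ref{thm:CV-intro}(b) the analytic one is independent of the auxiliary propagator. On the algebraic side one has the twisted $\dIBL$ structure $\fp^{\fm}$ on $\dcbc\Alg_1[2-n]$ of Theorem~\ref{thm:CFL-intro}; the analytic construction of Theorem~\ref{thm:CV-intro} can be arranged as a homotopy transfer, along a Hodge-theoretic contraction of $\Om^*(M)$ onto cohomology representatives, of a configuration-space $\IBLinfty$ structure on $\dcbc\Om^*(M)[2-n]$. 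Since homological perturbation along a contraction is an $\IBLinfty$ homotopy equivalence and transfer is compatible with $\IBLinfty$ morphisms, it suffices to produce a zigzag of $\IBLinfty$ homotopy equivalences between the large structures on $\dcbc\Alg_1[2-n]$ and $\dcbc\Om^*(M)[2-n]$ and to transfer it to $\dcbc H[2-n]$.

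To build this zigzag I would fix a chain of Poincar\'e $\DGA$ quasi-isomorphisms $\Om^*(M)=\Alg^{(0)}\longleftarrow\Alg^{(1)}\longrightarrow\cdots\longrightarrow\Alg^{(k)}=\Alg_1$ realising the weak equivalence, and show that each leg induces an $\IBLinfty$ homotopy equivalence of the twisted $\dIBL$ structures on the dual cyclic bar complexes. By Theorem~\ref{thm:Pavel-Hodge-intro}(b) the portion of the chain away from the de Rham leg may be taken to consist of $1$-connected $\dPD$ algebras and \emph{pairing-preserving} ($\dPD$) quasi-isomorphisms; for such maps $\Alg\mapsto(\dcbc\Alg[2-n],\fp^{\fm})$ is functorial up to $\IBLinfty$ homotopy, which is precisely the mechanism already used in the proof of Theorem~\ref{thm:alg-intro}. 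Everything then reduces to a single \emph{analytic} leg: relating the configuration-space $\IBLinfty$ structure on $\dcbc\Om^*(M)[2-n]$ to the algebraic $\fp^{\fm}$ on $\dcbc\Alg_1[2-n]$ for one conveniently chosen finite-dimensional $\dPD$ model $\Alg_1\subset\Om^*(M)$.

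For this leg I would take the propagator of Theorem~\ref{thm:CV-intro} adapted to the contraction: a Green kernel $G$ on the Fulton--MacPherson compactification of $M\times M\setminus\Delta$ with $\dd G=\delta_\Delta-K$, where $K$ is the Schwartz kernel of the projection onto the chosen model. The configuration-space integrals defining the analytic operations then expand over trivalent graphs whose tree part reproduces the structure constants feeding $\fp^{\fm}$ for $\Alg_1$, the remaining terms being absorbed into the homological-perturbation homotopies; Stokes' theorem on the codimension-one strata of the compactified configuration spaces packages these identities into the required $\IBLinfty$ homotopy equivalence. The hypothesis $H^1_{\rm dR}(M)=H^2_{\rm dR}(M)=0$ enters exactly here: it makes the relevant weight filtrations on $\dcbc H[2-n]$ complete and forces the low-degree ``anomalous'' boundary contributions --- which otherwise obstruct convergence and the graph identities --- to vanish, in the same way $2$-connectedness is used in Theorems~\ref{thm:CFL-intro} and~\ref{thm:alg-intro}. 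One must finally check that the notion of $\IBLinfty$ homotopy equivalence used analytically in Theorem~\ref{thm:CV-intro} coincides with the algebraic one, so that the equivalences compose along the whole zigzag; this is a direct comparison of definitions.

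I expect the analytic leg to be the main obstacle. The delicate points are: choosing the propagator precisely enough that the Feynman expansion of the configuration-space integrals matches the algebraic expansion defining $\fp^{\fm}$ including all signs and combinatorial factors; controlling the convergence of the configuration-space integrals and the vanishing of their boundary contributions inside the homological-perturbation sums; and making the analytic and algebraic frameworks of $\IBLinfty$ homotopy equivalence literally compatible. The algebraic legs of the zigzag, by contrast, go exactly as in Theorem~\ref{thm:alg-intro}, and the independence statements of Theorem~\ref{thm:alg-intro} and Theorem~\ref{thm:CV-intro}(b) make the assembly of the zigzag a formal matter.
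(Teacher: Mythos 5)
Your overall strategy --- realize both structures as transfers of ``large'' structures on dual cyclic bar complexes of honest algebras and compare those along a zigzag --- does not match how the paper proceeds, and it has a genuine gap at its foundation: there is no $\IBLinfty$ structure on $\dcbc\Om^*(M)[2-n]$ to transfer from. The $\dIBL$ construction of Proposition~\ref{prop:structureexists} requires a \emph{perfect} pairing, and the intersection pairing on $\Om^*(M)$ is only nondegenerate (the paper states explicitly that one cannot form ``$\dIBL^{\fm^\can_\Om}(\Om)$''); likewise the intermediate $\PDGA$s in a zigzag realizing the weak equivalence carry no perfect chain-level pairing, so ``$\fp^{\fm}$ on $\dcbc\Alg^{(i)}$'' is undefined for them. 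The analytic construction of Theorem~\ref{thm:CV-intro} produces a Maurer--Cartan element directly in $\dIBL(\HdR)$ via configuration-space integrals; it is not a homotopy transfer of an actual structure upstairs. Your ``analytic leg'' then asks Stokes' theorem on compactified configuration spaces to package the discrepancy between the full analytic $\IBLinfty$ structure (all genera, all numbers of outputs) and the algebraic one into an explicit $\IBLinfty$ homotopy; this would require constructing infinitely many higher homotopies and is not carried out or reduced to anything checkable in your sketch.

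The paper's route is quite different and avoids all of this. Both constructions are encoded as Maurer--Cartan elements $\fm^{\ana}_P$ and $\fm^{\alg}_P$ in the \emph{same} algebra $\dIBL(\HdR)$, where the algebraic one is built from the canonical $\dPD$ model $\QQ_P=\QQ(\SS_P(\Om))$ --- the nondegenerate quotient of the small subalgebra associated to the \emph{same} special analytic propagator $P$. The decisive inputs are the vanishing results (Propositions~\ref{prop:alg-vanishing} and~\ref{prop:ana-vanishing}): since $P$ is special and $H^1_{\rm dR}=0$, positivity of degrees forces all contributions with $(\ell,g)\neq(1,0)$ to vanish on both sides, so the comparison collapses to the $(1,0)$ components, i.e.\ to cyclic $\Ainfty$ structures on $H$. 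These are then both identified with the Kontsevich--Soibelman homotopy transfer $\m^{H+}$ --- the analytic one by Fubini on configuration spaces (Lemma~\ref{lem:ana=KS}), the algebraic one by resumming the tree expansion (Lemma~\ref{lem:alg=KS}) together with the functoriality of $\Ainfty$ homotopy transfer through $\Om\supset\SS_P\onto\QQ_P$ (Corollary~\ref{cor:comparison-of-homotopy-transfers}). The outcome is an \emph{equality} of Maurer--Cartan elements, not merely a homotopy equivalence. Your description of where $H^1_{\rm dR}=H^2_{\rm dR}=0$ enters is also off: $H^1=0$ feeds the degree count in the vanishing results, while $H^2=0$ is needed for the uniqueness of $2$-connected $\dPD$ models (Theorem~\ref{thm:Pavel-Hodge-intro}(b)) that makes the algebraic structure well defined; neither has to do with completeness of filtrations or convergence of the integrals.
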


This comparison result has the following immediate corollary.

\begin{cor}\label{cor:homotopy-invariance-intro}
In the class of closed oriented connected manifolds with $H^1_{\rm dR}=H^2_{\rm dR}=0$, the $\IBLinfty$ structure arising from the analytic construction in Theorem~\ref{thm:CV-intro} is invariant (up to $\IBLinfty$ homotopy equivalence) under orientation preserving homotopy equivalences. 
\hfill$\square$
\end{cor}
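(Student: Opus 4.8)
The plan is to obtain the statement as a purely formal consequence of Theorem~\ref{thm:comparison-intro}, Corollary~\ref{cor-alg-intro} and the fact that $\IBLinfty$ homotopy equivalence is an equivalence relation (in particular transitive), which is part of the formalism developed in~\cite{Cieliebak-Fukaya-Latschev}. No further analytic or algebraic input is needed: all the substance already sits in the two results being combined, and the present corollary only records their juxtaposition.

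First I would fix closed oriented connected manifolds $M,M^{\prime}$ with $H^1_{\rm dR}=H^2_{\rm dR}=0$ together with an orientation preserving homotopy equivalence $f\colon M\to M^{\prime}$, and recall, as noted just before Corollary~\ref{cor-alg-intro}, that $f^*\colon\Om^*(M^{\prime})\to\Om^*(M)$ is then a quasi-isomorphism of Poincar\'e $\DGA$s; in particular it induces an isomorphism $f^*\colon\HdR(M^{\prime})\cong\HdR(M)$, under which $\dcbc\HdR(M^{\prime})[2-n]$ is identified with $\dcbc\HdR(M)[2-n]$. Since $M$ and $M^{\prime}$ are connected with vanishing first and second de Rham cohomology, $\HdR(M)$ and $\HdR(M^{\prime})$ are $2$-connected, so Theorem~\ref{thm:Pavel-Hodge-intro}(a) provides associated $2$-connected $\dPD$ algebras $\Alg_1,\Alg_1^{\prime}$, and Theorem~\ref{thm:CFL-intro} the corresponding algebraic $\IBLinfty$ structures. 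Write $\pi^{\ana}_M,\pi^{\ana}_{M^{\prime}}$ for the analytic $\IBLinfty$ structures of Theorem~\ref{thm:CV-intro} and $\pi^{\alg}_M,\pi^{\alg}_{M^{\prime}}$ for these algebraic ones, all living on $\dcbc\HdR(M)[2-n]\cong\dcbc\HdR(M^{\prime})[2-n]$.

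Next I would assemble the chain of equivalences. Applying Theorem~\ref{thm:comparison-intro} to $M$ and to $M^{\prime}$ separately shows that $\pi^{\ana}_M$ is $\IBLinfty$ homotopy equivalent to $\pi^{\alg}_M$ and that $\pi^{\ana}_{M^{\prime}}$ is $\IBLinfty$ homotopy equivalent to $\pi^{\alg}_{M^{\prime}}$. Applying Corollary~\ref{cor-alg-intro} to the orientation preserving homotopy equivalence $f$ shows that $\pi^{\alg}_M$ is $\IBLinfty$ homotopy equivalent to $\pi^{\alg}_{M^{\prime}}$, along the identification induced by $f^*$. Composing these three equivalences and using transitivity yields that $\pi^{\ana}_M$ and $\pi^{\ana}_{M^{\prime}}$ are $\IBLinfty$ homotopy equivalent, which is exactly the asserted homotopy invariance of the analytic structure.

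The only two points deserving a word of care are the following. First, one must use that the notion of $\IBLinfty$ homotopy equivalence from~\cite{Cieliebak-Fukaya-Latschev} does not fix the underlying chain complex but only requires it to be matched up to quasi-isomorphism, so that the three equivalences above can legitimately be composed across the identification $f^*$. Second, for the statement to concern a well-defined structure one needs $\pi^{\ana}_M$ to be independent of the auxiliary choices in the configuration-space integrals up to $\IBLinfty$ homotopy equivalence, which is Theorem~\ref{thm:CV-intro}(b), and $\pi^{\alg}_M$ to be independent of the chosen $\dPD$ model, which is built into Corollary~\ref{cor-alg-intro} itself. I do not anticipate a genuine obstacle here: the corollary is a formal corollary of the comparison theorem and its inputs, and the entire content of the argument is the transitivity step.
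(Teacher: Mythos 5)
Your proposal is correct and is exactly the paper's intended argument: the corollary is stated with a $\square$ as an immediate consequence of Theorem~\ref{thm:comparison-intro} (applied to each of $M$ and $M^{\prime}$) combined with Corollary~\ref{cor-alg-intro} and transitivity of $\IBLinfty$ homotopy equivalence. Your two points of care (composing equivalences across the identification induced by $f^*$, and well-definedness of both structures up to homotopy equivalence) are also the right ones and are covered by Theorem~\ref{thm:CV-intro}(b) and Corollary~\ref{cor-alg-intro} as you say.
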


Combining this with Theorem~\ref{thm:Naef-Willwacher}, we obtain

\begin{cor}\label{cor:homotopy-invariance-simpconn-intro}
In the class of closed oriented {\em simply connected} manifolds with $H^2_{\rm dR}=0$, the $\IBLinfty$ structure arising from Theorem~\ref{thm:CV-intro} is invariant (up to $\IBLinfty$ homotopy equivalence) under orientation preserving homotopy equivalences. Moreover, the involutive Lie bialgebra structure on its reduced homology 
is isomorphic to the one on reduced equivariant loop space homology due to Chas and Sullivan.
\hfill$\square$
\end{cor}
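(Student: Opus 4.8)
The plan is to deduce this corollary formally by combining Corollary~\ref{cor:homotopy-invariance-intro}, Theorem~\ref{thm:comparison-intro} and Theorem~\ref{thm:Naef-Willwacher}; no new analytic or algebraic input is needed. First I would observe that a simply connected manifold has $H^1_{\rm dR}(M)=0$, so the class of closed oriented simply connected manifolds with $H^2_{\rm dR}(M)=0$ is a subclass of the closed oriented connected manifolds with $H^1_{\rm dR}=H^2_{\rm dR}=0$ to which Corollary~\ref{cor:homotopy-invariance-intro} and Theorem~\ref{thm:comparison-intro} apply. The homotopy-invariance assertion is then literally a special case of Corollary~\ref{cor:homotopy-invariance-intro}.

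For the assertion about the induced involutive Lie bialgebra on reduced homology I would proceed in two steps. Step one uses that an $\IBLinfty$ homotopy equivalence induces an isomorphism between the involutive Lie bialgebra structures that the two $\IBLinfty$ structures induce on their reduced homologies; this is part of the formalism of~\cite{Cieliebak-Fukaya-Latschev} recalled in Section~\ref{sec:IBLinfty}. By Theorem~\ref{thm:comparison-intro} the analytic $\IBLinfty$ structure on $\dcbc H[2-n]$ is $\IBLinfty$ homotopy equivalent to the algebraic one obtained via Corollary~\ref{cor-alg-intro}, so the involutive Lie bialgebra that the analytic structure induces on the reduced homology of $\dcbc H[2-n]$ --- which by Theorems~\ref{thm:CV-intro} and~\ref{thm:CFL-intro} is the reduced cyclic cohomology $\ol{CH}^*(\Om^*(M))$, hence $H_*^{S^1}(LM,\pt)$ via~\eqref{eq:cychom-loophom} --- is isomorphic to the one induced by the algebraic structure. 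Step two applies Theorem~\ref{thm:Naef-Willwacher}, which identifies the involutive Lie bialgebra induced by the algebraic $\IBLinfty$ structure on $\ol{CH}^*(\HdR(M))$ with the Chas--Sullivan structure on $H_*^{S^1}(LM,\pt)$; here one uses that $M$ is simply connected so that Theorem~\ref{thm:Naef-Willwacher} applies, and that the algebraic structure entering Corollary~\ref{cor-alg-intro} is built from a $2$-connected $\dPD$ model as in Theorems~\ref{thm:Pavel-Hodge-intro}(a) and~\ref{thm:CFL-intro}, i.e.\ the same construction underlying Theorem~\ref{thm:Naef-Willwacher}. Chaining the two isomorphisms yields the claim.

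The only points requiring care --- and the closest thing to an obstacle --- are bookkeeping: matching degree conventions so that the ``reduced homology'' of the $\IBLinfty$ structure on $\dcbc H[2-n]$ is identified with $H_*^{S^1}(LM,\pt)$ through the \emph{same} shift and Chen--Jones-type isomorphism used in all three input theorems, and checking that the various $\dPD$ models occurring in Theorem~\ref{thm:Pavel-Hodge-intro}(a), Theorem~\ref{thm:alg-intro} and Theorem~\ref{thm:Naef-Willwacher} produce $\IBLinfty$ structures that are $\IBLinfty$ homotopy equivalent compatibly with these identifications. But this compatibility is exactly what Theorem~\ref{thm:alg-intro}, combined with the functoriality of the passage from $\IBLinfty$ structures to involutive Lie bialgebras on homology, provides, so the corollary follows with no further work.
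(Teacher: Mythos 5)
Your proposal is correct and follows exactly the paper's route: the corollary is stated there as an immediate consequence of combining Corollary~\ref{cor:homotopy-invariance-intro} (itself a special case of Theorem~\ref{thm:comparison-intro}) with Theorem~\ref{thm:Naef-Willwacher}, using that simple connectivity gives $H^1_{\rm dR}=0$ and that $\IBLinfty$ homotopy equivalences induce isomorphisms of the involutive Lie bialgebra structures on (reduced) homology. The paper offers no further argument beyond this combination, so your additional bookkeeping remarks are, if anything, more explicit than the original.
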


Corollary~\ref{cor:homotopy-invariance-simpconn-intro} establishes, in the class of simply connected manifolds with $H^2_{\rm dR}=0$, the orientation preserving homotopy invariance of the $\IBLinfty$ structure underlying $S^1$-equivariant string topology. 
To our knowledge, this is the first result on homotopy invariance of string topology operations on the chain level. 
The homotopy invariance statement for the string cobracket appears to be new even on the level of homology. 

Another corollary of Theorem~\ref{thm:comparison-intro} is the description of the $\IBLinfty$ structures in the case that $M$ is formal in the sense of rational homotopy theory (see~\S\ref{ss:formality} for the relevant definitions):

\begin{cor}[Formality implies $\IBLinfty$ formality]\label{cor:formal-intro}
Let $M$ be a closed oriented connected manifold and $H=\HdR(M)$ its de Rham cohomology.
Assume that $M$ is formal and $H^1_{\rm dR}(M)=H^2_{\rm dR}(M)=0$.
Then the analytic $\IBLinfty$ structure on $\dcbc H[2-n]$ in Theorem~\ref{thm:CV-intro}
is $\IBLinfty$ homotopy equivalent to the canonical $\dIBL$ structure $\dIBL^\fm(H)$ which is only twisted by the triple intersection product. 
\end{cor}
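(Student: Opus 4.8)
The plan is to combine Theorem~\ref{thm:comparison-intro} with the observation that for a formal manifold the Lambrechts–Stanley / Hodge construction produces a $\dPD$ algebra that is particularly simple. First I would recall what formality buys us: by definition $M$ is formal iff $\Om^*(M)$ is weakly equivalent, as a commutative $\DGA$, to its cohomology $H=\HdR(M)$ equipped with zero differential and the cup product $\wedge$. Since $M$ is closed oriented of dimension $n$, the pair $(H,0,\wedge,\la\cdot,\cdot\ra)$ with $\la a,b\ra=\int_M a\wedge b$ is then a Poincaré $\DGA$ — in fact already a $\dPD$ algebra, because $H$ is finite dimensional and the pairing is nondegenerate on $H$ by Poincaré duality. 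Moreover the hypothesis $H^1_{\rm dR}(M)=H^2_{\rm dR}(M)=0$ makes $H$ a $2$-connected $\dPD$ algebra. Thus $(H,0,\wedge,\la\cdot,\cdot\ra)$ is a legitimate choice of $\Alg_1$ in Theorem~\ref{thm:Pavel-Hodge-intro}(a) applied to $\Alg=\Om^*(M)$: it is $2$-connected and weakly equivalent to $\Om^*(M)$ as a Poincaré $\DGA$. (Here I should check that a commutative-$\DGA$ weak equivalence to $(H,0,\wedge)$ can be upgraded to a Poincaré $\DGA$ weak equivalence; this is where the orientation/fundamental-class bookkeeping enters, and it is the kind of statement already used to deduce Corollary~\ref{cor-alg-intro} from Theorem~\ref{thm:alg-intro}, so I would cite that.)

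Next I would unwind what Theorem~\ref{thm:CFL-intro} gives when fed the $\dPD$ algebra $(H,0,\wedge,\la\cdot,\cdot\ra)$. The twisted $\dIBL$ structure $\fp^\fm$ on $\dcbc H[2-n]$ has its Maurer–Cartan element $\fm$ built from the $A_\infty$/cyclic structure of the input algebra. For an input with \emph{zero differential} and an honest \emph{associative} (here graded-commutative) product, the only nonvanishing structure map is the binary product, so the twisting datum $\fm$ reduces to its lowest piece — the cyclic $3$-tensor $\m_2(a,b,c)=\la a\wedge b,c\ra = \int_M a\wedge b\wedge c$, i.e. the triple intersection product — with all higher components vanishing. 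Consequently $\fp^\fm$ on $\dcbc H[2-n]$ is exactly the canonical $\dIBL$ structure twisted only by the triple intersection product, which the paper denotes $\dIBL^\fm(H)$. So Theorem~\ref{thm:CFL-intro} applied to this particular $\Alg_1$ yields, on the nose, an $\IBLinfty$ structure $\IBLinfty$-homotopy equivalent to $\dIBL^\fm(H)$.

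Finally I would invoke Theorem~\ref{thm:comparison-intro}: since $H^1_{\rm dR}(M)=H^2_{\rm dR}(M)=0$, the analytic $\IBLinfty$ structure of Theorem~\ref{thm:CV-intro} on $\dcbc H[2-n]$ is $\IBLinfty$-homotopy equivalent to the one produced by Corollary~\ref{cor-alg-intro}. By Corollary~\ref{cor-alg-intro} (really Theorem~\ref{thm:alg-intro}) the algebraic output is independent, up to $\IBLinfty$-homotopy equivalence, of which $2$-connected $\dPD$ model $\Alg_1$ we picked; so we are free to use the formal model $\Alg_1=(H,0,\wedge,\la\cdot,\cdot\ra)$ computed in the previous paragraph. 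Chaining the equivalences — analytic $\simeq$ algebraic (via Theorem~\ref{thm:comparison-intro}) $\simeq$ $\fp^\fm$ on $\dcbc H[2-n]$ (via Theorem~\ref{thm:CFL-intro}) $=\dIBL^\fm(H)$ — gives the claim.

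The main obstacle I anticipate is not any of the homotopy-theoretic plumbing but the bookkeeping in the middle step: verifying cleanly that when Theorem~\ref{thm:CFL-intro} is applied to an input $\DGA$ with vanishing differential and strictly (graded-)commutative associative product, the Maurer–Cartan element $\fm$ has \emph{only} its weight-three (triple-product) component, so that $\fp^\fm$ literally equals $\dIBL^\fm(H)$ rather than merely being equivalent to it. This requires recalling the precise formula for $\fm$ from Section~\ref{sec:IBLinfty} in terms of the cyclic $A_\infty$ structure of the input and checking that all contributions involving higher $A_\infty$ operations or the differential drop out. A secondary, more routine point is the upgrade of the commutative-$\DGA$ formality quasi-isomorphisms to Poincaré $\DGA$ weak equivalences, which should follow exactly as in the proof of Corollary~\ref{cor-alg-intro}.
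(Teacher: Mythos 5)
Your proposal is correct and follows essentially the same route as the paper's proof of Corollary~\ref{cor:ibl-formality}: reduce to the algebraic side via the comparison theorem, observe that formality makes $(H,0,\wedge,\la\cdot,\cdot\ra)$ itself a $2$-connected $\dPD$ model of $\Om^*(M)$ (after upgrading the $\CDGA$ zigzag to a $\PDGA$ zigzag, which the paper does by citing \cite[Prop.~6.2.5]{Pavel-thesis}), and then invoke independence of the algebraic construction from the choice of $\dPD$ model under the hypothesis $H^2_{\rm dR}=0$. The only cosmetic difference is that the paper works with the explicit model $\QQ_P=\QQ(\SS_P(\Om))$ and the equality of Maurer--Cartan elements $\fm^\ana_P=\fm^\alg_P$ from Theorem~\ref{thm:comparison}, whereas you argue at the level of the introduction's statements; both versions rest on the same two pillars.
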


\begin{rem}
Theorem~\ref{thm:CV-intro} is in fact a consequence of a more refined statement: $\dcbc H[2-n]$ carries a Maurer--Cartan element for its untwisted $\dIBL$ structure which is independent of all choices up to $\IBLinfty$ gauge equivalence.
Similarly, the $\IBLinfty$ structure in Theorem~\ref{thm:CFL-intro} is obtained by twisting with a Maurer--Cartan element.
Combining the proof of Theorem~\ref{thm:CV-intro} with Lemma~\ref{lem:relative-hodge-decomposition} below, Theorem~\ref{thm:alg-intro} can probably be upgraded to $\IBLinfty$ gauge equivalence of the corresponding Maurer--Cartan elements. 
\end{rem}

\begin{rem}
In the upcomping paper~\cite{Cieliebak-Volkov-string} it is proved {\em analytically} (using Chen's iterated integrals) that, for a simply connected closed manifold, 
the involutive Lie bialgebra structure on the reduced homology of the $\IBL_\infty$ structure from Theorem~\ref{thm:CV-intro} is isomorphic to the one on reduced equivariant loop space homology due to Chas and Sullivan. Thus, the last assertion in Corollary~\ref{cor:homotopy-invariance-simpconn-intro} is true without the hypothesis $H^2_{\rm dR}=0$. 
\end{rem}

\begin{rem}
In the new version of~\cite{Pavel-Hodge} it is proved that in all the results above we can drop the hypothesis $H^2_{\rm dR}=0$ if the manifold has {\em odd dimension}. It would be interesting to know whether this also holds in the even dimensional case. 
\end{rem}

{\bf Acknowledgements. }
We thank H{\^{o}}ng V{\^{a}}n L{\^{e}}, Thomas Willwacher and Florian Naef for interesting discussions.

\section{Poincar\'e DGAs}\label{sec:PDGA}
\subsection{Cochain complexes with pairing}\label{ss:cochain}

Let $\N=\{1,2,\dotsc\}$ be the set of natural numbers, $\N_0\coloneqq \N\cup\{0\}$, $\Z$ the ring of integers, $\Q$ the field of rational numbers, and~$\R$ the field of real numbers.

We will work in the category of $\Z$-graded vector spaces $\Alg=\bigoplus_{i\in\Z} \Alg^i$ over~$\R$ with morphisms
\begin{align*}
	\Hom(\Alg_1,\Alg_2)&\coloneqq \bigoplus_{i\in\Z}\Hom^i(\Alg_1,\Alg_2),\\
	\Hom^i(\Alg_1,\Alg_2)&\coloneqq\{f\colon\Alg_1\to\Alg_2 \text{ linear homogenous}, \deg f = i\}.
\end{align*}
Here we say that $f:\Alg_1\to\Alg_2$ is \emph{homogenous of degree} $\deg f\in \Z$ if $f(\Alg^i_1)\subset \Alg_2^{i+\deg f}$ for all $i\in\Z$, and that $x\in \Alg$ is \emph{homogenous of degree} $\deg x\in \Z$ if $x\in\Alg^{\deg x}$.

We say that $\Alg$ is \emph{nonnegatively graded} if $\Alg=\bigoplus_{i\in\N_0} \Alg^i$, and of {\em finite type} if $\dim \Alg^i < \infty$ for all $i\in \Z$.%

We denote by $\Alg[n]$ the \emph{degree shift} of $\Alg$ by $n\in \Z$, i.e.,  $\Alg[n]^i=\Alg^{i+n}$ for all $i\in\Z$.

A {\em pairing of degree $n\in \Z$} on $\Alg$ is a bilinear form $\la\cdot,\cdot\ra\colon \Alg\times \Alg\to \R$ which for all homogenous $x, y\in \Alg$ satisfies the degree condition%
\footnote{Note that this is equivalent to $\deg \la\cdot,\cdot\ra=-n$ in $\Hom(\Alg^{\otimes 2},\R)$.}
\begin{equation*}
	\la x,y\ra\neq 0\quad\Longrightarrow\quad\deg x+\deg y=n
\end{equation*}
and graded symmetry
\begin{align}\label{Eq:Symmetry}
   \la x, y \ra = (-1)^{\deg x \deg y} \la y, x \ra.
\end{align}
We write $x\perp y$ if $\la x, y \ra = 0$ and say that $x,y$ are \emph{orthogonal}.
The subspace of elements of $\Alg$ orthogonal to a given subspace $B\subset\Alg$ will be denoted by 
\[
	B^{\perp} \coloneqq \{ x\in \Alg\mid x\perp B\}.
\]
We call
a pairing $\la\cdot,\cdot\ra\colon\Alg\times\Alg\to\R$ \emph{nondegenerate} if the \emph{musical map}
\begin{equation*}
	\begin{aligned}
	\flat\colon\Alg&\longrightarrow\Hom(\Alg,\R)\\
	x&\longmapsto x^\flat \coloneqq \la x,\cdot\ra
	\end{aligned}
\end{equation*}
is injective, and {\em perfect} if it is an isomorphism.
In that case we denote its inverse by 
\[
	\#\colon\Hom(\Alg,\R)\longrightarrow\Alg.
\]
Note that $\flat$ and $\#$ are linear homogenous maps of degrees $\deg\flat = - n$ and $\deg \#=n$.

We point out the following facts which are easy to prove:
\begin{enumerate}
\item A pairing $\la\cdot,\cdot\ra\colon\Alg\times\Alg\to\R$ is perfect if and only if it is nondegenerate and~$\Alg$ is of finite type. 
\item If $\Alg$ is nonegatively graded, then any nontrivial pairing $\la\cdot,\cdot\ra\colon\Alg\times\Alg\to\R$ must have nonnegative degree $n\in \N_0$, and the existence of a nondegenerate pairing of degree $n$ implies $\Alg=\bigoplus_{i=0}^n\Alg^i$.
\item If $\Alg$ is nonegatively graded, then a pairing $\la\cdot,\cdot\ra\colon\Alg\times\Alg\to\R$ is perfect if and only if it is nondegenerate and $\dim\Alg<\infty$.
\end{enumerate}

{\bf Convention. }\textit{From now on all pairings will be of degree $n\in \Z$.}

Consider now a cochain complex $(\Alg,\dd)$, i.e., a graded vector space $\Alg$ together with a differential $\dd\in\Hom^1(\Alg,\Alg)$, $\dd\circ\dd=0$.
A {\em pairing} on $(\Alg,\dd)$ is a pairing $\la\cdot,\cdot\ra\colon\Alg\times\Alg\to\R$ satisfying for all homogenous $x,y\in\Alg$,
\begin{equation*}
   \la \dd x, y \ra = (-1)^{1+\deg x} \la x, \dd y \ra. %
\end{equation*}
Such a pairing descends naturally to a pairing $\la\cdot,\cdot\ra_H\colon H(\Alg)\times H(\Alg)\to\R$ on cohomology $(H(\Alg)\coloneqq H(\Alg,\dd),\dd=0)$.
Following the terminology of~\cite{Cieliebak-Fukaya-Latschev}, we call a cochain complex with a perfect pairing $(\Alg,\dd,\la\cdot,\cdot\ra)$ a {\em cyclic cochain complex}.%
\footnote{In~\cite{Cieliebak-Fukaya-Latschev} the authors use $(-1)^{\deg x}\la x,y\ra$ instead of $\la x,y\ra$.}
    
{\bf Propagators and symmetric projections. }
Let $(\Alg,\dd,\la\cdot,\cdot\ra)$ be a cochain complex with pairing.
A {\em projection} is a chain map $\pi\in\Hom^0(\Alg,\Alg)$ which satisfies $\pi\circ \pi = \pi$. 
We say that $P\in\Hom^{-1}(\Alg,\Alg)$ is a \emph{homotopy operator} if the map $-\dd\circ P - P \circ\dd\colon\Alg\to\Alg$ is a projection.
A homotopy operator $P$ is called \emph{special} if
\begin{align}
	P\circ P &= 0\quad\text{and}\label{eq:P5}\\
	P\circ\dd \circ P &= - P. \label{eq:PP2}
\end{align}
Every homotopy operator $P$ determines a projection 
\[
	\pi_P\coloneqq \Id+\dd\circ P+P\circ\dd\colon\Alg\longrightarrow\Alg
\]
which is a {\em quasi-isomorphism}, i.e., the induced map on cohomology $H(\pi_P)\colon H(\Alg)\to H(\Alg)$ is an isomorphism.
Given a projection $\pi\colon\Alg\to\Alg$ which is a quasi-isomorphism, we say that $P\in\Hom^{-1}(\Alg,\Alg)$ is a \emph{homotopy operator with respect to $\pi$} if it is a homotopy operator and $\pi_P=\pi$, so that
\begin{equation}
	\dd\circ P+P\circ\dd=\pi-\Id. \label{eq:P2} 
\end{equation}
Note: assuming \eqref{eq:P5} and \eqref{eq:P2}, condition \eqref{eq:PP2} on $P\in\Hom^{-1}(\Alg,\Alg)$ holds if and only if 
\begin{equation}
  P\circ \pi = \pi\circ P = 0. \label{eq:P4}
\end{equation}

We say that a homotopy operator $P\colon \Alg\to\Alg$ is a \emph{propagator}\footnote{
Thus a ``propagator'' is a partial inverse of the differential $d$ and not of the Laplace operator.}
if it satisfies the symmetry property
\begin{equation}
	 \la Px,y\ra = (-1)^{\deg x}\la x,Py\ra. \label{eq:P3} \\
\end{equation}
The associated projection $\pi_P\colon\Alg\to\Alg$ is then \emph{symmetric}:
\begin{equation*}
	 \la\pi_P x,y\ra = \la x,\pi_P y\ra.
\end{equation*}

\begin{lemma}[{\cite[Remark 2]{Cattaneo-Mnev}}]\label{lem:propagator}
Any propagator $P$ can be modified to a special propagator $P_3$ with respect to the same projection $\pi$ by setting
\[
   P_2 \coloneqq (\pi-\Id)\circ P\circ (\pi-\Id),\qquad P_3 \coloneqq -P_2\circ \dd\circ P_2.
\]
\end{lemma}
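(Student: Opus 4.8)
The plan is to check, one at a time, that $P_3$ has the three defining features of a special propagator with respect to $\pi$: that it is a homotopy operator with $\pi_{P_3}=\pi$ (i.e.\ \eqref{eq:P2}), that it satisfies \eqref{eq:P5} and \eqref{eq:PP2}, and that it satisfies the symmetry \eqref{eq:P3}. It is convenient to set $q\coloneqq\Id-\pi$; then $q\circ q=q$, $q$ commutes with $\dd$ (because $\pi$ does), and $\pi-\Id=-q$, so that $P_2=q\circ P\circ q$ and hence $q\circ P_2=P_2\circ q=P_2$. Writing $A\coloneqq\dd\circ P_2$ and $B\coloneqq P_2\circ\dd$, relation \eqref{eq:P2} for $P$ gives
\[
  A+B=\dd\circ P_2+P_2\circ\dd=q\circ(\dd\circ P+P\circ\dd)\circ q=q\circ(\pi-\Id)\circ q=-q,
\]
whence already $\pi_{P_2}=\pi$, and $q\circ P_2=P_2\circ q=P_2$ yields $q\circ A=A\circ q=A$ and $q\circ B=B\circ q=B$.

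The computational heart of the argument is the set of identities $A\circ A=-A$, $B\circ B=-B$, $A\circ B=B\circ A=0$. Here $B\circ A=P_2\circ\dd\circ\dd\circ P_2=0$ since $\dd\circ\dd=0$; then $(A+B)\circ A=-q\circ A=-A$ while also $(A+B)\circ A=A\circ A+B\circ A=A\circ A$, forcing $A\circ A=-A$; composing $A+B=-q$ with $A$ on the left gives $A\circ A+A\circ B=-A$, so $A\circ B=0$; and composing with $B$ on the right gives $A\circ B+B\circ B=-B$, so $B\circ B=-B$. Conceptually this is the homological-perturbation fact that on the acyclic subcomplex $\im q$ --- where $\pi$ vanishes and $P_2$ restricts to a contracting homotopy --- the operators $A$ and $-B$ are complementary idempotents, and the passage from $P$ to $P_2$ to $P_3$ is the classical trick for enforcing the ``side conditions''. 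I expect this idempotency computation, and keeping the signs straight in the symmetry step below, to be the only non-routine parts.

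Granting these identities, the homotopy-operator property and \eqref{eq:P5}, \eqref{eq:PP2} are direct substitutions. From $\dd\circ P_3=-A\circ A=A$ and $P_3\circ\dd=-B\circ B=B$ one gets $\dd\circ P_3+P_3\circ\dd=A+B=-q=\pi-\Id$, i.e.\ $\pi_{P_3}=\pi$. From $P_3\circ P_3=P_2\circ(\dd\circ P_2\circ P_2\circ\dd)\circ P_2=P_2\circ(A\circ B)\circ P_2=0$ one gets \eqref{eq:P5}. And $P_3\circ\dd\circ P_3=P_2\circ A\circ A\circ A=P_2\circ A=P_2\circ\dd\circ P_2=-P_3$ (using $A\circ A\circ A=A$, a consequence of $A\circ A=-A$), which is \eqref{eq:PP2}; alternatively \eqref{eq:PP2} follows from \eqref{eq:P4} via the remark preceding the lemma, since $q\circ\pi=\pi\circ q=0$ gives $P_3\circ\pi=\pi\circ P_3=0$.

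Finally, for the symmetry \eqref{eq:P3} I would first observe that $P_2$ itself satisfies it: the outer factors $\pi-\Id$ are symmetric for $\la\cdot,\cdot\ra$ (because $\pi=\pi_P$ is) and of degree $0$, so they cross the pairing with no sign, and \eqref{eq:P3} for $P$ transfers verbatim to $P_2$; together with $\pi_{P_2}=\pi$ this makes $P_2$ a propagator with respect to $\pi$. Then for $P_3=-P_2\circ\dd\circ P_2$ one performs a three-step sign chase across the pairing in $\la P_3 x,y\ra=-\la(P_2\circ\dd\circ P_2)(x),y\ra$: moving the left $P_2$ across contributes $(-1)^{\deg(\dd P_2 x)}=(-1)^{\deg x}$ by \eqref{eq:P3} for $P_2$; moving $\dd$ across contributes $(-1)^{1+\deg(P_2 x)}=(-1)^{\deg x}$ by the compatibility of $\dd$ with $\la\cdot,\cdot\ra$; moving the remaining $P_2$ across contributes $(-1)^{\deg x}$; since $(-1)^{3\deg x}=(-1)^{\deg x}$ this yields $\la P_3 x,y\ra=(-1)^{\deg x}\la x,P_3 y\ra$. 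As $\pi_{P_3}=\pi$ is moreover a quasi-isomorphism, $P_3$ is a propagator with respect to $\pi$ which, by the relations above, is special. This completes the plan.
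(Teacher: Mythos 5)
Your proof is correct: the reduction to the idempotent identities $A^2=-A$, $B^2=-B$, $AB=BA=0$ for $A=\dd\circ P_2$, $B=P_2\circ\dd$ is sound, the sign chase for the symmetry \eqref{eq:P3} of $P_3$ checks out (each of the three crossings contributes $(-1)^{\deg x}$), and the verification of \eqref{eq:P5} and \eqref{eq:PP2} is complete. The paper itself gives no proof of this lemma, deferring to \cite[Remark 2]{Cattaneo-Mnev}; your argument is the standard ``side conditions'' computation from homological perturbation theory and fills that gap correctly.
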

Note that a special propagator with respect to a given projection is not unique in general.

Given a subcomplex $B\subset \Alg$, we say that a projection $\pi\colon\Alg\to\Alg$ is onto $B$ if $\im\pi=B$ and identify $\pi$ with the induced surjection $\pi\colon\Alg\onto B$ in that case.
One can show that a homotopy operator~$P\colon\Alg\to\Alg$ with respect to a projection $\pi\colon\Alg\onto B$ exists if and only if~$\pi$ is a quasi-isomorphism (cf.~the construction in the proof below). 
In the case with pairing, we have the following:

\begin{lemma}\label{lem:retraction}
Let $(\Alg,\dd,\la\cdot,\cdot\ra)$ be a cyclic cochain complex and $B\subset \Alg$ a subcomplex.
A propagator $P$ with respect to a projection $\pi\colon\Alg\onto B$ exists if and only if~$\pi$ is symmetric and a quasi-isomorphism.
\end{lemma}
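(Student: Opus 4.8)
The plan is to prove both implications, with the "if" direction carrying all the content. The "only if" direction is essentially definitional: if a propagator $P$ with respect to $\pi$ exists, then by the displayed consequence of~\eqref{eq:P3} the projection $\pi=\pi_P$ is symmetric, and by the definition of a homotopy operator (together with the fact, already noted in the excerpt, that $\pi_P$ is always a quasi-isomorphism) $\pi$ is a quasi-isomorphism. So assume conversely that $\pi\colon\Alg\onto B$ is a symmetric projection and a quasi-isomorphism; I must construct a propagator $P$ with $\pi_P=\pi$.

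First I would construct \emph{some} homotopy operator $P_0$ with respect to $\pi$, ignoring symmetry: this is the standard fact alluded to just before the lemma, and can be obtained by choosing, via the quasi-isomorphism property, a contracting homotopy for $\id-\pi$ on the acyclic complex $\ker\pi$ (or by a dimension-by-dimension splitting of $\Alg$ into $B\oplus(\text{exact})\oplus(\text{complement})$ in each degree and letting $P_0$ invert $\dd$ on the exact part). The issue is that $P_0$ need not satisfy the symmetry~\eqref{eq:P3}. The key idea is to \emph{average}: since both $\pi$ and $\dd$ behave well with respect to the pairing (the latter by the cyclic-complex condition on $\la\cdot,\cdot\ra$), the "adjoint" $P_0^{*}$ defined by $\la P_0^{*}x,y\ra=(-1)^{\deg x}\la x,P_0 y\ra$ is again a homotopy operator with respect to the \emph{same} projection $\pi$ — here one uses that $\pi$ is symmetric, so that taking adjoints fixes $\pi$ rather than producing a different projection, and one checks that~\eqref{eq:P2} is preserved under $P_0\mapsto P_0^{*}$ using the sign rule for $\dd$. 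Then $P_1\coloneqq\tfrac12(P_0+P_0^{*})$ is a homotopy operator with respect to $\pi$ which is symmetric in the sense of~\eqref{eq:P3}, i.e., a propagator.

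Finally, to upgrade $P_1$ to the statement as phrased — a propagator with respect to $\pi$, with no further claim — nothing more is needed, but I would remark that one may additionally invoke Lemma~\ref{lem:propagator} to replace $P_1$ by a \emph{special} propagator $P_3$ with respect to the same $\pi$, since the formulas there preserve both the projection and (one checks) the symmetry property; this is not required for the present lemma but is how it will be used later. The main obstacle is the averaging step: one has to be careful that forming the pairing-adjoint of a homotopy operator does not alter the associated projection, which is exactly where the hypothesis that $\pi$ is symmetric enters, and that all the Koszul signs in~\eqref{eq:P2}, \eqref{eq:P3} and the cyclicity relation for $\dd$ conspire correctly — a routine but sign-sensitive verification. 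Everything else is bookkeeping with degree-shifted graded vector spaces.
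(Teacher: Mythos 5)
Your proposal is correct and follows essentially the same route as the paper: the paper also constructs an initial homotopy operator from a splitting $\ker\pi=C\oplus\dd C$ of the acyclic complex $\ker\pi$, forms its adjoint with respect to the perfect pairing (which is again a homotopy operator for the same $\pi$ precisely because $\pi$ is symmetric), and averages the two to obtain the propagator $P_1=\tfrac12(P+P^\dagger)$. The only difference is cosmetic: the paper leaves the ``only if'' direction implicit, whereas you spell it out.
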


\begin{proof}
Suppose that $\pi\colon\Alg\onto B$ is a symmetric projection and a quasi-isomorphism.
Then $\ker \pi\subset\Alg$ is an acyclic subcomplex, and thus there exists a subspace $C\subset \ker \pi$ such that 
$\ker \pi = C\oplus \dd C$. Consider the linear map $P\colon \Alg\to \Alg$ defined by
\begin{equation*}
		P(x) \coloneqq \begin{cases}
			-y & \text{if } x= \dd y \text{ for some } y\in C, \\
			0 & \text{for } x\in B\oplus C.
		\end{cases}
\end{equation*}
Then $P$ is a homotopy operator with respect to $\pi$.
Due to the perfection of $\la\cdot,\cdot\ra\colon \Alg\times\Alg\to\R$, the operator $P\colon\Alg\to\Alg$ has a unique adjoint $P^{\dagger}\colon \Alg\to\Alg$, i.e., a linear map such that $\la P x, y\ra = (-1)^{\deg x}\la x, P^{\dagger}y\ra$ for every $x, y\in \Alg$.
Clearly, $P^{\dagger}$ is also a homotopy operator with respect to~$\pi$, hence
$P_1\coloneqq \frac{1}{2}(P + P^\dagger)$ is a propagator with respect to~$\pi$.
\end{proof}

We say that~$B\subset\Alg$ is a \emph{quasi-isomorphic subcomplex} if it is a subcomplex and the inclusion $\iota\colon B\into \Alg$ is a quasi-isomorphism. 
One can show that any quasi-isomorphic subcomplex $B\subset\Alg$ admits a projection $\pi\colon \Alg\onto B$.
Given a cochain complex with pairing $(\Alg,\dd,\la\cdot,\cdot\ra)$, every subcomplex $B\subset \Alg$ such that $\Alg = B\oplus B^\perp$ admits a unique symmetric projection $\pi_B\colon \Alg \onto B$, which is the projection with $\ker \pi_B = B^\perp$.
This occurs in particular when $B\subset\Alg$ is a \emph{cyclic subcomplex}, i.e., the restriction $\la\cdot,\cdot\ra|_B\colon B\times B \to \R$ is perfect, so that $(B,\dd,\la\cdot,\cdot\ra|_B)$ is a cyclic cochain complex.
Lemmas~\ref{lem:propagator} and~\ref{lem:retraction} now imply the following:

\begin{cor}
Let $(\Alg,\dd,\la\cdot,\cdot\ra)$ be a cyclic cochain complex and $B\subset \Alg$ a quasi-isomorphic cyclic subcomplex.	
Then there exists a special propagator $P\colon\Alg\to\Alg$ such that $\im \pi_P=B$.
\qed
\end{cor}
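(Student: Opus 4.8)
The plan is to read the Corollary off from Lemmas~\ref{lem:retraction} and~\ref{lem:propagator}; the only substantive step is to check that a quasi-isomorphic \emph{cyclic} subcomplex $B$ carries a symmetric projection which is itself a quasi-isomorphism, and everything after that is bookkeeping.

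First I would use that $B\subset\Alg$, being a cyclic subcomplex, satisfies $\Alg=B\oplus B^\perp$: perfection of $\la\cdot,\cdot\ra|_B$ gives, for each $x\in\Alg$, a $b\in B$ with $\la b,\cdot\ra|_B=\la x,\cdot\ra|_B$, whence $x-b\in B^\perp$, and $B\cap B^\perp=0$ by nondegeneracy on $B$. As recorded above, this produces the unique symmetric projection $\pi_B\colon\Alg\onto B$ with $\ker\pi_B=B^\perp$. Next I would verify that $\pi_B$ is a quasi-isomorphism: by hypothesis the inclusion $\iota\colon B\into\Alg$ is a quasi-isomorphism, and $\pi_B\circ\iota=\id_B$, so on cohomology $H(\pi_B)\circ H(\iota)=\id$ with $H(\iota)$ invertible, forcing $H(\pi_B)$ to be an isomorphism.

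Now $\pi_B$ is a symmetric projection and a quasi-isomorphism, so Lemma~\ref{lem:retraction} produces a propagator $P_1\colon\Alg\to\Alg$ with respect to $\pi_B$; in particular $\pi_{P_1}=\pi_B$, hence $\im\pi_{P_1}=B$. Feeding $P_1$ into Lemma~\ref{lem:propagator} --- i.e.\ setting $P_2\coloneqq(\pi_B-\Id)\circ P_1\circ(\pi_B-\Id)$ and $P\coloneqq-P_2\circ\dd\circ P_2$ --- gives a special propagator $P$ with respect to the \emph{same} projection $\pi_{P_1}=\pi_B$. Therefore $\im\pi_P=\im\pi_B=B$ and $P$ is special, as required.

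I do not foresee a genuine obstacle: the Corollary is a formal consequence of the two lemmas. The two points that merit attention are (i) the short cohomological argument showing $\pi_B$ is a quasi-isomorphism, since this is precisely the hypothesis needed to invoke Lemma~\ref{lem:retraction}, and (ii) the fact that the modification in Lemma~\ref{lem:propagator} leaves the associated projection unchanged, which is what guarantees that $\im\pi_P=B$ persists when passing from $P_1$ to the special propagator $P$.
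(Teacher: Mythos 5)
Your proposal is correct and follows exactly the route the paper intends: the unique symmetric projection $\pi_B$ with kernel $B^\perp$ (which exists because the cyclic subcomplex satisfies $\Alg=B\oplus B^\perp$) is a quasi-isomorphism since $\pi_B\circ\iota=\id_B$, so Lemma~\ref{lem:retraction} yields a propagator and Lemma~\ref{lem:propagator} upgrades it to a special one with the same associated projection. The paper states the corollary as an immediate consequence of those two lemmas, and your two flagged points (the quasi-isomorphism check and the invariance of $\pi$ under the modification) are precisely the details being elided there.
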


{\bf Harmonic subspaces and projections. }
Let $(\Alg,\dd,\la\cdot,\cdot\ra)$ be a cochain complex with pairing.
We say that $\HH\subset\Alg$ is a \emph{harmonic subspace} if
\[
	\ker\dd = \HH\oplus \im\dd.
\]
We call a projection $\pi\colon \Alg\to\Alg$ \emph{harmonic} if it is symmetric and $\im\pi\subset\Alg$ is a harmonic subspace.
Note that the last condition implies $\dd\circ\pi=\pi\circ \dd=0$. 
Given a harmonic subspace $\HH\subset\Alg$, the inclusion $\iota\colon \HH\into \Alg$ is a quasi-isomorphism, hence any harmonic projection $\pi\colon\Alg\onto\HH$ is a quasi-isomorphism.
If a harmonic projection onto $\HH$ exists and $\la\cdot,\cdot\ra|_\HH\colon\HH\times\HH\to\R$ is nondegenerate, then $\Alg=\HH\oplus\HH^{\perp}$ and every harmonic projection agrees with the unique symmetric projection $\pi_\HH\colon\Alg\onto\HH$.

\begin{lemma}\label{lem:harmonic}
Suppose that $(\Alg,\dd,\la\cdot,\cdot\ra)$ is a cyclic cochain complex.
Then every harmonic subspace $\HH\subset \Alg$ is a quasi-isomorphic cyclic subcomplex and the unique symmetric projection $\pi_\HH\colon \Alg \onto \HH$ is harmonic.
\end{lemma}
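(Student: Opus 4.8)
The plan is to unpack the phrase ``quasi-isomorphic cyclic subcomplex'' into its three constituents, verify each, and then read off harmonicity of $\pi_\HH$.

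First the two easy constituents. Since by definition of a harmonic subspace $\HH\subset\ker\dd$, the restriction $\dd|_\HH$ vanishes, so $\HH$ is a subcomplex. Moreover the defining decomposition $\ker\dd=\HH\oplus\im\dd$ says precisely that the composite $\HH\into\ker\dd\onto\ker\dd/\im\dd=H(\Alg)$ is an isomorphism, i.e.\ $\iota\colon\HH\into\Alg$ is a quasi-isomorphism --- as was already recorded in the text preceding the lemma. So it only remains to prove that $\HH$ is \emph{cyclic}, i.e.\ that $\la\cdot,\cdot\ra|_\HH$ is perfect, and to check harmonicity of $\pi_\HH$.

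The key input is the pair of orthogonality identities
\[
   (\im\dd)^\perp=\ker\dd,\qquad (\ker\dd)^\perp=\im\dd,
\]
valid in any cyclic cochain complex. For the first: the inclusion $\ker\dd\subset(\im\dd)^\perp$ is immediate from the compatibility $\la\dd a,b\ra=(-1)^{1+\deg a}\la a,\dd b\ra$; conversely, if $x\perp\im\dd$ then $\la\dd x,w\ra=(-1)^{1+\deg x}\la x,\dd w\ra=0$ for every $w$, so $\dd x=0$ by nondegeneracy of the pairing on $\Alg$, i.e.\ $x\in\ker\dd$. The second identity then follows by applying $(-)^\perp$ to the first, using that $\Alg$ is of finite type, so that $(V^\perp)^\perp=V$ for every graded subspace $V\subset\Alg$ (one could also argue the second identity directly in the same spirit).

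Granting these, suppose $h\in\HH$ satisfies $h\perp\HH$. Since $h\in\ker\dd=(\im\dd)^\perp$, $h$ is also orthogonal to $\im\dd$, hence orthogonal to $\HH\oplus\im\dd=\ker\dd$, so $h\in(\ker\dd)^\perp=\im\dd$; but $\HH\cap\im\dd=0$, whence $h=0$. Thus $\la\cdot,\cdot\ra|_\HH$ is nondegenerate, and since $\HH$ inherits finite type from $\Alg$ it is perfect by fact~(i) of \S\ref{ss:cochain}; so $\HH$ is a cyclic subcomplex. Consequently (as recalled in \S\ref{ss:cochain}) $\Alg=\HH\oplus\HH^\perp$ and there is a unique symmetric projection $\pi_\HH\colon\Alg\onto\HH$, namely the one with $\ker\pi_\HH=\HH^\perp$. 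This $\pi_\HH$ is symmetric by construction and has image the harmonic subspace $\HH$, hence is harmonic by definition (in particular $\dd\circ\pi_\HH=\pi_\HH\circ\dd=0$, consistent with $\dd|_\HH=0$). I expect the only step requiring any thought to be the orthogonality identities, and even these are a short consequence of perfectness of the pairing together with its compatibility with $\dd$; everything else is assembling definitions and facts already collected in the excerpt.
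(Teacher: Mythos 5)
Your proof is correct, and its overall structure matches the paper's: both arguments reduce perfectness of $\la\cdot,\cdot\ra|_\HH$ to the inclusion $(\ker\dd)^\perp\subset\im\dd$, then observe that an element of $\HH$ orthogonal to $\HH$ is automatically orthogonal to all of $\ker\dd=\HH\oplus\im\dd$ (since $\HH\subset\ker\dd\perp\im\dd$), hence lies in $\im\dd\cap\HH=0$; harmonicity of $\pi_\HH$ is then definitional in both. The one point of divergence is how $(\ker\dd)^\perp\subset\im\dd$ is established. The paper constructs an explicit primitive: choosing a contraction $\alpha$ with $\Id-\alpha\circ\dd$ a projection onto $\ker\dd$, it sets $x'=\#\bigl((-1)^{\deg x}\la x,\cdot\ra\circ\alpha\bigr)$ and checks $\dd x'=x$ whenever $x\perp\ker\dd$. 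You instead take the double orthogonal complement of the easy identity $(\im\dd)^\perp=\ker\dd$, using that a perfect pairing on a finite-type space satisfies $(V^\perp)^\perp=V$ for graded subspaces --- note this last step silently uses $\dim\Alg^i=\dim\Alg^{n-i}$, which is supplied by perfectness, not by finite type alone. Your route is slightly more elementary (pure dimension counting in each degree), while the paper's exhibits the preimage under $\dd$ explicitly rather than inferring its existence; both rest on exactly the same hypotheses.
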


\begin{proof}
Let $\alpha\colon\Alg\to\Alg$ be a linear map of degree $-1$ such that $\Id - \alpha\circ\dd$ is a projection onto $\ker\dd$.
Such $\alpha$ can be constructed by choosing a complement $C\subset\Alg$ of $\ker\dd$ and defining $\alpha(\dd x)\coloneqq x$ for $x\in C$ and $\alpha(x)\coloneqq 0$ for $x\in\HH\oplus C$.
Given a homogenous element $x\in \Alg$, let $x^\prime$ denote the image of $(-1)^{\deg x}\la x,\cdot\ra \circ \alpha$ under the musical isomorphism $\#\colon\Hom(\Alg,\R)\to\Alg$.
A straightforward computation shows that $x\perp \ker\dd$ implies $x=\dd x^\prime$.
Consequently, $\la\cdot,\cdot\ra|_\HH\colon\HH\times\HH\to\R$ is nondegenerate, hence perfect since $\la\cdot,\cdot\ra$ is perfect by assumption.
The projection $\pi_\HH$ is clearly harmonic because $\im\dd\subset \HH^{\perp}=\ker \pi_\HH$.
\end{proof}

Lemma~\ref{lem:harmonic} implies that the cohomology $(H(\Alg),\dd=0,\la\cdot,\cdot\ra_H)$ of a cyclic cochain complex $(\Alg,\dd,\la\cdot,\cdot\ra)$ equipped with the induced pairing is a cyclic cochain complex as well.
Indeed, if $\HH\subset\Alg$ is a harmonic subspace, then $(H(\Alg),\dd=0,\la\cdot,\cdot\ra_H)$ can be canonically identified with $(\HH,\dd=0,\la\cdot,\cdot\ra|_\HH)$ via the quotient map
\begin{equation}\label{eq:canon-quotient-to-hom}
	\pi\colon \HH\subset \ker\dd \longrightarrow H(\Alg) = \ker\dd/\im\dd.	
\end{equation}

Let $P\colon\Alg\to\Alg$ be a propagator in a cochain complex with pairing $(\Alg,\dd,\la\cdot,\cdot\ra)$.
Then the associated projection $\pi_P\colon\Alg\to\Alg$ satisfies
\[
	\pi_P \circ\dd =\dd + \dd\circ P\circ\dd =\dd\circ \pi_P, 
\]
and it follows that $\pi_P$ is harmonic if and only if $P$ satisfies condition~\eqref{eq:PP2}.

{\bf Hodge decompositions. }
Let $(\Alg,\dd,\la\cdot,\cdot\ra)$ be a cochain complex with pairing.
A~\emph{Hodge decomposition} of $(\Alg,\dd,\la\cdot,\cdot\ra)$ is the data of a harmonic subspace $\HH\subset\Alg$ and a complement $C$ of $\ker\dd$ in $\Alg$ such that 
\[
   C\perp C\oplus \HH.
\]
Associated to each Hodge decomposition $\Alg=\HH\oplus \im\dd\oplus C$, there is a canonical harmonic projection $\pi_{\HH,C}\colon \Alg \onto\HH$ defined by
\begin{align*}
	\pi_{\HH,C}(x) = \begin{cases}
		x & \text{for } x\in \HH,\\
		0 & \text{for } x\in \im\dd\oplus C,
	\end{cases}
\end{align*}
and a canonical special propagator $P_{\HH,C}\colon \Alg\to \Alg$ with respect to $\pi_{\HH,C}$ defined by
\begin{equation}\label{eq:special}
	P_{\HH,C}(x) \coloneqq \begin{cases} 
	-y & \text{if }x=\dd y\text{ for some }y\in C,\\
		0 & \text{for }x\in \HH\oplus C.
		\end{cases}
\end{equation}
Note that if in addition $\la\cdot,\cdot\ra\colon\Alg\times\Alg\to\R$ is nondegenerate, then also the restriction $\la\cdot,\cdot\ra|_\HH\colon\HH\times\HH\to\R$ is nondegenerate and every harmonic projection $\pi:A\onto\HH$ thus agrees with~$\pi_{\HH,C}$. 
The proof of the following lemma is straightforward:

\begin{lemma}\label{lem:special}
Let $(\Alg,\dd,\la\cdot,\cdot\ra)$ be a cochain complex with pairing.
Then \eqref{eq:special} defines a one-to-one correspondence between Hodge decompositions of $\Alg$ and special propagators $P\colon\Alg\to\Alg$. 
Under this correspondence, the following holds:
\begin{equation*}
	\HH=\im(\Id+\dd \circ P + P \circ\dd), \quad C=\im P,\quad \pi_{\HH,C}=\pi_P. 
\end{equation*}
\end{lemma}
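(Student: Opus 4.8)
The plan is to write down the map $\Phi\colon(\HH,C)\mapsto P_{\HH,C}$ given by \eqref{eq:special} together with the candidate inverse $\Psi\colon P\mapsto(\im\pi_P,\im P)$, to verify that each is well defined and that the two compositions are the identity, and finally to read off the displayed formulas. Note at the outset that, since $C$ is a complement of $\ker\dd$, the differential restricts to an isomorphism $\dd\colon C\to\im\dd$, so $P_{\HH,C}$ is well defined; and essentially every verification below amounts to evaluating an operator on the three summands of a Hodge decomposition $\Alg=\HH\oplus\im\dd\oplus C=\HH\oplus\dd C\oplus C$.

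First I would check that $\Phi$ lands among special propagators. Evaluating on $\HH$, on $\dd C$, and on $C$ yields the homotopy-operator identity $\dd\circ P_{\HH,C}+P_{\HH,C}\circ\dd=\pi_{\HH,C}-\Id$ together with \eqref{eq:P5} and \eqref{eq:PP2}; this is routine. The only computation touching the pairing is the symmetry \eqref{eq:P3}: for homogeneous $x=h_x+\dd c_x+c_x'$ and $y=h_y+\dd c_y+c_y'$ (with $h_\bullet\in\HH$, $c_\bullet,c_\bullet'\in C$) one has $P_{\HH,C}x=-c_x$, $P_{\HH,C}y=-c_y$, and in both $\la P_{\HH,C}x,y\ra$ and $(-1)^{\deg x}\la x,P_{\HH,C}y\ra$ every term drops out by the orthogonality $C\perp C\oplus\HH$ except $-\la c_x,\dd c_y\ra$, respectively $-(-1)^{\deg x}\la\dd c_x,c_y\ra$; these agree because $\la\dd c_x,c_y\ra=(-1)^{1+\deg c_x}\la c_x,\dd c_y\ra$ and $\deg c_x=\deg x-1$. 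For $\Psi\circ\Phi=\id$, observe that $\pi_{P_{\HH,C}}$ is the identity on $\HH$ and zero on $\dd C\oplus C$, hence $\im\pi_{P_{\HH,C}}=\HH$, while visibly $\im P_{\HH,C}=C$.

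The substantive part is that $\Psi$ is well defined, i.e.\ that for a special propagator $P$ the pair $(\HH,C)\coloneqq(\im\pi_P,\im P)$ is a Hodge decomposition, together with $\Phi\circ\Psi=\id$. That $\HH$ is a harmonic subspace is already recorded in the text (for special $P$ the projection $\pi_P$ is harmonic), so in particular $\dd\circ\pi_P=0$. The heart of the matter is the decomposition $\Alg=\HH\oplus\im\dd\oplus\im P$: one has $\Alg=\im\pi_P\oplus\ker\pi_P$, and $\ker\pi_P=\im(\Id-\pi_P)\subseteq\im\dd+\im P$ by \eqref{eq:P2}, while $\im\dd\cap\im P=0$ is extracted from \eqref{eq:P5}, \eqref{eq:P2} and $\dd\circ\pi_P=0$. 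One then checks $\ker\dd=\HH\oplus\im\dd$ — if $x=h+\dd u+Pv\in\ker\dd$ then $\dd Pv=0$, whence $Pv=-P\dd Pv=0$ by \eqref{eq:PP2} — so $C=\im P$ is a complement of $\ker\dd$; and $C\perp C\oplus\HH$ follows from \eqref{eq:P3} with \eqref{eq:P5} and $\pi_P\circ P=0$ (the latter by \eqref{eq:P4}). Finally, evaluating $P$ on $\HH$ (zero, by \eqref{eq:P4}), on $\im\dd=\dd C$ (sending $\dd y\mapsto -y$, by \eqref{eq:PP2}), and on $C$ (zero, by \eqref{eq:P5}) identifies $P$ with $P_{\HH,C}$, giving $\Phi\circ\Psi=\id$. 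The displayed formulas $\HH=\im\pi_P$ and $C=\im P$ then hold by construction of $\Psi$, and $\pi_{\HH,C}=\pi_P$ because both are the projection onto $\HH$ with kernel $\im\dd\oplus C=\ker\pi_P$.

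I expect the only mildly delicate point to be the third paragraph: extracting the decomposition $\Alg=\HH\oplus\im\dd\oplus\im P$ and the orthogonality $C\perp C\oplus\HH$ purely from the special-propagator axioms \eqref{eq:P5}, \eqref{eq:PP2}, \eqref{eq:P3} (together with $\dd\circ\pi_P=0$). Everything else is bookkeeping on the three summands of a Hodge decomposition, consistent with the paper's assertion that the proof is straightforward.
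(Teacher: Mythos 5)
Your proof is correct. The paper offers no argument here --- it simply declares the lemma straightforward --- and your write-up is exactly the expected verification: checking \eqref{eq:P5}, \eqref{eq:PP2}, \eqref{eq:P3} for $P_{\HH,C}$ on the three summands, and conversely recovering the decomposition $\Alg=\im\pi_P\oplus\im\dd\oplus\im P$ from the special-propagator axioms together with the facts the paper records just before the lemma ($\pi_P\circ\dd=\dd\circ\pi_P$, the equivalence of \eqref{eq:PP2} with \eqref{eq:P4}, and harmonicity of $\pi_P$), all of which you invoke legitimately.
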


Following \cite{Fiorenza+}, we say that $(\Alg,\dd,\la\cdot,\cdot\ra)$ is of \emph{Hodge type} if it admits a Hodge decomposition.
If $(\Alg,\dd,\la\cdot,\cdot\ra)$ is of Hodge type and the induced pairing on cohomology is perfect, then \cite[Remark~2.6]{Fiorenza+} shows that for every harmonic subspace $\HH\subset\Alg$ there is a subspace $C\subset\Alg$ such that $\Alg=\HH\oplus \im\dd\oplus C$ is a Hodge decomposition.
This together with Lemma~\ref{lem:special} implies the following:

\begin{cor}\label{cor:special-Hodge}
A cochain complex with pairing $(\Alg,\dd,\la\cdot,\cdot\ra)$ is of Hodge type if and only if it admits a special propagator.
Suppose that this is the case and the induced pairing on cohomology is perfect.
Let $\HH\subset\Alg$ be any harmonic subspace. 
Then there exists a unique harmonic projection $\pi_\HH\colon\Alg\onto\HH$ and a (possibly nonunique) special propagator $P\colon\Alg\to\Alg$ with respect to $\pi_\HH$.
\qed
\end{cor}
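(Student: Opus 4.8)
The plan is to read the statement off Lemma~\ref{lem:special}, \cite[Remark~2.6]{Fiorenza+}, and the elementary facts about harmonic and symmetric projections recorded above, with essentially no new computation. For the first assertion, recall that $(\Alg,\dd,\la\cdot,\cdot\ra)$ is of Hodge type precisely when it admits a Hodge decomposition, while Lemma~\ref{lem:special} sets up a bijection between Hodge decompositions of $\Alg$ and special propagators $P\colon\Alg\to\Alg$; hence $\Alg$ is of Hodge type if and only if it admits a special propagator.

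Now assume in addition that the induced pairing on $H(\Alg)$ is perfect, and fix a harmonic subspace $\HH\subset\Alg$. The first step I would take is to transfer perfection from cohomology to $\HH$: the quotient map~\eqref{eq:canon-quotient-to-hom} is a linear isomorphism $\HH\to H(\Alg)$ intertwining $\la\cdot,\cdot\ra|_\HH$ with the induced pairing on cohomology, and since precomposition with it is an isomorphism $\Hom(H(\Alg),\R)\to\Hom(\HH,\R)$, the musical map of $\HH$ is an isomorphism if and only if that of $H(\Alg)$ is. Thus $\la\cdot,\cdot\ra|_\HH$ is perfect, in particular nondegenerate. The hypotheses of \cite[Remark~2.6]{Fiorenza+} are therefore met, and it produces a complement $C$ of $\ker\dd$ in $\Alg$ with $\Alg=\HH\oplus\im\dd\oplus C$ a Hodge decomposition. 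By Lemma~\ref{lem:special} (equivalently, directly from~\eqref{eq:special}), $P\coloneqq P_{\HH,C}$ is a special propagator with $\pi_P=\pi_{\HH,C}$, and $\pi_{\HH,C}\colon\Alg\onto\HH$ is a harmonic projection; in particular a harmonic projection onto $\HH$ exists.

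For uniqueness of the harmonic projection I would invoke the observation recorded above that, once a harmonic projection onto $\HH$ exists and $\la\cdot,\cdot\ra|_\HH$ is nondegenerate, one has $\Alg=\HH\oplus\HH^\perp$, so $\HH$ carries a \emph{unique} symmetric projection $\pi_\HH$, namely the one with kernel $\HH^\perp$, and every harmonic projection onto $\HH$ coincides with it. Applying this to $\pi_{\HH,C}$ gives $\pi_{\HH,C}=\pi_\HH$, so the special propagator $P=P_{\HH,C}$ produced above is a special propagator with respect to $\pi_\HH$, as required. The asserted possible non-uniqueness of such a $P$ needs no proof; it reflects the freedom in the choice of $C$.

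I do not expect a genuine obstacle here. The one point that deserves care is the transfer of perfection of the pairing from $H(\Alg)$ to the chosen harmonic subspace $\HH$, since this is exactly what makes both \cite[Remark~2.6]{Fiorenza+} and the uniqueness criterion for harmonic projections available; the rest is bookkeeping with Lemma~\ref{lem:special} and the definitions.
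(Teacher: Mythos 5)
Your proposal is correct and follows essentially the same route as the paper, which derives the corollary directly from Lemma~\ref{lem:special}, the cited \cite[Remark~2.6]{Fiorenza+}, and the preceding observation that a harmonic projection onto $\HH$ is unique once $\la\cdot,\cdot\ra|_\HH$ is nondegenerate. Your explicit transfer of perfection from $H(\Alg)$ to $\HH$ via the quotient map~\eqref{eq:canon-quotient-to-hom} is a detail the paper leaves implicit, and it is the right way to justify invoking the uniqueness criterion under the corollary's hypothesis.
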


Lemmas~\ref{lem:propagator}, \ref{lem:retraction}, \ref{lem:harmonic}, \ref{lem:special} imply the following:

\begin{lemma}[{\cite[Lemma 11.1]{Cieliebak-Fukaya-Latschev}}]\label{lem:hodge-decomposition}
Any cyclic cochain complex is of Hodge type.	
\end{lemma}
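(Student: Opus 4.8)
The plan is to reduce the statement to producing a \emph{special propagator} and then to assemble one out of Lemmas~\ref{lem:propagator}--\ref{lem:special}. By Lemma~\ref{lem:special} (equivalently Corollary~\ref{cor:special-Hodge}), a cochain complex with pairing is of Hodge type exactly when it admits a special propagator $P$, the corresponding Hodge decomposition being $\Alg=\HH\oplus\im\dd\oplus C$ with $\HH=\im\pi_P$ and $C=\im P$. So let $(\Alg,\dd,\la\cdot,\cdot\ra)$ be an arbitrary cyclic cochain complex; it suffices to build a special propagator on it.

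First I would choose a harmonic subspace. Working over $\R$, pick any vector-space complement $\HH$ of $\im\dd$ inside $\ker\dd$, so that $\ker\dd=\HH\oplus\im\dd$; this step uses only linear algebra, not the pairing. Now Lemma~\ref{lem:harmonic} does the real work: since the pairing is perfect, $\HH$ is automatically a quasi-isomorphic cyclic subcomplex, $\la\cdot,\cdot\ra|_\HH$ is perfect, $\Alg=\HH\oplus\HH^\perp$, and the unique symmetric projection $\pi_\HH\colon\Alg\onto\HH$ is harmonic and a quasi-isomorphism. With $\pi_\HH$ in hand, Lemma~\ref{lem:retraction} produces a propagator $P_1$ with respect to $\pi_\HH$ (build any homotopy operator with respect to $\pi_\HH$ from a splitting $\ker\pi_\HH=C\oplus\dd C$, then symmetrize via the adjoint, using perfectness of the pairing), and Lemma~\ref{lem:propagator} (Cattaneo--Mnev) modifies $P_1$ into a \emph{special} propagator $P$ with respect to the same $\pi_\HH$ --- this is precisely the Corollary stated after Lemma~\ref{lem:retraction}, applied with $B=\HH$. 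Feeding $P$ back into Lemma~\ref{lem:special} then exhibits the Hodge decomposition $\Alg=\HH\oplus\im\dd\oplus\im P$, with the required orthogonality $\im P\perp\im P\oplus\HH$ built in, so $\Alg$ is of Hodge type.

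I do not expect a serious obstacle: once the preceding lemmas are available, the proof is pure assembly. The genuine content --- already absorbed into Lemmas~\ref{lem:harmonic} and~\ref{lem:retraction} --- is that a harmonic complement $\HH$ of $\im\dd$ in $\ker\dd$ need \emph{not} be orthogonal to a carelessly chosen complement of $\ker\dd$, so the naive decomposition fails the condition $C\perp C\oplus\HH$. The one step that actually uses all three hypotheses (nondegeneracy, finite type, characteristic $0$) is the choice of an isotropic complement $C$ to the Lagrangian subspace $\im\dd\subset\HH^\perp$ --- isotropic because $\dd\circ\dd=0$ and half-dimensional because $\dd$ restricts to an isomorphism from any complement onto $\im\dd$ --- i.e. the graded-symplectic polarization encoded by the symmetrization $\tfrac{1}{2}(P+P^\dagger)$ together with the Cattaneo--Mnev correction $P\mapsto -P_2\circ\dd\circ P_2$. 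If one wished to bypass the packaged lemmas, this Lagrangian-complement argument is the only place where the hypotheses are really needed.
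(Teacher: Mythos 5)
Your proof is correct and follows exactly the route the paper intends: the paper derives this lemma in one line as a consequence of Lemmas~\ref{lem:propagator}, \ref{lem:retraction}, \ref{lem:harmonic} and~\ref{lem:special}, and your argument is precisely that assembly (choose a harmonic subspace, invoke Lemma~\ref{lem:harmonic} to get the symmetric harmonic projection, Lemma~\ref{lem:retraction} to get a propagator, Lemma~\ref{lem:propagator} to make it special, and Lemma~\ref{lem:special} to convert it into a Hodge decomposition). Nothing further is needed.
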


Let us now discuss functoriality.
We say that a chain map $f\colon\Alg\to\Alg^{\prime}$ \emph{respects Hodge decompositions} $\Alg=\HH\oplus\im\dd\oplus C$ and $\Alg^{\prime}=\HH^{\prime}\oplus\im \dd^{\prime}\oplus C^{\prime}$ if 
\begin{equation}\label{eq:respect-hodge}
	f(\HH)\subset\HH^{\prime}\quad\text{and}\quad f(C)\subset C^{\prime}.
\end{equation}
This implies in particular 
\[
	f\circ\pi_{\HH,C}=\pi_{\HH^\prime,C^\prime}\circ f.
\]
If $P$ and $P^\prime$ are the associated special propagators, then \eqref{eq:respect-hodge} is equivalent to
\begin{equation*}
	f\circ P = P^{\prime}\circ f.
\end{equation*}

{\bf Relative Hodge decompositions. }
Let $(\Alg,\dd,\la\cdot,\cdot\ra)$ be a cochain complex with pairing, and let $B\subset \Alg$ be a subcomplex.
We say that a Hodge decomposition $\Alg=\HH\oplus\im\dd\oplus C$ is \emph{relative} to $B$ if 
\begin{equation*}
B=(B\cap\HH)\oplus (B\cap\im\dd) \oplus (B\cap C)
\end{equation*}
is a Hodge decomposition of~$(B,\dd,\la\cdot,\cdot\ra|_B)$.
The inclusion $\iota\colon B\into\Alg$ is then a chain map which respects the Hodge decompositions.
Let $P^\Alg\colon \Alg\to\Alg$ and $P^B\colon B\to B$ be the special propagators corresponding to the Hodge decompositions of $\Alg$ and $B$, respectively, and suppose that there is a symmetric projection $\pi\colon\Alg\onto B$ which is a quasi-isomorphism.
Then a simple computation shows that
	\[
		P\coloneqq (\Id-\pi)\circ P^{\Alg}\colon\Alg\longrightarrow\Alg
	\]
is a propagator with respect to $\pi$.
If $\pi$ respects the Hodge decompositions, then $P= (\Id-\pi)\circ P^{\Alg} = P^B\circ(\Id-\pi)$ is special.

\begin{lemma}\label{lem:relative-hodge-decomposition}
Let $(\Alg,\dd,\la\cdot,\cdot\ra)$ be a cyclic cochain complex and $B\subset\Alg$ a quasi-isomorphic cyclic subcomplex. Then there exists a Hodge decomposition of~$\Alg$ relative to~$B$ such that the unique symmetric projection $\pi_B\colon\Alg\onto B$ respects the Hodge decompositions.
\end{lemma}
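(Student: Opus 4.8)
The plan is to reduce to the absolute case by exploiting the orthogonal splitting that a cyclic subcomplex provides, and then to glue together separate Hodge decompositions of $B$ and of its orthogonal complement. First I would record the splitting: since $B$ is a cyclic subcomplex, $\la\cdot,\cdot\ra|_B$ is perfect, so $\Alg=B\oplus B^\perp$ and $\pi_B\colon\Alg\onto B$ is the symmetric projection with $\ker\pi_B=B^\perp$. Since $\Alg=B\oplus B^\perp$ splits as cochain complexes, $H(\Alg)=H(B)\oplus H(B^\perp)$; as $B$ is quasi-isomorphic, $H(\iota)\colon H(B)\to H(\Alg)$ is an isomorphism, which forces $H(B^\perp)=0$, i.e.\ $B^\perp$ is acyclic and $\pi_B$ is a quasi-isomorphism. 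I would then check that $B^\perp$ is again a cyclic cochain complex: it is a subcomplex (being $\ker\pi_B$ with $\pi_B$ a chain map); it is of finite type because $\Alg$ is (the pairing on $\Alg$ is perfect); and $\la\cdot,\cdot\ra|_{B^\perp}$ is nondegenerate, since an element of $B^\perp$ orthogonal to $B^\perp$ is then orthogonal to $B\oplus B^\perp=\Alg$, hence zero — so the restricted pairing is perfect.

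Next I would apply Lemma~\ref{lem:hodge-decomposition} to $B$ and to $B^\perp$ separately, obtaining a Hodge decomposition $B=\HH_B\oplus\dd B\oplus C_B$ and a Hodge decomposition of $B^\perp$ whose harmonic part is necessarily $0$ (the harmonic subspace of an acyclic complex is trivial), i.e.\ a complement $C'$ of $\ker\dd\cap B^\perp=\dd B^\perp$ inside $B^\perp$ with $C'\perp C'$. Setting $\HH\coloneqq\HH_B$ and $C\coloneqq C_B\oplus C'$, the fact that $\Alg=B\oplus B^\perp$ splits as cochain complexes gives $\im\dd=\dd B\oplus\dd B^\perp$ and $\ker\dd=(\HH_B\oplus\dd B)\oplus\dd B^\perp=\HH\oplus\im\dd$, so $\HH$ is a harmonic subspace of $\Alg$ and $\Alg=\ker\dd\oplus C$. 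The requirement $C\perp C\oplus\HH$ then decomposes into $C_B\perp C_B\oplus\HH_B$ inside $B$, $C'\perp C'$ inside $B^\perp$, and the mixed terms, which vanish because $B\perp B^\perp$; hence $\Alg=\HH\oplus\im\dd\oplus C$ is a Hodge decomposition.

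It remains to verify that this decomposition is relative to $B$ and is respected by $\pi_B$, which I expect to be routine bookkeeping. Intersecting the three summands with $B$ and using $\HH_B,\dd B,C_B\subset B$ while $\dd B^\perp,C'\subset B^\perp$ yields $B\cap\HH=\HH_B$, $B\cap\im\dd=\dd B$ and $B\cap C=C_B$; thus the decomposition of $B$ induced from that of $\Alg$ is precisely the chosen Hodge decomposition of $B$, so the latter is relative to $B$. Finally $\pi_B$ is the identity on $B$ and annihilates $B^\perp$, so $\pi_B(\HH)=\HH_B=B\cap\HH$ and $\pi_B(C)=\pi_B(C_B)=C_B=B\cap C$, i.e.\ $\pi_B$ respects the Hodge decompositions (equivalently, $\pi_B\circ P^{\Alg}=P^{B}\circ\pi_B$ for the associated special propagators). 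The one step requiring genuine care is the claim that $B^\perp$ is again a cyclic cochain complex — this is where the finite-type hypothesis built into the notion ``cyclic'' is used — while everything else amounts to chasing the orthogonality relations through the direct sum.
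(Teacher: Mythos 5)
Your proof is correct and follows essentially the same route as the paper's: split $\Alg=B\oplus B^\perp$, observe that $B^\perp$ is an acyclic cyclic cochain complex, apply Lemma~\ref{lem:hodge-decomposition} to $B$ and $B^\perp$ separately, and combine the two Hodge decompositions. The paper leaves the final verifications as ``a straightforward check,'' which you carry out explicitly.
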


\begin{proof}
The subcomplex $B$ and its orthogonal complement $B^{\prime}\coloneqq B^{\perp}$, both equipped with the restrictions of $\la\cdot,\cdot\ra$, are cyclic cochain complexes and hence admit Hodge decompositions $B=\HH\oplus \dd C \oplus C$ and $B^{\prime} = \HH^\prime\oplus \dd C^{\prime} \oplus C^{\prime}$ by Lemma~\ref{lem:hodge-decomposition}.
Setting $\HH^\dprime \coloneqq \HH\oplus\HH^\prime$ and $C^{\dprime}\coloneqq C\oplus C^{\prime}$, we obtain a Hodge decomposition $\Alg=\HH\oplus \dd C^{\dprime} \oplus C^{\dprime}$.
The rest is a straightforward check.
\end{proof}

{\bf Hodge star. }
Let $(\Alg,\dd,\la\cdot,\cdot\ra)$ be a nonnegatively graded cochain complex with pairing of degree $n\in\N_0$.
A \emph{Hodge star} on $\Alg$ is a linear map $\star\colon\Alg\to\Alg$
which can be written as $\star = \sum_{k=0}^n \star^k$ for linear maps $\star^k\colon\Alg^k\to\Alg^{n-k}$ satisfying
\[
	\star^{n-k}\circ\star^k = (-1)^{k(n-k)}\Id\colon\Alg^k\longrightarrow\Alg^k
\]
such that the following bilinear form is a positive definite inner product:
\[
	(\cdot,\cdot)\coloneqq\la\cdot,\star\cdot\ra\colon \Alg\times\Alg\longrightarrow\R.
\]
We define the codifferential
\[
	\dd^\star \coloneqq \sum_{k=0}^n (-1)^{1+n(k-1)} \star^{n-k+1}\circ \dd\circ \star^k\colon \Alg\longrightarrow\Alg
\] 
which satisfies $(\dd x,y)=(x,\dd^\star y)$.
The nondegeneracy of $(\cdot,\cdot)$ then implies that $\ker\dd$ is the orthogonal subspace to $\im \dd^\star$ and $\ker \dd^\star$ is the orthogonal subspace to $\im\dd$ with respect to $(\cdot,\cdot)$.
We define 
\[
	\HH\coloneqq \ker\dd \cap \ker \dd^\star
\]
and suppose that the following condition is satisfied:
\begin{enumerate}
	\item[(proj)] There exist projections $\Alg\onto\ker\dd$ and $\ker\dd\onto \HH$ which are symmetric with respect to $(\cdot,\cdot)$.
\end{enumerate}
Then we obtain the decomposition
\begin{equation}\label{eq:HodgeStar}
	\Alg = \HH \oplus \im\dd \oplus \im \dd^\star
\end{equation}
which is orthogonal with respect to $(\cdot,\cdot)$, and it follows from the previous relations that~\eqref{eq:HodgeStar} is also a Hodge decomposition.
Let $P_\star\colon\Alg\to\Alg$ be the special propagator and $\pi_\star \colon\Alg\onto\HH$ the harmonic projection canonically associated to~\eqref{eq:HodgeStar}.
Since $\la\cdot,\cdot\ra\colon\Alg\times\Alg\to\R$ is nondegenerate, $\pi_\star$ can be equivalently characterized as the unique symmetric projection $\pi_\star=\pi_\HH\colon\Alg\onto\HH$.
The special propagator~$P_\star$ can also be equivalently characterized as the unique homotopy operator $P\colon\Alg\to\Alg$ with respect to $\pi_\HH$ such that $\im P \subset \im \dd^\star$. 

Two important examples which admit a Hodge star such that the condition (proj) above holds arise when $\dim \Alg < \infty$, or when $\Alg = \Om$ is the de Rham complex of an oriented closed manifold (see Section~\ref{sec:analytic-construction}).  

The following lemma follows by inspection of the construction of the Hodge star in the proof of \cite[Lemma 11.1]{Cieliebak-Fukaya-Latschev}: 

\begin{lemma}\label{lem:rel-hodge-star}
Let $(\Alg,\dd,\la\cdot,\cdot\ra)$ be a nonnegatively graded cyclic cochain complex and $B\subset \Alg$ a cyclic subcomplex.
Then there exists a Hodge star on $\Alg$ which restricts to a Hodge star on $B$.
\qed
\end{lemma}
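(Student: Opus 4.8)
The plan is to reduce the statement to the construction of a Hodge star on a single cyclic cochain complex carried out in the proof of \cite[Lemma~11.1]{Cieliebak-Fukaya-Latschev} (Lemma~\ref{lem:hodge-decomposition}), applied separately to $B$ and to its orthogonal complement. First I would record the structural input. Since $(\Alg,\dd,\la\cdot,\cdot\ra)$ is a cyclic cochain complex which is nonnegatively graded, its pairing is perfect of some degree $n\in\N_0$, so $\dim\Alg<\infty$ and $\Alg=\bigoplus_{k=0}^n\Alg^k$. Because $B\subset\Alg$ is a cyclic subcomplex, $\la\cdot,\cdot\ra|_B$ is perfect and $\Alg=B\oplus B^\perp$ (as already noted before Lemma~\ref{lem:harmonic}). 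I would then check that $B^\perp$ is again a nonnegatively graded cyclic subcomplex of degree $n$: it is $\dd$-invariant, since $\la\dd x,b\ra=(-1)^{1+\deg x}\la x,\dd b\ra=0$ for $x\in B^\perp$, $b\in B$ (using $\dd b\in B$); it is graded, because the pairing is homogeneous of degree $n$, so $x\perp B$ if and only if every homogeneous component of $x$ is $\perp B$; and $\la\cdot,\cdot\ra|_{B^\perp}$ is nondegenerate, hence perfect as $\dim B^\perp<\infty$, because $\la\cdot,\cdot\ra$ is nondegenerate on $\Alg=B\oplus B^\perp$ with $B\perp B^\perp$.

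Next I would apply the construction from the proof of \cite[Lemma~11.1]{Cieliebak-Fukaya-Latschev} to the nonnegatively graded cyclic cochain complexes $B$ and $B^\perp$ separately, obtaining Hodge stars $\star_B\colon B\to B$ and $\star_{B^\perp}\colon B^\perp\to B^\perp$, written as sums of graded maps $\star_B^k\colon B^k\to B^{n-k}$ and $\star_{B^\perp}^k\colon (B^\perp)^k\to(B^\perp)^{n-k}$ with $\star^{n-k}\circ\star^k=(-1)^{k(n-k)}\Id$, such that $\la\cdot,\star_B\cdot\ra$ and $\la\cdot,\star_{B^\perp}\cdot\ra$ are positive definite inner products. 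I would then set $\star\coloneqq\star_B\oplus\star_{B^\perp}\colon\Alg=B\oplus B^\perp\to\Alg$, so that $\star^k=\star_B^k\oplus\star_{B^\perp}^k\colon\Alg^k=B^k\oplus(B^\perp)^k\to\Alg^{n-k}$.

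It then remains to verify that $\star$ is a Hodge star on $\Alg$ restricting to $\star_B$ on $B$. The identity $\star^{n-k}\circ\star^k=(-1)^{k(n-k)}\Id$ on $\Alg^k$ holds because it holds on each summand $B^k$ and $(B^\perp)^k$. For the inner product, writing $x=x_B+x_\perp$ and $y=y_B+y_\perp$ with respect to $\Alg=B\oplus B^\perp$, the cross terms $\la x_B,\star_{B^\perp}y_\perp\ra$ and $\la x_\perp,\star_B y_B\ra$ vanish since $\star_{B^\perp}y_\perp\in B^\perp$, $\star_B y_B\in B$ and $B\perp B^\perp$; hence $\la\cdot,\star\cdot\ra=\la\cdot,\star_B\cdot\ra\oplus\la\cdot,\star_{B^\perp}\cdot\ra$ is an orthogonal direct sum of positive definite inner products, so positive definite. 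Finally $\star(B)=\star_B(B)\subset B$ and $\star|_B=\star_B$ is a Hodge star on $B$ by construction. I do not expect a serious obstacle here; the only step requiring care is the first one, namely checking that $B^\perp$ inherits the structure of a nonnegatively graded cyclic cochain complex of the same degree $n$, after which the proof is direct-sum bookkeeping. One could equivalently phrase the argument by running the construction of \cite[Lemma~11.1]{Cieliebak-Fukaya-Latschev} on $\Alg$ itself while choosing, at each stage, the auxiliary inner products to be orthogonal direct sums with respect to $\Alg=B\oplus B^\perp$; this is the ``inspection'' of that construction referred to in the statement.
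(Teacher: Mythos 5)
Your argument is correct and is essentially the proof the paper intends: the paper justifies the lemma by ``inspection of the construction of the Hodge star'' in the single-complex case, and your packaging of that inspection --- splitting $\Alg=B\oplus B^{\perp}$, verifying that $B^{\perp}$ is again a nonnegatively graded cyclic subcomplex, and taking the direct sum of the Hodge stars produced on the two summands --- is precisely the compatible choice of auxiliary data that inspection yields. The only step requiring real care, namely that $B^{\perp}$ is graded, $\dd$-invariant, and carries a nondegenerate restricted pairing of the same degree $n$, is handled correctly.
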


In the situation of this lemma, the corresponding Hodge decomposition of $\Alg$ is relative to the corresponding Hodge decomposition of $B$,
which gives an alternative proof of Lemma~\ref{lem:relative-hodge-decomposition}.

{\bf Nondegenerate quotient. }
Let $(\Alg,\dd,\la\cdot,\cdot\ra)$ be a cochain complex with pairing.
We consider the degenerate subspace $\Alg_{\mathrm{deg}}\coloneqq\{a\in \Alg \mid a \perp \Alg\}$ and define the \emph{nondegenerate quotient}
\[
   \QQ(\Alg) \coloneqq \Alg / \Alg_{\mathrm{deg}}.
\]
Since~$\Alg_{\mathrm{deg}}\subset\Alg$ is a subcomplex, the differential $\dd$ descends to a differential~$\dd_\QQ$ on $\QQ\coloneqq \QQ(\Alg)$.
Moreover, the pairing $\la\cdot,\cdot\ra\colon\Alg\times\Alg\to\R$ descents to a nondegenerate pairing $\la\cdot,\cdot\ra_\QQ\colon\QQ\times\QQ\to\R$, and the quotient map $\pi_\QQ\colon \Alg\onto \QQ$ is a chain map preserving the pairings.

\begin{lemma}[{%
\cite[Lemma 3.3]{Pavel-Hodge}, \cite[Prop.~6.1.17]{Pavel-thesis}}]\label{lem:nondeg-quotient}
Let $(\Alg,\dd,\la\cdot,\cdot\ra)$ be a cochain complex with pairing, and let $\la\cdot,\cdot\ra_H\colon H(\Alg)\times H(\Alg)\to\R$ be the induced pairing on cohomology.
Consider the nondegenerate quotient $\QQ\coloneqq\QQ(\Alg)$.
Then:
\begin{enumerate}[(a)]
\item If $\Alg$ is of Hodge type and $\la \cdot,\cdot\ra_H$ is nondegenerate, then $\Alg_{\mathrm{deg}}\subset\Alg$ is an acyclic subcomplex and $\pi_\QQ\colon \Alg\onto \QQ$ is a quasi-isomorphism.
\item If $\Alg_{\mathrm{deg}}\subset\Alg$ is acyclic and $\QQ$ is of finite type, then $\Alg$ is of Hodge type.
\item Each Hodge decomposition $\Alg=\HH\oplus\im\dd\oplus C$ induces a Hodge decomposition
\[
	\QQ=\pi_\QQ(\HH)\oplus\im\dd_\QQ\oplus \pi_\QQ(C),
\]
and $\pi_\QQ\colon\Alg\onto\QQ$ respects these Hodge decompositions. 
\end{enumerate}
\end{lemma}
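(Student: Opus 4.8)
The plan is to deduce all three parts from one explicit description of the degenerate subspace $\Alg_{\mathrm{deg}}$ with respect to a Hodge decomposition. So the first step is: given any Hodge decomposition $\Alg=\HH\oplus\im\dd\oplus C$, observe that the pairing identity $\la\dd x,y\ra=(-1)^{1+\deg x}\la x,\dd y\ra$ together with $\dd\HH=0$ and $\dd\circ\dd=0$ forces $\HH\perp\im\dd$ and $\im\dd\perp\im\dd$; combining these with the relations $C\perp C$ and $C\perp\HH$ built into the definition of a Hodge decomposition, one computes $\la\cdot,\cdot\ra$ against $\HH$, against $\im\dd$, and against $C$ to obtain
\[
\Alg_{\mathrm{deg}}=(\HH\cap\HH^{\perp})\oplus\dd C_{0}\oplus C_{0},\qquad C_{0}:=\{c\in C\mid c\perp\im\dd\}=C\cap\Alg_{\mathrm{deg}},
\]
where $\dd$ maps $C_{0}$ isomorphically onto $\dd C_{0}$ because $\dd|_{C}$ is injective.

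Part (a) is then immediate. Under the canonical isomorphism $\HH\cong H(\Alg)$ coming from $\HH\subset\ker\dd\onto H(\Alg)$, which identifies $\la\cdot,\cdot\ra|_{\HH}$ with $\la\cdot,\cdot\ra_{H}$, nondegeneracy of $\la\cdot,\cdot\ra_{H}$ amounts to $\HH\cap\HH^{\perp}=0$; in that case the displayed formula becomes $\Alg_{\mathrm{deg}}=C_{0}\oplus\dd C_{0}$ with $\dd$ identifying the two summands, which is an acyclic complex. Feeding the short exact sequence $0\to\Alg_{\mathrm{deg}}\to\Alg\xrightarrow{\pi_{\QQ}}\QQ\to 0$ into the long exact cohomology sequence shows that $\pi_{\QQ}$ is a quasi-isomorphism. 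Part (c) is also short once the formula is available: since $\pi_{\QQ}$ preserves the pairings, the orthogonality $\pi_{\QQ}(C)\perp_{\QQ}\pi_{\QQ}(C)\oplus\pi_{\QQ}(\HH)$ is inherited from $\Alg$; and using the formula for $\Alg_{\mathrm{deg}}$ one checks that $\pi_{\QQ}(\HH)+\im\dd_{\QQ}+\pi_{\QQ}(C)$ is a direct sum exhausting $\QQ$, that $\ker\dd_{\QQ}=\pi_{\QQ}(\ker\dd_{\Alg})=\pi_{\QQ}(\HH)\oplus\im\dd_{\QQ}$ so that $\pi_{\QQ}(\HH)$ is a harmonic subspace, and that $\pi_{\QQ}(C)$ complements $\ker\dd_{\QQ}$; that $\pi_{\QQ}$ respects the two decompositions is then tautological.

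For part (b) I would run the argument in reverse. Since $\la\cdot,\cdot\ra_{\QQ}$ is nondegenerate by construction and $\QQ$ is of finite type, the pairing on $\QQ$ is perfect, so $(\QQ,\dd_{\QQ},\la\cdot,\cdot\ra_{\QQ})$ is a cyclic cochain complex and admits a Hodge decomposition $\QQ=\HH_{\QQ}\oplus\im\dd_{\QQ}\oplus C_{\QQ}$ by Lemma~\ref{lem:hodge-decomposition}. Next I would choose a chain-level section $s\colon\QQ\to\Alg$ of $\pi_{\QQ}$: the degreewise-split exact sequence $0\to\Alg_{\mathrm{deg}}\to\Alg\to\QQ\to 0$ carries an obstruction class in $H^{1}(\Hom^{\bullet}(\QQ,\Alg_{\mathrm{deg}}))$ to the existence of such a section, and post-composition with a contracting homotopy of the acyclic complex $\Alg_{\mathrm{deg}}$ contracts $\Hom^{\bullet}(\QQ,\Alg_{\mathrm{deg}})$, so the obstruction vanishes. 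Then $V:=s(\QQ)$ is a $\dd$-invariant complement of $\Alg_{\mathrm{deg}}$ and $\pi_{\QQ}|_{V}\colon V\to\QQ$ is a pairing-preserving chain isomorphism. Transporting the Hodge decomposition of $\QQ$ through $s$ and combining it with a decomposition $\Alg_{\mathrm{deg}}=D\oplus\dd D$ of the acyclic subcomplex (with $D$ any complement of its cocycles), I would set $\HH:=s(\HH_{\QQ})$ and $C:=s(C_{\QQ})\oplus D$ and verify that $\Alg=\HH\oplus\im\dd\oplus C$ is a Hodge decomposition: the harmonicity of $\HH$ and the complement property of $C$ reduce to the corresponding statements for $V\cong\QQ$ together with $\dd\colon D\xrightarrow{\sim}\dd D$, and the orthogonality survives because $D\subset\Alg_{\mathrm{deg}}$ is totally isotropic while the $V$-part is an isometric copy of the Hodge decomposition of $\QQ$.

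I expect the only non-formal step to be the existence of the chain-level section $s$ in part (b) --- equivalently, of a $\dd$-invariant complement of the acyclic subcomplex $\Alg_{\mathrm{deg}}$; this is where acyclicity is used essentially, and while over $\R$ it is standard homological algebra, one should handle the $\Z$-graded $\Hom$-complex with some care to keep the argument self-contained. Everything else is routine bookkeeping organised around the displayed formula for $\Alg_{\mathrm{deg}}$, together with the two auxiliary orthogonality relations $\HH\perp\im\dd$ and $\im\dd\perp\im\dd$.
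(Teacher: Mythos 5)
Your proof is correct. Note that the paper itself does not prove Lemma~\ref{lem:nondeg-quotient} --- it is quoted from \cite{Pavel-Hodge} and \cite{Pavel-thesis} --- so there is no in-text argument to compare against; judged on its own, your write-up is a valid and self-contained derivation. The organizing computation $\Alg_{\mathrm{deg}}=(\HH\cap\HH^{\perp})\oplus\dd C_0\oplus C_0$ with $C_0=C\cap\Alg_{\mathrm{deg}}$ is right (the block structure $\HH\perp\im\dd$, $\im\dd\perp\im\dd$, $C\perp C\oplus\HH$ forces exactly this, and $\{\dd x:\dd x\perp C\}=\dd C_0$ follows since $\dd|_C$ is an isomorphism onto $\im\dd$), and it does make (a) and (c) routine; in (c) the one identity worth displaying is $\Alg_{\mathrm{deg}}\cap\im\dd=\dd C_0$, which is what gives $\ker\dd_\QQ=\pi_\QQ(\ker\dd)$ and the directness of the induced decomposition. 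For (b), the existence of a chain-level section $s\colon\QQ\to\Alg$ (equivalently a $\dd$-invariant complement of the acyclic subcomplex $\Alg_{\mathrm{deg}}$) is indeed the only non-formal step, and your obstruction/contracting-homotopy argument is the standard one over a field: with $k$ a contracting homotopy of $\Alg_{\mathrm{deg}}$ and $s_0$ any degreewise linear section, $s\coloneqq s_0-k(\dd s_0-s_0\dd_\QQ)$ is a chain map. The finite-type hypothesis enters exactly where you use it, namely to upgrade the nondegenerate pairing on $\QQ$ to a perfect one so that Lemma~\ref{lem:hodge-decomposition} applies.
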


\subsection{Differential graded algebras with pairing}

A {\em differential graded algebra} ($\DGA$) $(\Alg,\dd,\wedge)$ is a cochain complex $(\Alg,\dd)$ equipped with an associative product $\wedge\colon \Alg\times \Alg \to \Alg$ of degree $0$ satisfying the Leibniz identity 
\[
	\dd(x\wedge y) = \dd x \wedge y + (-1)^{\deg x} x \wedge \dd y.
\]
A {\em commutative $\DGA$} ($\CDGA$) satisfies in addition the graded commutativity
\[
	x \wedge y = (-1)^{\deg x \deg y}y \wedge x.
\]

A nonzero element $1\in\Alg^0$ in a $\DGA$ $(\Alg,\dd,\wedge)$ is called a \emph{unit} if $1\wedge x = x \wedge 1 = x$ for all $x\in \Alg$.
A $\DGA$ which admits a unit (which is then uniquely determined) is called a \emph{unital $\DGA$}.
We say that a unital $\DGA$ $\Alg$ is {\em connected} if it is nonnegatively graded and $\Alg^0 = \R\cdot 1$.
Given $k\in\N$, we say that $\Alg$ is {\em $k$-connected} if it is connected and $\Alg^i = 0$ for all $i\in\{1,\dotsc,k\}$.
A 1-connected $\DGA$ is also called {\em simply connected}.

A {\em pairing} on a $\DGA$ $(\Alg,\dd,\wedge)$ is a pairing $\la \cdot,\cdot\ra\colon\Alg\times\Alg\to\R$ on the cochain complex $(\Alg,\dd)$ satisfying in addition
\begin{equation}\label{Eq:ProdCyclic}
	\la x \wedge y,z\ra = \la x, y\wedge z\ra.
\end{equation}
Note: using \eqref{Eq:Symmetry}, condition \eqref{Eq:ProdCyclic} is equivalent to the \emph{cyclicity condition} 
\begin{equation}\label{Eq:ProdCyclicOld}
	\la x\wedge y, z\ra  = (-1)^{\deg z(\deg x+\deg y)} \la z\wedge x, y\ra,
\end{equation}
which is considered in \cite{Cieliebak-Fukaya-Latschev}
and later in Subsection~\ref{subsec:cyclic} in the context of $\Ainfty$ algebras.
Also note that \eqref{Eq:Symmetry} is implied by \eqref{Eq:ProdCyclic} if $\Alg$ is commutative and by \eqref{Eq:ProdCyclicOld} if $\Alg$ is unital. 
Following~\cite{Cieliebak-Fukaya-Latschev}, we call a $\DGA$ with a perfect pairing $(\Alg,\dd,\wedge,\la\cdot,\cdot\ra)$ a {\em cyclic $\DGA$}.%
\footnote{In~\cite{Cieliebak-Fukaya-Latschev} the authors use $(-1)^{\deg x}\la x,y\ra$ instead of $\la x,y\ra$.}

{\bf Orientations. }
Following~\cite{Lambrechts-Stanley}, an \emph{orientation} of degree $n\in \N_0$ on a nonnegatively graded cochain complex $(\Alg,\dd)$ is a linear map $o\colon \Alg\to \R$ with $\deg o = -n$ such that $o\circ \dd = 0 $ and the induced map on cohomology $H(o)\colon H(\Alg)\to \R$ is nontrivial.
The triple $(\Alg,\dd,o)$ is then called an \emph{oriented cochain complex.}

If we view the cohomology as the trivial cochain complex $(H(\Alg),\dd=0)$, then for every orientation $\wt o$ on $H(\Alg)$ there is an orientation $o$ on $\Alg$ such that $\wt o = H(o)$, and such orientation is unique if and only if $\dd \Alg^n = 0$. 

An orientation $o$ on a $\DGA$ $(\Alg,\dd,\wedge)$ is defined as an orientation on the underlying cochain complex $(\Alg,\dd)$.
It induces a pairing $\la\cdot,\cdot\ra\colon \Alg\times\Alg\to\R$ of degree $n$ via
\begin{equation}\label{eq:pairing-orientation}
	\la x,y \ra \coloneqq o(x\wedge y).
\end{equation}
Conversely, a pairing $\la\cdot,\cdot\ra\colon\Alg\times\Alg\to\R$ of degree $n$ induces a chain map $o\colon \Alg\to \R$ with $\deg o = -n$ provided that $\Alg$ is unital (see \cite[Lemma 3.1]{Pavel-thesis}).

{\bf Small subalgebra. }
Consider a $\DGA$ of Hodge type $(\Alg,\dd,\wedge,\la\cdot,\cdot\ra)$, and let $P\colon\Alg\to\Alg$ be a special propagator with associated harmonic subspace $\HH_P\coloneqq \im\pi_P$.
Following~\cite{Fiorenza+}, we call the smallest dg-subalgebra $\SS\subset
\Alg$ such that 
\[
	\HH_P\subset \SS\quad\text{and}\quad P(\SS)\subset \SS
\]
the \emph{small subalgebra} of $\Alg$ associated to~$P$ and denote it by
\[
	\SS_P.
\]
\begin{lemma}[{\cite[Lemma 3.4]{Pavel-Hodge}, \cite[Prop.~6.1.15]{Pavel-thesis}}]\label{lem:small-subalgebra}
	Let $(\Alg,\dd,\wedge,\la\cdot,\cdot\ra)$ be a $\DGA$ of Hodge type, $P\colon\Alg\to\Alg$ a special propagator with associated harmonic subspace $\HH_P\subset\Alg$, and 
	$\SS_P$ the associated small subalgebra.
	Then:
\begin{enumerate}[(a)]
	\item The vector space $\SS_P$ is generated by iterated applications of $P$ and $\wedge$ to tuples of homogenous elements $h_1,\dotsc,h_i\in\HH_P$, $i\in\N$.
\item If $\Alg$ is unital and the cohomology $H(\Alg)$ is simply connected and of finite type, then so is~$\SS_P$. 
\item The restriction $P|_{\SS_P}\colon \SS_P\to\SS_P$ is a special propagator in the $\DGA$ with pairing $(\SS_P,\dd,\wedge,\la\cdot,\cdot\ra|_{\SS_P})$ and induces the Hodge decomposition $\SS_P = \HH_P\oplus \dd\SS_P\oplus P(\SS_P)$ which is relative to the Hodge decomposition of $\Alg$ associated to $P$.
In particular, the inclusion $\iota\colon\SS_P\into \Alg$ respects the Hodge decompositions and is a quasi-isomorphism.
\end{enumerate}
\end{lemma}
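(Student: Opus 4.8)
The plan is to establish (a) first and then deduce (c) and (b) from it. Throughout I will use that, since $P$ is a special propagator, it satisfies \eqref{eq:P5}, \eqref{eq:PP2} and \eqref{eq:P3}, so $\pi_P$ is a harmonic projection and, by \eqref{eq:P4}, $P\circ\pi_P=\pi_P\circ P=0$. In particular both $\dd$ and $P$ vanish on $\HH_P=\im\pi_P$, and $\pi_P$ maps $\Alg$ onto $\HH_P$.

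For (a), let $W\subset\Alg$ be the linear span of all \emph{words}, i.e. elements obtained from finitely many homogeneous $h_1,\dots,h_i\in\HH_P$ by iterating $\wedge$ and $P$. Then $\HH_P\subset W$, and $W$ is visibly closed under $\wedge$ and $P$, so it remains only to check $\dd W\subset W$; minimality of $\SS_P$ then forces $\SS_P\subset W$, while $W\subset\SS_P$ is immediate from the definition of $\SS_P$. I would prove $\dd w\in W$ for every word $w$ by induction on the number of occurrences of $\wedge$ and $P$ in $w$: if $w=h\in\HH_P$ then $\dd h=0$; if $w=w_1\wedge w_2$ the Leibniz rule plus the inductive hypothesis give it; and if $w=Pw'$ then \eqref{eq:P2} yields $\dd Pw'=\pi_P w'-w'-P\dd w'$, where $\pi_P w'\in\HH_P\subset W$, $w'\in W$, and $\dd w'\in W$ by induction, hence $P\dd w'\in W$. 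This in particular confirms that $W=\SS_P$ is a dg-subalgebra, which is used below.

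For (c), note that $\pi_P(\SS_P)=\HH_P\subset\SS_P$, that $P(\SS_P)\subset\SS_P$ by the definition of the small subalgebra, and that $\dd\SS_P\subset\SS_P$ since $\SS_P$ is a dg-subalgebra; therefore \eqref{eq:P5}, \eqref{eq:PP2}, \eqref{eq:P3} and \eqref{eq:P2} all restrict to $\SS_P$, so $P|_{\SS_P}$ is a special propagator on the cochain complex with pairing $(\SS_P,\dd,\la\cdot,\cdot\ra|_{\SS_P})$. Applying the correspondence of Lemma~\ref{lem:special} to $P|_{\SS_P}$ gives the Hodge decomposition $\SS_P=\pi_P(\SS_P)\oplus\dd(P\SS_P)\oplus P(\SS_P)$, and since $\dd$ kills $\HH_P$ and $\dd\SS_P$ we have $\dd(P\SS_P)=\dd\SS_P$, so this reads $\SS_P=\HH_P\oplus\dd\SS_P\oplus P(\SS_P)$ as claimed. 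Because $\HH_P\subset\HH_P$ and $P(\SS_P)\subset\im P$, the inclusion $\iota\colon\SS_P\into\Alg$ respects the two Hodge decompositions — equivalently, the decomposition of $\SS_P$ is relative to that of $\Alg$ — and since under the canonical identifications of cohomology with the harmonic subspaces $\iota$ restricts to the identity of $\HH_P$, the map $\iota$ is a quasi-isomorphism.

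The real work is in (b). Here $\Alg$ is unital (and nonnegatively graded, as is implicit in the statement), and via the identification $\HH_P\cong H(\Alg)$ the hypothesis that $H(\Alg)$ is simply connected of finite type gives $\HH_P^0=\R\cdot 1$, $\HH_P^1=0$, and $\dim\HH_P^k<\infty$ for every $k$. By (a), $\SS_P$ is spanned by words; using $1\wedge x=x$, $P(1)=0$, $P|_{\HH_P}=0$ and $P\circ P=0$, every word is seen to equal $0$, a multiple of $1$, or a \emph{reduced} word built from leaves in $\bigoplus_{k\ge 2}\HH_P^k$ in which no $P$ is applied directly to a leaf or to another $P$. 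Represent a reduced word by its parse tree: its $\wedge$-nodes form a binary tree with, say, $\ell\ge 1$ leaves and hence $\ell-1$ internal nodes, while each $P$-node has a $\wedge$-node as its unique child, which gives an injection from $P$-nodes into $\wedge$-nodes and so at most $\ell-1$ of them. Since every leaf has degree $\ge 2$ and every $P$ lowers the total degree by $1$, a reduced word of degree $k$ satisfies $k\ge 2\ell-(\ell-1)=\ell+1$, so $\ell\le k-1$ and the total leaf-degree is $\le k+(\ell-1)\le 2k-2$. This already forces $\SS_P^0=\R\cdot 1$ and $\SS_P^1=0$ (no reduced word has degree $\le 1$), so $\SS_P$ is connected and simply connected; and in each fixed degree $k$ there are only finitely many admissible tree shapes, finitely many placements of $P$-nodes, and the leaf labels range over the finite-dimensional space $\bigoplus_{j=2}^{2k-2}\HH_P^j$, whence $\dim\SS_P^k<\infty$ and $\SS_P$ is of finite type. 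The main obstacle is precisely this combinatorial step: checking that the reductions really do cut the spanning set down to reduced words, and that the degree estimate $\ell\le k-1$ — which simultaneously yields simple connectivity and finite type — is watertight. Parts (a) and (c) are then essentially formal consequences of the defining identities for a special propagator.
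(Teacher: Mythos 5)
Your proof is correct, and it follows the route one would expect: part (a) by showing the span of words is a dg-subalgebra closed under $P$, part (c) by restricting the identities \eqref{eq:P5}, \eqref{eq:PP2}, \eqref{eq:P3}, \eqref{eq:P2} and invoking Lemma~\ref{lem:special}, and part (b) by the reduction to trees with leaves in $\bigoplus_{k\ge 2}\HH_P^k$ and the degree/leaf count $k\ge\ell+1$. The paper itself does not prove this lemma but cites \cite{Pavel-Hodge,Pavel-thesis}, and its Remark~\ref{rem:ks-eval} records exactly the labeled-binary-tree description of $\SS_P$ on which your combinatorial argument for (b) is based, so your approach matches the intended one (your only unstated hypothesis, that $\Alg$ is nonnegatively graded so that $1\in\HH_P$, is indeed implicit in the paper's conventions).
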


\begin{figure}
	\centering
	\begin{tikzpicture}
	\node[root] (R) at (0,0) {};
	\node[point,label={[above]:$\wedge$}] (RU) at ($(R)+(0,\leaflen)$) {};
	\node[point,label={[above]:$\wedge$}] (RUR) at ($(RU)+(90-\brancheangle:\edgelen)$) {};
	\node[leaf,label={[above]:$h_2$}] (RURL) at ($(RUR)+(90+\brancheangle:\leaflen)$) {};
	\node[leaf,label={[above]:$h_1$}] (RURR) at ($(RUR)+(90-\brancheangle:\leaflen)$) {};
	\node[leaf,label={[above]:$h_3$}] (RULL) at ($(RU)+(90+\brancheangle:\leaflen)$) {};
	\draw (R) -- (RU) node[midway,right] {P};
	\draw (RU) -- (RUR) node[midway,below] {$\Id$};
	\draw (RU) -- (RULL) node[midway,below left] {P};
	\draw (RUR) -- (RURL) node[midway,below left] {P};
	\draw (RUR) -- (RURR) node[midway,below right] {$\Id$};
	\end{tikzpicture}
	\[
		\ev_{T,L}(h_1,h_2,h_3)= P(P(h_3)\wedge P(h_2) \wedge h_1)
	\]
	\caption{Kontsevich--Soibelman-like evaluation of a labeled planar rooted binary tree.
	}
	\label{fig:tree}
\end{figure}

\begin{remark}\label{rem:ks-eval}
The following description of $\SS_P$ in terms of Kontsevich--Soibelman-like evaluations of trees from \cite{Pavel-thesis} is of interest in the context of this paper.
Let $\TT_{i}^{\rmbin}$ denote the set of isotopy classes of planar embeddings of rooted binary trees with $i\in\N$ leaves (for $i=1$ we include the trivial tree with only one edge).
There is a natural orientation of edges of $T\in \TT_i^{\rmbin}$ towards the root and a natural numbering of leaves of $T$ in the counterclockwise direction from the root.
A labeling $L$ of $T$ is an assignment of either~$P$ or~$\Id$ to each edge (interior and exterior) and of $\wedge$ to each interior vertex.
We denote the set of all labelings of $T$ by $L(T)$.
For $L\in L(T)$, we interpret the labeled tree $(T,L)$ as a composition rule for the operations $\Id$, $P$, $\wedge$ and obtain a linear map (see Figure~\ref{fig:tree})
\begin{equation} \label{eq:tree-eval}
\ev_{T,L}\colon \Alg^{\otimes i} \longrightarrow \Alg.
\end{equation}
Then we have
\[
	\SS_P = \sum_{i\in\N} \sum_{T\in \TT_{i}^{\rmbin}}\sum_{L\in L(T)} \ev_{T,L}(\HH^{\otimes i}). 
\]
\end{remark}

\subsection{Poincar\'e DGAs and differential Poincar\'e duality models}\label{ss:PDGA}

In this subsection, we will work in the category of nonnegatively graded unital $\CDGA$s with orientations of degree $n\in \N_0$.
Following~\cite{Lambrechts-Stanley} we define:
\begin{definition}

A \emph{differential Poincar\'e duality algebra ($\dPD$ algebra)} $(\Alg,\dd,\wedge,o)$ of degree $n\in \N_0$ is a finite dimensional nonnegatively graded unital $\CDGA$ equipped with an orientation~$o\colon\Alg\to\R$ of degree~$n$ such that the pairing $\la\cdot,\cdot\ra\colon\Alg\times\Alg\to\R$ corresponding to $o$ via~\eqref{eq:pairing-orientation} is nondegenerate. 
\end{definition}
Note: this is the same notion as that of a nonnegatively graded unital cyclic $\CDGA$.
Following~\cite{Fiorenza+} we define:
\begin{definition}
A \emph{Poincar\'e $\DGA$ ($\PDGA$)} $(\Alg,\dd,\wedge,o_H)$ of degree $n\in\N_0$ is a nonnegatively graded unital $\CDGA$ whose cohomology $H(\Alg)$ is equipped with an orientation $o_H\colon H(\Alg)\to\R$ making it into a \emph{Poincar\'e duality algebra}, i.e., a $\dPD$ algebra with trivial differential. 
An \emph{oriented $\PDGA$} $(\Alg,\dd,\wedge,o)$ is a $\PDGA$ $(\Alg,\dd,\wedge,o_H)$ equipped with an orientation~$o\colon\Alg\to\R$ such that $H(o)=o_H$.
\end{definition}

Let us emphasize that a $\dPD$ algebra involves a perfect pairing on chain level, whereas a $\PDGA$ involves a perfect pairing only on cohomology.

A {\em morphism of $\PDGA$s $(\Alg,\dd,\wedge,o_H)$ and $(\Alg^{\prime},\dd^{\prime},\wedge^{\prime},o_H^{\prime})$ ($\PDGA$ morphism)} is a morphism of unital $\DGA$s $f\colon (\Alg,\dd,\wedge) \to (\Alg^{\prime},
\dd^{\prime},\wedge^{\prime})$ such that the induced map on cohomology $H(f)\colon H(\Alg)\to H(\Alg^{\prime})$ satisfies
\begin{equation*}
	o_H^{\prime}\circ H(f)=o_H.
\end{equation*}
Recall that $f$ is called a {\em quasi-isomorphism} if in addition $H(f)$ is an isomorphism. 
A~{\em morphism of oriented $\PDGA$s} $(\Alg,\dd,\wedge,o)$ and $(\Alg^\prime,\dd^\prime,\wedge^\prime,o^\prime)$ is a $\PDGA$ morphism $f\colon(\Alg,\dd,\wedge,o_H)\to(\Alg^\prime,\dd^\prime,\wedge^\prime,o_H^\prime)$ such that
\begin{equation*}
o^{\prime} \circ f = o.
\end{equation*}
A morphism of $\dPD$ algebras is defined in the same way.
Let us emphasize that a morphism of $\dPD$ algebras is injective on chain level whereas a morphism of $\PDGA$s is injective only on cohomology.

Suppose that $\dd \Alg^{n} = 0$, $\dd^\prime \Alg^{\prime n} = 0$ so that orientations on cohomology $o_H, o_H^\prime$ are in one to one correspondence with orientations $o, o^\prime$ on chain level.
Then any $\PDGA$ morphism $f\colon (\Alg,\dd,\wedge,o_H)\to(\Alg^\prime,\dd^\prime,\wedge^\prime,o^\prime_H)$ is also a morphism of oriented $\PDGA$s $f\colon (\Alg,\dd,\wedge,o)\to(\Alg^\prime,\dd^\prime,\wedge^\prime,o^\prime)$.
In particular, a $\PDGA$ morphism of two $\dPD$ algebras is automatically a morphism of $\dPD$ algebras.

A {\em Hodge decomposition} of an oriented $\PDGA$ $(\Alg,\dd,\wedge,o)$ is defined as a Hodge decomposition of the corresponding cochain complex with pairing $(\Alg,\dd,\la\cdot,\cdot\ra)$.
We then say that $(\Alg,\dd,\wedge,o)$ is of {\em Hodge type} if $(\Alg,\dd,\la\cdot,\cdot\ra)$ is of Hodge type, and that a $\PDGA$ $(\Alg,\dd,\wedge,o_H)$ is of Hodge type if $o_H$ is induced by a chain level orientation $o\colon \Alg\to \R$ such that $(\Alg,\dd,\wedge,o)$ is of Hodge type.

{\bf Differential Poincar\'e duality models. }
We say that two $\PDGA$s $\Alg$ and $\Alg^{\prime}$ are \emph{weakly equivalent} if there exist $\PDGA$s $\Alg_1$, $\dotsc$, $\Alg_k$ for some $k\in \N$ and a zigzag of $\PDGA$ quasi-isomorphisms
\begin{equation*}
\begin{tikzcd}
   	& \Alg_1 \ar[dl] \ar[dr] & & \Alg_3 \cdots \ar[dl] & & \Alg_k \ar[dl] \ar[dr] \\
	\Alg & & \Alg_2 & & \cdots \Alg_{k-1} & & \Alg^{\prime}. 
\end{tikzcd}
\end{equation*}
A weak equivalence of oriented $\PDGA$s resp.~$\dPD$ algebras can be defined similarly by replacing the term ``$\PDGA$'' with the terms ``oriented $\PDGA$'' resp.~``$\dPD$ algebra''.

A \emph{differential Poincar\'e duality model} of a $\PDGA$ $\Alg$ is a $\dPD$ algebra $\MM$ which is weakly equivalent to $\Alg$ as a $\PDGA$. 
We have the following existence and uniqueness theorem. 
Part (a) essentially corresponds to~\cite[Theorem~1.1]{Lambrechts-Stanley}, and part (b) to~\cite[Theorem~7.1]{Lambrechts-Stanley} under the additional hypotheses $n\geq 7$, $H^3(\Alg)=H^3(\Alg^{\prime})=0$ and $\Alg^2={\Alg}^{\prime 2}=0$. 
In its present formulation, the theorem is proved in~\cite{Pavel-Hodge}.

\begin{theorem}[Existence and uniqueness of $\dPD$ models~\cite{Lambrechts-Stanley,Pavel-Hodge}]\label{thm:existence-uniqueness-PDmodel} 
\hfill\break
(a) A $\PDGA$ $\Alg$ whose cohomology $H(\Alg)$ is simply connected admits a simply connected differential Poincar\'e duality model $\MM$ in the form
\begin{equation*}
\begin{tikzcd}
   & \Alg_1\ar[dl] \ar[two heads, dr] & \\
   \Alg & & \QQ(\Alg_1)\eqqcolon\MM,
\end{tikzcd}
\end{equation*}
where the $\PDGA$ $\Alg_1$ is simply connected, of Hodge type, and of finite type, and $\QQ$ denotes the nondegenerate quotient. 

(b) Let $\Alg, \Alg^{\prime}$ be simply connected $\dPD$ algebras which are weakly equivalent as $\PDGA$s, and suppose that $H^2(\Alg) = H^2(\Alg^{\prime}) =0$.
Then there exists a simply connected $\dPD$ algebra $\Alg_1$ and quasi-isomorphisms of $\dPD$ algebras
\begin{equation}\label{eq:weak-equivalence-of-dpd}
\begin{tikzcd}	
	& \Alg_1 & \\
	\Alg \ar[hook,ur] & & \Alg^{\prime}. \ar[hook',ul]
\end{tikzcd}
\end{equation}
In particular, $\Alg$ and $\Alg^\prime$ are weakly equivalent as (simply connected) $\dPD$ algebras.
\end{theorem}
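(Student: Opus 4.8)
In both parts the mechanism is the same: produce a simply connected, finite-type Poincar\'e $\DGA$ which is \emph{of Hodge type}, and then pass to its nondegenerate quotient via Lemma~\ref{lem:nondeg-quotient}. For (a): since $H(\Alg)$ is a Poincar\'e duality algebra it is finite dimensional and simply connected, so $\Alg$ admits a minimal Sullivan model, which is nonnegatively graded, unital, simply connected and of finite type; transporting $o_H$ along the induced isomorphism on cohomology makes it a $\PDGA$ mapping quasi-isomorphically to $\Alg$. Following Lambrechts--Stanley (Theorem~\ref{thm:Lambrechts-Stanley}) one modifies this model — adjoining acyclic free generators in degrees $\le n$ and performing a Poincar\'e-duality completion — to obtain a simply connected, finite-type $\PDGA$ $\Alg_1$, still mapping quasi-isomorphically to $\Alg$, whose degenerate subspace $(\Alg_1)_{\mathrm{deg}}$ is acyclic. (That a cocycle $\alpha\in(\Alg_1)_{\mathrm{deg}}$ is already a coboundary is automatic, since $\la[\alpha],[\beta]\ra_H=o_{\Alg_1}(\alpha\wedge\beta)=0$ for every cocycle $\beta$ forces $[\alpha]=0$ by nondegeneracy of the cohomology pairing; the content is to make $\alpha$ bound \emph{inside} $(\Alg_1)_{\mathrm{deg}}$.) Since the pairing has degree $n$, everything in $\Alg_1$ of degree $>n$ is degenerate, so $\QQ(\Alg_1)$ is finite dimensional; hence Lemma~\ref{lem:nondeg-quotient}(b) shows $\Alg_1$ is of Hodge type. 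As $\la 1,\cdot\ra=o_{\Alg_1}\ne 0$ we have $1\notin(\Alg_1)_{\mathrm{deg}}$, so $\MM\coloneqq\QQ(\Alg_1)$ is a simply connected $\dPD$ algebra and, by Lemma~\ref{lem:nondeg-quotient}(a), the surjection $\pi_\QQ\colon\Alg_1\onto\MM$ is a $\PDGA$ quasi-isomorphism; this is the roof $\Alg\longleftarrow\Alg_1\onto\MM$ of (a).

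\textbf{Part (b).} Weakly equivalent simply connected $\PDGA$s share a minimal Sullivan model $\Lambda V$, and $H^2(\Alg)=H^2(\Alg^\prime)=0$ forces the generators of $\Lambda V$ to begin in degree $\ge 3$, so $\Lambda V$ is $2$-connected; fix $\PDGA$ quasi-isomorphisms $\Alg\xleftarrow{\ \simeq\ }\Lambda V\xrightarrow{\ \simeq\ }\Alg^\prime$. Replacing one leg by a relative Sullivan cofibration and forming the homotopy pushout in $\CDGA$s yields a $2$-connected, finite-type $\PDGA$ $B_0$ with $\PDGA$ quasi-isomorphisms $\Alg\xrightarrow{\ \simeq\ }B_0\xleftarrow{\ \simeq\ }\Alg^\prime$; since $\PDGA$ morphisms preserve the orientation and hence the pairing, and $\Alg,\Alg^\prime$ carry nondegenerate chain-level pairings, these maps are injective. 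Truncating $B_0$ in high degrees (quotient by an acyclic ideal concentrated in degrees $\ge n$, disjoint from the images of $\Alg,\Alg^\prime$) we may assume $\dd B_0^n=0$, so $B_0$ has a unique compatible chain-level orientation. One now applies the Poincar\'e-duality completion of (a) to $B_0$ — this is where $2$-connectivity is indispensable — to obtain a $2$-connected, finite-type $\PDGA$ $B$ of Hodge type, through which $\Alg$ and $\Alg^\prime$ still include quasi-isomorphically and with $B_{\mathrm{deg}}$ acyclic. Then $\Alg_1\coloneqq\QQ(B)$ is a simply connected $\dPD$ algebra; $\pi_\QQ\colon B\onto\Alg_1$ is a quasi-isomorphism by Lemma~\ref{lem:nondeg-quotient}(a), and the composites $\Alg\hookrightarrow B\to\Alg_1$ and $\Alg^\prime\hookrightarrow B\to\Alg_1$ are orientation-preserving quasi-isomorphisms which, being injective by nondegeneracy of the pairing on $\Alg_1$, are quasi-isomorphisms of $\dPD$ algebras $\Alg\to\Alg_1\leftarrow\Alg^\prime$; in particular $\Alg$ and $\Alg^\prime$ are weakly equivalent as $\dPD$ algebras.

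\textbf{The main obstacle} is the Poincar\'e-duality completion in both parts: arranging that the auxiliary finite-type $\PDGA$ is of Hodge type, equivalently that a cocycle in the radical of its chain-level pairing bounds inside that radical. For (a) this is essentially the Lambrechts--Stanley construction; for (b) the corresponding surgery on the pushout is much more delicate in the middle dimension — Lambrechts--Stanley's Theorem~7.1 needed $n\ge 7$, $H^3=0$ and $\Alg^2=(\Alg^\prime)^2=0$ — and the Hodge-theoretic approach taken here is what allows these to be relaxed to the single hypothesis $H^2(\Alg)=H^2(\Alg^\prime)=0$, which enters precisely in keeping the degenerate subspace of the completed pushout acyclic.
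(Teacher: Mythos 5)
First, a point of comparison: the paper does not actually prove Theorem~\ref{thm:existence-uniqueness-PDmodel}; it is imported from \cite{Lambrechts-Stanley} and \cite{Pavel-Hodge} (the text states explicitly that ``in its present formulation, the theorem is proved in~\cite{Pavel-Hodge}''), so there is no in-paper argument to measure yours against. Judged on its own terms, your outline of part (a) is consistent with the cited construction (Sullivan minimal model, Lambrechts--Stanley Poincar\'e-duality completion, passage to the nondegenerate quotient via Lemma~\ref{lem:nondeg-quotient}), and the bookkeeping about finite type, concentration in degrees $\le n$, and Hodge type is correct; of course the actual completion step is delegated entirely to the citation.

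The genuine gap is in part (b), and it is precisely the step you yourself flag as ``the main obstacle'' and then do not carry out. After forming the pushout $B_0$ with embeddings $\Alg\hookrightarrow B_0\hookleftarrow\Alg^{\prime}$, you write that ``one now applies the Poincar\'e-duality completion of (a) to $B_0$'' to obtain a finite-type $\PDGA$ $B$ of Hodge type with $B_{\mathrm{deg}}$ acyclic \emph{and with $\Alg$, $\Alg^{\prime}$ still embedded}. But the completion in (a) is a surgery on (an extension of) a minimal Sullivan model; it is neither stated nor obvious that it can be performed \emph{relative} to two prescribed $\dPD$ subalgebras so that their images survive injectively into $\QQ(B)$, nor how the single hypothesis $H^2=0$ replaces the Lambrechts--Stanley hypotheses $n\ge 7$, $H^3=0$, $\Alg^2={\Alg}^{\prime 2}=0$. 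That relative completion \emph{is} the content of part (b) --- and the paper's remark that (b) actually \emph{fails} without $H^2=0$ shows the hypothesis must enter at a specific, identifiable point --- so asserting that ``the Hodge-theoretic approach taken here is what allows these to be relaxed'' without exhibiting that approach leaves the theorem unproved. Two smaller slips: $B_0$ cannot literally be $2$-connected when $\Alg^2\neq 0$ (only its cohomology is), so you should say which object's connectivity is actually used and where; and the injectivity of $\Alg\to\QQ(B)$ follows from the map preserving the chain-level pairing together with nondegeneracy of the pairing on the \emph{source} $\Alg$, not from nondegeneracy on $\Alg_1$.
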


The $\PDGA$ $\Alg_1$ in Theorem~\ref{thm:existence-uniqueness-PDmodel}(a) is constructed as an extension of the Sullivan minimal model of $\Alg$.
In the case that $\Alg$ is of Hodge type, we have the following more explicit construction of a differential Poincar\'e duality model which will be crucial in the sequel.

\begin{prop}[{\cite[Section 5]{Pavel-Hodge}}]\label{prop:existence-PDmodel}
	Let $(\Alg,\dd,\wedge,o)$ be an oriented $\PDGA$ of Hodge type such that its cohomology $H(\Alg)$ is simply connected.
	Let $P\colon\Alg\to\Alg$ be a special propagator and $\SS_P$ the associated small subalgebra.
	Then the nondegenerate quotient $\QQ_P\coloneqq\QQ(\SS_P)$ is a differential Poincar\'e duality model of $\Alg$ via the canonical zigzag of $\PDGA$ quasi-isomorphisms
\begin{equation}\label{eq:canon-zigzag}
	\begin{tikzcd}
		& \SS_P \ar[hook',"\iota",swap]{dl} \ar[two heads,"\pi_{\QQ}"]{dr} & \\
		\Alg & & \QQ_P.
	\end{tikzcd}	
\end{equation}
Here both $\SS_P$ and $\QQ_P$ are simply connected, of finite type, and equipped with orientations and Hodge decompositions which are respected by $\iota, \pi_\QQ$.
\end{prop}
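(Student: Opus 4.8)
The plan is to assemble the statement from the structural results of this section in three stages: analyze the small subalgebra $\SS_P$, pass to its nondegenerate quotient $\QQ_P$, and finally check that the two arrows in \eqref{eq:canon-zigzag} are oriented $\PDGA$ quasi-isomorphisms. The bulk of the argument is bookkeeping; the two genuinely substantive inputs — finiteness and simple connectedness of $\SS_P$, and the fact that the nondegenerate quotient is a quasi-isomorphism — are already available as Lemmas~\ref{lem:small-subalgebra}(b) and~\ref{lem:nondeg-quotient}(a).

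\textbf{Step 1: $\SS_P$ is a simply connected oriented $\PDGA$ of Hodge type, of finite type.} Apply Lemma~\ref{lem:small-subalgebra} to $(\Alg,\dd,\wedge,\la\cdot,\cdot\ra)$ with the given special propagator $P$. Part (c) shows that $P|_{\SS_P}$ is a special propagator on $(\SS_P,\dd,\wedge,\la\cdot,\cdot\ra|_{\SS_P})$ inducing the Hodge decomposition $\SS_P=\HH_P\oplus\dd\SS_P\oplus P(\SS_P)$ relative to that of $\Alg$, so that $\iota\colon\SS_P\into\Alg$ respects Hodge decompositions and is a quasi-isomorphism; by Corollary~\ref{cor:special-Hodge}, $\SS_P$ is of Hodge type. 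I equip $\SS_P$ with $o_{\SS_P}\coloneqq o|_{\SS_P}=o\circ\iota$; since $\iota$ is a quasi-isomorphism, $H(o_{\SS_P})=H(o)\circ H(\iota)$ is nontrivial, so $o_{\SS_P}$ is an orientation and $\iota$ is a morphism of oriented $\PDGA$s. As $\iota$ is a $\DGA$ map compatible with orientations it intertwines the pairings, and hence $H(\iota)$ is an isometric isomorphism from $(H(\SS_P),\la\cdot,\cdot\ra_H)$ onto the Poincar\'e duality algebra $(H(\Alg),o_H)$; this makes $\SS_P$ a $\PDGA$, and the cohomology pairing of $\SS_P$ is perfect (it is nondegenerate and $\SS_P$, hence $H(\SS_P)$, is of finite type). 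Finally, since $\Alg$ is unital and $H(\Alg)$ is simply connected and finite-dimensional (being a Poincar\'e duality algebra), Lemma~\ref{lem:small-subalgebra}(b) gives that $\SS_P$ is unital, simply connected and of finite type.

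\textbf{Step 2: $\QQ_P=\QQ(\SS_P)$ is a simply connected $\dPD$ algebra.} By construction $\la\cdot,\cdot\ra_\QQ$ is nondegenerate and $\pi_\QQ\colon\SS_P\onto\QQ_P$ is a chain map preserving pairings. The orientation $o_{\SS_P}$ vanishes on $(\SS_P)_{\mathrm{deg}}$, because for $a\perp\SS_P$ one has $o_{\SS_P}(a)=o_{\SS_P}(1\wedge a)=\la 1,a\ra=0$ using $1\in\SS_P$; hence it descends to an orientation $o_\QQ$ on $\QQ_P$ with $o_\QQ\circ\pi_\QQ=o_{\SS_P}$, inducing $\la\cdot,\cdot\ra_\QQ$ via~\eqref{eq:pairing-orientation}. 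Since the cohomology pairing of $\SS_P$ is perfect and $\SS_P$ is of Hodge type, Lemma~\ref{lem:nondeg-quotient}(a) gives that $(\SS_P)_{\mathrm{deg}}$ is acyclic and $\pi_\QQ$ is a quasi-isomorphism. In particular $1\notin(\SS_P)_{\mathrm{deg}}$ (otherwise $o_{\SS_P}$ would be trivial, contradicting nontriviality on cohomology), so the unit survives in $\QQ_P$ and $\QQ_P^0=\R\cdot 1$, while $\QQ_P^1=0$ as $\SS_P^1=0$; thus $\QQ_P$ is simply connected. It is of finite type as a quotient of such, and the existence of a nondegenerate pairing of degree $n$ on the nonnegatively graded space $\QQ_P$ forces $\QQ_P=\bigoplus_{i=0}^n\QQ_P^i$, hence $\QQ_P$ is finite-dimensional. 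Therefore $\QQ_P$ is a finite-dimensional, nonnegatively graded, simply connected, unital $\CDGA$ with nondegenerate pairing corresponding to $o_\QQ$, i.e.\ a simply connected $\dPD$ algebra. By Lemma~\ref{lem:nondeg-quotient}(c) the Hodge decomposition of $\SS_P$ descends to one on $\QQ_P$, and $\pi_\QQ$ respects Hodge decompositions.

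\textbf{Step 3: conclusion and main obstacle.} Both maps in~\eqref{eq:canon-zigzag} are quasi-isomorphisms of unital $\DGA$s compatible with orientations ($o_{\SS_P}=o\circ\iota$ and $o_\QQ\circ\pi_\QQ=o_{\SS_P}$), hence a zigzag of oriented $\PDGA$ — and in particular of $\PDGA$ — quasi-isomorphisms; this exhibits $\QQ_P$ as a differential Poincar\'e duality model of $\Alg$, with $\SS_P$ and $\QQ_P$ simply connected, of finite type, and carrying orientations and Hodge decompositions respected by $\iota$ and $\pi_\QQ$. I do not expect a serious obstacle here: the only points requiring care are the chain of small observations in Step~2 guaranteeing that $\QQ_P$ is genuinely finite-dimensional and Poincar\'e dual (survival of the unit, concentration in degrees $0\le i\le n$, descent of $o$ and of the Hodge decomposition), and each of these follows directly from the cited structural facts once one records that $\iota$ is an isometric quasi-isomorphism.
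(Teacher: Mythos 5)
Your proof is correct and follows essentially the same route as the paper's: Lemma~\ref{lem:small-subalgebra} for the properties of $\SS_P$, Lemma~\ref{lem:nondeg-quotient} for the passage to $\QQ_P$, and a direct check that $\iota$ and $\pi_\QQ$ are $\PDGA$ quasi-isomorphisms respecting orientations and Hodge decompositions. You merely spell out several points the paper dismisses with ``by construction'' and ``clearly'' (descent of the orientation to the quotient, survival of the unit, finite-dimensionality of $\QQ_P$), and these verifications are all accurate.
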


\begin{proof}
The inclusion $\iota\colon\SS_P\into\Alg$ and the quotient map $\pi_\QQ\colon\SS_P\onto\QQ_P$ are $\PDGA$ morphisms by construction; they are quasi-isomorphisms (and hence $\PDGA$ quasi-isomorphisms) by Lemma~\ref{lem:small-subalgebra}(b) and Lemma~\ref{lem:nondeg-quotient}(a), respectively. 
According to Lemma~\ref{lem:small-subalgebra}(b), a Hodge decomposition of $\Alg$ induces a Hodge decomposition of~$\SS_P$, which in turn induces a Hodge decomposition of $\QQ_P$ by Lemma~\ref{lem:nondeg-quotient}(c), such that~$\iota$ and~$\pi_\QQ$ respect the Hodge decompositions.  
By Lemma~\ref{lem:small-subalgebra}(c), $\SS_P$ is simply connected and of finite type, and the same is clearly true for its nondegenerate quotient $\QQ_P$.
The pairing on~$\QQ_P$ is nondegenerate by construction, hence~$\QQ_P$ is a $\dPD$ algebra.
\end{proof}

\begin{remark}
Examples in~\cite{Pavel-Hodge} show:

(a) The zigzag in Theorem~\ref{thm:existence-uniqueness-PDmodel}(a) cannot generally be replaced by a single $\PDGA$ quasi-isomorphism: there exists a $\PDGA$ $\Alg$ with simply connected cohomology which can not be connected to any of its Poincar\'e duality models $\MM$ by a single $\PDGA$ quasi-isomorphism (neither $\MM \to\Alg$ nor $\Alg \to \MM$). 

(b) Theorem~\ref{thm:existence-uniqueness-PDmodel}(b) fails without the hypothesis $H^2(\Alg_1) = H^2(\Alg_2) =0$: there exist two simply connected $\dPD$ algebras $\Alg$ and $\Alg^{\prime}$ such that there is no simply connected $\dPD$ algebra $\Alg_1$ which would fit in~\eqref{eq:weak-equivalence-of-dpd}.
Nevertheless, we expect that~$\Alg$ and~$\Alg^\prime$ are still weakly equivalent as $\dPD$ algebras (cf.~the Conjecture at the end of~\cite{Lambrechts-Stanley}). 

(c) The differential Poincar\'e duality model in Proposition~\ref{prop:existence-PDmodel} depends on the choice of $P$: there exists an oriented $\PDGA$ $\Alg$ of Hodge type with simply connected cohomology and two special propagators $P, P^\prime\colon\Alg\to\Alg$ such that~$\QQ_P$ and~$\QQ_{P^\prime}$ are not isomorphic as graded vector spaces.
\end{remark}

\section{\texorpdfstring{$\IBLinfty$ algebras}{IBL-infinity algebras}}\label{sec:IBLinfty}
\subsection{Basic definitions and properties}

In this subsection, we recall from~\cite{Cieliebak-Fukaya-Latschev} the basic notions of $\IBLinfty$ algebras (over $\R$).

{\bf $\IBLinfty$ algebras. }
Let $C=\bigoplus_{i\in\Z} C^i$ be a $\Z$-graded $\R$-vector space.
We denote the \emph{shifted grading} in $C[1]$ on a homogenous element $c$ by
\begin{equation}\label{eq:shifted-grading}
   |c| \coloneqq \deg c -1,
\end{equation}
and use it to induce a grading $|\cdot|$ on all Cartesian and tensor products of $C[1]$ and on their morphisms.
For every $k\in\N$, let
\begin{equation}\label{eq:extalg}
	E_k C \coloneqq C[1]^{\otimes k}/\sim
\end{equation}
be the quotient of the $k$-fold tensor product (over $\R$) of $C[1]$ under the action of the symmetric group~$S_k$ generated by the transposition $c_1 \otimes c_2 \mapsto (-1)^{|c_1||c_2|} c_2\otimes c_1$.
Following \cite{Cieliebak-Fukaya-Latschev}, an {\em $\IBLinfty$ structure of degree $d\in \Z$} on $C$ is a collection of linear maps 
\[
    \mathfrak p_{k,\ell,g} \colon E_k C \longrightarrow E_{\ell} C,\quad k\ge 1,\ \ell \ge 1,\ g\geq 0
\] 
of degrees
\[
    |\mathfrak p_{k,\ell,g}| = -2d(k+g-1)-1
\]
which for every $(k,\ell,g)$ satisfy the quadratic relation%
\footnote{Here, as well as in formula~\eqref{eq:ibl-mor-rel} below, the sums have only finitely many nonzero terms.}  
\begin{equation}\label{eq:ibl-relation}
	\sum_{h=1}^\infty \sum_{\substack{k_1+k_2-h = k \\ \ell_1 + \ell_2 -h = \ell \\ g_1 + g_2 + h-1 = g}} \fp_{k_2,\ell_2,g_2}\circ_{h}\fp_{k_1,\ell_1,g_1} = 0,
\end{equation}
where $\circ_h$ denotes the composition of $h$ outputs of $\fp_{k_1,\ell_1,g_1}$ with $h$ inputs of $\fp_{k_2,\ell_2,g_2}$ in all possible ways.
We call the tuple
\[
	\bigl(C,\fp\coloneqq\{\mathfrak p_{k,\ell,g}\}_{k,\ell \geq 1,g\geq 0}\bigr)
\]
an {\em $\IBLinfty$ algebra of degree~$d$}.

We think of~$\fp_{k,\ell,g}$ as being encoded by a connected compact oriented surface of signature $(k,\ell,g)$, meaning that it has $k$ incoming boundary components, $\ell$~outgoing boundary components, and genus~$g$, and interpret \eqref{eq:ibl-relation} as a sum over pairs of surfaces of signatures $(k_1,\ell_1,g_1)$, $(k_2,\ell_2,g_2)$ such that if we glue them at $h$ common boundaries, then we obtain a connected surface of signature $(k,\ell,g)$.
In fact, the left hand side of \eqref{eq:ibl-relation} is a special case of the \emph{connected composition}
\[
	(\fp_{k^-_1,\ell^-_1,g^-_1},\dotsc,\fp_{k^-_{r^-},\ell^-_{r^-},g^-_{r^-}})\circ_{\mathrm{conn}}(\fp_{k^+_1,\ell^+_1,g^+_1},\dotsc,\fp_{k^+_{r^+},\ell^+_{r^+},g^+_{r^+}})
\]
defined in \cite[Definition D.4.2]{Pavel-thesis} with fixed total genus~$g$ and restricted to $E_k C \to E_{\ell} C$.
We will denote this modification of $\circ_{\mathrm{conn}}$ by $\circ_{\mathrm{conn}}^{k,\ell,g}$.

An $\IBLinfty$ algebra is called a {$\dIBL$ algebra} if the only possibly nontrivial operations are $\fp_{1,1,0}, \fp_{2,1,0}, \fp_{1,2,0}$,  and an {\em $\IBL$ algebra} if the only possibly nontrivial operations are $\fp_{2,1,0}, \fp_{1,2,0}$.
The data of an $\IBL$ algebra is the same as the data of a graded involutive Lie bialgebra up to signs, which can be explained by choosing a natural convention for degree shifts (see \cite[Proposition 3.2.5]{Pavel-thesis}).

The relation (1,1,0) of \eqref{eq:ibl-relation} implies that $\fp_{1,1,0}\colon C\to C$ squares to zero and the relations (2,1,0) resp.~(1,2,0) that the operations $\fp_{2,1,0}$ resp.~$\fp_{1,2,0}$ descend to homology $H\coloneqq H(C,\fp_{1,1,0})$ where they constitute an $\IBL$ algebra. 
The Jacobi identity on $H$ follows from (3,1,0), the coJacobi identity from (1,3,0), Drinfeld compatibility from (2,2,0), and the involutivity from (1,1,1).

{\bf $\IBLinfty$ morphisms. }
Let $(C,\fp=\{\fp_{k,\ell,g}\})$ and $(D,\fq=\{\fq_{k,\ell,g}\})$ be  two $\IBLinfty$ algebras of the same degree $d$. 
An {\em $\IBLinfty$ morphism} $\ff\coloneqq\{\ff_{k,\ell,g}\}\colon C\to D$ is a collection of linear maps  
\[
    \ff_{k,\ell,g} \colon E_k C \longrightarrow E_{\ell} D,\quad k,\ell \ge 1, g\geq 0
\] 
of degrees 
\[
    |\ff_{k,\ell,g}| = -2d(k+g-1)
\] 
which for every $(k,\ell,g)$ satisfy
\begin{equation}\label{eq:ibl-mor-rel}
\begin{aligned}
	&\sum \frac{1}{r^+!}
	\fq_{k^-,\ell^-,g^-}\circ_{\mathrm{conn}}^{k,\ell,g}(\ff_{k_1^+,\ell^+_1,g^+_1},\dotsc,\ff_{k^+_{r^+},\ell^+_{r^+},g^+_{r^+}}) \\ 
	&= \sum \frac{1}{r^-!}
	(\ff_{k^-_1,\ell^-_1,g^-_1},\dotsc,\ff_{k^-_{r^-},\ell^-_{r^-},g^-_{r^-}})\circ_{\mathrm{conn}}^{k,\ell,g}\fp_{k^+,\ell^+,g^+}.
\end{aligned}
\end{equation}
Here and in all composition formulas below $\sum$ means a summation over all free indices in $\N$ and genus in $\N_0$.
The relation $(1,1,0)$ of~\eqref{eq:ibl-mor-rel} implies that $\ff_{1,1,0}\colon (C^+,\fp^+)\to(C^-,\fp^-)$ is a chain map, and the relations $(2,1,0)$ resp.~$(1,2,0)$ imply that the induced map on homology $H(\ff_{1,1,0})\colon H(C^+)\to H(C^-)$ preserves the bracket resp.~cobracket.

An \emph{$\IBLinfty$ quasi-isomorphism} is an $\IBLinfty$ morphism $\ff=\{\ff_{k,\ell,g}\}\colon (C^+,\fp^+)\to (C^-,\fp^-)$ such that $\ff_{1,1,0}\colon (C^+,\fp_{1,1,0}^+)\to (C^-,\fp_{1,1,0}^-)$ is a quasi-isomorphism.
In~\cite{Cieliebak-Fukaya-Latschev}, the notion of a {\em homotopy of $\IBLinfty$ morphisms} is defined and the following facts are proved using obstruction theory:
\renewcommand{\descriptionlabel}[1]{\hspace{\labelsep}(\textit{#1})}
\begin{description}
\item[Homotopy inverse] Every $\IBLinfty$ quasi-isomorphism is an $\IBLinfty$ homotopy equivalence, i.e., it has an $\IBLinfty$ homotopy inverse.
\item[Homotopy transfer] Every $\IBLinfty$ algebra $(C,\fp)$ induces an $\IBLinfty$ structure $\fq=\{\fq_{k,\ell,g}\}$ on its homology $H\coloneqq H(C,\fp_{1,1,0})$ together with an $\IBLinfty$ homotopy equivalence $\ff\colon(C,\fp)\to (H,\fq)$.
\end{description}

{\bf Twisting with a Maurer--Cartan element. }
The deformation theory of $\IBLinfty$ algebras is formulated in terms of \emph{filtered $\IBLinfty$ algebras}.\footnote{
All our filtered $\IBLinfty$ algebras will be {\em strict} in the sense of~\cite{Cieliebak-Fukaya-Latschev}.} 
Similarly to $\IBLinfty$ algebras they are governed by the relations \eqref{eq:ibl-relation}, \eqref{eq:ibl-mor-rel}, with the difference that $C$ is now a filtered graded vector space and the maps $\fp_{k,\ell,g}\colon\wh{E}_kC\to\wh{E}_{\ell}C$, $\ff_{k,\ell,g}\colon\wh{E}_kC\to\wh{E}_{\ell}D$ are defined on suitable completions.
Besides the degree $d\in\Z$, the notion of a filtered $\IBLinfty$ algebra also involves a filtration degree $\gamma\in\N_0$.
All the $\IBLinfty$ algebras considered in this paper are based on the dual cyclic bar complex, which has a natural filtration by word-length with $\gamma=2$.
We refer to~\cite[Section~8]{Cieliebak-Fukaya-Latschev} and \cite[Appendix~D]{Pavel-thesis} for more details.

A {\em Maurer--Cartan element} in a filtered $\IBLinfty$ algebra $(C,\fp=\{\fp_{k,\ell,g}\})$ of degree $d$ and filtration degree $\gamma$ is a collection $\fm=\{\fm_{\ell,g}\}$ of elements 
\[
    \fm_{\ell,g}\in \wh{E}_\ell C,\quad \ell\geq 1,g\geq 0
\]
of degrees 
\[
   |\fm_{\ell,g}| = -2d(g-1)
\] 
and filtration degrees
\[
	\|\fm_{\ell,g}\| \ge \gamma (2-2g-\ell), 
\]
where a strict inequality is assumed for $(1,0)$, $(2,0)$, which satisfy the relations
\begin{equation*}
	\sum \frac{1}{r^+!}\fp_{k^-,\ell^-,g^-}\circ_{\mathrm{conn}}^{k,\ell,g}(\fm_{\ell^+_1,g^+_1},\dotsc,\fm_{\ell^+_{r^+},g^+_{r^+}})= 0
\end{equation*}
for all $(\ell,g)\in\N\times\N_0$.
Here we view $\fm_{\ell,g}$ as a map $\R\to\wh E_{\ell} C$, $1\mapsto \fm_{\ell,g}$ while applying the composition.
Given a Maurer--Cartan element $\fm=\{\fm_{\ell,g}\}$ in a filtered $\IBLinfty$ algebra $(C,\fp=\{\fp_{k,\ell,g}\})$, the \emph{twist of $\fp$ with $\fm$} is the collection $\fp^\fm=\{\fp^\fm_{k,\ell,g}\}$ of operations $\fp_{k,\ell,g}^\fm\colon\wh{E}_k C\to\wh{E}_{\ell} C$ for $k,\ell\ge 1$, $g\ge 0$ defined by
\begin{equation}\label{eq:twisted-operations}
	\fp_{k,\ell,g}^\fm \coloneqq \sum \frac{1}{r^+!}
	\fp_{k^-,\ell^-,g^-}\circ_{\mathrm{conn}}^{k,\ell,g}(\fm_{\ell^+_1,g^+_1},\dotsc,\fm_{\ell^+_{r^+},g^+_{r^+}}).
\end{equation}
Given another filtered $\IBLinfty$ algebra $(D,\fq=\{\fq_{k,\ell,g}\})$ and a morphism $\ff\colon (C,\fp)\to (D,\fq)$, the \emph{pushforward of $\fm$ along $\ff$} is the collection $\ff_*\fm=\{(\ff_*\fm)_{\ell,g}\}$ of elements $(\ff_*\fm)_{\ell,g}\in\wh{E}_\ell D$ for $\ell\ge 1$, $g\ge 0$ defined by
\begin{equation}\label{eq:pushforward-mc}
	(\ff_*\fm)_{\ell,g}= 
	 \sum \frac{1}{r^+!}\frac{1}{r^-!}
	 (\ff_{k_1^-,\ell_1^-,g_1^-},\dotsc,
	 \ff_{k_{r^-}^-,\ell_{r^-}^-,g_{r^-}^-})\circ_{\mathrm{conn}}^{0,\ell,g}
     (\fm_{\ell_1^+,g_1^+},\dotsc, 
     \fm_{\ell_{r^+}^+,g_{r^+}^+}).	
\end{equation}
Finally, the \emph{twist of $\ff$ with $\fm$} is the collection $\ff^\fm = \{\ff^\fm_{k,\ell,g}\}$ of maps $\ff^\fm_{k,\ell,g}\colon\wh{E}_k C \to \wh{E}_\ell D$ for $k,\ell\geq 1$, $g\ge 0$ defined by
\begin{equation*}
	\ff^\fm_{k,\ell,g} =
	\sum
	 \frac{1}{r^+!}\frac{1}{r^-!}
	 (\ff_{k_1^-,\ell_1^-,g_1^-},\dotsc,	 
	 \ff_{k_{r^-}^-,\ell_{r^-}^-,g_{r^-}^-})\circ_{\mathrm{conn}}^{k,\ell,g}
     (\fm_{\ell_1^+,g_1^+},\dotsc, 
     \fm_{\ell_{r^+}^+,g_{r^+}^+}).	
\end{equation*}

\begin{prop}[{\cite[Propositions~9.3, 9.6]{Cieliebak-Fukaya-Latschev}}]\label{prop:MC}
In the situation above, we have:
\begin{enumerate}
\item The twist $\fp^\fm$ defines an filtered $\IBLinfty$ structure on $C$.
\item The pushforward $\ff_*\fm$ defines a Maurer--Cartan element in $(D,\fq)$.
\item The twist $\ff^\fm$ defines a filtered $\IBLinfty$ morphism $\ff^\fm\colon (C,\fp^\fm) \to (D,\fq^{\ff_*\fm})$.
\item If $\ff$ is a homotopy equivalence of filtered $\IBLinfty$ algebras, then so is $\ff^\fm$.  
\end{enumerate}
\end{prop}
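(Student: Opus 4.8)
The plan is to deduce all four assertions from the standard twisting calculus for square-zero operators, carried out inside the algebraic structure that governs filtered $\IBLinfty$ algebras. Following \cite[Sections~8--9]{Cieliebak-Fukaya-Latschev} and \cite[Appendix~D]{Pavel-thesis}, one packages the collection $\{\fp_{k,\ell,g}\}$ into a single operator $\wh\fp$ on the completed (co)free object built from $C[1]$, with a formal variable recording the genus $g$; the relations \eqref{eq:ibl-relation} then say precisely that $\wh\fp\circ\wh\fp=0$. In the same dictionary, an $\IBLinfty$ morphism $\ff$ is encoded by an operator $\wh\ff$ intertwining $\wh\fp$ and $\wh\fq$ and respecting the comultiplicative structure (this being the content of \eqref{eq:ibl-mor-rel}), while a Maurer--Cartan element $\fm$ is encoded by a ``group-like'' element whose translation automorphism commutes with $\wh\fp$; under this correspondence, \eqref{eq:twisted-operations} and \eqref{eq:pushforward-mc} are the component expansions of conjugation by, resp.\ transport of, that translation automorphism. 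The first thing I would pin down is convergence: the filtration-degree bounds $\|\fm_{\ell,g}\|\ge\gamma(2-2g-\ell)$, strict for $(1,0)$ and $(2,0)$, are exactly what forces every infinite sum in \eqref{eq:twisted-operations}--\eqref{eq:pushforward-mc} to be finite on each filtration level, so that every expression below is well defined.

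Parts (i)--(iii) are then purely formal and would go as in any reasonable deformation-theoretic framework. For (i), one checks that the twisted operator $\wh{\fp^\fm}$ is the conjugate of $\wh\fp$ by the automorphism ``translation by $\fm$'', lands in the prescribed degrees and filtration levels, and satisfies $\wh{\fp^\fm}\circ\wh{\fp^\fm}=0$ because conjugation preserves square-zeroness; expanding this identity component by component returns the $\IBLinfty$ relations for $\fp^\fm$. For (ii), one observes that $\ff_*\fm$ of \eqref{eq:pushforward-mc} is the image under $\wh\ff$ of the group-like element attached to $\fm$; since $\wh\ff$ preserves the comultiplicative structure its image is again group-like, and since $\wh\ff$ intertwines $\wh\fp$ with $\wh\fq$ the image is $\wh\fq$-invariant, which is the Maurer--Cartan equation for $\ff_*\fm$. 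For (iii), one checks that $\wh\ff$ followed by translation by $\fm$ equals translation by $\ff_*\fm$ followed by $\wh\ff$, which says precisely that the twisted map $\ff^\fm$ intertwines $\wh{\fp^\fm}$ with $\wh{\fq^{\ff_*\fm}}$, i.e.\ is a filtered $\IBLinfty$ morphism.

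For (iv) I would proceed as follows. Because each $\fm_{\ell,g}$ raises the word-length filtration by at least $\gamma(2-2g-\ell)$, with the strict bounds at $(1,0)$ and $(2,0)$, the linear part $\ff^\fm_{1,1,0}$ differs from $\ff_{1,1,0}$ only by operators that strictly increase the filtration; hence $\ff^\fm_{1,1,0}$ is again a quasi-isomorphism, and by the filtered version of the Homotopy Inverse property recalled above it follows that $\ff^\fm$ is a filtered $\IBLinfty$ homotopy equivalence. Alternatively, and closer to the twisting calculus, one fixes a filtered homotopy inverse $\ff^{-1}$ of $\ff$ and shows that $(\ff^{-1})^{\ff_*\fm}$ is a homotopy inverse of $\ff^\fm$, using that the composition of two twisted morphisms is the twist of their composition and that the homotopy relation between $\IBLinfty$ morphisms is itself preserved under twisting --- the last point being proved by the same translation-automorphism manipulation as in (iii), applied now to the homotopy data.

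The main obstacle is not conceptual but bookkeeping. The substance of the proof is to make the dictionary above precise: to define $\wh\fp$, $\wh\ff$, and the group-like element of $\fm$ on the completed (co)free object carefully enough that ``$\wh{\fp^\fm}$ is the conjugate of $\wh\fp$'' and ``$\ff_*\fm$ is $\wh\ff$ of the group-like element of $\fm$'' reproduce, with all Koszul signs coming from the shifted grading $|\cdot|$ and all combinatorial factors coming from the $S_k$-symmetrization defining $E_kC$, the explicit connected-composition formulas \eqref{eq:twisted-operations} and \eqref{eq:pushforward-mc}; and to check that the filtration estimates indeed truncate every exponential and converge every sum term by term. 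This is exactly the content of \cite[Sections~8--9]{Cieliebak-Fukaya-Latschev} and \cite[Appendix~D]{Pavel-thesis}, and once that framework is granted, the proof of the proposition is the formal argument above.
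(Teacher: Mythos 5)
The paper offers no proof of this proposition at all: it is imported verbatim from \cite[Propositions~9.3, 9.6]{Cieliebak-Fukaya-Latschev}, so there is nothing internal to compare your argument against. Your sketch is a faithful reconstruction of the argument in that reference --- packaging the operations into a square-zero operator on a completed symmetric coalgebra with a genus variable, realizing twisting as conjugation by translation by the (group-like exponential of the) Maurer--Cartan element, using the filtration bounds (strict at $(1,0)$ and $(2,0)$) for convergence, and deducing (iv) from the fact that $\ff^\fm_{1,1,0}$ agrees with $\ff_{1,1,0}$ modulo filtration-raising terms together with the obstruction-theoretic Homotopy Inverse property --- so it matches the intended proof in both substance and route.
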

In the situation above, we will refer to $\fp^\fm$ as the \emph{twisted $\IBLinfty$ structure}, to $\ff^\fm$ as the \emph{twisted $\IBLinfty$ morphism}, and to $\ff_*\fm$ as the \emph{pushforward Maurer--Cartan element}.

\subsection{The \texorpdfstring{${\rm dIBL}$}{dIBL} structure associated to a cyclic cochain complex}\label{ss:dIBL-cyc-cochain}

In this and the following subsections, we recall the algebraic constructions from~\cite{Cieliebak-Fukaya-Latschev} which, taken all together, associate to a cyclic $\DGA$~$\Alg$ of degree $n$ a filtered $\IBLinfty$ structure of degree $n-3$ on the degree shifted dual cyclic bar complex of the cohomology $(\dcbc H(\Alg))[2-n]$, whose homology is isomorphic to the cyclic cohomology of~$\Alg$ and which is defined naturally up to $\IBLinfty$ homotopy equivalence.
The trickiest part of these constructions are the signs, which we will not spell out but refer to~\cite{Cieliebak-Fukaya-Latschev}.

{\bf The dual cyclic bar complex and the $\dIBL$ functor. }
Let $\Alg$ be a $\Z$-graded $\R$-vector space,
and let $|\cdot|$ be the shifted grading on $\Alg[1]$ as in~\eqref{eq:shifted-grading}.
For every $k\in \N$, we define
\begin{equation*}
	B_k^{\text{\rm cyc}}\Alg \coloneqq A[1]^{\otimes k}/\sim 
\end{equation*}
as the quotient of the $k$-fold tensor product of $\Alg$ over $\R$ under the restriction of the action of $S_k$ from \eqref{eq:extalg} to the cyclic permutations $\Z_k\subset S_k$.
We will write the equivalence class of $a_1\otimes\dots\otimes a_k$ in $B_k^{\text{\rm cyc}}\Alg$ as $a_1\cdots a_k$. 
Following~\cite{Cieliebak-Fukaya-Latschev}, we define the \emph{cyclic bar complex}%
\footnote{Note that our cyclic bar complex does not include a $k=0$ term.}
\[
	\cbc\Alg\coloneqq\bigoplus_{k\in\N} \cbc_k\Alg.
\] 
We define the {\em dual cyclic bar complex}
\begin{equation*}
	\begin{aligned}
	\dcbc_k \Alg& \coloneqq (B_k^{\text{\rm cyc}} \Alg)^*, \\
	\dcbc \Alg & \coloneqq (\cbc\Alg)^* = \prod_{k\in\N} \dcbc_k\Alg,
	\end{aligned}
\end{equation*}
where the upper $*$ denotes the {\em graded dual} with respect to the degrees in $A[1]$.
This definition differs from the one in~\cite{Cieliebak-Fukaya-Latschev} where $\dcbc \Alg$ was defined as a direct sum. Our $\dcbc \Alg$ is already {\em complete} with respect to the filtration of $\prod_{k\in\N} \dcbc_k\Alg$ by the tensor degree $k$, which avoids unnecessary completions later. 

We consider the \emph{reversed shifted grading} on $\dcbc\Alg$, i.e., for homogenous $\varphi\in \dcbc\Alg$ and $x\in \cbc\Alg$, we have
\[
	\varphi(x)\neq 0\quad\Longrightarrow\quad |x| = |\varphi|.
\]

\begin{prop}[{\cite[Proposition~10.4]{Cieliebak-Fukaya-Latschev}}]\label{prop:structureexists}
Let $(\Alg,\dd,\la\cdot,\cdot\ra)$ be a cyclic cochain complex of degree $n\in\Z$.
The degree shifted dual cyclic bar complex $C\coloneqq(\dcbc\Alg)[2-n]$ carries a canonical filtered $\dIBL$ structure $\{\fp_{1,1,0}=\dd^*,\fp_{2,1,0},\fp_{1,2,0}\}$ of degree $n-3$.
\end{prop}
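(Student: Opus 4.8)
The plan is to construct the three operations $\fp_{1,1,0}$, $\fp_{2,1,0}$, $\fp_{1,2,0}$ explicitly on the degree shifted dual cyclic bar complex $C = (\dcbc\Alg)[2-n]$, verify their degrees and filtration behaviour, and then check the $\dIBL$ relations, i.e. the relations \eqref{eq:ibl-relation} for $(k,\ell,g)$ with $k+\ell+2g\le 3$. The key point is that $\dcbc\Alg$ is the graded dual of the cyclic bar complex $\cbc\Alg$, and the perfect pairing $\la\cdot,\cdot\ra$ identifies $\Alg$ with its dual; this allows us to transport the three pieces of a $\dIBL$ structure to the dual side. Concretely: $\fp_{1,1,0}$ should be $\dd^*$, the dual of the bar differential built from $\dd$ on $\cbc\Alg$; $\fp_{1,2,0}$ should be the dual of the coproduct on $\cbc\Alg$ which splits a cyclic word $a_1\cdots a_k$ at two positions, inserting the copairing (Casimir element) $\sum e_i\otimes e^i$ dual to $\la\cdot,\cdot\ra$ into the two resulting cyclic words; and $\fp_{2,1,0}$ should be the dual of the product on $\cbc\Alg$ which merges two cyclic words by pairing one letter from each against $\la\cdot,\cdot\ra$ and concatenating the remainders. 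Equivalently, on $\dcbc\Alg$ itself, $\fp_{2,1,0}$ and $\fp_{1,2,0}$ are the string-topology-style operations coming from the cyclic structure and the pairing.

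First I would fix conventions: recall the reversed shifted grading on $\dcbc\Alg$, the shift by $2-n$, and spell out the Casimir element $C = \sum_i e_i\otimes e^i \in \Alg\otimes\Alg$ determined by perfectness of $\la\cdot,\cdot\ra$, noting it has degree $n$ (so after the shift the degree bookkeeping works out to give $|\fp_{2,1,0}| = |\fp_{1,2,0}| = -2(n-3)-1 = 5-2n$ and $|\fp_{1,1,0}| = -1$, matching $|\fp_{k,\ell,g}| = -2d(k+g-1)-1$ with $d = n-3$). Next I would define the three maps precisely on representatives $a_1\cdots a_k$ of $\cbc_k\Alg$, check well-definedness under cyclic permutation (this uses the cyclicity \eqref{Eq:ProdCyclicOld} / graded symmetry of the pairing, together with the Koszul signs from the shifted grading), and then dualise. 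The filtration statement is immediate: $\fp_{1,1,0}$ preserves word-length, $\fp_{2,1,0}$ decreases total word-length by $2$, $\fp_{1,2,0}$ by $2$ as well, which is exactly compatible with filtration degree $\gamma = 2$; and one checks the strict-inequality conditions are not needed here since there is no Maurer--Cartan element yet — this is the untwisted structure.

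The substantive step is verifying the $\dIBL$ relations. There are essentially five: $(1,1,0)$ says $(\dd^*)^2 = 0$, which is dual to $\dd^2 = 0$ on $\cbc\Alg$; $(2,1,0)$ and $(1,2,0)$ say that $\fp_{1,1,0}$ is a derivation/coderivation for $\fp_{2,1,0}$ and $\fp_{1,2,0}$, which dualises to the Leibniz identity for $\dd$ against the product/coproduct on $\cbc\Alg$ and ultimately to the Leibniz rule $\dd(x\wedge y) = \dd x\wedge y \pm x\wedge\dd y$ together with the compatibility \eqref{Eq:ProdCyclic} of the pairing with $\dd$; $(3,1,0)$ and $(1,3,0)$ are co-Jacobi-type relations that follow from coassociativity of the splitting coproduct on $\cbc\Alg$ (two nested splittings), which is a combinatorial identity about splitting a cyclic word at three points and inserting two Casimir elements; and $(2,2,0)$ is the Drinfeld compatibility relating $\fp_{2,1,0}$ and $\fp_{1,2,0}$, which follows from the interplay of merging and splitting cyclic words — splitting after merging equals merging after splitting in the appropriate sense. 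I expect the main obstacle to be the signs: all of these identities are true "on the nose" at the level of the cyclic bar complex, but tracking the Koszul signs coming from the shifted grading $|\cdot|$, the cyclic reordering of letters, the graded dualisation, and the overall shift by $2-n$ through the $\circ_h$ compositions is delicate. Following~\cite{Cieliebak-Fukaya-Latschev}, I would not reproduce these sign computations but cite Proposition~10.4 there for the precise formulas and the verification, and restrict the exposition to the structural content: the three operations come from the differential, product, and coproduct on $\cbc\Alg$, and the $\dIBL$ relations are the duals of $\dd^2=0$, the (co)Leibniz rule, (co)associativity, and the merge-split compatibility on the cyclic bar complex.
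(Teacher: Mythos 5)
Your route is the same as the paper's: the proof in the text also simply writes down $\fp_{1,1,0}=\dd^*$ together with the two dual-basis formulas \eqref{eq:mu} and \eqref{eq:delta} (which are precisely your ``merge two cyclic words through the pairing'' and ``split a cyclic word and insert the Casimir element'' operations, read on the predual $\cbc\Alg$), notes independence of the choice of basis, and refers to \cite{Cieliebak-Fukaya-Latschev} for the signs and for the ``straightforward, though tedious'' verification of the relations. Two concrete points in your plan are nevertheless wrong and would have to be repaired.

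First, your list of relations to verify omits the involutivity relation, which is the instance $(k,\ell,g)=(1,1,1)$ of \eqref{eq:ibl-relation}: it asserts the vanishing of $\fp_{2,1,0}\circ_2\fp_{1,2,0}$, the composition gluing \emph{both} outputs of the cobracket to both inputs of the bracket (which raises the genus by one). Your stated cutoff ``$k+\ell+2g\le 3$'' in fact excludes $(3,1,0)$, $(1,3,0)$, $(2,2,0)$ and $(1,1,1)$ alike; you recover the first three by listing them explicitly, but $(1,1,1)$ never appears. It is an independent relation --- it is the ``I'' in $\IBL$, and the paper notes that it is exactly what yields involutivity on homology --- so a verification that skips it does not establish the statement. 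Second, your degree bookkeeping contradicts the formula you quote: $|\fp_{k,\ell,g}|=-2d(k+g-1)-1$ with $d=n-3$ gives $|\fp_{1,2,0}|=-1$ (since $k+g-1=0$ for $(k,g)=(1,0)$), not $5-2n$; only $\fp_{2,1,0}$ has degree $5-2n$. This asymmetry between $k$ and $\ell$ is genuine and is precisely what the shift by $2-n$ is calibrated to produce. Neither point changes the overall strategy, which is sound and matches the paper, but both must be fixed for the argument to be complete.
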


\begin{proof}[Sketch of proof]
Let us describe the operations.
The coboundary operator~$\dd$ extends as a graded derivation to $B^{\rm cyc}\Alg$, still denoted by $\dd$, and $\fp_{1,1,0}\coloneqq \dd^*$ is just its dual. 
To define the other two operations $\fp_{2,1,0}$, $\fp_{1,2,0}$, we pick a homogeneous basis $(e_i)$ of~$\Alg$.
Let $(e^i)$ be the dual basis of $\Alg$ with respect to the pairing $\la\cdot,\cdot\ra\colon\Alg\times\Alg\to\R$, i.e., we require
\[
    \la e_i,e^j\ra = \delta_i^j. 
\]
We set 
\[
    g_{ij} \coloneqq \la e_i,e_j\ra,\quad g^{ij} \coloneqq \la e^i,e^j\ra. 
\]
Let $\varphi \in B_{k_1+1}^{\text{\rm cyc} *}\Alg$, $\psi \in B_{k_2+1}^{\text{\rm cyc} *}\Alg$,  $k_1,k_2 \ge 0$, $k_1+k_2\ge 1$.
We define $\fp_{2,1,0}(\phi,\psi)\in B_{k_1+k_2}^{\text{\rm cyc} *}\Alg$ on elements $x_i\in \Alg$ by
\begin{align}\label{eq:mu}
    & \fp_{2,1,0}(\varphi,\psi)(x_1,\dots,x_{k_1+k_2}) \cr
    &= \sum_{a,b}\sum_{c=1}^{k_1+k_2} \pm g^{ab} \varphi(e_a,x_c,\dots,x_{c+k_1-1})\psi(e_b,x_{c+k_1}, \dots,x_{c-1}).
\end{align}
See~\cite{Cieliebak-Fukaya-Latschev} for the appropriate signs. 
Next, let $\varphi \in \dcbc_k \Alg$, $k\ge 4$.
We define
\[
    \fp_{1,2,0}(\varphi) \in \bigoplus_{k_1+k_2= k-2} \dcbc_{k_1} \Alg \otimes \dcbc_{k_2} \Alg
\]
on elements $x_i,y_j\in \Alg$ by
\begin{align}\label{eq:delta}
    & \fp_{1,2,0}(\varphi)(x_1\dots x_{k_1}\otimes y_1,\dots,y_{k_2}) \cr
    &= \sum_{a,b}\sum_{c=1}^{k_1}\sum_{c^{\prime}=1}^{k_2} \pm g^{ab}\varphi(e_a,x_c,\dots,x_{c-1},e_b,y_{c^{\prime}},\dots,y_{c^{\prime}-1}),
\end{align}
again with suitable signs.
It is easy to see that operations $\fp_{2,1,0}$, $\fp_{1,2,0}$ do not depend on the choice of the basis $(e_i)$.
It is straightforward, though tedious, to verify that together with $\fp_{1,1,0}$ they satisfy the relations of a $\dIBL$ algebra.
\end{proof}

Following~\cite{Pavel-thesis}, we will denote the $\dIBL$ algebra from Proposition~\ref{prop:structureexists} by
\[
	\dIBL(\Alg)\coloneqq \bigl((\dcbc \Alg)[2-n],\fp\coloneqq\{\fp_{1,1,0},\fp_{2,1,0},\fp_{1,2,0}\}\bigr)
\]
This reflects the fact that it is part of a natural \emph{covariant} functor from the category of cyclic cochain complexes to the category of $\dIBL$ algebras%
\footnote{A morphism of $\dIBL$ algebras is an $\IBLinfty$ morphism with $\ff_{k,\ell,g}=0$ for $(k,\ell,g)\neq (1,1,0)$.} defined on morphisms as follows.
A morphism of cyclic cochain complexes $f\colon B\to \Alg$ is automatically injective (because it preserves the nondegenerate pairings), hence it can be identified with an inclusion $\iota\colon B\into \Alg$.
One can show that the pullback $\pi^{*}_B\colon\dcbc B\to\dcbc\Alg$ along the unique symmetric projection $\pi_B\colon \Alg \onto B$ induces a $\dIBL$ morphism $\dIBL(f)\coloneqq\pi^{*}_B\colon\dIBL(B)\to\dIBL(\Alg)$.

\subsection{Weak functoriality of the dIBL construction}
\label{subsec:weak}

We continue the discussion of functoriality of the $\dIBL$ construction from the previous subsection and ask now about its \emph{contravariant} properties.
Given an inclusion of cyclic cochain complexes $\iota\colon B\into\Alg$, the pullback $\iota^*$ may not preserve the bracket and cobracket and hence may not define a morphism of $\dIBL$ algebras $\dIBL(\Alg)\to\dIBL(B)$.
The following theorem extends $\iota^*$ to an $\IBLinfty$ homotopy equivalence provided that $\iota$ is a quasi-isomorphism.

\begin{thm}[{\cite[Theorem~11.3]{Cieliebak-Fukaya-Latschev}}]\label{thm:homotopyequiv}
	Let $(\Alg,\dd,\la\cdot,\cdot\ra)$ be a cyclic cochain complex and $B\subset \Alg$ a quasi-isomorphic cyclic subcomplex. 
	Let $P\colon\Alg\to\Alg$ be a propagator with respect to the unique symmetric projection $\pi_B\colon\Alg\onto B$. 
	Then there is an $\IBL_{\infty}$ homotopy equivalence 
	\[
		{\ff^P} = \{\ff^P_{k,\ell,g}\} \colon \dIBL(\Alg) \longrightarrow \dIBL(B)
	\]
	which extends the pullback $\ff^P_{1,1,0}=\iota^*\colon \dcbc \Alg \to \dcbc B$ such that the value $\ff^P_{k,\ell,g}(\varphi)(\alpha)\in\R$ of $\ff^P_{k,\ell,g}$ on the tensor products
	\begin{equation*}
	\begin{aligned}
		\varphi&\coloneqq\varphi^1\otimes \dotsb\otimes\varphi^k,\quad\text{where }\varphi^i\in \dcbc\Alg\text{, and}\\
		\alpha&\coloneqq\alpha^1_1\cdots \alpha^1_{s_1}\otimes\dots\otimes\alpha^\ell_1\cdots \alpha^\ell_{s_{\ell}},\quad\text{where }s_i\in \N\text{ and }\alpha^b_j\in B,
	\end{aligned}
	\end{equation*}
	can be written as a sum over isomorphism classes of ribbon graphs $\Gamma$ with $k$ interior vertices, $\ell$ boundary components, and genus $g$, of contributions naturally associated to~$\Gamma$ by labeling the interior vertices with $\varphi^1,\dotsc,\varphi^k$, the interior edges with~$P$, and the exterior vertices on the $i$-th boundary component with $\alpha_{1}^i,\dotsc,\alpha_{s_i}^i$ (see Figure~\ref{fig:htpy}).	
\end{thm}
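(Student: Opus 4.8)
The plan is to define $\ff^P$ by the ribbon-graph (Feynman-diagram) formula stated in the theorem and to verify the $\IBL_\infty$ morphism relations \eqref{eq:ibl-mor-rel} by a combinatorial boundary-of-moduli-space / homological-perturbation argument. The data $(\iota\colon B\into\Alg,\ \pi_B,\ P)$ is exactly a deformation retraction of cochain complexes in which $\pi_B$ is symmetric and $P$ a propagator, so the only properties we will use are $\dd\circ P+P\circ\dd=\pi_B-\Id$ from \eqref{eq:P2} and the symmetry $\la Px,y\ra=(-1)^{\deg x}\la x,Py\ra$ from \eqref{eq:P3}; if one additionally arranges $P$ to be special by Lemma~\ref{lem:propagator} (changing $P$), then \eqref{eq:P5} and \eqref{eq:PP2} cause the graphs with two consecutive propagator edges or $\pi_B$-loops to drop out, streamlining the combinatorics, but this is not needed for the existence statement. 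So first I would take the ribbon-graph sum in the statement as the \emph{definition} of $\ff^P_{k,\ell,g}$: a graph $\Gamma$ of signature $(k,\ell,g)$ contributes the scalar obtained by placing $\varphi^1,\dots,\varphi^k$ at the interior vertices, $P$ on each interior edge, a metric coefficient $g^{ab}$ at each interior edge (exactly as in the contractions \eqref{eq:mu}, \eqref{eq:delta} defining $\fp_{2,1,0},\fp_{1,2,0}$), and $\alpha^i_1,\dots,\alpha^i_{s_i}$ at the exterior vertices along the $i$-th boundary component, read off cyclically.

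Next I would check that this is well defined. Convergence is automatic: a standard Euler-characteristic count fixes the number of interior edges of a contributing $\Gamma$ in terms of $k,\ell,g$ and $\sum_i s_i$, hence the total valence of the interior vertices, so for each $\varphi^i$ only its component of one bounded word-length enters, and only finitely many isomorphism classes $\Gamma$ contribute; completeness of $\dcbc\Alg=\prod_k\dcbc_k\Alg$ with respect to the word-length filtration then guarantees the output lies in the appropriate completion of $\dcbc B$. The degree $|\ff^P_{k,\ell,g}|=-2d(k+g-1)$ with $d=n-3$ is verified by adding the contributions of the $\varphi^i$, of the interior edges (each $P$ of degree $-1$ contracted against $g^{ab}$ of degree $-n$), and of the boundary insertions, using again the relation $2-2g-\ell$; this is the same bookkeeping that produces the degrees in Proposition~\ref{prop:structureexists}.

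The heart of the proof is the identity \eqref{eq:ibl-mor-rel}. Here I would argue that both sides can be re-expressed as one and the same sum over ribbon graphs carrying a single marked interior edge. On the right-hand side the source operations $\fp_{1,1,0}=\dd^*,\fp_{2,1,0},\fp_{1,2,0}$, glued onto $\ff^P$-pieces along $h$ boundaries, produce a marked $\dd$-labeled edge, resp.\ a marked bivalent vertex or a marked splitting of a boundary component; on the left-hand side the target operations $\fq$ play the symmetric role. Applying $\dd\circ P+P\circ\dd=\pi_B-\Id$ to the marked edge matches the two re-expansions: the $-\Id$ term collapses the edge, identifying a graph with a marked $\dd$ on an interior edge with a graph having one fewer interior edge — this accounts for $\fp_{1,1,0}=\fq_{1,1,0}=\dd^*$ and, when the collapse merges or splits boundary components or changes genus, for the $\fq_{2,1,0},\fq_{1,2,0}$ terms of the target — while the $\pi_B$ term, being the identity when absorbed into a boundary insertion, reorganizes the two interior vertices across the marked edge into the contraction producing an $\fp_{2,1,0}$ or $\fp_{1,2,0}$ term of the source, with $\pi_B\circ P=P\circ\pi_B=0$ (i.e.\ \eqref{eq:P4}, equivalently \eqref{eq:PP2}) eliminating the would-be double-propagator contributions. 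The symmetry \eqref{eq:P3} of $P$ together with the graded symmetry of the pairing is precisely what makes each contribution descend to the cyclic quotients $\cbc$ and the symmetric quotients $E_k$, and makes the summation over the two orientations of each interior edge consistent. I expect the genuine difficulty to be the sign bookkeeping, together with the combinatorial identification just described: one must track Koszul signs through the shifts $[1]$ and $[2-n]$, the $\Z_k$-symmetrization defining $\dcbc$, the $S_k$-symmetrization defining $E_k$, the orientation conventions for ribbon graphs, and the $1/r!$-type symmetry factors in \eqref{eq:ibl-mor-rel}; rather than redo this from scratch I would follow the conventions of~\cite{Cieliebak-Fukaya-Latschev} verbatim.

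Finally, the normalization $\ff^P_{1,1,0}=\iota^*$ holds because the unique ribbon graph with $k=\ell=1$, $g=0$ and no interior edge has its single interior vertex directly incident to the one boundary, so its contribution to $\ff^P_{1,1,0}(\varphi^1)$ is $\varphi^1$ precomposed with $\cbc B\to\cbc\Alg$, i.e.\ the restriction $\iota^*\colon\dcbc\Alg\to\dcbc B$. Since $\iota$ is a quasi-isomorphism of cochain complexes, so is $\cbc\iota$ (the cyclic bar functor is a direct sum of $\Z_k$-coinvariants of tensor powers, hence preserves quasi-isomorphisms over $\R$), and hence so is its graded dual $\ff^P_{1,1,0}=\iota^*$ (dualizing and taking products are exact over $\R$). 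Therefore $\ff^P$ is an $\IBL_\infty$ quasi-isomorphism, and by the homotopy-inverse property recalled before the theorem it is an $\IBL_\infty$ homotopy equivalence.
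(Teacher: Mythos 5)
Your overall strategy is the one the paper (and its source \cite{Cieliebak-Fukaya-Latschev}) uses: take the ribbon-graph contraction as the definition of $\ff^P_{k,\ell,g}$, check finiteness/degree by the Euler-characteristic count, and verify \eqref{eq:ibl-mor-rel} by applying the Leibniz rule to the graph contraction and substituting $\dd\circ P+P\circ\dd=\pi_B-\Id$ on each interior edge. Note that the paper itself only sketches the construction (formula \eqref{eq:contribution-of-graph}) and defers both the signs and the verification of the morphism relations entirely to the reference, so the comparison here is really with that verification. Your treatment of the normalization $\ff^P_{1,1,0}=\iota^*$ (the unique edgeless star graph) and of the quasi-isomorphism property is correct.

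There is, however, a concrete error in your description of the key cancellation, and as stated that step would fail. The operations $\fp_{2,1,0},\fp_{1,2,0}$ of the \emph{source} $\dIBL(\Alg)$ contract with $g^{ab}=\la e^a,e^b\ra$ for a basis of $\Alg$, i.e.\ with the Schwarz kernel of $\Id_\Alg$; so it is the $-\Id$ term of $\pi_B-\Id$ that merges the two interior vertices across the marked edge and produces the $\ff\circ\fp_{2,1,0}$ and $\ff\circ\fp_{1,2,0}$ terms on the right-hand side of \eqref{eq:ibl-mor-rel}. The operations $\fq_{2,1,0},\fq_{1,2,0}$ of the \emph{target} $\dIBL(B)$ contract with the metric of $B$, whose kernel inside $\Alg$ is exactly that of the symmetric projection $\pi_B$; so it is the $\pi_B$ term that cuts the marked edge into two $B$-valued half-edges and produces the $\fq\circ_{\rm conn}(\ff,\dotsc,\ff)$ terms on the left-hand side. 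You have these two roles interchanged, which is precisely the kind of misidentification that would derail the bookkeeping you rightly flag as the hard part. A smaller inconsistency: in the same step you invoke $\pi_B\circ P=P\circ\pi_B=0$ (condition \eqref{eq:P4}), which holds only for a \emph{special} propagator, whereas the theorem assumes only a propagator; the cancellation must be organized without the side conditions \eqref{eq:P5}, \eqref{eq:PP2} (or one must first reduce to the special case via Lemma~\ref{lem:propagator} and then argue that the resulting morphisms are homotopic, which is an extra step you have not supplied).
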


We refer to ${\ff^P}$ from the theorem above as to the \emph{natural $\IBLinfty$ homotopy equivalence $\dIBL(\Alg)\to\dIBL(B)$} associated to $P$.

The construction of ${\ff^P}$ extends similar constructions for $\Alg_{\infty}$ or $L_{\infty}$ structures based on planar rooted trees (see \cite{Kontsevich-Soibelman}, \cite[Subsection 5.4.2]{FOOO-I}) by considering general ribbon graphs.
Before sketching the construction, we summarize our conventions on ribbon graphs.

A~{\em ribbon graph} $\Gamma$ is a finite connected graph
with a cyclic ordering of the adjacent edges at every vertex.
We denote by $d(v)$ the {\em degree} of a vertex $v$, i.e., the number of edges adjacent to $v$.
We suppose that $\Gamma$ is equipped with a decomposition
\[
    C^0(\Gamma) = C^0_{\text{\rm int}}(\Gamma) \sqcup C^0_{\text{\rm ext}}(\Gamma) 
\]
of the set of vertices into {\em interior} and {\em exterior} vertices, where an exterior vertex has degree 1 and an interior vertex has degree at least $1$.
This induces a decomposition
\[
    C^1(\Gamma) = C^1_{\text{\rm int}}(\Gamma) \sqcup C^1_{\text{\rm ext}}(\Gamma) 
\]
of the set of edges into {\em interior} and {\em exterior} edges, where an edge is called exterior if and only if it contains an exterior vertex.
We denote by~$\Sigma_\Gamma$ the compact oriented surface with boundary obtained by thickening $\Gamma$ such that $\Gamma\cap\p\Sigma_\Gamma=C^0_\ext(\Gamma)$.
We require that each boundary component has precisely one \emph{marked} exterior vertex.

The {\em signature} of $\Gamma$ is the triple $(k,\ell,g)$, where $k=\#C^0_\inn(\Gamma)$, $\ell$ is the number of boundary components of $\Sigma_\Gamma$, and $g$ is the genus of $\Sigma_\Gamma$.
Given $k,\ell\geq 1$ and $g\geq 0$, we denote by $RG_{k,\ell,g}$ the set of isomorphism classes of ribbon graphs of signature $(k,\ell,g)$ with the additional data specified above.

\begin{figure}
	\centering
	\begin{tikzpicture}
		\node[point,label={[below right]:$\varphi^1$}] (A) at (0,0) {};
		\node[point,label={[below left]:$\varphi^2$}] (B) at ($(A)+(\edgelen,0)$) {};
		\node[root,label={[above left]:$\alpha_1^1$}] (CI) at ($(A)+(135:\leaflen)$) {};
		\node[leaf,label={[left]:$\alpha_2^1$}] (CII) at ($(A)+(180:\leaflen)$) {};
		\node[leaf,label={[below left]:$\alpha_3^1$}] (CIII) at ($(A)+(-135:\leaflen)$) {};
		\coordinate (AI) at ($(A)+(.3*\edgelen,.3*\edgelen)$);
		\coordinate (AII) at ($(A)+(.7*\edgelen,.3*\edgelen)$);
		\coordinate (BI) at ($(A)+(.3*\edgelen,-.3*\edgelen)$);
		\coordinate (BII) at ($(A)+(.7*\edgelen,-.3*\edgelen)$);
		\coordinate (TI) at ($(A)+(10:.91*\edgelen)$);
		\coordinate (TII) at ($(A)+(10:1.03*\edgelen)$);	
		\draw (A) to[out=90,in=180] (AI) -- (AII) node[midway,above] {$P$} to[out=0,in=90] (B); 
		\draw (A) to[out=-90,in=180] (BI) -- (BII) node[midway,below] {$P$} to[out=0,in=-90] (B); 
		\draw (A) -- (TI) node[midway,below] {$P$};
		\draw (TII) to[out=10,in=0] (B);
		\draw (A) -- (CI);
		\draw (A) -- (CII);
		\draw (A) -- (CIII);
	\end{tikzpicture}
	\caption{A labeled ribbon graph contributing to 
	$\ff^P_{2,1,1}(\varphi^1\otimes\varphi^2)(\alpha^1_1\alpha_2^1\alpha_3^1)$ immersed in the plane so that the cyclic ordering at interior vertices agrees with the counterclockwise orientation.}
	\label{fig:htpy}
\end{figure}

\begin{proof}[Sketch of the construction of ${\ff^P}$ from Theorem~\ref{thm:homotopyequiv}]
Given $\Gamma\in RG_{k,\ell,g}$, we will define the contribution $\ff^P_\Gamma(\varphi)(\alpha)\in\R$ of $\Gamma$ to ${\ff^P}$ such that we can write
\[
	\ff^P_{k,\ell,g}(\varphi)(\alpha) = \sum_{\Gamma\in RG_{k,\ell,g}} (-1)^{r_\Gamma} C_\Gamma \ff^P_\Gamma(\varphi)(\alpha),
\]
where $C_\Gamma\in\Q^+$ is a combinatorial coefficient and $r_\Gamma\in\Z$ a sign exponent.
The values of~$C_\Gamma$ and~$r_\Gamma$ can be deduced from \cite{Cieliebak-Fukaya-Latschev} and we will not discuss them further.
By an {\em ordering $O$ on $\Gamma$} we mean an ordering of the interior vertices and an ordering of the boundary components of~$\Sigma_\Gamma$.
Having $O$ allows us to label the interior vertices by $\varphi^1,\dotsc,\varphi^k$ and the exterior vertices on the $b$-th boundary component by $\alpha_1^b,\dotsc,\alpha_{s_b}^b$,
where $j=1$ corresponds to the marked exterior vertex and $j\mapsto j+1$ goes in the positive direction of the boundary.
Here we require implicitly that $s_b$ equals the number of exterior vertices at the $b$-th boundary component for every $b\in\{1,\dotsc,\ell\}$, and define the contribution of the pair $(\Gamma,O)$ to be $0$ otherwise.

Using the notation of~\cite{Cieliebak-Fukaya-Latschev}, we define a {\em flag} of $\Gamma$ as a pair $(v,t)$ consisting of a vertex~$v$ and an edge~$t$ such that $v \in t$
(where an edge starting and ending at the same vertex gives rise to two flags).
We have the decomposition
\[
	\Flag(\Gamma) = \Flag_\rmint(\Gamma) \sqcup \Flag_\rmext(\Gamma)
\]
of the set of flags into \emph{interior} and \emph{exterior} flags, where a flag $(v,t)$ is called exterior if and only if $v$ is an exterior vertex.
As in the proof of Proposition~\ref{prop:structureexists}, we pick a basis $(e_i)_{i\in I}$ of $\Alg$, denote by $(e^i)_{i\in I}$ the dual basis of $\Alg$ with respect to $\la\cdot,\cdot\ra$, and set
\[
    P_{ij} \coloneqq \la Pe_i,e_j\ra,\quad P^{ij} \coloneqq \la Pe^i,e^j\ra. 
\]
Consider the set of maps $\mathfrak I \coloneqq {\rm Map}({\rm Flag}_{\rm int}(\Gamma),I)$, where $I$ is the index set of the basis.
Given $\mathfrak i \in \mathfrak I$ and $(v,t)\in\Flag(\Gamma)$, we define
\[
	\fe_{{\mathfrak i}(v,t)} \coloneqq \begin{cases}
					e_{\mathfrak i(v,t)} & \text{if }(v,t)\in{\rm Flag}_{\rm int}(\Gamma), \\
					\alpha_i^b	     & \text{if }(v,t)\in{\rm Flag}_{\rm ext}(\Gamma)\text{ and }v\text{ is labeled by }\alpha_i^b.
				 \end{cases}
\]
Notice that ${\mathfrak i}(v,t)$ in the subscript of $\fe$ for $(v,t)\in\Flag_\rmext(\Gamma)$ is not defined by itself.
In order to write down a linear expression, we make the following additional choices:
for every $v\in C^0_\rmint(\Gamma)$, we choose a bijection of the set of adjacent flags of $v$ and the set $\{(v,1),\dotsc,(v,d(v))\}$ compatible with the cyclic ordering,
and for every $t\in C^1_\rmint(\Gamma)$, we choose a bijection of the set of flags containing $t$ and the set $\{(1,t),(2,t)\}$ (equivalent to the choice of an orientation of the edge $t$).
We can then write down the natural expression
\begin{equation}\label{eq:contribution-of-graph}
	\sum_{\mathfrak i \in \mathfrak I}\pm \hspace{-10pt} \prod_{t\in C^1_\inn(\Gamma)} \hspace{-10pt}P^{\mathfrak i(1,t),\mathfrak i(2,t)}\hspace{-10pt} \prod_{v \in C^0_{\text{int}}(\Gamma)}\hspace{-10pt} \varphi^v(\fe_{{\mathfrak i}(v,1)}, \cdots, \fe_{\mathfrak i(v,d(v))}),
\end{equation}
where the sign is obtained from a natural convention introduced in \cite{Cieliebak-Fukaya-Latschev} such that the whole expression does not depend on the additional choices of the bijections provided that $P$ and $\varphi$ have the required symmetries.
The contribution $\ff^P_\Gamma(\varphi)(\alpha)$ is then defined as a sum of the contributions \eqref{eq:contribution-of-graph} of the pair $(\Gamma,O)$ over all orderings~$O$.
\end{proof}

\subsection{The twisted \texorpdfstring{${\rm dIBL}$}{dIBL} structure associated to a cyclic DGA}

Consider a nonnegatively graded cyclic $\DGA$ $(\Alg,\dd,\wedge,\la\cdot,\cdot\ra)$ of degree $n\in\N_0$ and the filtered $\dIBL$ algebra $\dIBL(\Alg)=((\dcbc \Alg)[2-n],\fp=\{\fp_{1,1,0},\fp_{2,1,0},\fp_{1,2,0}\})$ of degree $n-3$ associated to the underlying cyclic cochain complex $(\Alg,\dd,\la\cdot,\cdot\ra)$.
Following~\cite{Cieliebak-Fukaya-Latschev}, we will use the multiplication~$\m_2(x,y)\coloneqq(-1)^{\deg x} x\wedge y$ to obtain a canonical Maurer--Cartan element in $\dIBL(\Alg)$. 
We define the {\em triple intersection product} $\m_2^+\in\dcbc_3\Alg$ by%
\begin{equation}\label{eq:triple-intersection}
   \m_2^+(x_0x_1x_2) \coloneqq (-1)^{n + \deg x_1}\la x_0\wedge x_1,x_2\ra.
\end{equation}

\begin{prop}[{\cite[Proposition~12.5]{Cieliebak-Fukaya-Latschev}}]\label{propIBLI2}
Let $(\Alg,\dd,\wedge,\la\cdot,\cdot\ra)$ be a nonnegatively graded cyclic $\DGA$ of degree $n\in\N_0$.
Then
\begin{equation*}
	\fm_{\ell,g}\coloneqq 
	\begin{cases}
		\m_2^+ & \text{if }(\ell,g)=(1,0),\\
		0 & \text{otherwise,}
	\end{cases}
\end{equation*}
defines a Maurer--Cartan element $\fm=\{\fm_{\ell,g}\}$ in $\dIBL(\Alg)$.
The twisted operations
\[
   \fp^\fm=\bigl\{\fp_{1,1,0}^\fm=\dd^*+\fp_{2,1,0}(\fm_{1,0},\cdot),\,\fp_{2,1,0},\,\fp_{1,2,0}\bigr\}
\]
define a $\dIBL$ structure on $(\dcbc \Alg)[2-n]$.
\end{prop}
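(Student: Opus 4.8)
This proposition is essentially \cite[Proposition~12.5]{Cieliebak-Fukaya-Latschev}, and the plan is to (a) verify the degree and filtration constraints on $\m_2^+$, (b) reduce the Maurer--Cartan equation to two elementary identities in $\Alg$, and (c) read off the twisted operations from~\eqref{eq:twisted-operations}. For (a): since $\la x_0\wedge x_1,x_2\ra\neq 0$ forces $\deg x_0+\deg x_1+\deg x_2=n$, the element $\m_2^+\in\dcbc_3\Alg$ is supported on words of total degree $n$, hence has degree $n-3$ in the reversed shifted grading on $\dcbc\Alg$ and therefore degree $2(n-3)=-2d(0-1)$ in $C[1]$, where $d=n-3$; this is the degree prescribed for $\fm_{1,0}$. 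For the word-length filtration ($\gamma=2$) we have $\|\m_2^+\|=3>2=\gamma(2-2\cdot 0-1)$, the strict inequality required at $(1,0)$; all other components of $\fm$ vanish, so the remaining constraints are vacuous.

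For (b): because $\fm_{\ell,g}=0$ for $(\ell,g)\neq(1,0)$ and the only nonzero structure maps of $\dIBL(\Alg)$ are $\fp_{1,1,0}=\dd^*$, $\fp_{2,1,0}$, $\fp_{1,2,0}$, the Maurer--Cartan relation for $(\ell,g)$ reduces to $\sum_{r\ge 1}\tfrac1{r!}\fp_{r,\ell,g}(\m_2^+,\dotsc,\m_2^+)=0$, which is vacuous unless $(\ell,g)\in\{(1,0),(2,0)\}$. The $(2,0)$ relation reads $\fp_{1,2,0}(\m_2^+)=0$ and holds trivially, since $\fp_{1,2,0}$ annihilates $\dcbc_k\Alg$ for $k<4$ while $\m_2^+\in\dcbc_3\Alg$. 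The $(1,0)$ relation reads $\dd^*\m_2^+ + \tfrac12\fp_{2,1,0}(\m_2^+,\m_2^+)=0$; as $\dd^*$ preserves word length whereas $\fp_{2,1,0}$ lowers it by $2$, its two summands lie in $\dcbc_3\Alg$ and $\dcbc_4\Alg$, so the relation splits into $\dd^*\m_2^+=0$ and $\fp_{2,1,0}(\m_2^+,\m_2^+)=0$. The first follows from $o\circ\dd=0$ and the Leibniz rule: unwinding the dual of the derivation $\dd$ on $\cbc\Alg$ and using~\eqref{eq:triple-intersection} one gets $\dd^*\m_2^+(x_0x_1x_2)=\pm\, o\bigl(\dd(x_0\wedge x_1\wedge x_2)\bigr)=0$ once the Koszul signs are matched. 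For the second, substituting~\eqref{eq:mu} and~\eqref{eq:triple-intersection}, contracting the $g^{ab}$'s, and using cyclicity~\eqref{Eq:ProdCyclic}, one rewrites $\fp_{2,1,0}(\m_2^+,\m_2^+)$ on a $4$-letter word as a signed sum over the four cyclic rotations of $o\bigl((x_1\wedge x_2)\wedge x_3\wedge x_4\bigr)$, which cancels in pairs by associativity of $\wedge$. The one genuine obstacle here is the sign bookkeeping, which I would carry out following the conventions of~\cite{Cieliebak-Fukaya-Latschev}; the underlying mathematical content is simply ``$\dd$ is a derivation of $\wedge$'' together with ``$\wedge$ is associative''.

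For (c): granting the Maurer--Cartan property, Proposition~\ref{prop:MC}(i) shows that $\fp^\fm$ is a filtered $\IBLinfty$ structure on $(\dcbc\Alg)[2-n]$, and it remains to see it is a $\dIBL$ structure. By~\eqref{eq:twisted-operations} (up to the CFL signs), $\fp^\fm_{k,\ell,g}=\sum_{r\ge 0}\tfrac1{r!}\fp_{k+r,\ell,g}(\m_2^+,\dotsc,\m_2^+,\,\cdot\,)$, since capping $r$ of the inputs of $\fp_{k+r,\ell,g}$ with the genus-zero, one-output element $\m_2^+$ keeps the underlying surface connected and changes neither $\ell$ nor $g$. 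As $\fp_{k+r,\ell,g}=0$ unless $(k+r,\ell,g)\in\{(1,1,0),(2,1,0),(1,2,0)\}$, the only surviving contributions come from $(k,r)=(1,0)$ through $\fp_{1,1,0}$, from $(k,r)=(2,0)$ and $(k,r)=(1,1)$ through $\fp_{2,1,0}$, and from $(k,r)=(1,0)$ through $\fp_{1,2,0}$; this yields $\fp^\fm_{1,1,0}=\dd^*+\fp_{2,1,0}(\m_2^+,\,\cdot\,)$, $\fp^\fm_{2,1,0}=\fp_{2,1,0}$, $\fp^\fm_{1,2,0}=\fp_{1,2,0}$, and all other twisted operations vanish, so $\fp^\fm$ is a $\dIBL$ structure. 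The omitted cases $k=0$ are exactly the Maurer--Cartan relations verified in (b), hence contribute nothing.
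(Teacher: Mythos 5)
Your argument is correct, and it is essentially the expected unpacking of the cited result: the paper itself gives no proof of Proposition~\ref{propIBLI2} (it is quoted from \cite[Proposition~12.5]{Cieliebak-Fukaya-Latschev}), but the two key points of your step (b) are exactly the ones the paper records elsewhere. The vanishing $\fp_{1,2,0}(\m_2^+)=0$ for word-length reasons is stated verbatim after \eqref{eq:second-MC-equation}, and the first Maurer--Cartan equation $\dd^*\m_2^+ + \tfrac12\fp_{2,1,0}(\m_2^+,\m_2^+)=0$ could have been obtained with less sign bookkeeping by invoking Proposition~\ref{MCAinfty}: for a $\DGA$ the element $\m^+$ equals $\m_2^+$, and the $\Ainfty$ relations reduce to the Leibniz rule and associativity, which is precisely the content of your word-length splitting. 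Your identification of the twisted operations in (c) likewise reproduces the general formula for $\fp^\fm$ displayed in the paper, specialized to $\fm_{\ell,g}=0$ for $(\ell,g)\neq(1,0)$. One cosmetic caveat: you phrase both identities in (b) via an orientation $o$ with $\la x,y\ra=o(x\wedge y)$, but the proposition assumes only a nonnegatively graded cyclic $\DGA$, and the paper points out that a pairing induces such an $o$ only in the unital case; to cover the general statement you should run the same computation directly with the pairing, using $\la\dd u,v\ra=(-1)^{1+\deg u}\la u,\dd v\ra$ and the cyclicity \eqref{Eq:ProdCyclic} in place of $o\circ\dd=0$ and cyclic invariance of $o$. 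This changes nothing in the structure of the argument.
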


We call $\fm$ in Proposition~\ref{propIBLI2} the \emph{canonical Maurer Cartan element} in $\dIBL(\Alg)$ and denote it by
\[
	\fm^\can_\Alg.
\]
Generally, for a Maurer--Cartan element $\fm$ in $\dIBL(\Alg)$, we denote the corresponding twisted $\IBLinfty$ algebra by
\[
	\dIBL^\fm(\Alg) \coloneqq \bigl((\dcbc\Alg)[2-n],\fp^\fm \bigr).
\]
In the case of $\fm=\fm^\can_\Alg$, we call it the \emph{canonical twisted $\dIBL$ algebra}.
It is straightforward to show that $\fp_{2,1,0}((\fm_\Alg^\can)_{1,0},\cdot)=\bb^*$ is the dual of the \emph{Hochschild differential} $\bb\colon\cbc\Alg\to\cbc\Alg$ for the algebra $(\Alg,\wedge)$ which is given by
\[
\bb(x_1\dotsb x_k) =\begin{aligned}[t]
		&\sum_{i=1}^{k-1} (-1)^{|x_1|+\dotsb+|x_i|+1} x_1\dotsb x_{i-1}(x_{i}\wedge x_{i+1})x_{i+2}\dotsb x_k \\
		&{}+ (-1)^{|x_k|(|x_1|+\dotsb+|x_{k-1}|)+\deg x_k} (x_k\wedge x_1) x_2 \dotsb x_k.
		\end{aligned}
\]
Therefore, the homology $H(\dcbc\Alg,\fp^{\fm^\can_\Alg}=\dd^*+\bb^*)$ is a version of cyclic cohomology of the $\DGA$ $(\Alg,\dd,\wedge)$.
We refer to \cite{Cieliebak-Volkov-cyclic} for a comparison of eight versions of cyclic cohomology
(see also \cite[Section 3.3]{Pavel-thesis}). 

Note that the twist of a general filtered $\dIBL$ structure $\fp=\{\fp_{1,1,0},\fp_{2,1,0},\fp_{1,2,0}\}$ with a general Maurer--Cartan element $\fm=\{\fm_{\ell,g}\}$ has by \eqref{eq:twisted-operations} the form%
\footnote{The twisted cobrackets alone form a version of a quantum co-$L_\infty$ algebra.
We could thus study the structures in this paper up to homotopy of quantum co-$L_\infty$ algebras instead.}
\[
	\fp^\fm = \left\{
		\begin{aligned}
			\fp^\fm_{1,1,0} &= \fp_{1,1,0} + \fp_{2,1,0}\circ_1 \fm_{1,0},\\
			\fp^\fm_{2,1,0} &= \fp_{2,1,0},\\
			\fp_{1,2,0}^\fm &= \fp_{1,2,0} + \fp_{2,1,0}\circ_1\fm_{2,0},\\
			\fp_{1,\ell,g} &= \fp_{2,1,0}\circ_1 \fm_{\ell,g}\quad\text{for }(\ell,g)\in\N\times\N_0\backslash\{(1,0),(2,0)\}
		\end{aligned}\right\}.
\]

Consider now the cohomology $(H\coloneqq H(\Alg),\dd=0,\la\cdot,\cdot\ra_H)$ as a cyclic cochain complex and write the associated canonical $\dIBL$ algebra as
\[
	\dIBL(H)=\bigl((\dcbc H)[2-n],\fq=\{\fq_{1,1,0}=0,\fq_{2,1,0},\fq_{1,2,0}\}\bigr).
\]
Let $P\colon\Alg\to\Alg$ be a special propagator and $\HH\subset\Alg$ the associated harmonic subspace.
We identify the cyclic cochain complexes $(H,\dd=0,\la\cdot,\cdot\ra_H)$ and $(\HH,\dd=0,\la\cdot,\cdot\ra|_\HH)$ via the quotient map \eqref{eq:canon-quotient-to-hom}.
Proposition~\ref{prop:structureexists} then associates to $P$ an $\IBL_{\infty}$ homotopy equivalence ${\ff^P} \colon \dIBL(\Alg)\to \dIBL(H)$,
and Proposition~\ref{prop:MC} asserts that the pushforward 
\begin{equation*}
	\ff^P_*\fm^\can_\Alg = \{ (\ff^P_*\fm_\Alg^\can)_{\ell,g}\}
\end{equation*}
is a Maurer--Cartan element in $\dIBL(H)$.
We call $\ff^P_*\fm^\can_\Alg$ the \emph{pushforward Maurer--Cartan element} in $\dIBL(H)$ associated to $P$.

The $\IBLinfty$ algebras $\dIBL^{\ff^P_*\fm^\can_\Alg}(H)$ and $\dIBL^{\ff^{P^\prime}_*\fm^\can_\Alg}(H)$ for different special propagators $P$ and $P^\prime$ are $\IBLinfty$ homotopy equivalent because each of them is $\IBLinfty$ homotopy equivalent to $\dIBL^{\fm^\can_\Alg}(\Alg)$ via the twisted morphisms 
\[
	(\ff^P)^{\fm^\can_\Alg}\colon\dIBL^{\fm^\can_\Alg}(\Alg)\longrightarrow\dIBL^{\ff^P_*\fm^\can_\Alg}(H).
\]
In fact, a stronger assertion holds: the Maurer--Cartan elements $\ff^P_*\fm^\can_\Alg$ and $\ff^{P^\prime}_*\fm^\can_\Alg$ are \emph{gauge equivalent} in the sense of~\cite[Section~9]{Cieliebak-Fukaya-Latschev}.%
\footnote{A Maurer--Cartan element in $\dIBL(H)$ is equivalent to a quantum $\Ainfty$ algebra on $H$ (via a BV formalism due to the second author), and gauge equivalence corresponds to homotopy equivalence.}
See~\cite{Cieliebak-Volkov-Chern-Simons} for a proof in the analytic case (to be introduced in Section~\ref{sec:analytic-construction}), which can be adapted to work here as well.

The preceding discussion is summarized in the following theorem.

\begin{thm}[{\cite[Theorem~12.10]{Cieliebak-Fukaya-Latschev}}]\label{homologyBLI}
Let $(\Alg,\dd,\wedge,\la\cdot,\cdot\ra)$ be a nonnegatively graded cyclic $\DGA$ of degree~$n\in\N_0$, and let $H=H(\Alg,\dd)$ be its cohomology.
Then there is a natural $\IBLinfty$ structure on $(\dcbc H)[2-n]$
which is $\IBLinfty$ homotopy equivalent to the canonical twisted $\dIBL$ structure
on $(\dcbc \Alg)[2-n]$.
In particular, the homology of $\dcbc H$ is isomorphic to the cyclic cohomology of $(\Alg,\dd,\wedge)$. 
\end{thm}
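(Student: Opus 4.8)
The plan is to assemble the statement from the results recalled above; no genuinely new computation is required. First I would use Lemma~\ref{lem:hodge-decomposition} to conclude that the cyclic cochain complex $(\Alg,\dd,\la\cdot,\cdot\ra)$ is of Hodge type and fix a special propagator $P\colon\Alg\to\Alg$, with associated harmonic subspace $\HH\coloneqq\im\pi_P$. Since $P$ satisfies~\eqref{eq:PP2}, the projection $\pi_P$ is harmonic, so Lemma~\ref{lem:harmonic} tells me that $\HH$ is a quasi-isomorphic cyclic subcomplex of $\Alg$ and that $\pi_P$ is the unique symmetric projection $\pi_\HH\colon\Alg\onto\HH$. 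I would then identify the cyclic cochain complexes $(\HH,\dd=0,\la\cdot,\cdot\ra|_\HH)$ and $(H,\dd=0,\la\cdot,\cdot\ra_H)$ via the quotient map~\eqref{eq:canon-quotient-to-hom}, hence also $\dIBL(\HH)$ and $\dIBL(H)$.

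Next I would apply Theorem~\ref{thm:homotopyequiv} with $B=\HH$ to obtain the natural $\IBLinfty$ homotopy equivalence ${\ff^P}\colon\dIBL(\Alg)\to\dIBL(H)$ extending the pullback $\ff^P_{1,1,0}=\iota^*$ along the inclusion $\iota\colon\HH\into\Alg$. By Proposition~\ref{propIBLI2} the triple intersection product gives the canonical Maurer--Cartan element $\fm^\can_\Alg$ in $\dIBL(\Alg)$ together with the twisted structure $\dIBL^{\fm^\can_\Alg}(\Alg)$, which is the canonical twisted $\dIBL$ structure on $(\dcbc\Alg)[2-n]$. I would then push $\fm^\can_\Alg$ forward along ${\ff^P}$: Proposition~\ref{prop:MC}(2) gives that $\ff^P_*\fm^\can_\Alg$ is a Maurer--Cartan element in $\dIBL(H)$, and Proposition~\ref{prop:MC}(3),(4) give that the twisted morphism
\[
	(\ff^P)^{\fm^\can_\Alg}\colon\dIBL^{\fm^\can_\Alg}(\Alg)\longrightarrow\dIBL^{\ff^P_*\fm^\can_\Alg}(H)
\]
is an $\IBLinfty$ homotopy equivalence. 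Declaring $\dIBL^{\ff^P_*\fm^\can_\Alg}(H)$ to be the $\IBLinfty$ structure on $(\dcbc H)[2-n]$ asserted by the theorem, this last homotopy equivalence is exactly the one required.

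To justify the word ``natural'', I would verify independence of the chosen special propagator up to $\IBLinfty$ homotopy equivalence: for a second special propagator $P^\prime$, the previous paragraph produces $\IBLinfty$ homotopy equivalences $\dIBL^{\fm^\can_\Alg}(\Alg)\to\dIBL^{\ff^P_*\fm^\can_\Alg}(H)$ and $\dIBL^{\fm^\can_\Alg}(\Alg)\to\dIBL^{\ff^{P^\prime}_*\fm^\can_\Alg}(H)$, and composing one with a homotopy inverse of the other (using the \emph{Homotopy inverse} property from Section~\ref{sec:IBLinfty}) yields an $\IBLinfty$ homotopy equivalence between the two structures; as remarked after Proposition~\ref{prop:MC}, one in fact gets the sharper statement that $\ff^P_*\fm^\can_\Alg$ and $\ff^{P^\prime}_*\fm^\can_\Alg$ are gauge equivalent. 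Finally, for the ``in particular'' clause I would note that an $\IBLinfty$ homotopy equivalence is an $\IBLinfty$ quasi-isomorphism, so $(\ff^P)^{\fm^\can_\Alg}_{1,1,0}$ induces an isomorphism $H(\dcbc H,\fp^{\ff^P_*\fm^\can_\Alg}_{1,1,0})\cong H(\dcbc\Alg,\dd^*+\bb^*)$, whose right-hand side is the cyclic cohomology of $(\Alg,\dd,\wedge)$ by the identification $\fp_{2,1,0}((\fm^\can_\Alg)_{1,0},\cdot)=\bb^*$ recalled after Proposition~\ref{propIBLI2}.

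Since every step is a direct appeal to an already established result, I do not expect a serious obstacle. The only parts that need real care are the independence-of-choices argument above and, at a more bookkeeping level, tracking the word-length filtration and the attendant completions so that the pushforward and twist constructions of Proposition~\ref{prop:MC} make sense in the filtered setting.
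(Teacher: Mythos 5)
Your proposal is correct and follows essentially the same route as the paper, which presents this theorem as a summary of the preceding discussion: choose a special propagator via Lemma~\ref{lem:hodge-decomposition}, identify $H$ with the harmonic subspace, apply Theorem~\ref{thm:homotopyequiv} to get $\ff^P$, push forward $\fm^\can_\Alg$ and twist via Proposition~\ref{prop:MC}, and deduce independence of $P$ from the fact that all the resulting structures are $\IBLinfty$ homotopy equivalent to $\dIBL^{\fm^\can_\Alg}(\Alg)$. Your additional care in invoking Lemma~\ref{lem:harmonic} to check that $\HH$ is a quasi-isomorphic cyclic subcomplex (so that Theorem~\ref{thm:homotopyequiv} applies) is a detail the paper leaves implicit but is entirely consistent with its argument.
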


Inserting \eqref{eq:contribution-of-graph} and \eqref{eq:triple-intersection} in~\eqref{eq:pushforward-mc} leads to an explicit description of the pushforward Maurer--Cartan element $({\ff_P}_*\fm^\can_\Alg)_{k,\ell,g}$ in terms of \emph{trivalent ribbon graphs}
\[
	RG^3_{k,\ell,g}\coloneqq\bigl\{\Gamma\in RG_{k,\ell,g}\mid d(v) = 3\text{ for all } v\in C^0_{\rmint}(\Gamma)\bigr\}.
\]
First of all, given $\Gamma\in RG_{k,\ell,g}^3$, we denote by 
\[
	e\coloneqq \# C^1_\rmint(\Gamma)\quad\text{and}\quad s\coloneqq \# C^0_\rmext(\Gamma)
\]
the numbers of interior edges and exterior vertices, respectively.
Evaluating the Euler characteristic $\chi(\Sigma_\Gamma)$ of the associated surface $\Sigma_\Gamma$ in two ways gives then 
\begin{equation}\label{eq:euler-characteristic}
	2-2g-\ell = \chi(\Sigma_\Gamma) = k-e.
\end{equation}
Counting flags of $\Gamma$ in two ways, using trivalency, gives
\begin{equation}\label{eq:trivalency}
	2e + s = 3k.
\end{equation}
Eliminating $e$ from these equations yields
\begin{equation}\label{eq:k}
   k = 2(2g+\ell-2)+s. 
\end{equation}

\begin{cor}\label{cor:alg-MC}
The value of $(\ff^P_*\fm^\can_\Alg)_{\ell,g}\in \wh{E}_{\ell}\dcbc H$ on the tensor product
\[
	\alpha\coloneqq\alpha^1_1\cdots \alpha^1_{s_1}\otimes\dots\otimes  \alpha^\ell_1\cdots \alpha^\ell_{s_{\ell}},\quad\text{where }\alpha^b_j\in H, s_b\in \N, 
\]
can be written as 
\[
   (\ff^P_*\fm^\can_\Alg)_{\ell,g}(\alpha) = 
   \sum_{\Gamma\in RG^3_{k,\ell,g}}(-1)^{r_\Gamma}C_\Gamma (\ff^P_*\fm^\can_\Alg)_\Gamma(\alpha),
\]
where $k$ is determined by \eqref{eq:k} with $s=\sum_{i=1}^\ell s_i$ and the numbers $(\ff^P_*\fm^\can_\Alg)_\Gamma(\alpha)$ are defined similarly as in the construction from Theorem~\ref{thm:homotopyequiv} using the following assignments (see Figure~\ref{fig:pushforward-mc}):
\begin{itemize}
\item to the $j$-th exterior vertex on the $b$-th boundary component we assign $\alpha^b_j$;
\item to each interior vertex we assign the triple intersection product $\m_2^+$;  
\item to each interior edge we assign the element 
	\[
		\PP=\sum_{i,j}\pm P^{ij}e_i\otimes e_j
	\]
	dual to the map $x\otimes y\mapsto \la Px,y\ra$ (the Schwarz kernel of $P$). 
\end{itemize}
\end{cor}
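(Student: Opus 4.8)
The plan is to unwind the definition~\eqref{eq:pushforward-mc} of the pushforward Maurer--Cartan element, substitute the ribbon-graph description of $\ff^P$ from Theorem~\ref{thm:homotopyequiv} together with the formula~\eqref{eq:triple-intersection} for the triple intersection product, and observe that trivalency of $\m_2^+$ collapses the sum to trivalent ribbon graphs.

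\textbf{Reduction to a single $\ff^P$-component.} The only nonzero component of $\fm^\can_\Alg$ is $(\fm^\can_\Alg)_{1,0}=\m_2^+$, a functional on $\cbc_3\Alg$, i.e.\ a surface of signature $(0,1,0)$ with three marked boundary points; hence every factor $\fm_{\ell^+_i,g^+_i}$ occurring in~\eqref{eq:pushforward-mc} equals $\m_2^+$. In the connected composition each such cap is glued along its single boundary to exactly one input of one $\ff^P$-component, and distinct $\ff^P$-components are never glued to one another (a morphism's components cannot self-compose in a pushforward); so the gluing graph between the $\m_2^+$'s and the $\ff^P$-components is connected only if there is a single $\ff^P$-component $\ff^P_{k,\ell,g}$, whose $k$ incoming boundaries must all be capped, forcing $r^+=k$ copies of $\m_2^+$ and the prefactor $\tfrac{1}{r^+!}\tfrac{1}{r^-!}=\tfrac{1}{k!}$. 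Thus $(\ff^P_*\fm^\can_\Alg)_{\ell,g}(\alpha)=\sum_{k\ge 1}\tfrac{1}{k!}\,\ff^P_{k,\ell,g}(\m_2^+\otimes\cdots\otimes\m_2^+)(\alpha)$.

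\textbf{Forcing trivalency and reinterpreting the contributions.} Next I would insert the formula from Theorem~\ref{thm:homotopyequiv}: $\ff^P_{k,\ell,g}(\varphi^1\otimes\cdots\otimes\varphi^k)(\alpha)$ is a sum over $\Gamma'\in RG_{k,\ell,g}$ of the contributions~\eqref{eq:contribution-of-graph}, whose $v$-th interior-vertex factor is $\varphi^v(\fe_{{\mathfrak i}(v,1)},\dots,\fe_{{\mathfrak i}(v,d(v))})$. Setting $\varphi^v=\m_2^+\in\dcbc_3\Alg$ for all $v$ makes this factor vanish unless $d(v)=3$, so only trivalent graphs $\Gamma\in RG^3_{k,\ell,g}$ survive, and for those the relations~\eqref{eq:euler-characteristic}--\eqref{eq:k} give $k=2(2g+\ell-2)+s$ with $s=\sum_i s_i$, as in the statement. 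For such $\Gamma$ one rereads~\eqref{eq:contribution-of-graph}: the interior-vertex factor is $\m_2^+$ evaluated on the three adjacent $\fe$'s, where the value at an exterior flag is the label $\alpha^b_j$ of the corresponding exterior vertex; this is the assignment of $\m_2^+$ to each interior vertex. The factor $P^{{\mathfrak i}(1,t),{\mathfrak i}(2,t)}$ of an interior edge $t$, summed over the basis indices at its endpoints, is exactly the insertion of the Schwarz kernel $\PP=\sum_{i,j}\pm P^{ij}e_i\otimes e_j$; and the $j$-th exterior vertex on the $b$-th boundary carries $\alpha^b_j\in H$ (identified as usual with the harmonic subspace $\HH$). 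This produces precisely the numbers $(\ff^P_*\fm^\can_\Alg)_\Gamma(\alpha)$ in the statement. Finally, the factor $\tfrac{1}{k!}$, the sum over orderings built into $\ff^P_{\Gamma'}$, and the passage from labeled graphs to isomorphism classes combine — by the same bookkeeping already used for Theorem~\ref{thm:homotopyequiv} — into the coefficient $C_\Gamma\in\Q^+$ (an automorphism count) and the sign $(-1)^{r_\Gamma}$.

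The hard part will be the coefficient and sign bookkeeping of the last step: one must check that the $\tfrac{1}{r^+!}\tfrac{1}{r^-!}$ prefactors of~\eqref{eq:pushforward-mc}, together with the orderings and automorphisms, reassemble into exactly the $C_\Gamma$ and $(-1)^{r_\Gamma}$ attached to the trivalent graphs, and that substituting $\varphi^v=\m_2^+$ into~\eqref{eq:contribution-of-graph} reproduces the signs built into $\m_2^+$ via~\eqref{eq:triple-intersection}. As for Theorem~\ref{thm:homotopyequiv}, I would treat this as a routine though tedious verification using the sign conventions of~\cite{Cieliebak-Fukaya-Latschev} and not carry it out in detail.
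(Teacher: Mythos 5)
Your proposal is correct and follows essentially the same route as the paper, which presents the corollary as the result of inserting the graph formula~\eqref{eq:contribution-of-graph} and the triple intersection product~\eqref{eq:triple-intersection} into the pushforward formula~\eqref{eq:pushforward-mc}, with trivalency forced by $\m_2^+\in\dcbc_3\Alg$ and $k$ pinned down by the Euler-characteristic and flag-counting identities~\eqref{eq:euler-characteristic}--\eqref{eq:k}. Your explicit observation that connectedness together with $\ell^+_i=1$ forces a single $\ff^P$-component is a correct and useful elaboration of a step the paper leaves implicit, and deferring the sign/coefficient bookkeeping matches the paper's treatment.
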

\begin{figure}
	\centering
	\begin{tikzpicture}
	\node[point,label={[below,yshift=-.1cm]:$\m_2^+$}] (L) at (0,0) {};
	\node[point,label={[below,yshift=-.1cm]:$\m_2^+$}] (R) at ($(L)+(\edgelen,0)$) {};
	\node[point,label={[above]:$\m_2^+$}] (U) at ($(L)+(60:\edgelen)$) {};
	\node[root,label={[above left]:$\alpha_1^1$}] (O) at ($(L)+(150:\leaflen)$) {};
	\node[leaf,label={[above right]:$\alpha_2^1$}] (OI) at ($(R)+(30:\leaflen)$) {};
	\node[root,label={[below, yshift=-.1cm]:$\alpha_1^2$}] (OII) at ($(U)+(-90:\leaflen)$) {};
	\draw (L) -- (R) node[midway,below] {$\PP$} -- (U) node[pos=0.5,right] {$\PP$} -- (L) node[left,pos=0.5] {$\PP$};
	\draw (L) -- (O);
	\draw (R) -- (OI);
	\draw (U) -- (OII);
	\end{tikzpicture}
	\[ = \sum_{i_1,\dotsc,i_6} \pm P^{i_1 i_2} P^{i_3 i_4} P^{i_5 i_6} \m_2^+(\alpha_1^1 e_{i_6} e_{i_1}) \m_2^+(\alpha_1^2 e_{i_3} e_{i_2}) \m_2^+(\alpha_2^1 e_{i_4}e_{i_5})\]
	\caption{The contribution to $(\ff^P_*\fm^\can_\Alg)_{2,0}(\alpha_{1}^1\alpha_2^1\otimes\alpha_2^1)$ of a labeled ribbon graph immersed in the plane so that the cyclic ordering at interior vertices agrees with the counterclockwise orientation.}
	\label{fig:pushforward-mc}
\end{figure}

\section{The algebraic construction}
\subsection{Algebraic vanishing results}

We are interested in conditions which imply the vanishing of the number $(\ff^P_*\fm^\can_\Alg)_\Gamma(\alpha)\in\R$ associated in Corollary~\ref{cor:alg-MC} to a trivalent ribbon graph $\Gamma\in RG_{k,\ell,g}^3$ and an element $\alpha\in(\cbc B)^{\otimes\ell}$.

We abbreviate the Euler characteristic of the graph $\Gamma$ by
\[
	\chi\coloneqq \chi(\Gamma)=\chi(\Sigma_\Gamma)\in\{1,0,-1,-2,\dotsc\}
\]
and define the \emph{number of loops} 
\[
	\gamma\coloneqq 1-\chi\in\N_0.
\]
The following proposition is an algebraic analog of Proposition~\ref{prop:ana-vanishing} in the analytic case which will be introduced in Section~\ref{sec:analytic-construction}.
Its proof is a straightforward adaption of the proof of \cite[Propositions 4.4.1 and 4.4.2]{Pavel-thesis}.

\begin{prop}\label{prop:alg-vanishing}
	Let $(\Alg,\dd,\wedge,\la\cdot,\cdot\ra)$ be a nonnegatively graded unital cyclic $\DGA$ of degree~$n\in\N_0$.
	Let $B\subset \Alg$ be a quasi-isomorphic subcomplex, and let $P\colon\Alg\to\Alg$ be a propagator with respect to a projection onto~$B$.
	Suppose that $\Gamma\in RG_{k,\ell,g}^3$ is a trivalent ribbon graph which is not the Y-tree (see Figure~\ref{fig:Y-tree}) and $\alpha\in(\cbc B)^{\otimes\ell}$ a tensor product
	\[
		\alpha=\alpha_1^1\dotsb\alpha_{s_1}^1\otimes\dotsb\otimes\alpha_1^\ell\dotsb\alpha_{s_{\ell}}^\ell,\quad\text{where }\alpha_i^b\in B, s_b\in\N, 
	\]
	such that
	\begin{equation*}
		(\ff^P_*\fm^\can_\Alg)_\Gamma(\alpha)\neq 0.
	\end{equation*}
	\begin{figure}
	\centering
	\begin{tikzpicture}
	\node[point] (A) at (0,0) {};
	\node[leaf] (AA) at ($(A)+(30:\leaflen)$) {};
	\node[leaf] (AB) at ($(A)+(150:\leaflen)$) {};
	\node[root] (AC) at ($(A)+(270:\leaflen)$) {};
	\draw (A) -- (AA);
	\draw (A) -- (AB);
	\draw (A) -- (AC);
	\end{tikzpicture}
	\caption{The Y-tree.}
	\label{fig:Y-tree}
	\end{figure}%
	If $P$ is special, then the following holds:
	\begin{enumerate}
		\item If $B^0 = \R\cdot 1$, then \emph{positivity of degrees} holds: 
		\[ 
			\deg(\alpha_i^b)>0\quad\text{for all }i\in\{1,\dotsc,s_b\}, b\in\{1,\dotsc,\ell\}.
		\]
		\item If $B$ satisfies
			\begin{equation*}
				B\wedge B\subset B,
			\end{equation*}
		      then $\gamma\ge 1$, i.e., all trees vanish.
	\end{enumerate}
	Positivity of degrees implies:
	\begin{enumerate}
	\setcounter{enumi}{2}
		\item If $\gamma=1$ or $n=3$, then $\deg(\alpha_i^b)=1$ for all $i\in\{1,\dotsc,s_b\}$, $b\in\{1,\dotsc,\ell\}$.
		\item If $n\ge 4$, then $\gamma\le 1$, i.e., all graphs with more than one loop vanish.
	\end{enumerate}
\end{prop}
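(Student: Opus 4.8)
The plan is to convert the hypothesis $(\ff^P_*\fm^\can_\Alg)_\Gamma(\alpha)\neq 0$ into a single degree identity and then combine it with the combinatorial relations~\eqref{eq:euler-characteristic} and~\eqref{eq:trivalency}. In the graph sum of Corollary~\ref{cor:alg-MC} every interior vertex is decorated by $\m_2^+$, whose three arguments must by~\eqref{eq:triple-intersection} have degrees summing to $n$; every interior edge carries the Schwarz kernel $\PP$ of $P$, and since $P$ lowers degree by $1$ and $\la\cdot,\cdot\ra$ pairs degrees summing to $n$, the two basis vectors at the two ends of an interior edge have degrees summing to $n-1$ whenever the coefficient $P^{ij}$ is nonzero; every exterior vertex carries a (homogeneous) element $\alpha^b_j\in B$. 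Summing degrees over all $3k$ flags incident to interior vertices gives $nk$ on one hand and $(n-1)e+\sum_{b,j}\deg\alpha^b_j$ on the other, where $e=\#C^1_\rmint(\Gamma)$; eliminating $k$ and $e$ via~\eqref{eq:euler-characteristic} ($k-e=1-\gamma$) and~\eqref{eq:trivalency} ($2e+s=3k$, $s=\sum_b s_b$) yields the master identity
\[
   \sum_{b=1}^{\ell}\sum_{j=1}^{s_b}\deg\alpha^b_j=(n-3)(1-\gamma)+s .
\]
(As a check, for the Y-tree $\gamma=0$ and $s=3$, so the right-hand side is $n$, consistent with $\m_2^+(\alpha^1_1\alpha^1_2\alpha^1_3)$.) Parts (iii) and (iv) then follow at once: positivity of degrees gives $\sum_{b,j}\deg\alpha^b_j\ge s$, hence $(n-3)(1-\gamma)\ge 0$; if $n\ge 4$ then $n-3>0$, forcing $\gamma\le 1$, which is (iv); if $\gamma=1$ or $n=3$ then $(n-3)(1-\gamma)=0$, so $\sum_{b,j}\deg\alpha^b_j=s$, and since each summand is at least $1$ every $\deg\alpha^b_j$ must equal $1$, which is (iii).

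For (i) I would argue by contradiction. If some $\alpha^{b_0}_{j_0}$ has degree $0$ then, as $B^0=\R\cdot 1$, it equals $\lambda\cdot 1$ with $\lambda\neq 0$; let $v_0$ be the interior vertex to which the corresponding exterior vertex is attached. Since $\m_2^+(1,x,y)=\pm\la x,y\ra$ (a cyclic rotation of the arguments and graded symmetry reduce all cases to this up to sign), the local factor at $v_0$ collapses to a pairing of the two remaining inputs, and one inspects the two remaining flags of $v_0$. If they are the two ends of a self-loop, the local factor is $\sum_{p,q}P^{pq}\la e_p,e_q\ra$, which vanishes termwise because $P^{pq}\neq 0$ needs $\deg e_p+\deg e_q=n-1$ whereas $\la e_p,e_q\ra\neq 0$ needs $\deg e_p+\deg e_q=n$. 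If one remaining flag is exterior, carrying $\alpha'\in B$, then contracting the single interior-edge kernel against the pairing produces $\pm P\alpha'$, which is $0$: indeed $P$ special together with~\eqref{eq:P2} (it is a homotopy operator with respect to $\pi_P$ onto $B$) gives $P\circ\pi_P=0$ by the Note after~\eqref{eq:P4}, and $\pi_P$ restricts to the identity on $B$, so $P|_B=0$. If both remaining flags are interior, contracting the two interior-edge kernels against the pairing yields, up to sign, the Schwarz kernel of $P\circ P$, which is $0$ by~\eqref{eq:P5}. Finally, if all three flags of $v_0$ are exterior, connectedness of $\Gamma$ forces $k=1$, i.e., $\Gamma$ is the Y-tree, which is excluded. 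In every admissible case $(\ff^P_*\fm^\can_\Alg)_\Gamma(\alpha)=0$, a contradiction; hence positivity of degrees holds.

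Part (ii) uses the same local mechanism. If $\Gamma$ is a tree ($\gamma=0$) and is not the Y-tree, then $k\ge 2$; deleting all exterior vertices and edges leaves a tree on the $k\ge 2$ interior vertices, which has a vertex of degree $1$, namely an interior vertex incident to exactly one interior edge and to two exterior edges carrying elements $a,a'\in B$. Its local factor is $\pm\la a\wedge a',e_q\ra$ up to a cyclic permutation, so contracting the attached interior-edge kernel produces $\pm P(a\wedge a')$, which vanishes since $a\wedge a'\in B\wedge B\subset B$ and $P|_B=0$. Hence $(\ff^P_*\fm^\can_\Alg)_\Gamma(\alpha)=0$, a contradiction, and $\gamma\ge 1$.

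The only genuinely delicate point throughout is the sign bookkeeping in the conventions of~\cite{Cieliebak-Fukaya-Latschev}: one must verify that contracting one leg of $\PP$ against the pairing-functional coming from $\m_2^+$ returns $\pm P$ applied to the relevant element, that contracting two copies of $\PP$ returns the Schwarz kernel of $\pm P\circ P$, and that the ``self-loop trace'' $\sum_{p,q}P^{pq}\la e_p,e_q\ra$ vanishes; one also needs the standard fact that a special $P$ with $\pi_P$ onto $B$ satisfies $P|_B=0$. None of these is deep, which is why the proposition is obtained by adapting the argument of~\cite[Propositions 4.4.1 and 4.4.2]{Pavel-thesis}.
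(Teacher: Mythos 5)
Your proof is correct and follows essentially the same route as the paper's: the same degree count $\deg\alpha=(n-3)\chi+s$ yields (iii) and (iv), and the same local vanishing mechanisms ($P|_B=0$ at an interior vertex with one adjacent interior edge, $P\circ P=0$ at one with two) give (i) and (ii). Your explicit treatment of the self-loop subcase is a small extra care that the paper's A/B/C-vertex case analysis glosses over, but it does not change the argument.
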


\begin{proof}
\begin{figure}
	\centering
	\def\edgelen{3cm}
	\def\leaflen{1cm}
	\begin{tikzpicture}
	\node[point,label={A}] (A) at (0,0) {};
	\node[leaf] (AA) at ($(A)+(30:\leaflen)$) {};
	\node[leaf] (AB) at ($(A)+(150:\leaflen)$) {};
	\node[point] (AC) at ($(A)+(270:\leaflen)$) {};
	\draw (A) -- (AA) node[pos=.9,below] {$\alpha_j^b$};
	\draw (A) -- (AB) node[pos=.9,below] {$\alpha_i^b$};
	\draw (A) -- (AC) node[pos=.8,left] {$e_{i_2}$} node[pos=.2,left] {$e_{i_1}$};
	\end{tikzpicture}
	\hspace{1cm}
	\begin{tikzpicture}
	\node[point,label={B}] (A) at (0,0) {};
	\node[point] (AA) at ($(A)+(30:\leaflen)$) {};
	\node[leaf] (AB) at ($(A)+(150:\leaflen)$) {};
	\node[point] (AC) at ($(A)+(270:\leaflen)$) {};
	\draw (A) -- (AA) node[pos=.2,below right] {$e_{i_3}$} node[pos=.8,below right] {$e_{i_4}$};
	\draw (A) -- (AB) node[pos=.9,below left] {$\alpha_i^b$};
	\draw (A) -- (AC) node[pos=.2,left] {$e_{i_1}$} node[pos=.8,left] {$e_{i_2}$};
	\end{tikzpicture}
	\hspace{1cm}
	\begin{tikzpicture}
	\node[point,label={C}] (A) at (0,0) {};
	\node[point] (AA) at ($(A)+(30:\leaflen)$) {};
	\node[point] (AB) at ($(A)+(150:\leaflen)$) {};
	\node[point] (AC) at ($(A)+(270:\leaflen)$) {};
	\draw (A) -- (AA);
	\draw (A) -- (AB);
	\draw (A) -- (AC);
	\end{tikzpicture}	
	\caption{The three types of interior vertices in $\Gamma\neq Y$.}
	\label{fig:abc-vertex}
\end{figure}
Recall that in order to write down $(\ff^P_*\fm^\can_\Alg)_\Gamma(\alpha)$, the construction in Theorem~\ref{thm:homotopyequiv} associates basis elements $(e_i)$ to interior flags, elements $\alpha_i^b$ to exterior flags, and the triple intersection product $\m_2^+$ to interior vertices.
The crucial observation is that the degrees must add up to~$\deg\m_2^+=n$ around each interior vertex and~$\deg\PP=n-1$ along each interior edge.
This implies that the \emph{total degree}
\begin{equation*}
	\deg\alpha\coloneqq\sum_{b=1}^\ell\sum_{i=1}^{s_b} \deg(\alpha_i^b)
\end{equation*}
satisfies
\begin{equation}\label{eq:degree-eqn}
   nk = (n-1)e + \deg \alpha.
\end{equation}
Expressing $(k,e)$ in terms of $(\chi,s)$ using~\eqref{eq:trivalency} and~\eqref{eq:euler-characteristic} gives 
\begin{equation*}
   \deg\alpha = (n-3)\chi + s.
\end{equation*}
With this (i) immediately implies~(iii) and~(iv).
Hence, only (i) and (ii) are left.

Following \cite{Pavel-thesis}, we call an interior vertex an
\begin{itemize}
\item {\em A-vertex} if it has $1$ adjacent interior edge; 
\item {\em B-vertex} if it has $2$ adjacent interior edges; 
\item {\em C-vertex} if it has $3$ adjacent interior edges (see Figure~\ref{fig:abc-vertex}).
\end{itemize}
Since $\Gamma$ is not the $Y$-tree, each interior vertex is of type A, B or C.

(i) 
Since the expression for $(\ff^P_*\fm_\Alg^\can)_\Gamma(\alpha)$ is homogenous in $\alpha_i^b$ and $B^0=\R\cdot 1$, it suffices to show that none of the $\alpha_i^b$ equals $1$. 

For an A-vertex, let $e_{i_1}$ be the basis element associated to the adjacent interior flag, $e_{i_2}$ the basis element associated to the other flag on the corresponding interior edge, and $\alpha_i^b,\alpha_j^b\in B$ the elements associated to the adjacent exterior edges.
Then $(\ff^P_*\fm_\Alg^\can)_\Gamma(\alpha)$ involves the sum
\begin{equation*}
	\sum_{i_1}\la\alpha_i^b\wedge\alpha_j^b,e_{i_1}\ra P^{i_2 i_1}
	= \la\alpha_i^b\wedge\alpha_j^b,Pe^{i_2}\ra
	= \pm\la P(\alpha_i^b\wedge\alpha_j^b),e^{i_2}\ra,
\end{equation*}
where we have used $\sum_{i_1} P^{i_2 i_1}e_{i_1}=Pe^{i_2}$ and the symmetry of $P$. 
If $\alpha_i^b$ or $\alpha_j^b$ is equal to $1$, then $\alpha_i^b\wedge\alpha_j^b\in B$, hence $P(\alpha_i^b\wedge\alpha_j^b)=0$ because $P$ is special and thus $P(B)=0$.

For a B-vertex, let $e_{i_1},e_{i_3}$ be the basis elements associated to the adjacent interior flags, $e_{i_2},e_{i_4}$ the basis elements associated to the other flags on the corresponding interior edges, and $\alpha_i^b\in B$ the element associated to the adjacent exterior edge.
If $\alpha_i^b=1$, then $(\ff^P_*\fm^\can_\Alg)_\Gamma(\alpha)$ involves the sum
\begin{equation*}
	\begin{aligned}
		\smash{\sum_{i_1,i_3}}\la 1,e_{i_1}\wedge e_{i_3}\ra P^{i_2 i_1}P^{i_4 i_3}
		&= \la 1,Pe^{i_2}\wedge Pe^{i_4}\ra\\
		&= \pm\la Pe^{i_2},Pe^{i_4}\ra\\
		&= \pm\la P(Pe^{i_2}),e^{i_4}\ra,
	\end{aligned}
\end{equation*}
which vanishes again because $P$ is special and thus $P\circ P=0$.

(ii)
If $B$ is a sub-$\DGA$, then $\alpha_i^b,\alpha_j^b\in B$ implies $\alpha_i^b\wedge\alpha_j^b\in B$, so the proof of (i) shows that $\Gamma$ cannot have an A-vertex.
This excludes each tree except the $Y$-tree, which is excluded by hypothesis.
\end{proof}

Notice that if $n\neq 3$, then $\alpha$ determines the signature $(k,\chi)$ of any~$\Gamma\in RG_{k,\ell,g}^3$ whose contribution to $(\ff^P_*\fm^\can_\Alg)_{\ell,g}(\alpha)$ might be nonzero as follows:
\[
	\begin{pmatrix}
		\deg\alpha \\
		s
	\end{pmatrix} =
	\begin{pmatrix}
		1 & n-1 \\
		1 & 2
	\end{pmatrix}
	\begin{pmatrix}
		k \\
		\chi
	\end{pmatrix}
	\quad\overset{n\neq 3}{\Longrightarrow}\quad
	\begin{pmatrix}
		k \\
		\chi
	\end{pmatrix} =
	\frac{1}{3-n}
	\begin{pmatrix}
		2 & 1-n \\
		-1 & 1
	\end{pmatrix}
	\begin{pmatrix}
		\deg \alpha \\
		s
	\end{pmatrix}.
\]
The conditions on $(k,\gamma)$ obtained by restricting to pairs $(\alpha,s)$ satisfying the bounds $s\le \deg\alpha\le ns$ are then equivalent to (iii) and (iv).

We remark that the formula for $(\ff^P_*\fm^\can_\Alg)_\Gamma(\alpha)\in\R$ makes sense also for graphs with $0\le s<\ell$, i.e., when there is a boundary component with no exterior vertex, and the vanishing results still hold.
However, such graphs do not naturally appear in our theory.

\begin{remark}\label{rem:vanishing-in-low-dimensions}
	We summarize some facts from \cite{Pavel-thesis} (originally in the analytic case) about the low-degree cases in Proposition~\ref{prop:alg-vanishing}:
	\begin{description}
	\item[$n=0$] We must have $P=0$, so all graphs vanish trivially.
	\item[$n=1$] If~$B^{0}=\R\cdot 1$ and $P$ is special, then every $\alpha_i^b$ has degree $1$ by (i).
	We then have $k=s$ by \eqref{eq:degree-eqn} and $k=e$ by \eqref{eq:trivalency}.
	If $v\in B^1$ is the element dual to~$1$, then $B=\mathrm{span}_\R\{1,v\}$, and hence there is no A-vertex by the proof of (ii).
	We deduce that $(\ff^P_*\fm^\can_\Alg)_{\Gamma}(\alpha)$ for $\Gamma\in RG^3_{k,\ell,g}$ does not necessarily vanish only if the underlying graph is the \emph{circular graph}~$C_s$, i.e., the graph with $s$ interior vertices, $1$ loop, and no A-vertex, and $\alpha$ is a product of $s$ copies of~$v$ (or its nonzero multiples).
	This value does not depend on the ribbon structure, can be computed explicitly, and in the de Rham case is related to Bernoulli numbers. 
\item[$n=2$] Suppose that $B^0=\R\cdot 1$, $B^1=0$ and $P$ is special, so that $B=\mathrm{span}_\R\{1,v\}$, where $v\in B^2$ is the element dual to $1$.
	One can show that the only case when $(\ff^P_*\fm^\can_\Alg)_{\Gamma}(\alpha)$ for $\Gamma\in RG_{k,\ell,g}^3$ does not necessarily vanish is when the characteristic of the underlying graph satisfies $(k,\gamma)=(3s,s+1)$ and $\alpha$ is the product of $s$ copies of $v$ (or its nonzero multiples).
	We were not able to prove vanishing in general except for small $s$. 
	\end{description}
\end{remark}

Proposition~\ref{prop:alg-vanishing} and Remark~\ref{rem:vanishing-in-low-dimensions} immediately imply

\begin{cor}\label{cor:alg-vanishing}
	Let $(\Alg,\dd,\wedge,\la\cdot,\cdot\ra)$ be a nonnegatively graded unital cyclic $\DGA$ of degree $n\in\N_0\backslash\{2\}$ whose cohomology $H\coloneqq H(\Alg,\dd)$ is simply connected.
Let $P\colon\Alg\to \Alg$ be a special propagator with associated harmonic subspace $\HH\subset\Alg$, and let $\ff^P\colon\dIBL(\Alg)\to\dIBL(H)$ be the associated $\IBLinfty$ homotopy.
Then the only contributions to the pushforward Maurer--Cartan element $\ff^P_*\fm^\can_\Alg$ come from {\em trees}, so that 
\[
	(\ff^P_*\fm^\can_\Alg)_{\ell,g}=0\quad\text{for all }(\ell,g)\neq(1,0).
\]
Consider $H$ as a cyclic $\DGA$ $(H,\dd=0,\wedge,\la\cdot,\cdot\ra_H)$, and let $\fm_H^\can$ be the canonical Maurer--Cartan element in $\dIBL(H)$.
If in addition $\HH\wedge \HH \subset \HH$, then the only contribution comes from the $Y$-tree, so that
\[
	\ff^P_*\fm^\can_\Alg=\fm_H^\can.
\]
\end{cor}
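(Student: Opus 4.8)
The plan is to feed the hypotheses into Proposition~\ref{prop:alg-vanishing} (supplemented by Remark~\ref{rem:vanishing-in-low-dimensions} in low degrees), applied with $B=\HH$, and then read off the $Y$-tree contribution from the pushforward formula~\eqref{eq:pushforward-mc}. First I would record the setup. By Lemma~\ref{lem:special} the harmonic subspace associated to the special propagator $P$ is $\HH=\im\pi_P$, so by Lemma~\ref{lem:harmonic} it is a quasi-isomorphic cyclic subcomplex whose unique symmetric projection is $\pi_P\colon\Alg\onto\HH$, and $P$ is a (special) propagator with respect to $\pi_P$. Under the identification $\HH\cong H$ of~\eqref{eq:canon-quotient-to-hom}, simple connectedness of $H$ gives $\HH^0=\R\cdot 1$ and $\HH^1=0$. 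By Corollary~\ref{cor:alg-MC}, $(\ff^P_*\fm^\can_\Alg)_{\ell,g}$ is a sum over $\Gamma\in RG^3_{k,\ell,g}$ of the numbers $C_\Gamma(\ff^P_*\fm^\can_\Alg)_\Gamma(\alpha)$ with $C_\Gamma\ne 0$, so it suffices to control these contributions on tensor products $\alpha$ of homogeneous elements of $\HH$. Note that $\gamma=1-\chi=2g+\ell-1$, so $\Gamma$ is a tree precisely when $(\ell,g)=(1,0)$.

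For the first assertion I would show that every trivalent ribbon graph $\Gamma\ne Y$ with $\gamma\ge1$ has vanishing contribution, by a case distinction on $n\in\N_0\setminus\{2\}$. If $n=0$ then $P=0$ (Remark~\ref{rem:vanishing-in-low-dimensions}), so any graph with an interior edge vanishes; if $n=1$, the same remark shows a non-$Y$ graph can contribute only when every $\alpha_i^b$ has degree $1$, impossible as $\HH^1=0$. If $n=3$, positivity of degrees (Proposition~\ref{prop:alg-vanishing}(i), available since $\HH^0=\R\cdot1$ and $P$ is special) and part~(iii) force $\deg\alpha_i^b=1$ for every non-$Y$ graph, again impossible. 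If $n\ge4$, part~(iv) gives $\gamma\le1$, and part~(iii) excludes $\gamma=1$ as before. Thus only trees contribute; since every tree has signature $(k,1,0)$, this yields $(\ff^P_*\fm^\can_\Alg)_{\ell,g}=0$ for $(\ell,g)\ne(1,0)$.

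Now assume in addition $\HH\wedge\HH\subset\HH$. Then Proposition~\ref{prop:alg-vanishing}(ii) says every non-vanishing trivalent graph $\ne Y$ has $\gamma\ge1$; combined with the previous paragraph (only $\gamma=0$ survives), the $Y$-tree is the only surviving graph, so $\ff^P_*\fm^\can_\Alg$ has vanishing components for $(\ell,g)\ne(1,0)$ and $(\ff^P_*\fm^\can_\Alg)_{1,0}$ equals the $Y$-contribution. In the pushforward formula~\eqref{eq:pushforward-mc} with $\fm=\fm^\can_\Alg$ (only $\fm_{1,0}=\m_2^+$ nonzero), the unique one-interior-vertex graph $Y$ records precisely the term $\ff^P_{1,1,0}(\m_2^+)=\iota^*\m_2^+=\m_2^+|_{\HH^{\otimes3}}$, with coefficient $1$ and no extra sign. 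For $h_0,h_1,h_2\in\HH$ both $h_0\wedge h_1$ and $h_2$ are $\dd$-closed, so the pairing descended to cohomology gives
\[
   \m_2^+(h_0h_1h_2)=(-1)^{n+\deg h_1}\la h_0\wedge h_1,h_2\ra=(-1)^{n+\deg h_1}\la[h_0]\wedge[h_1],[h_2]\ra_H,
\]
which under $\HH\cong H$ is exactly the triple intersection product of $(H,\dd=0,\wedge,\la\cdot,\cdot\ra_H)$, i.e.\ $(\fm_H^\can)_{1,0}$. Since the remaining components of $\fm_H^\can$ vanish by Proposition~\ref{propIBLI2}, we conclude $\ff^P_*\fm^\can_\Alg=\fm_H^\can$.

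Every computation here is routine; the step needing care is the low-degree bookkeeping — that $n\in\{0,1\}$ must be handled via Remark~\ref{rem:vanishing-in-low-dimensions} rather than Proposition~\ref{prop:alg-vanishing} directly, and that the residual graphs with $\gamma=1$ (respectively the case $n=3$) are ruled out not by a degree count but by $\HH^1=0$, i.e.\ simple connectedness of $H$. The $Y$-tree normalization requires no sign chase, being literally $\ff^P_{1,1,0}$ applied to $\fm_{1,0}$.
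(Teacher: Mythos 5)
Your proposal is correct and is exactly the deduction the paper intends: the corollary is stated as an immediate consequence of Proposition~\ref{prop:alg-vanishing} and Remark~\ref{rem:vanishing-in-low-dimensions}, and your case analysis on $n$ (using $\HH^1=0$ to kill the $\gamma=1$ and $n=3$ graphs, part (iv) for $n\ge 4$, and the remark for $n\in\{0,1\}$) together with the identification of the $Y$-tree term as $\ff^P_{1,1,0}(\m_2^+)=\iota^*\m_2^+$ fills in precisely the omitted details. No gaps.
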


\subsection{Weak functoriality of the twisted dIBL construction}

We have the following analog of Theorem~\ref{thm:homotopyequiv} for the twisted $\dIBL$ algebra in the simply connected case:

\begin{prop}\label{prop:functoriality}
Let $(\Alg,\dd,\wedge,\la\cdot,\cdot\ra)$ be a simply connected cyclic $\DGA$ of degree~$n\in \N_0$, and let $B\subset \Alg$ be a quasi-isomorphic cyclic sub-$\DGA$.
Let $P\colon\Alg\to\Alg$ be a special propagator with respect to the unique symmetric projection $\pi_B\colon\Alg\onto B$, and let $\ff^P\colon \dIBL(\Alg)\to\dIBL(B)$ be the associated $\IBLinfty$ homotopy equivalence. 
Then the canonical Maurer--Cartan elements $\fm_\Alg^\can$ in $\dIBL(\Alg)$ and $\fm_B^\can$ in $\dIBL(B)$ satisfy
\begin{equation}\label{eq:pushforward-equal}
	\ff^P_*\fm^\can_\Alg = \fm^\can_B.
\end{equation}
In particular, the twisted morphism $(\ff^P)^{\fm_\Alg^\can}$ is an $\IBLinfty$ homotopy equivalence
\[
	(\ff^P)^{\fm_\Alg^\can}\colon \dIBL^{\fm_\Alg^\can}(\Alg)\stackrel{\simeq}\longrightarrow\dIBL^{\fm_B^\can}(B). 
\]
\end{prop}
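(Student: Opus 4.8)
The plan is to reduce the identity~\eqref{eq:pushforward-equal} to the explicit trivalent-ribbon-graph description of the pushforward Maurer--Cartan element (Corollary~\ref{cor:alg-MC}, read with $B$ in place of $H$), to show that under the present hypotheses every graph other than the $Y$-tree contributes zero, and finally to identify the surviving $Y$-term with $(\fm^\can_B)_{1,0}$. The ``in particular'' clause will then be a formal consequence of Proposition~\ref{prop:MC}.

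First I would collect the structural consequences of the hypotheses. Since $\Alg$ is simply connected, $\Alg^0=\R\cdot 1$ and $\Alg^1=0$, and because $B\subset\Alg$ is a unital subcomplex this forces $B^0=\R\cdot 1$ and $B^1=0$; in particular $B$ is itself simply connected, $B\wedge B\subset B$, and $P|_B=0$ (the last by~\eqref{eq:P4}, as $P$ is special with respect to $\pi_B$). Next I would dispatch the degenerate low-degree cases: there is no simply connected cyclic $\DGA$ of degree $1$ (a nondegenerate degree-$1$ pairing on such a complex would force $\Alg^1\cong(\Alg^0)^*\ne 0$), for $n=0$ one has $\Alg=\R\cdot 1=B$ and $P=0$, and for $n=2$ the grading forces $\dd=0$ on $\Alg$, hence $\pi_B=\Id$, $B=\Alg$ and again $P=0$. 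In each of these cases $\ff^P=\Id$, so~\eqref{eq:pushforward-equal} is trivial, and I may assume $n\ge 3$.

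The core of the argument is the vanishing $(\ff^P_*\fm^\can_\Alg)_\Gamma(\alpha)=0$ for every trivalent ribbon graph $\Gamma\ne Y$ and every $\alpha\in(\cbc B)^{\otimes\ell}$, and this I would extract from Proposition~\ref{prop:alg-vanishing}: part~(i) (using $B^0=\R\cdot 1$ and $P$ special) yields positivity of degrees, which combined with $B^1=0$ means every $\alpha_i^b$ has degree $\ge 2$; part~(ii) (using $B\wedge B\subset B$) gives $\gamma\ge 1$; and for $n\ge 4$, parts~(iv) and then (iii) force $\gamma=1$ and $\deg\alpha_i^b=1$, while for $n=3$ part~(iii) forces $\deg\alpha_i^b=1$ directly --- in either case contradicting $B^1=0$ (note there is always at least one exterior vertex, as $s\ge\ell\ge 1$). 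Hence only the $Y$-tree survives: $(\ff^P_*\fm^\can_\Alg)_{\ell,g}=0$ for $(\ell,g)\ne(1,0)$, and $(\ff^P_*\fm^\can_\Alg)_{1,0}$ equals the $Y$-tree contribution. I would then identify that contribution using~\eqref{eq:pushforward-mc}: it is $\ff^P_{1,1,0}(\m_2^+)=\iota^*\m_2^+$, the restriction to $B^{\otimes 3}$ of the triple intersection product of $\Alg$, which is precisely $(\fm^\can_B)_{1,0}$; that the attendant combinatorial factor and sign are trivial can be verified directly, or deduced by applying the identity just proved to $B=\Alg$, $P=0$, where $\ff^0=\Id$ forces $\ff^0_*\fm^\can_\Alg=\fm^\can_\Alg$. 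This gives~\eqref{eq:pushforward-equal}. The ``in particular'' statement then follows: Proposition~\ref{prop:MC}(3) exhibits $(\ff^P)^{\fm^\can_\Alg}$ as a filtered $\IBLinfty$ morphism $\dIBL^{\fm^\can_\Alg}(\Alg)\to\dIBL^{\ff^P_*\fm^\can_\Alg}(B)=\dIBL^{\fm^\can_B}(B)$, and Proposition~\ref{prop:MC}(4) together with Theorem~\ref{thm:homotopyequiv} (which furnishes $\ff^P$ as a filtered $\IBLinfty$ homotopy equivalence) shows it is itself a homotopy equivalence.

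The main obstacle is the vanishing of the non-$Y$ contributions, and within that the degree-$3$ case: Proposition~\ref{prop:alg-vanishing} alone does not settle it, and the decisive extra input is that the simple-connectivity of $\Alg$ (hence $B^1=0$) upgrades ``positivity of degrees'' to ``degree $\ge 2$'', which is incompatible with the degree-$1$ conclusion of Proposition~\ref{prop:alg-vanishing}(iii); the cases $n\le 2$ instead require the separate observation that there the relevant complexes are (essentially) trivial. The leftover sign and combinatorial-coefficient check for the surviving $Y$-term is routine given the conventions of~\cite{Cieliebak-Fukaya-Latschev}.
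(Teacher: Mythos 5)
Your proposal is correct and follows essentially the same route as the paper: the paper's (very terse) proof likewise derives the $n\ge 3$ case from Proposition~\ref{prop:alg-vanishing} using that $P$ is special and $B$ is simply connected, and disposes of $n=1$ as impossible and $n\in\{0,2\}$ via $P=0$ for degree reasons. Your write-up merely makes explicit the steps the paper leaves implicit (positivity of degrees upgraded to degree $\ge 2$ via $B^1=0$, contradicting the degree-$1$ conclusion of parts (iii)--(iv), and the identification of the surviving $Y$-tree term with $(\fm^\can_B)_{1,0}$).
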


\begin{proof}
Equation \eqref{eq:pushforward-equal} for $n\ge 3$ follows from Proposition~\ref{prop:alg-vanishing} using that $P$ is special and $B$ simply connected. %
Since $\Alg$ is simply connected, the case $n=1$ is impossible, and for $n=2$ we must have $P=0$ 
for degree reasons.
\end{proof}

Let $f\colon B\to \Alg$ be a morphism of cyclic $\DGA$s.
Consider the associated morphism of $\dIBL$ algebras $\dIBL(f)=\pi^{*}_B\colon\dIBL(B)\to\dIBL(\Alg)$ at the end of \S\ref{ss:dIBL-cyc-cochain}.
It will in general not define a morphism $\dIBL^{\fm^\can_B}(B)\to\dIBL^{\fm^\can_\Alg}(\Alg)$ as it need not preserve the Hochschild codifferential $\fp_{1,1,0}^{\fm^\can} = \bb^*$.
On the other hand, there is a natural $\IBLinfty$ morphism in the opposite direction $\dIBL^{\fm^\can_\Alg}(\Alg)\to\dIBL^{\fm^\can_B}(B)$ from Proposition~\ref{prop:functoriality} defined up to $\IBLinfty$ homotopy equivalence.  
It would be interesting to know whether the $\dIBL^{\fm^\can}$ construction induces a natural \emph{contravariant} functor from the homotopy category of cyclic $\DGA$s to the homotopy category of $\dIBL$ algebras.

\subsection{Twisted dIBL algebras associated to Poincar\'e DGAs}

We now explain how to extend the $\dIBL^{\fm^\can}$ construction to Poincar\'e $\DGA$s using differential Poincar\'e duality models.
Let $\Alg$ be a $\PDGA$ whose cohomology $H\coloneqq H(\Alg)$ is 2-connected, i.e., $H^0=\R$ and $H^1=H^2=0$. 
By Theorem~\ref{thm:existence-uniqueness-PDmodel}(a),  $\Alg$ admits a simply connected $\dPD$ model $\MM$.
Proposition~\ref{propIBLI2} associates to~$\MM$ the twisted $\dIBL$ algebra $\dIBL^{\fm^\can_\MM}(\MM)$. 
Suppose that $\MM^{\prime}$ is another simply connected $\dPD$ model of~$\Alg$.
By Theorem~\ref{thm:existence-uniqueness-PDmodel}(b), there exists a simply connected $\dPD$ algebra $\Alg_1$ and quasi-isomorphisms of $\dPD$ algebras
\begin{equation*}
\begin{tikzcd}	
	& \Alg_1 & \\
	\MM \ar[hook,ur,
	] & & \MM^{\prime}.\ar[hook',ul]
\end{tikzcd}
\end{equation*}
Proposition~\ref{prop:functoriality} then extends the pullbacks to $\IBLinfty$ homotopy equivalences
\begin{equation*}
\begin{tikzcd}	
	& \dIBL^{\fm^\can_{\Alg_1}}(\Alg_1) \ar[dl,"\simeq",swap]\ar[dr,"\simeq"] & \\
	\dIBL^{\fm^\can_{\MM}}(\MM)  & & \dIBL^{\fm^\can_{\MM^\prime}}(\MM^{\prime}).
\end{tikzcd}
\end{equation*}
Since $\IBLinfty$ homotopy equivalences are invertible and composable, this shows that $\dIBL^{\fm_\MM^\can}(\MM)$ is independent of the simply connected $\dPD$ model $\MM$ of $\Alg$ up to $\IBLinfty$ homotopy equivalence (provided that $H$ is $2$-connected). 
If $\Alg^{\prime}$ is a $\PDGA$ weakly equivalent to $\Alg$, then a $\dPD$ model~$\MM^\prime$ for~$\Alg^\prime$ is also one for $\Alg$, and hence the $\dIBL$ algebras associated to $\Alg$ and~$\Alg^{\prime}$ are $\IBLinfty$ homotopy equivalent by the previous discussion. 
We have thus proved the following result which corresponds to Theorem~\ref{thm:alg-intro} in the Introduction:

\begin{thm}\label{thm:alg-construction}
The map $\Alg\mapsto\dIBL^{\fm_\MM^\can}(\MM)$, where $\MM$ is a simply connected differential Poincar\'e duality model of $\Alg$, assigns to each Poincar\'e $\DGA$ $\Alg$ of degree $n\in\N_0$ whose cohomology is 2-connected 
a $\dIBL$ algebra of degree $n-3$ whose homology is the cyclic cohomology of the $\DGA$ $\Alg$, canonically up to $\IBLinfty$ homotopy equivalence. 
If two such Poincar\'e $\DGA$s~$\Alg$ and~$\Alg^{\prime}$ are weakly equivalent, then their associated $\dIBL$ algebras are $\IBLinfty$ homotopy equivalent. 
\hfill$\square$
\end{thm}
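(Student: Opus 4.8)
The plan is to deduce the statement from the existence and uniqueness of differential Poincar\'e duality models (Theorem~\ref{thm:existence-uniqueness-PDmodel}) together with the construction of the twisted $\dIBL$ algebra (Proposition~\ref{propIBLI2}, Theorem~\ref{homologyBLI}) and its weak functoriality (Proposition~\ref{prop:functoriality}). First, given a $\PDGA$ $\Alg$ of degree $n$ whose cohomology $H$ is $2$-connected, Theorem~\ref{thm:existence-uniqueness-PDmodel}(a) produces a simply connected $\dPD$ model $\MM$, which is in particular a nonnegatively graded unital cyclic $\DGA$; Proposition~\ref{propIBLI2} then attaches to it the canonical Maurer--Cartan element $\fm^\can_\MM$ and the twisted $\dIBL$ algebra $\dIBL^{\fm^\can_\MM}(\MM)$ of degree $n-3$. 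By Theorem~\ref{homologyBLI} its homology is the cyclic cohomology of the $\DGA$ $\MM$, which equals that of $\Alg$ because $\MM$ is connected to $\Alg$ by a zigzag of quasi-isomorphisms of $\DGA$s and cyclic cohomology is invariant under such.

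Next I would show that $\dIBL^{\fm^\can_\MM}(\MM)$ does not depend, up to $\IBLinfty$ homotopy equivalence, on the chosen simply connected $\dPD$ model $\MM$. If $\MM'$ is a second such model, then concatenating zigzags shows that $\MM$ and $\MM'$ are weakly equivalent as $\PDGA$s; moreover they are simply connected with $H^2(\MM)=H^2(\MM')=H^2(\Alg)=0$, so Theorem~\ref{thm:existence-uniqueness-PDmodel}(b) yields a simply connected $\dPD$ algebra $\Alg_1$ and quasi-isomorphisms of $\dPD$ algebras $\MM\hookrightarrow\Alg_1\hookleftarrow\MM'$. Since a morphism of $\dPD$ algebras is an injective $\CDGA$ map preserving the perfect pairings, $\MM$ and $\MM'$ are realized inside the simply connected cyclic $\DGA$ $\Alg_1$ as quasi-isomorphic cyclic sub-$\DGA$s, so there exist special propagators with respect to the symmetric projections $\pi_\MM\colon\Alg_1\onto\MM$ and $\pi_{\MM'}\colon\Alg_1\onto\MM'$ (cf.\ the corollary following Lemma~\ref{lem:retraction}). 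Proposition~\ref{prop:functoriality} then gives the identity $\ff^P_*\fm^\can_{\Alg_1}=\fm^\can_\MM$ and promotes the pullback to an $\IBLinfty$ homotopy equivalence $\dIBL^{\fm^\can_{\Alg_1}}(\Alg_1)\stackrel{\simeq}{\longrightarrow}\dIBL^{\fm^\can_\MM}(\MM)$, and similarly for $\MM'$. Because $\IBLinfty$ homotopy equivalences have homotopy inverses and can be composed, these assemble into an $\IBLinfty$ homotopy equivalence $\dIBL^{\fm^\can_\MM}(\MM)\simeq\dIBL^{\fm^\can_{\MM'}}(\MM')$, which makes the assignment $\Alg\mapsto\dIBL^{\fm^\can_\MM}(\MM)$ canonical up to $\IBLinfty$ homotopy equivalence.

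Finally, if $\Alg'$ is a $\PDGA$ with $2$-connected cohomology that is weakly equivalent to $\Alg$, then any simply connected $\dPD$ model $\MM'$ of $\Alg'$ is, via concatenation of the two zigzags, also a simply connected $\dPD$ model of $\Alg$; applying the previous paragraph to the pair of models $\MM$ and $\MM'$ of $\Alg$ exhibits the $\dIBL$ algebras associated to $\Alg$ and $\Alg'$ as $\IBLinfty$ homotopy equivalent, which finishes the argument.

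I expect the real content to lie in the inputs rather than in this assembly: the hard results are Theorem~\ref{thm:existence-uniqueness-PDmodel}(b), where the hypothesis $H^2=0$ cannot be dropped, and Proposition~\ref{prop:functoriality}, whose proof relies on the algebraic vanishing statements of Proposition~\ref{prop:alg-vanishing} (specialness of $P$ together with the sub-$\DGA$ property of $\MM$ kills every ribbon-graph contribution to the pushforward Maurer--Cartan element except the $Y$-tree, forcing $\ff^P_*\fm^\can_{\Alg_1}=\fm^\can_\MM$). Within the present argument the only point requiring care is verifying that the inclusion $\MM\hookrightarrow\Alg_1$ genuinely presents $\MM$ as a quasi-isomorphic cyclic sub-$\DGA$, so that Proposition~\ref{prop:functoriality} applies verbatim and matches the \emph{twisted} $\dIBL$ structures rather than merely the untwisted ones.
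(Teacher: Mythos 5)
Your proposal is correct and follows essentially the same route as the paper: existence of a simply connected $\dPD$ model via Theorem~\ref{thm:existence-uniqueness-PDmodel}(a), comparison of two models through a common simply connected $\dPD$ algebra $\Alg_1$ via Theorem~\ref{thm:existence-uniqueness-PDmodel}(b), and promotion of the inclusions to $\IBLinfty$ homotopy equivalences of the twisted structures via Proposition~\ref{prop:functoriality}, with the weak-equivalence statement following by concatenating zigzags. The extra care you take in checking that the $\dPD$ quasi-isomorphisms realize $\MM$ and $\MM'$ as quasi-isomorphic cyclic sub-$\DGA$s (so that Proposition~\ref{prop:functoriality} applies to the twisted structures) is exactly the point the paper leaves implicit.
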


{\bf Application to the de Rham complex. }
Let $M$ be a connected closed oriented manifold of dimension $n$.
Consider the de Rham complex
\[
	\bigl(\Om\coloneqq\Om^*(M),\dd,\wedge\bigr)
\]
with the \emph{intersection pairing} $\la\cdot,\cdot\ra\colon \Om\times\Om\to \R$ of degree $n$ defined by
\[
	\la\alpha,\beta\ra\coloneqq\int_M\alpha\wedge\beta.
\]
Stokes' theorem implies that $(\Om,\dd,\wedge,\la\cdot,\cdot\ra)$ is a $\DGA$ with pairing in the sense of Section~\ref{sec:PDGA}.
The Poincar\'e duality theorem implies that the induced pairing $\la\cdot,\cdot\ra_{\HdR}\colon \HdR\times \HdR\to\R$ on the de Rham cohomology $\HdR\coloneqq \HdR(M)\coloneqq H(\Om,\dd)$ is perfect, and so $\Om$ is an oriented $\PDGA$ of degree $n$.
It is of course unital with unit the constant $1$.

A smooth homotopy equivalence $f\colon M\to M^{\prime}$ induces a $\DGA$ quasi-isomorphism $f^*\colon \Om^*(M^{\prime})\to\Om^*(M)$.
So the map $f^*$ is a $\PDGA$ quasi-isomorphism if and only if the induced isomorphism $H(f^*)\colon H_n(M;\Z)\to H_n(M^{\prime};\Z)$ intertwines the orientations, i.e maps the fundamental cycle~$[M]$ to the fundamental cycle~$[M^{\prime}]$.
We will call such $f$ an {\em orientation preserving homotopy equivalence}. 
Theorem~\ref{thm:alg-construction} then implies the following result which corresponds to Corollary~\ref{cor-alg-intro} in the Introduction:

\begin{cor}\label{cor:alg-construction}
The map $M\mapsto\dIBL^{\fm_\MM^\can}(\MM)$, where $\MM$ is a simply connected differential Poincar\'e duality model of the de Rham complex $\Om^*(M)$, assigns to each closed $n$-manifold $M$ whose de Rham cohomology $\HdR(M)$ is $2$-connected a $\dIBL$ algebra of degree $n-3$ whose homology is the cyclic cohomology of $(\Om,\dd,\wedge)$, canonically up to $\IBLinfty$ homotopy equivalence.
An orientation preserving homotopy equivalence between two such manifolds $M$ and $M^{\prime}$ gives rise to an $\IBLinfty$ homotopy equivalence between their associated $\dIBL$ algebras. 
\hfill$\square$
\end{cor}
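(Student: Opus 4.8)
The plan is to obtain this as the specialization of Theorem~\ref{thm:alg-construction} to $\Alg=\Om^*(M)$; the only things to check are that the de Rham complex falls under the hypotheses of that theorem and that an orientation preserving homotopy equivalence induces a weak equivalence of Poincar\'e $\DGA$s.

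First I would record --- as was already explained in the paragraph preceding the corollary --- that $(\Om^*(M),\dd,\wedge,\la\cdot,\cdot\ra)$ is an oriented $\PDGA$ of degree $n$: Stokes' theorem makes the intersection pairing a pairing on the cochain complex $(\Om^*(M),\dd)$ and forces the cyclicity~\eqref{Eq:ProdCyclic}; the algebra is unital with unit the constant function $1$; and Poincar\'e duality says that the induced pairing on $\HdR(M)$ is perfect, so that $\HdR(M)$ equipped with the orientation given by integration is a Poincar\'e duality algebra. By hypothesis $\HdR(M)$ is $2$-connected, hence in particular simply connected, so Theorem~\ref{thm:existence-uniqueness-PDmodel}(a) provides a simply connected $\dPD$ model $\MM$ of $\Om^*(M)$, and Theorem~\ref{thm:alg-construction} then applies verbatim: it yields a $\dIBL$ algebra $\dIBL^{\fm^\can_\MM}(\MM)$ of degree $n-3$, independent of the choice of $\MM$ up to $\IBLinfty$ homotopy equivalence, whose homology is the cyclic cohomology of the $\DGA$ $(\Om^*(M),\dd,\wedge)$ (cyclic cohomology of a $\DGA$ being invariant under zigzags of $\DGA$ quasi-isomorphisms, which is why the answer can be read off directly from $\Om^*(M)$ and not merely from the model).

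For the homotopy-invariance clause I would argue as follows. Given an orientation preserving homotopy equivalence $f\colon M\to M^\prime$ between two such manifolds, Whitney approximation produces a smooth map $g\colon M\to M^\prime$ homotopic to $f$; since homotopic maps induce the same map on homology, $g$ is again a homotopy equivalence and $g_*[M]=[M^\prime]$. Then $g^*\colon\Om^*(M^\prime)\to\Om^*(M)$ is a quasi-isomorphism of unital $\DGA$s, and because $\int_M g^*\omega=\int_{M^\prime}\omega$ for every closed $n$-form $\omega$ on $M^\prime$ (using $g_*[M]=[M^\prime]$), the pullback $g^*$ is compatible with the orientations given by integration, hence is a $\PDGA$ quasi-isomorphism. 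Thus $\Om^*(M)$ and $\Om^*(M^\prime)$ are weakly equivalent Poincar\'e $\DGA$s, and the last assertion of Theorem~\ref{thm:alg-construction} supplies the desired $\IBLinfty$ homotopy equivalence between their associated $\dIBL$ algebras.

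I do not expect a genuine obstacle here: the substance lies entirely in Theorem~\ref{thm:alg-construction}, which was assembled from Proposition~\ref{propIBLI2}, Proposition~\ref{prop:functoriality} and Theorem~\ref{thm:existence-uniqueness-PDmodel}. The only steps deserving a sentence of care are the replacement of the topological homotopy equivalence by a smooth one via Whitney approximation, together with the remark that being orientation preserving is a homotopy-invariant condition, so that it is harmless to assume $f$ is smooth from the outset.
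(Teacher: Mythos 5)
Your proposal is correct and follows the same route as the paper: the corollary is obtained by specializing Theorem~\ref{thm:alg-construction} to $\Alg=\Om^*(M)$, after noting that the de Rham complex with the intersection pairing is an oriented $\PDGA$ and that an orientation preserving (smooth) homotopy equivalence induces a $\PDGA$ quasi-isomorphism, hence a weak equivalence. Your extra remark about smoothing a topological homotopy equivalence via Whitney approximation is a harmless refinement the paper leaves implicit.
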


\begin{rem}
On the class of manifolds $M$ that admit an orientation reversing homotopy equivalence $M\to M$, the specification ``orientation preserving'' can be dropped in Corollary~\ref{cor:alg-construction}.
However, there are manifolds (such as the complex or quaternionic projective spaces $\C P^m$, $\mathbb{H}P^m$ for even $m$) that do not admit an orientation reversing homotopy equivalence $M\to M$.
For further examples see~\cite{chiral}.
\end{rem}

\section{The analytic construction}
\label{sec:analytic-construction}

Throughout this section, $M$ will be a connected closed oriented manifold of dimension~$n$ and $\bigl(\Om\coloneqq\Om^*(M),\dd,\wedge)$ its de Rham complex equipped with the intersection pairing $\la\alpha,\beta\ra=\int_M\alpha\wedge\beta$.
We denote by $\HdR\coloneqq \HdR(M)$ the de Rham cohomology. 

Given a Riemannian metric $g$ on $M$, let~$\star_g\colon \Om^{\bullet}\to\Om^{n-\bullet}$ be the induced Hodge star operator, $\dd^\star_g\colon \Om^\bullet\to\Om^{\bullet-1}$ the associated codifferential, and $\HH_g\colon\ker\dd\cap \ker \dd^\star_g$ the corresponding harmonic subspace.
The Hodge theorem asserts that
\begin{equation}\label{eq:riemannian-hodge-decomposition}
	\Om = \HH_g \oplus\im\dd\oplus \im \dd^\star_g
\end{equation}
is a Hodge decomposition, and we conclude that $\Om$ is an oriented $\PDGA$ of Hodge type.
Moreover, the pairing $\la\cdot,\cdot\ra\colon\Om\times\Om\to\R$ is nondegenerate, so every harmonic subspace $\HH\subset \Om$ admits a \emph{unique} harmonic projection $\pi_\HH\colon\Om\onto\HH$.

\subsection{Analytic propagators}

We call a propagator $P\colon\Om\to\Om$ {\em analytic}%
\footnote{ Here ``analytic'' does not stand for ``real analytic'' but just for good analytic properties.}
if it can be written as
\begin{equation}\label{eq:propagator-and-kernel}
	(P\alpha)(x) = \int_{y\in M}\PP(x,y)\wedge\alpha(y)
\end{equation}
for a smooth $(n-1)$-form $\PP$ on the {\em oriented real blow-up} of the diagonal $\Delta\subset M\times M$.
This blow-up, which we denote by $\Bl_\Delta(M\times M)$, is obtained by replacing~$\Delta$ with the real oriented projectivization $P^+N\Delta \coloneqq N\Delta/\sim$ of the normal bundle $N\Delta\to \Delta$, where $v_1\sim v_2$ holds if and only if $v_1 = \alpha v_2$ for some $\alpha\in(0,\infty)$, and promoting polar coordinates around~$\Delta$ to boundary charts around $P^+N\Delta$.
This leads to a smooth compact manifold with boundary~$\partial \Bl_\Delta(M\times M)=P^+N\Delta$ whose interior is canonically identified with $(M\times M)\backslash\Delta$ (see~\cite{Cieliebak-Volkov-Chern-Simons,Pavel-thesis} for details).

A~version of the following lemma has been proved in dimension $3$ in~\cite{Bott-Cattaneo,Cattaneo-Mnev}, and in arbitrary dimension (with essentially the same proof) in~\cite{Cieliebak-Volkov-Chern-Simons,Pavel-thesis}:

\begin{lem}\label{lem:existence-of-analytic-propagator}
Given a harmonic subspace $\HH\subset\Om$, there exists an analytic propagator $P\colon\Om\to\Om$ with respect to the harmonic projection $\pi_\HH\colon\Om\onto\HH$.
The corresponding special propagator~$P_3$ from Lemma~\ref{lem:propagator} is again analytic.
\end{lem}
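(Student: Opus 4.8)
The plan is to construct $P$ directly as an integral operator whose Schwartz kernel $\PP$ is a smooth form on the blow-up $\Bl_\Delta(M\times M)$, using the classical parametrix for the Hodge Laplacian, and then to verify that the required algebraic identities \eqref{eq:P2}, \eqref{eq:P3}, \eqref{eq:P5}, \eqref{eq:PP2} translate into statements about $\PP$ that hold because of elliptic regularity and the symmetry of the Green's operator. Concretely: fix a Riemannian metric $g$ whose harmonic space is the given $\HH$ (this exists since $\HH$ is a harmonic subspace and, by the remark after \eqref{eq:riemannian-hodge-decomposition}, equals the space of $g$-harmonic forms for a suitable $g$, or one works with the orthogonal projection $\pi_\HH$ directly). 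Let $G$ be the Green's operator for $\Delta_g = \dd\dd^\star_g+\dd^\star_g\dd$, so that $\Id - \pi_\HH = \Delta_g G = G\Delta_g$ and $G$ commutes with $\dd,\dd^\star_g,\pi_\HH$. Set $P_0 \coloneqq -\dd^\star_g G$. A direct computation gives $\dd P_0 + P_0\dd = -(\dd\dd^\star_g + \dd^\star_g\dd)G = \pi_\HH - \Id$, which is \eqref{eq:P2}, so $P_0$ is a homotopy operator with respect to $\pi_\HH$; the symmetry \eqref{eq:P3} follows from self-adjointness of $G$ with respect to $(\cdot,\cdot)=\la\cdot,\star_g\cdot\ra$ combined with the way the Hodge star intertwines the pairing $\la\cdot,\cdot\ra$ (this is the standard computation in \cite{Cattaneo-Mnev,Cieliebak-Volkov-Chern-Simons}). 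Hence $P_0$ is a propagator with respect to $\pi_\HH$.

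The analyticity of $P_0$ is the content of the classical fact that $G$ (hence $\dd^\star_g G$) has a Schwartz kernel which is smooth away from $\Delta$ and has a conormal singularity of the expected order along $\Delta$; after passing to the oriented real blow-up, the leading polar term becomes a smooth form on the boundary hypersurface $P^+N\Delta$ and the kernel extends smoothly to $\Bl_\Delta(M\times M)$. This is exactly the statement proved in \cite{Cieliebak-Volkov-Chern-Simons,Pavel-thesis} (following \cite{Bott-Cattaneo}), so I would cite it rather than reprove it; the only point to check is that the degree of the form $\PP$ is $n-1$, which follows by counting: $P_0$ lowers form-degree by $1$, and the integration in \eqref{eq:propagator-and-kernel} against an $n$-form on $M$ forces $\PP$ to be an $(n-1)$-form on the blow-up. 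Thus $P \coloneqq P_0$ is an analytic propagator with respect to $\pi_\HH$.

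For the second assertion, apply Lemma~\ref{lem:propagator} to $P=P_0$: the special propagator is $P_3 = -P_2\circ\dd\circ P_2$ with $P_2 = (\pi_\HH-\Id)\circ P_0\circ(\pi_\HH-\Id)$. Since $\pi_\HH$ is itself an integral operator with smooth kernel on $M\times M$ (its kernel is $\sum_i h_i(x)\wedge \star_g h_i(y)$ for an orthonormal basis $(h_i)$ of $\HH$, or more intrinsically the kernel representing the symmetric projection), and since compositions and sums of operators with kernels smooth on $\Bl_\Delta(M\times M)$ again have kernels smooth on $\Bl_\Delta(M\times M)$ — here one uses that $\pi_\HH$ has a kernel smooth on all of $M\times M$, so composing with it does not create new boundary singularities, and that $\dd$ acts as a differential operator which preserves the class of blow-up-smooth kernels — the kernel of $P_3$ is again smooth on $\Bl_\Delta(M\times M)$. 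Hence $P_3$ is analytic.

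**Main obstacle.** The genuinely analytic input — that the Green's operator of an elliptic operator has a kernel that lifts to a smooth form on the oriented real blow-up of the diagonal, with the correct order along the exceptional boundary — is the crux, and it is precisely the place where the proof relies on the cited references \cite{Bott-Cattaneo,Cattaneo-Mnev,Cieliebak-Volkov-Chern-Simons,Pavel-thesis} rather than on anything elementary. Everything else (the homotopy identity, the symmetry, the passage to a special propagator via Lemma~\ref{lem:propagator}, and the stability of blow-up-smoothness under the relevant operations) is a routine bookkeeping of kernels and degrees. I would therefore structure the write-up as: (1) recall the parametrix/Green's operator and its kernel properties as a black box; (2) define $P=-\dd^\star_g G$ and check \eqref{eq:P2} and \eqref{eq:P3} by the standard computation; (3) read off analyticity from (1) together with a degree count; (4) deduce analyticity of $P_3$ from Lemma~\ref{lem:propagator} and the kernel-composition remark.
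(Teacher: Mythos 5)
Your construction takes a genuinely different route from the paper's, and it runs into a gap that the paper itself explicitly flags. The operator you propose, $P_0=-\dd^\star_g\circ G$ with $G$ the Green's operator of the Hodge Laplacian, is exactly the special propagator $P_g$ associated to the Riemannian Hodge decomposition \eqref{eq:riemannian-hodge-decomposition}, and the Remark immediately following this lemma in the paper states that the analyticity of $P_g$ ``has been frequently claimed \dots However, we have been unable to find a proof of this assertion.'' So the step you describe as ``the classical fact that $G$ \dots has a Schwartz kernel which \dots extends smoothly to $\Bl_\Delta(M\times M)$'' is precisely the unverified assertion, and it is \emph{not} what is proved in \cite{Cieliebak-Volkov-Chern-Simons,Pavel-thesis}: those references, like the paper's own proof, avoid the Green's operator entirely. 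The leading singularity of the kernel of $\dd^\star_g G$ has the expected homogeneity and does lift to the blow-up, but analyticity in the sense of \eqref{eq:propagator-and-kernel} requires the \emph{full} kernel, including all lower-order terms of the parametrix expansion (and possible logarithmic terms), to extend smoothly up to the boundary face $P^+N\Delta$, and this is exactly what the authors say they cannot verify.

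The paper's actual argument is cohomological rather than elliptic: the integral kernel of $\pi_\HH$ is a smooth closed $n$-form on $M\times M$ Poincar\'e dual to $\Delta$; its pullback to $\Bl_\Delta(M\times M)$ is exact, so one \emph{chooses} a smooth primitive $\PP\in\Om^{n-1}(\Bl_\Delta(M\times M))$ and defines $P$ by \eqref{eq:propagator-and-kernel}. The homotopy identity \eqref{eq:P2} then follows from Stokes' theorem (the boundary integral over $P^+N\Delta$ producing the identity operator), and the symmetry \eqref{eq:P3} is arranged by replacing $\PP(x,y)$ with $\tfrac12(\PP(x,y)\pm\PP(y,x))$. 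This sidesteps all questions about Green's kernels. A secondary issue with your write-up: the lemma is stated for an \emph{arbitrary} harmonic subspace $\HH\subset\Om$, i.e.\ any complement of $\im\dd$ in $\ker\dd$, and there is no reason such an $\HH$ should be the space of $g$-harmonic forms for some metric $g$; your fallback of ``working with $\pi_\HH$ directly'' does not rescue the formula $P_0=-\dd^\star_g G$, since for general $\HH$ the symmetric projection $\pi_\HH$ is not an $L^2$-orthogonal projection for any metric. Your treatment of the second assertion (stability of blow-up-smooth kernels under composition with the smooth-kernel operator $\pi_\HH$, with $\dd$, and with each other) is in the right spirit and matches the paper's one-line justification, but it rests on the unestablished analyticity of $P_0$.
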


\begin{proof}[Sketch of proof]
	The integral kernel of $\pi_\HH\colon\Om\onto\HH$ is a closed smooth $n$-form on $M\times M$ Poincar\'e dual to $\Delta$. 
	Therefore, it admits a primitive $\PP\in\Om^{n-1}(\Bl_\Delta(M\times M))$ by Poincar\'e duality; we choose one and define $P$ up to a sign by~\eqref{eq:propagator-and-kernel}.
	The homotopy equation~\eqref{eq:P2} for $P$ then follows from Stokes' theorem.
	The symmetry condition~\eqref{eq:P3} for $P$ is equivalent to $\PP(x,y)=\pm\PP(y,x)$, which can be always achieved by taking $\PP_1(x,y)\coloneqq \frac{1}{2}(\PP(x,y)\pm \PP(y,x))$.
The last assertion follows by translating the formulas in Lemma~\ref{lem:propagator} to the integral kernels.
\end{proof}

\begin{remark}
It has been frequently claimed that the special propagator $P_g$ corresponding to the Hodge decomposition \eqref{eq:riemannian-hodge-decomposition} is analytic (see, e.g. \cite{Cattaneo-Mnev} and \cite{Axelrod-Singer-II}).
However, we have been unable to find a proof of this assertion.
\end{remark}

\subsection{The analytic Maurer--Cartan element}

The intersection pairing $\la\cdot,\cdot\ra\colon\Om\times\Om\to\R$ is not perfect for $n>0$, so that we cannot apply Propositions~\ref{prop:structureexists}, \ref{propIBLI2} to get ``$\dIBL^{\fm_\Om^\can}(\Om)$''.
Instead, one can construct a Maurer--Cartan element in
\[
	\dIBL(\HdR)=\bigl((\dcbc \HdR)[2-n],\fq_{1,1,0}=0,\fq_{2,1,0},\fq_{1,2,0}\bigr)
\]
by formally applying the homotopy transfer from Theorem~\ref{thm:homotopyequiv} to the triple intersection product $\m^+_2\in\dcbc_3\Om$ given by
\begin{equation}\label{eq:analytic-triple-intersection}
	\m_2^+(\alpha_0\alpha_1\alpha_2) = (-1)^{\deg \alpha_1 + n} \int_M \alpha_0 \wedge \alpha_1 \wedge \alpha_2.
\end{equation}
In analogy with Proposition~\ref{propIBLI2}, we can view \eqref{eq:analytic-triple-intersection} as the ``canonical Maurer--Cartan element in $\dIBL(\Om)$'', so that the twist of $\dIBL(\HdR)$ with its formal pushforward can be viewed as a model of ``$\dIBL^{\fm^{\can}}(\Om)$'', provided that the formal pushforward is a Maurer--Cartan element.
This strategy has been proposed in~\cite{Cieliebak-Fukaya-Latschev} and is carried out in~\cite{Cieliebak-Volkov-Chern-Simons}, leading to the following refinement of Theorem~\ref{thm:CV-intro} in the Introduction: 

\begin{thm}[\cite{Cieliebak-Volkov-Chern-Simons}]\label{thm:CV-MC}
There is a Maurer--Cartan element $\fm=\{\fm_{\ell,g}\}$ in $\dIBL(\HdR)$ which is defined naturally up to a gauge equivalence such that the homology of $(\dcbc \HdR,\fq^{\fm}_{1,1,0})$ equals the cyclic cohomology of $(\Om,\dd,\wedge)$.
\end{thm}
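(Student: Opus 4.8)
The strategy is to perform the homotopy transfer of Theorem~\ref{thm:homotopyequiv} analytically: we keep the combinatorics of trivalent ribbon graphs from Corollary~\ref{cor:alg-MC} but replace the sums over a basis of $\Alg$ by integrals over configuration spaces of points on $M$. First I would fix a Riemannian metric $g$, let $\HH\coloneqq\HH_g$ be the space of harmonic forms, and choose an analytic propagator $P\colon\Om\to\Om$ with respect to $\pi_\HH$ with smooth kernel $\PP\in\Om^{n-1}(\Bl_\Delta(M\times M))$, as provided by Lemma~\ref{lem:existence-of-analytic-propagator}. For $\Gamma\in RG^3_{k,\ell,g}$ and a homogeneous element $\alpha=\alpha_1^1\cdots\alpha_{s_1}^1\otimes\dots\otimes\alpha_1^\ell\cdots\alpha_{s_\ell}^\ell$ with $\alpha_j^b\in\HH$, where $k$ is determined by \eqref{eq:k} with $s=\sum_{i=1}^\ell s_i$, I would introduce the Fulton--MacPherson--Axelrod--Singer compactification $\overline{C}_k(M)$ of the configuration space of $k$ labelled points on $M$, one point per interior vertex, and form the integrand obtained by pulling back $\PP$ along the evaluation map to the blow-up attached to each interior edge --- these pullbacks extend smoothly to $\overline{C}_k(M)$ --- and $\alpha_j^b$ along the evaluation at the interior endpoint of its exterior edge. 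Summing the integrals $\int_{\overline{C}_k(M)}(\cdots)$ over $\Gamma\in RG^3_{k,\ell,g}$ with the combinatorial coefficients and signs of Theorem~\ref{thm:homotopyequiv} defines $\fm_{\ell,g}(\alpha)$; the degree balance $nk=(n-1)e+\deg\alpha$ of \eqref{eq:degree-eqn} forces the integrand to have top degree, so each integral is finite, $\fm_{\ell,g}$ has the required degree $|\fm_{\ell,g}|=-2(n-3)(g-1)$, and the filtration bound holds by the relation \eqref{eq:k} between $k$, $s$, $\ell$, $g$ together with $k\ge 1$.

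The core of the argument is the Maurer--Cartan equation. I would derive it by applying Stokes' theorem to $\int_{\overline{C}_k(M)}\dd(\text{integrand})$ for each graph $\Gamma$. On the left the bulk term uses $\dd\PP=(\text{kernel of }\pi_\HH)$ on $\Bl_\Delta(M\times M)$, together with closedness of harmonic forms, so only boundary contributions survive. The codimension-one faces of $\overline{C}_k(M)$ fall into three types: a face where exactly two interior vertices collide contracts the joining edge and, after integrating $\PP$ over the boundary sphere of the blow-up, reproduces a term of the homotopy relation $\dd\circ P+P\circ\dd=\pi_\HH-\Id$; a face where a connected sub-collection of interior vertices (together with the exterior structure it carries) collapses to a point factors as a connected composition of a smaller graph integral with a ``bubble'' integral, matching precisely the terms $\fq_{k^-,\ell^-,g^-}\circ_{\mathrm{conn}}^{k,\ell,g}(\fm_{\ell_1^+,g_1^+},\dots,\fm_{\ell_{r^+}^+,g_{r^+}^+})$ of the Maurer--Cartan relation; and the remaining ``hidden'' faces, where three or more points collide without separating a full subgraph, must be shown to contribute zero. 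Summing over all $\Gamma$ and using $\fq_{1,1,0}=0$ on $\HdR$ then yields the desired identity. I expect this boundary analysis --- enumerating the faces, tracking the orientation conventions and signs of \cite{Cieliebak-Fukaya-Latschev}, and ruling out the hidden faces by a Kontsevich-type vanishing argument based on trivalency and the degree constraints (in the spirit of Proposition~\ref{prop:alg-vanishing} and \cite{Bott-Cattaneo,Axelrod-Singer-II,Pavel-thesis}) --- to be the main obstacle.

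For the statement about homology, note that the twisted operation $\fq^\fm_{1,1,0}=\fq_{2,1,0}(\fm_{1,0},\cdot)$ is the dual of the Hochschild differential of the cyclic $A_\infty$ structure on $\HdR$ whose operations are the genus-zero tree part $\fm_{1,0}$ of the transfer --- equivalently, the Kontsevich--Soibelman sum over planar rooted trees with interior edges labelled by $P$ and interior vertices by $\wedge$, which is the minimal model of $(\Om,\dd,\wedge)$ associated to $P$, and is cyclic because $P$ is a propagator. Since this minimal model is $A_\infty$-quasi-isomorphic to $(\Om,\dd,\wedge)$ and cyclic cohomology in the relevant sense is invariant under cyclic $A_\infty$-quasi-isomorphism (cf.~\cite{Cieliebak-Volkov-cyclic}), the homology of $(\dcbc\HdR,\fq^\fm_{1,1,0})$ is the cyclic cohomology of $(\Om,\dd,\wedge)$. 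Finally, for naturality I would compare the Maurer--Cartan elements built from two choices $(g_0,P_0)$ and $(g_1,P_1)$ by running the whole construction with a path of analytic propagators (or, equivalently, over $M\times[0,1]$); the associated family of configuration space integrals produces a primitive exhibiting the two Maurer--Cartan elements as gauge equivalent in the sense of \cite[Section~9]{Cieliebak-Fukaya-Latschev}.
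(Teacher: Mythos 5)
Your proposal follows essentially the same route as the paper's own sketch: defining $\fm_{\ell,g}$ as configuration-space integrals over trivalent ribbon graphs with the propagator kernel $\PP$ on edges and harmonic forms at exterior flags, using a Fulton--MacPherson compactification for convergence, deriving the Maurer--Cartan equation from Stokes' theorem with principal faces giving the $\circ_{\mathrm{conn}}$ terms and hidden faces killed by a Kontsevich-type symmetry argument, and obtaining gauge equivalence by interpolating between propagators. The only (immaterial) divergence is that the paper packages the gauge-equivalence step via the Weyl formalism of~\cite{Eliashberg-Givental-Hofer} rather than a bare family argument, and your identification of the homology with cyclic cohomology via the Kontsevich--Soibelman minimal model matches the mechanism of Lemma~\ref{lem:ana=KS}.
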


\begin{proof}[Sketch of proof]
	Let $P\colon\Om\to\Om$ be a special analytic propagator with respect to the harmonic projection $\pi_\HH\colon\Om\onto\HH$, and let~$\PP\in\Om^{n-1}(\Bl_\Delta(M\times M))$ be its integral kernel.
The value of~$\fm_{\ell,g}$
on the tensor product $\alpha=\alpha^1_1\cdots \alpha^1_{s_1}\otimes\dots\otimes  \alpha^\ell_1\cdots \alpha^\ell_{s_{\ell}}$, $\alpha^b_j\in \HdR$, $s_b\in\N$ is defined as a sum over trivalent ribbon graphs $\Gamma\in RG_{k,\ell,g}^3$   
with~$k$ determined by~\eqref{eq:k} similarly as in Corollary~\ref{cor:alg-MC}.
The contribution of $\Gamma$ is defined as the integral
\begin{equation}\label{eq:n}
	\fm_\Gamma(\alpha) \coloneqq \int_{X_\Gamma}\PP_\Gamma\times \alpha,
\end{equation}
where $X_\Gamma$ is a suitable configuration space of $k$ points of $M$ assigned to interior vertices, $\PP_\Gamma$ is a wedge product of the $(n-1)$-forms $\PP$ assigned to interior edges, and~$\alpha$ is a product of harmonic representatives of $\alpha_j^b$ assigned to exterior flags (see Figure~\ref{fig:triangle} for a clarifying example).
\begin{figure}
	\centering
	\begin{tikzpicture}
	\node[point,label={[below,yshift=-.1cm]$x_1$}] (A) at (0,0) {};
	\node[point,label={[below,yshift=-.1cm]$x_2$}] (B) at ($(A)+(0:\edgelen)$) {};
	\node[point,label={[above]$x_3$}] (C) at ($(B)+(120:\edgelen)$) {};
	\node[root,label={[above left]$\alpha_1^1$}] (AA) at ($(A)+(150:\leaflen)$) {};
	\node[leaf,label={[above right]$\alpha_2^1$}] (BB) at ($(B)+(30:\leaflen)$) {};
	\node[root,label={[below,yshift=-.1cm]$\alpha_1^2$}] (CC) at ($(C)+(270:\leaflen)$) {};
	\draw (A) -- (B) node[midway, below] {$\PP$}-- (C)  node[midway, right] {$\PP$} -- (A)  node[midway, left] {$\PP$};
	\draw (A) -- (AA);
	\draw (B) -- (BB);
	\draw (C) -- (CC);
	\end{tikzpicture}
\begin{equation*}
= \pm \int_{x_1, x_2, x_3}\PP(x_1,x_2)\wedge \PP(x_2,x_3)\wedge \PP(x_3,x_1)\wedge\alpha_1^1(x_1)\wedge \alpha_2^1(x_2)\wedge \alpha_2^1(x_3)
\end{equation*}
\caption{The contribution $\fm_\Gamma(\alpha_{1}^1\alpha_{2}^1\otimes\alpha^2_1)$ of a labeled ribbon graph immersed in the plane so that the cyclic ordering at interior vertices agrees with the counterclockwise orientation.
}
	\label{fig:triangle}
\end{figure}

The proof consists in overcoming the following four difficulties:

(1) Since $\PP$ is singular along the diagonal, it is a priori not clear that the integrals converge.
This is resolved by taking for $X_\Gamma$ a Fulton-MacPherson type compactification~\cite{Fulton-MacPherson} similar to the ones used in~\cite{Axelrod-Singer-II,Bott-Taubes}.

(2) The compactification $X_\Gamma$ has additional codimension one boundary components, so-called ``hidden faces'', which may obstruct the Maurer--Cartan equation for $\fm$.
This is resolved by showing, via a symmetry argument similar to the one in~\cite{Cattaneo-Mnev} going back to~\cite{Kontsevich-lowdim} and~\cite{Bott-Taubes}, 
that the integrals over hidden faces vanish.

(3) One needs to sum~\eqref{eq:n} over $\Gamma\in RG_{k,\ell,g}^3$ with suitable signs and combinatorial coefficients in order to obtain a Maurer--Cartan element. 

(4) One needs to produce a gauge equivalence between the Maurer--Cartan elements corresponding to different choices of a special analytic propagator $P$.
This uses an alternative description of gauge equivalence in terms of the Weyl formalism from~\cite{Eliashberg-Givental-Hofer}, and can also be reformulated in terms of an equivalence of BV actions corresponding to $\fm$ introduced in \cite{Pavel-thesis}.
\end{proof}

We denote the Maurer--Cartan element $\fm$ associated to a special analytic propagator~$P$ in Theorem~\ref{thm:CV-MC} by 
\[
	\fm^\ana_P
\]
and call it the \emph{analytic Maurer--Cartan element} in $\dIBL(\HdR)$.

\subsection{Analytic vanishing results}

The following result is an analog of Proposition~\ref{prop:alg-vanishing} in the analytic case and corresponds to~\cite[Propositions 4.4.1 and 4.4.2]{Pavel-thesis}.
Recall that $\gamma$ denotes the number of loops in $\Gamma$.%

\begin{prop}\label{prop:ana-vanishing}
Let $P\colon\Om\to\Om$ be a special analytic propagator,
$\Gamma\in RG_{k,\ell,g}^3$ a trivalent ribbon graph which is not the Y-tree, and $\alpha\in(\cbc\Om)^{\otimes \ell}$ a tensor product
	\[
		\alpha=\alpha_1^1\dotsb\alpha_{s_1}^1\otimes\dotsb\otimes\alpha_1^\ell\dotsb\alpha_{s_{\ell}}^\ell\text{ with }\alpha_i^b\in \HdR, s_b\in\N 
	\]
such that
	\begin{equation*}
		(\fm^{\ana}_P)_\Gamma(\alpha)\neq 0.
	\end{equation*}%
	Then the following holds:
	\begin{enumerate}
		\item The \emph{positivity of degrees} ($\HdR$ is connected by assumption): 
		\[ 
			\deg(\alpha_i^b)>0\quad\text{for all }i\in\{1,\dotsc,s_b\}, b\in\{1,\dotsc,\ell\}.
		\]
	\item If the harmonic subspace $\HH\subset\Om$ associated to $P$ satisfies
		\begin{equation*}
			\HH\wedge\HH\subset\HH,
		\end{equation*}
		then $\gamma\ge 1$, i.e., all trees vanish.
	\end{enumerate}
	The positivity of degrees implies:
	\begin{enumerate}
	\setcounter{enumi}{2}
\item If $\gamma=1$ or $n=3$, then $\deg(\alpha_i^b)=1$ for all $i\in\{1,\dotsc,s_b\}$, $b\in\{1,\dotsc,\ell\}$.
	\item If $n\ge 4$, then $\gamma\le 1$, i.e., all graphs with more than one loop vanish.
	\end{enumerate}
\end{prop}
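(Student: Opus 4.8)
The plan is to transcribe the proof of Proposition~\ref{prop:alg-vanishing} into the analytic setting, replacing the finite sums over a homogeneous basis of the de Rham algebra by fiber integrals over the configuration spaces $X_\Gamma$, and the propagator matrix entries by the kernel $\PP\in\Om^{n-1}(\Bl_\Delta(M\times M))$. I would first record the degree count: the number $(\fm^{\ana}_P)_\Gamma(\alpha)=\int_{X_\Gamma}\PP_\Gamma\times\alpha$ is the integral over the $nk$-dimensional space $X_\Gamma$ of a form of degree $(n-1)e+\deg\alpha$, since each of the $e$ interior edges carries the degree-$(n-1)$ form $\PP$ and the harmonic representatives contribute their own degrees, where $\deg\alpha=\sum_{b,i}\deg(\alpha_i^b)$. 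Hence nonvanishing of the integral forces $nk=(n-1)e+\deg\alpha$, and eliminating $e$ and $k$ by~\eqref{eq:trivalency} and~\eqref{eq:euler-characteristic} gives $\deg\alpha=(n-3)\chi+s$ exactly as in the algebraic case; as there, this reduces (iii) and (iv) to (i): once every $\deg(\alpha_i^b)\ge 1$ we get $\deg\alpha\ge s$, so $(n-3)\chi\ge 0$, which for $n\ge 4$ forces $\chi\in\{0,1\}$ and hence $\gamma\le 1$, while if $\gamma=1$ (so $\chi=0$) or $n=3$ then $\deg\alpha=s$, which together with $\deg(\alpha_i^b)\ge 1$ forces $\deg(\alpha_i^b)=1$ for every exterior vertex.

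For (i), I would recall the classification of the interior vertices of $\Gamma\neq Y$ into A-, B-, and C-vertices according as they have one, two, or three adjacent interior edges, and suppose for contradiction that some exterior vertex $v$ carries a form $\alpha_i^b$ of degree $0$. Since $\HdR(M)$ is connected and $(\fm^{\ana}_P)_\Gamma(\alpha)$ is linear in this argument, we may take $\alpha_i^b=1$, whose harmonic representative is the constant function in $\HH$. As $C$-vertices carry no exterior flag and $\Gamma\neq Y$, the interior vertex $w$ adjacent to $v$ is of type A or B. If $w$ is an A-vertex, with unique interior edge to an interior vertex $w'$ and its other exterior flag carrying $\alpha_j^b\in\HH$, then integrating out the position $x_w$ first turns the contribution of $w$ into $\pm\bigl(P(\alpha_i^b\wedge\alpha_j^b)\bigr)(x_{w'})$, using the kernel symmetry $\PP(x,y)=\pm\PP(y,x)$ from~\eqref{eq:P3} and the defining relation~\eqref{eq:propagator-and-kernel} of $P$; since $\alpha_i^b=1$ this equals $\pm(P\alpha_j^b)(x_{w'})$, which vanishes because $P$ is special and hence $P|_\HH=0$. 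If instead $w$ is a B-vertex, with interior edges to $w_1$ and $w_2$, then integrating out $x_w$ produces $\pm$ the integral kernel of $P\circ P$ at $(x_{w_1},x_{w_2})$, which vanishes since $P\circ P=0$; in the degenerate subcase of a self-loop at $w$ the contribution is the analytic analog of $\tr P$, which vanishes because $P$ lowers degree, so tadpole graphs contribute nothing. In every case $(\fm^{\ana}_P)_\Gamma(\alpha)=0$, a contradiction, proving positivity of degrees.

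For (ii), I would assume $\HH\wedge\HH\subset\HH$ and observe that the A-vertex computation above now applies to every A-vertex of $\Gamma$: integrating out its position gives $\pm\bigl(P(\alpha_i^b\wedge\alpha_j^b)\bigr)(x_{w'})$ with $\alpha_i^b,\alpha_j^b\in\HH$, hence $\alpha_i^b\wedge\alpha_j^b\in\HH$, hence $0$. So $\Gamma$ has no A-vertex. But the degree-$1$ vertices of a trivalent graph are exactly its exterior vertices, so deleting them from a trivalent tree leaves a tree on the interior vertices; a tree with at least two vertices has a leaf, i.e., an interior vertex of $\Gamma$ with a single interior edge, which is an A-vertex, whereas a tree with one vertex corresponds to $\Gamma=Y$. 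Since $\Gamma\neq Y$ and has no A-vertex, $\Gamma$ is not a tree, so $\chi(\Gamma)\le 0$ and $\gamma=1-\chi\ge 1$.

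The step I expect to be the main obstacle is the analytic justification of ``integrating out a single vertex first'' used in both the A- and B-vertex arguments: one must check that, after the appropriate blow-ups, $X_\Gamma$ fibers over the configuration space obtained by forgetting $w$, with fiber a compactification of $M$ minus the images of the remaining vertices, and that $\PP_\Gamma\times\alpha$ pulls back to a form which is smooth up to all the corners, so that Fubini applies and the inner integral genuinely realizes $P$ applied to the wedge of the forms at the flags adjacent to $w$. This Fubini-type statement, together with the smoothness of $\PP$ up to the boundary of $\Bl_\Delta(M\times M)$, is precisely the analytic input supplied by the construction of $\fm^{\ana}_P$ in~\cite{Cieliebak-Volkov-Chern-Simons} (and~\cite{Pavel-thesis}); granting it, the rest of the argument is a transcription of the algebraic proof.
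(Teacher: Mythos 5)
Your proposal is correct and follows essentially the same route as the paper, which proves the analytic statement by transcribing the proof of Proposition~\ref{prop:alg-vanishing} (the A-/B-/C-vertex analysis plus the degree count $nk=(n-1)e+\deg\alpha$ combined with \eqref{eq:trivalency} and \eqref{eq:euler-characteristic}) into integrals over $X_\Gamma$, deferring the Fubini/compactification justification to \cite{Cieliebak-Volkov-Chern-Simons} and \cite{Pavel-thesis} exactly as you indicate. Your identification of the "integrate out one vertex first" step as the only genuinely analytic input is accurate.
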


Recall that a manifold $M$ is called \emph{geometrically formal} if it admits a Riemannian metric $g$ such that $\HH_g\wedge\HH_g\subset\HH_g$.
Remark~\ref{rem:vanishing-in-low-dimensions} on the cases $n\in\{0,1,2\}$ applies here, too, and we conclude:

\begin{cor}\label{cor:ana-vanishing}
Suppose that $M$ is a connected closed oriented manifold such that $\HdR^1=0$ which is not diffeomorphic to 
$S^2$.
Let~$P\colon\Om\to\Om$ be a special analytic propagator and $\fm^\ana_P$ the associated analytic Maurer--Cartan element in $\dIBL(\HdR)$.
Then we have
\[
	(\fm^{\ana}_P)_{\ell,g}=0\quad\text{for all }(\ell,g)\neq(1,0).
\]
Consider $\HdR$ as a cyclic $\DGA$ $(\HdR,\dd=0,\wedge,\la\cdot,\cdot\ra_{\HdR})$, and let $\fm^\can_{\HdR}$ be the canonical Maurer--Cartan element in $\dIBL(\HdR)$. 
If $M$ is in addition geometrically formal, then $P$ can be chosen such that
\[
	\fm^{\ana}_P=\fm^{\can}_{\HdR}.
\]
\end{cor}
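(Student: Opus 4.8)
The plan is to deduce the corollary from Proposition~\ref{prop:ana-vanishing} together with a short inspection of low dimensions. First I would record the elementary observation that a trivalent ribbon graph $\Gamma\in RG^3_{k,\ell,g}$ satisfies $\gamma=2g+\ell-1$, so that $(\ell,g)\neq(1,0)$ is equivalent to $\gamma\ge 1$; hence the first claim amounts to showing that every trivalent ribbon graph with $\gamma\ge 1$ has vanishing contribution $(\fm^\ana_P)_\Gamma(\alpha)$ for all admissible tensor products $\alpha\in(\cbc\HdR)^{\otimes\ell}$. Since $M$ is connected, closed and oriented with $\HdR^1=0$, the dimension $n=1$ is impossible (as $\HdR^1(S^1)\neq 0$) and $n=2$ forces $M\cong S^2$, which is excluded by hypothesis; for $n=0$ one has $P=0$, hence the propagator kernel vanishes and the only trivalent ribbon graph without interior edges, namely the $Y$-tree of signature $(1,1,0)$, can contribute, so the first claim holds trivially.

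For $n\ge 3$ I would invoke Proposition~\ref{prop:ana-vanishing} directly. Suppose $\Gamma\neq Y$ with $(\fm^\ana_P)_\Gamma(\alpha)\neq 0$. If $n=3$, part~(iii) forces $\deg\alpha_i^b=1$ for all $i,b$, which is impossible because $\HdR^1=0$; thus the only contributing graph is the $Y$-tree, which has $\gamma=0$. If $n\ge 4$, part~(iv) gives $\gamma\le 1$, and if $\gamma=1$ then part~(iii) again forces degree-one entries, again impossible. In either case every graph with a nonzero contribution has $\gamma=0$, which proves $(\fm^\ana_P)_{\ell,g}=0$ for $(\ell,g)\neq(1,0)$.

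For the last assertion, assume $M$ is geometrically formal and fix a metric $g$ with $\HH_g\wedge\HH_g\subset\HH_g$; by Lemma~\ref{lem:existence-of-analytic-propagator} choose a special analytic propagator $P$ with respect to $\pi_{\HH_g}$, whose associated harmonic subspace is then $\HH_g$. Part~(ii) of Proposition~\ref{prop:ana-vanishing} shows that every $\Gamma\neq Y$ with a nonzero contribution has $\gamma\ge 1$, while the first assertion shows every contributing $\Gamma$ has $\gamma=0$; hence the $Y$-tree is the only contributing graph. It then remains to identify its contribution: evaluating~\eqref{eq:n} for $\Gamma=Y$, the configuration space is $X_Y=M$, the product of propagator forms is empty, and one is left with $\m_2^+$ as in~\eqref{eq:analytic-triple-intersection} applied to harmonic representatives of the $\alpha_i^b$; since $\int_M\alpha_0\wedge\alpha_1\wedge\alpha_2$ depends only on the de Rham cohomology classes, this equals $(\fm^\can_{\HdR})_{1,0}$, and all other components vanish, so $\fm^\ana_P=\fm^\can_{\HdR}$. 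The only point requiring care beyond Proposition~\ref{prop:ana-vanishing} is the normalization of the $Y$-tree term (that its combinatorial coefficient and sign are trivial), which is fixed by the construction of $\fm^\ana_P$ in~\cite{Cieliebak-Volkov-Chern-Simons} (see also~\cite{Pavel-thesis}); this, together with the low-dimensional bookkeeping, is the main thing to verify, the rest being a formal consequence of the vanishing proposition.
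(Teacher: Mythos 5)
Your proposal is correct and follows essentially the same route as the paper: the paper derives the corollary from Proposition~\ref{prop:ana-vanishing} together with the low-dimensional bookkeeping of Remark~\ref{rem:vanishing-in-low-dimensions} (ruling out $n=1,2$ via $\HdR^1=0$ and the $S^2$ exclusion, and treating $n=0$ trivially), exactly as you do, with the geometric formality statement reducing to the sole surviving $Y$-tree contribution being the triple intersection product $\m_2^+$ on harmonic representatives. Your explicit flagging of the $Y$-tree normalization as the only point not covered by the vanishing proposition is appropriate and consistent with how the paper handles it (it is fixed by the construction of $\fm^\ana_P$).
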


\section{Comparison of the algebraic and analytic constructions}

As in the previous section, $M$ will be a connected closed oriented manifold of dimension $n$ and $\bigl(\Om\coloneqq\Om^*(M),\dd,\wedge)$ its de Rham complex equipped with the intersection pairing $\la\alpha,\beta\rangle = \int_M\alpha\wedge\beta$.
We denote by $\HdR\coloneqq \HdR(M)$ the de Rham cohomology of $M$.

Suppose that the Poincar\'e $\DGA$ $\Om$ has a differential Poincar\'e duality model $\MM$.
We choose a zig-zag of $\PDGA$ quasi-isomorphisms connecting $\Om$ to $\MM$ and identify the Poincar\'e duality algebras $(\HdR,\dd=0,\wedge,\la\cdot,\cdot\ra_{\HdR})$ and $(H\coloneqq H(\MM),\dd=0,\wedge,\la\cdot,\cdot\ra_H)$ via the induced isomorphism on cohomology.
Consider the canonical Maurer--Cartan element $\fm^\can_\MM$ in $\dIBL(\MM)$, and let ${\ff^{P^\MM}}\colon\dIBL(\MM)\to\dIBL(H)$ be the $\IBLinfty$ homotopy associated to a special propagator $P^\MM\colon\MM\to\MM$.
The pushforward $\ff^{P^\MM}_*\fm^\can_\MM$ then naturally induces a Maurer--Cartan element in $\dIBL(\HdR)$.

Suppose now that $\HdR^1=0$, and let $P\colon\Om \to \Om$ be a special propagator.
Proposition~\ref{prop:existence-PDmodel} provides a \emph{canonical differential Poincar\'e duality model} of $\Om$,  
\[
	\QQ_P\coloneqq\QQ(\SS_P(\Om)),
\]
where $\SS_P$ is the small subalgebra associated to~$P$ and~$\QQ$ denotes the nondegenerate quotient.
We also have the canonical zigzag of $\PDGA$ quasi-isomorphisms~\eqref{eq:canon-zigzag}.
In particular, the Poincar\'e duality algebras~$\HdR$ and $H(\QQ_P)$ are canonically identified.
Moreover, Lemmas~\ref{lem:nondeg-quotient} and~\ref{lem:small-subalgebra} equip~$\SS_P$ and~$\QQ_P$ canonically with special propagators induced from $P$.
Therefore, the construction in the previous paragraph for $\MM=\QQ_P$ provides a canonical Maurer--Cartan element in $\dIBL(\HdR)$.
We denote it by
\[
	\fm^\alg_P
\]
and call it the \emph{algebraic Maurer--Cartan element} in $\dIBL(\HdR)$.

In the next subsections, we will compare $\fm^{\ana}_P$ to $\fm^{\alg}_P$ for a special analytic propagator $P$. Our approach is to use the algebraic and analytic vanishing results to reduce the problem to a comparison of the corresponding cyclic $\Ainfty$ algebras.

\subsection{\texorpdfstring{$\Ainfty$}{A-infinity} algebras and homotopy transfer}\label{ss:hty-transfer}

In this subsection, we recall $\Ainfty$ algebras and their homotopy transfer and prove its functoriality in a special case.

{\bf $\Ainfty$ algebras. }
An {\em $\Ainfty$ algebra} $(\Alg,\{\m_i\})$ consists of a $\Z$-graded vector space~$\Alg$ and a sequence of operations $\m_i\colon\Alg^{\otimes i}\to \Alg$, $i\in\N$, of degrees $\deg \m_i = 2-i$ satisfying  for each $r\in\N$ and $x_1,\dots,x_r\in \Alg$ the relation
\begin{equation}\label{eq:Ainfty}
	\sum_{\substack{i+j=r+1 \\ i,j\geq 1}}\sum_{c=1}^{r+1-j} (-1)^{|x_1|+\dotsb+|x_{c-1}|}\mathfrak m_{i}(x_1,\cdots,\mathfrak m_{j}(x_c,\cdots,x_{c+j-1}),\cdots,x_{r}) = 0, 
\end{equation}
where $|x| = \deg(x) - 1$ is the shifted degree.
In particular, $\dd=\m_1$ is a differential and $x\wedge y=(-1)^{\deg x}\m_2(x,y)$ a product on $\Alg$ which becomes associative on the cohomology $H(\Alg,\dd)$.
In fact, a $\DGA$ $(\Alg,\dd,\wedge)$ corresponds naturally to an $\Ainfty$ algebra $(\Alg,\{\m_1,\m_2,\m_i = 0\text{ for }i\ge 3\})$ and vice versa via  
\begin{equation*}
	\m_1=\dd, \quad \m_2(x,y)=(-1)^{\deg x}x\wedge y.
\end{equation*}
We refer to~\cite{FOOO-I} for the notions of a morphism, quasi-isomorphism, and homotopy equivalence of $\Ainfty$ algebras and their basic properties, such as the following result:

\begin{prop}[Homotopy transfer for $\Ainfty$ algebras,~\cite{Kontsevich-Soibelman}]\label{prop:hty-transfer}
Let $(\Alg,\{\m_i\})$ be an $\Ainfty$ algebra and $B\subset\Alg$ a quasi-isomorphic subcomplex with respect to $\dd=\m_1$.
Given a homotopy operator $P\colon\Alg\to\Alg$ with respect to a projection $\pi\colon\Alg\onto B$,
there is a natural $\Ainfty$ structure~$\{\m^B_i\}$ on $B$ extending $\m_1^B=\dd|_B$ and an $\Ainfty$ homotopy equivalence 
\[
	\bg=\{\bg_i\colon B^{\otimes i}\to \Alg\}\colon(B,\{\m^B_i\}) \longrightarrow (\Alg,\{\m_i\})
\]
extending the inclusion $\bg_1 = \iota\colon B\into\Alg$.
\end{prop}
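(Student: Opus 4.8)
The plan is to give the transferred structure by the standard sum-over-planar-trees formula of Kontsevich--Soibelman (cf.\ also \cite[Subsection~5.4.2]{FOOO-I}). First I would fix the ingredients: the homotopy operator $P\colon\Alg\to\Alg$ of degree $-1$, the associated projection $\pi=\pi_P\colon\Alg\onto B$, and the inclusion $\iota\colon B\into\Alg$, so that $\dd\circ P+P\circ\dd=\pi-\Id$ on $\Alg$ and $\pi\circ\iota=\Id_B$. For $i\ge 2$ and a planar rooted binary tree $T$ with $i$ leaves, one decorates the leaves by $\iota$, each internal vertex by the appropriate $\m_j$ of $\Alg$ (with $j$ the number of incoming edges), each internal edge by $P$, and the root edge by $\pi$; evaluating $T$ as a composition rule gives a map $\Alg^{\otimes i}\to B$, in the spirit of Figure~\ref{fig:tree} and of the tree evaluations $\ev_{T,L}$ recalled in Remark~\ref{rem:ks-eval}. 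Summing over all such $T$ (with the Koszul signs coming from \eqref{eq:Ainfty}) defines $\m^B_i$; the component maps $\bg_i\colon B^{\otimes i}\to\Alg$ are defined by the same recipe but with the root edge decorated by $P$ instead of $\pi$, and $\bg_1=\iota$. A convenient way to organise both definitions and the verification is via the recursions
\begin{align*}
	\m^B_r &= \sum_{\substack{j_1+\cdots+j_p=r\\ p\ge 2,\ j_t\ge 1}} \pm\,\pi\circ\m_p\circ(\bg_{j_1}\otimes\cdots\otimes\bg_{j_p}),\\
	\bg_r &= \sum_{\substack{j_1+\cdots+j_p=r\\ p\ge 2,\ j_t\ge 1}} \pm\,P\circ\m_p\circ(\bg_{j_1}\otimes\cdots\otimes\bg_{j_p}) \qquad (r\ge 2),
\end{align*}
with $\bg_1=\iota$ and $\m^B_1=\dd|_B$; these make the tree description precise without unwinding all trees by hand.

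The key steps, in order: (1) check that $\m^B_1=\dd|_B$ is well defined, i.e.\ that $B\subset\Alg$ is a subcomplex, which holds by hypothesis; (2) prove that $\{\m^B_i\}$ satisfies the $\Ainfty$ relations \eqref{eq:Ainfty}; (3) prove that $\bg=\{\bg_i\}$ satisfies the $\Ainfty$ morphism relations from $(B,\{\m^B_i\})$ to $(\Alg,\{\m_i\})$; and (4) observe that $\bg_1=\iota$ is a quasi-isomorphism by hypothesis, so $\bg$ is an $\Ainfty$ quasi-isomorphism, hence an $\Ainfty$ homotopy equivalence by the standard fact that $\Ainfty$ quasi-isomorphisms are invertible up to homotopy (see \cite{FOOO-I}). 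For steps (2) and (3) the mechanism is the same: one feeds the recursions into the relation to be verified, and groups the resulting tree-sum into contributions indexed by configurations of two internal vertices joined by an edge decorated either by $P$ or by one of the three terms of $\pi-\Id=\dd P+P\dd$. The $\dd P$ and $P\dd$ terms, together with the $\Ainfty$ relations of $\Alg$ itself, telescope; the surviving $\pi$-term reconstructs precisely the quadratic expressions in the $\m^B_i$ (resp.\ in $\m^B_i$ and $\bg_j$). The special conditions \eqref{eq:P5}, \eqref{eq:PP2} on $P$ are \emph{not} needed for existence — any homotopy operator with respect to $\pi$ suffices — though when $P$ is special the trees with a $P$ directly above $P$ or above $\pi$ drop out, simplifying bookkeeping; I would not assume specialness here.

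The main obstacle is the sign bookkeeping in steps (2) and (3): keeping the Koszul signs from \eqref{eq:Ainfty} consistent through the tree recursions, and checking that the cancellation among the $\dd P$, $P\dd$ terms and the inner $\Ainfty$ relations of $\Alg$ is sign-exact. This is entirely standard — it is the content of the Kontsevich--Soibelman homotopy transfer theorem — so rather than reproducing the full combinatorial verification I would cite \cite{Kontsevich-Soibelman} (and \cite[Subsection~5.4.2]{FOOO-I}) for the signs and the detailed argument, and in the write-up present only the tree formulas, the recursions, and the structure of the cancellation, noting that the homotopy inverse and homotopy come from the general invertibility of $\Ainfty$ quasi-isomorphisms. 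One minor point worth spelling out: well-definedness is automatic since the $\m_i$, $P$, $\pi$, $\iota$ are genuine linear maps (no cyclic-invariance quotient is involved at this stage, unlike for the later cyclic $\Ainfty$ refinement), so the only real work is the relation-checking.
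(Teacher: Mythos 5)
Your proposal is correct and follows essentially the same route as the paper: the paper's sketch defines $\m^B_i$ and $\bg_i$ as sums over planar rooted trees with leaves labeled by $\iota$, interior vertices with $j$ incoming edges by $\m_j$, interior edges by $P$, and the root by $\pi$ resp.\ $P$, and then declares the verification straightforward with reference to \cite{Kontsevich-Soibelman}; your recursions and the telescoping via $\dd\circ P+P\circ\dd=\pi-\Id$ are the standard way to organise that check, and you are right that specialness of $P$ is not needed. One slip: you say \emph{binary} trees while decorating each internal vertex by $\m_j$ with $j$ the number of incoming edges --- taken literally this would only ever invoke $\m_2$; for a general $\Ainfty$ algebra one must sum over all planar rooted trees whose interior vertices have at least two incoming edges (the set $\TT_i$, not $\TT_i^{\rmbin}$), which is exactly what your recursion with arbitrary $p\ge 2$ does, so the recursion, not the tree picture, is the correct definition here (the binary restriction is valid only when $\m_i=0$ for $i\ge 3$, as the paper notes after the proof).
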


\begin{proof}[Sketch of proof]
Following \cite{Kontsevich-Soibelman}, the maps $\m^B_i\colon B^{\otimes i}\to B$ and $\bg_i\colon\Alg^{\otimes i}\to B$ for $i\ge 2$ can be explicitly written as
\begin{equation}\label{eq:kontsevich-soibelman}
	\m^B_i \coloneqq \sum_{T\in\TT_{i}}\m^B_T\colon B^{\otimes i}\longrightarrow B\quad\text{and}\quad\bg_i \coloneqq \sum_{T\in\TT_{i}}\bg_T\colon B^{\otimes i}\longrightarrow \Alg,
\end{equation}
where $\TT_{i}$ denotes the set of isotopy classes of planar embeddings of rooted trees with~$i$ \emph{leaves} similarly as in Remark~\ref{rem:ks-eval}.
We orient the edges of $T\in \TT_i$ towards the root and require in addition that each interior vertex has at least two incoming edges.
We also order the leaves in the counterclockwise direction of the plane starting from the root.
We then define $\m_T^B\colon B^{\otimes i}\to B$ and $\bg_T\colon B^{\otimes i}\to \Alg$ by labeling $T$ and interpreting it as a composition rule similarly as in~\eqref{eq:tree-eval}.
Namely, we label each interior vertex with $j$ incoming edges by $\m_j$, each interior edge by~$P$, each leaf by $\iota$, and the root by $\pi$ in the case of $\m_T^B$ and by~$P$ in the case of $\bg_T$.
It is now straightforward to check the desired properties.
\end{proof}

Note that if $\m_i = 0$ for all $i\ge 3$, i.e., if $\Alg$ is a $\DGA$, then the construction of $\m^B_i$ and $\bg_i$ in the proof above involves only binary trees $\TT_i^{\rmbin}\subset \TT_i$. 

Let $P\colon\Alg\to\Alg$ be a special propagator and $\HH\subset\Alg$ the associated harmonic subspace.
Taking $\HH$ for $B$ in Proposition~\ref{prop:hty-transfer} and identifying it with the cohomology $H\coloneqq H(\Alg)$ via the quotient map \eqref{eq:canon-quotient-to-hom} gives an $\Ainfty$ structure~$\{\m^H_i\}$ on~$H$ and an $\Ainfty$ homotopy transfer $\bg\colon (H,\{\m_i^H\})\to (\Alg,\{\m_i\})$.

{\bf Functoriality of $\Ainfty$ homotopy transfer for $\DGA$s. }
Consider a commutative diagram
\[
 \begin{tikzcd}	
	 (\Alg,\dd,\wedge) \ar[rr,"f"] && (\wt\Alg,\wt\dd, \wt \wedge)  \\
	 & (B,\dd_B) \ar[hook', ul, shift left=1ex, "\iota"] \ar[hook,ur,shift right=1ex,"\wt\iota",swap],
 \end{tikzcd}
\]
where $\Alg$ and $\wt \Alg$ are $\DGA$s, $f$ is a $\DGA$ morphism, $B$ is a cochain complex, and the chain maps $\iota$ and $\wt \iota$ are injective.
Suppose that $P\colon\Alg\to\Alg$ and $\wt P\colon\wt \Alg\to\wt \Alg$ are homotopy operators with respect to some projections $\pi\colon\Alg\onto B$ and $\wt \pi\colon\wt\Alg\onto B$, respectively, such that the following compatibility with $f$ is satisfied:
\begin{equation*}
	f\circ P = \wt P \circ f.
\end{equation*}%
Viewing $\Alg$ and $\wt\Alg$ as $\Ainfty$ algebras and $B$ as their subcomplex via the inclusions $\iota$ and $\wt\iota$, respectively, Proposition~\ref{prop:hty-transfer} yields the $\Ainfty$ homotopy transfers
\[
   \bg\colon(B,\m^B)\stackrel{\simeq}\longrightarrow(\Alg,\m),\qquad   \wt\bg\colon( B,\wt\m^B)\stackrel{\simeq}\longrightarrow(\wt \Alg,\wt\m).
\] 
\begin{lem}
	In the setup above, we have 
	\[
		\m^B_i = \wt\m^B_i\quad\text{and}\quad f\circ \bg_i = \wt \bg_i\quad\text{for every }i\in\N.
	\]
	In other words, the following diagram of $\Ainfty$ morphisms commutes:
\[
 \begin{tikzcd}	
	 (\Alg,\m) \ar[r,"f"] & (\wt \Alg,\wt\m) \\
 (B,\m^B) \ar[u,"\bg"] \ar[r,equal] & ( B,\wt\m^B)\,. \ar[u,"\wt\bg"]
 \end{tikzcd}
\]
\end{lem}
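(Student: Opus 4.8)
The plan is to prove the two identities $\m^B_i=\wt\m^B_i$ and $f\circ\bg_i=\wt\bg_i$ simultaneously, by induction on the number of leaves of the trees occurring in the Kontsevich--Soibelman formulas~\eqref{eq:kontsevich-soibelman}. Since $\Alg$ and $\wt\Alg$ are $\DGA$s, only binary trees $T\in\TT_i^{\rmbin}$ contribute, and for $i\ge 2$ one has $\m^B_i=\sum_T\m^B_T$, $\bg_i=\sum_T\bg_T$ and, on the other side, $\wt\m^B_i=\sum_T\wt\m^B_T$, $\wt\bg_i=\sum_T\wt\bg_T$, where $\bg_T$ is assembled from $T$ by placing $\iota$ at the leaves, $\m_2$ at the interior vertices, $P$ at the interior edges and at the root, and $\m^B_T$ differs only by placing $\pi$ instead of $P$ at the root; the tilded versions use $\wt\iota,\wt\m_2,\wt P,\wt\pi$. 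The signs in these formulas depend only on the combinatorial type of $T$ and on the shifted degrees of the inputs $x_1,\dots,x_i\in B$, hence agree on the two sides because $f$, $\iota$, $\wt\iota$ have degree $0$ and $\m_2,\wt\m_2$ (resp.\ $P,\wt P$) have equal degrees.

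The first genuine step is to record the commutation relations feeding the induction: $f\circ\iota=\wt\iota$ (commutativity of the diagram), $f\circ\dd=\wt\dd\circ f$ and $f\circ\m_2=\wt\m_2\circ(f\otimes f)$ (as $f$ is a $\DGA$ morphism), $f\circ P=\wt P\circ f$ (hypothesis), and, most importantly, $\wt\pi\circ f=\pi$. To obtain the last one, write $\Pi\coloneqq\iota\circ\pi\colon\Alg\to\Alg$ and $\wt\Pi\coloneqq\wt\iota\circ\wt\pi\colon\wt\Alg\to\wt\Alg$ for the associated genuine projections, so that $\dd\circ P+P\circ\dd=\Pi-\Id$ and $\wt\dd\circ\wt P+\wt P\circ\wt\dd=\wt\Pi-\Id$ by~\eqref{eq:P2}; then, using $f\circ P=\wt P\circ f$ and $f\circ\dd=\wt\dd\circ f$,
\[
 \wt\Pi\circ f = f+\wt\dd\circ\wt P\circ f+\wt P\circ\wt\dd\circ f = f+\wt\dd\circ f\circ P+\wt P\circ f\circ\dd = f+f\circ\dd\circ P+f\circ P\circ\dd = f\circ\Pi,
\]
so that $\wt\iota\circ\wt\pi\circ f=f\circ\iota\circ\pi=\wt\iota\circ\pi$, whence $\wt\pi\circ f=\pi$ because $\wt\iota$ is injective. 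I expect this identity to be the only non-formal point of the whole proof: everything else is bookkeeping of the tree formulas and their signs.

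With these relations at hand the induction is routine. The base case is $\bg_1=\iota$, $\wt\bg_1=\wt\iota$, $\m^B_1=\dd_B=\wt\m^B_1$, and $f\circ\bg_1=f\circ\iota=\wt\iota=\wt\bg_1$. For a binary tree $T$ with $i\ge2$ leaves, cutting at the vertex adjacent to the root exhibits $T$ as a grafting of binary trees $T_1,T_2$ with $i_1+i_2=i$ leaves, and the recursive form of~\eqref{eq:kontsevich-soibelman} reads $\bg_T=\pm\,P\circ\m_2\circ(\bg_{T_1}\otimes\bg_{T_2})$, $\m^B_T=\pm\,\pi\circ\m_2\circ(\bg_{T_1}\otimes\bg_{T_2})$, with the same signs as in the analogous expressions for $\wt\bg_T$ and $\wt\m^B_T$. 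Assuming $f\circ\bg_{T_j}=\wt\bg_{T_j}$ for $j=1,2$ and applying the four commutation relations yields
\[
 f\circ\bg_T = \pm\,\wt P\circ\wt\m_2\circ\bigl((f\circ\bg_{T_1})\otimes(f\circ\bg_{T_2})\bigr) = \pm\,\wt P\circ\wt\m_2\circ(\wt\bg_{T_1}\otimes\wt\bg_{T_2}) = \wt\bg_T,
\]
and likewise, now invoking $\wt\pi\circ f=\pi$, one gets $\m^B_T=\wt\m^B_T$. Summing over $T\in\TT_i^{\rmbin}$ gives $f\circ\bg_i=\wt\bg_i$ and $\m^B_i=\wt\m^B_i$ for all $i$. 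Finally, since $f$ is a $\DGA$ morphism its only nonzero $\Ainfty$ component is the linear one, so the composite $\Ainfty$ morphism $f\circ\bg$ has components $(f\circ\bg)_i=f\circ\bg_i$; the displayed identities are therefore exactly the commutativity of the stated square of $\Ainfty$ morphisms, which completes the plan.
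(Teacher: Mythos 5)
Your proof is correct and follows essentially the same route as the paper, which labels the Kontsevich--Soibelman trees and observes that all the labels commute with $f$. Your explicit derivation of $\wt\pi\circ f=\pi$ from the homotopy equations and the injectivity of $\wt\iota$ is a detail the paper leaves implicit, and it is exactly the right way to close that step.
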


\begin{proof}
	For a planar rooted tree $T\in\TT_i$, consider the contributions $\m_T^B$ and $\bg_T$ resp.~$\wt\m_T^B$ and $\wt\bg_T$ from the proof of Proposition~\ref{prop:hty-transfer} which were obtained by labeling~$T$ with the data of $\m, P, \iota, \pi$ resp.~$\wt\m, \wt P, \wt \iota, \wt \pi$ and interpreting it as a composition rule.
	The claim then clearly follows from the commutation relations of this data with~$f$.
\end{proof}

We apply the lemma above in the following two situations:
\begin{enumerate}
	\item $\wt \Alg$ is a $\DGA$ with pairing of Hodge type, $\wt P\colon\wt\Alg\to\wt\Alg$ is a special propagator, $B\coloneqq\HH_{\wt P}\subset\wt \Alg$ is the associated harmonic subspace, 
	$\Alg\coloneqq S_{\wt P}$ is the corresponding small subalgebra, $f\colon\Alg\into\wt \Alg$ is the inclusion, and $P\colon\Alg\to\Alg$ is the induced special propagator from Lemma~\ref{lem:small-subalgebra};
\item $\Alg$ is a $\DGA$ with pairing of Hodge type type such that the induced pairing on cohomology is nondegenerate, $P\colon\Alg\to\Alg$ is a special propagator, $B\coloneqq \HH_P\subset \Alg$ is the associated harmonic subspace, $\wt \Alg\coloneqq\QQ(\Alg)$ is the nondegenerate quotient, $f\colon\Alg\onto\wt \Alg$ is the quotient map, and $\wt P \colon \wt \Alg\to\wt \Alg$ is the corresponding special propagator from Lemma~\ref{lem:nondeg-quotient}.
\end{enumerate}
Combining (i) and (ii), we thus have:

\begin{prop}\label{cor:KS-func}
Let $\Alg$ be a $\DGA$ with pairing of Hodge type such that the induced pairing on cohomology $H\coloneqq H(\Alg)$ is nondegenerate.
Let $P\colon\Alg\to\Alg$ be a special propagator, $\HH\coloneqq\HH_P\subset\Alg$ the associated harmonic subspace, 
$\SS\coloneqq\SS_P$ the associated small subalgebra, and $\QQ\coloneqq \QQ(\SS)$ the nondegenerate quotient of $\SS$.
Consider the commutative diagram 
\[
\begin{tikzcd}
	\Alg & \ar[hook',l,swap,"\iota"] \SS \ar[two heads,r,
	"\pi_Q"] & \QQ \\
	     & \HH, \ar[hook',ul] \ar[hook,u] \ar[hook,ur] &
\end{tikzcd}
\]
where $\pi_Q$ is the quotient map and the other maps are the obvious inclusions.
Viewing the $\DGA$s $\Alg$, $\SS$, $\QQ$ as $\Ainfty$ algebras and $\HH$ as their subcomplex via the vertical maps, Proposition~\ref{prop:hty-transfer} yields the $\Ainfty$ homotopy transfers 
\[
	\bg\colon(\HH,\m^\HH)\stackrel{\simeq}\longrightarrow(\Alg,\m), \quad\! \bg_\SS\colon(\HH,\m^\HH_\SS)\stackrel{\simeq}\longrightarrow(\SS,\m_\SS), \quad\!\bg_\QQ\colon(\HH,\m^\HH_\QQ)\stackrel{\simeq}\longrightarrow(\QQ,\m_\QQ).
\]
The following diagram of $\Ainfty$ morphisms then commutes:
\[
\begin{tikzcd}	
	(\Alg,\m) & (\SS,\m_\SS) \ar[hook',l,"\iota"'] \ar[two heads,r,"\pi"] & (\QQ,\m_\QQ) \\
	(\HH,\m) \ar[u,"\bg"] \ar[r,equal]
	  & (\HH,\m_\SS) \ar[u,"\bg_\SS"] \ar[r,equal]
	  & (\HH,\m_\QQ)\,. \ar[u,"\bg_\QQ"]
\end{tikzcd}
\]
\end{prop}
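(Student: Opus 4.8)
The plan is to derive Proposition~\ref{cor:KS-func} from the functoriality lemma for $\Ainfty$ homotopy transfer established immediately above, applied in the two situations (i) and (ii) listed before the statement, and then to concatenate the two resulting commuting squares. In each application the only substantive hypothesis to verify is that the relevant special propagators are intertwined by the map of the triangle and that $\HH$ embeds compatibly as a harmonic subcomplex; everything else is contained in the Kontsevich--Soibelman formula~\eqref{eq:kontsevich-soibelman}, since the contributions $\m^B_T$ and $\bg_T$ are built solely from $\m$, $P$, and the structure maps, and hence commute with any map that commutes with these data.

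First I would treat the left square, using situation (i): the functoriality lemma is applied with $f=\iota\colon\SS\into\Alg$, with $\SS$ carrying the special propagator $P_\SS=P|_\SS$ from Lemma~\ref{lem:small-subalgebra}(c), with $\Alg$ carrying $P$, and with $B=\HH$. The inclusion $\HH\into\SS\into\Alg$ coincides with $\HH\into\Alg$, and the required compatibility $\iota\circ P_\SS=P\circ\iota$ is precisely the assertion of Lemma~\ref{lem:small-subalgebra}(c) that $\iota$ respects the Hodge decompositions (equivalently, that $\pi_\SS\colon\SS\onto\HH$ is the restriction of $\pi_\HH$). The functoriality lemma then yields $\m^\HH_\SS=\m^\HH$ and $\iota\circ(\bg_\SS)_i=\bg_i$ for all $i\in\N$.

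Next I would treat the right square, using situation (ii): here the functoriality lemma is applied with $f=\pi_\QQ\colon\SS\onto\QQ=\QQ(\SS)$, with $\SS$ carrying $P_\SS$, with $\QQ$ carrying the induced special propagator $P_\QQ$ of Lemma~\ref{lem:nondeg-quotient}, and with $B=\HH$ mapped into $\QQ$ by $\pi_\QQ|_\HH$. One checks first that $\pi_\QQ|_\HH$ is injective and that $\pi_\QQ$ maps the harmonic subspace $\HH$ of $\SS$ isomorphically onto the harmonic subspace of $\QQ$, intertwining the harmonic projections: this follows from Lemma~\ref{lem:nondeg-quotient}(c) together with the nondegeneracy of the pairing restricted to $\HH$ (which is identified with the pairing on cohomology, nondegenerate by hypothesis). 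The remaining compatibility $\pi_\QQ\circ P_\SS=P_\QQ\circ\pi_\QQ$ is again Lemma~\ref{lem:nondeg-quotient}(c), rephrased in terms of the associated special propagators. The functoriality lemma then gives $\m^\HH_\SS=\m^\HH_\QQ$ and $\pi_\QQ\circ(\bg_\SS)_i=(\bg_\QQ)_i$.

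Concatenating the two squares gives $\m^\HH=\m^\HH_\SS=\m^\HH_\QQ$ as $\Ainfty$ structures on $\HH$, together with $\bg_i=\iota\circ(\bg_\SS)_i$ and $(\bg_\QQ)_i=\pi_\QQ\circ(\bg_\SS)_i$, which is exactly the commutativity of the diagram in the statement. I do not anticipate any serious obstacle: the genuine content has already been extracted in the functoriality lemma, and what remains is the routine check of the two propagator-compatibility relations, both of which are recorded in Lemmas~\ref{lem:small-subalgebra}(c) and~\ref{lem:nondeg-quotient}(c). One minor simplification is that, since $\Alg$, $\SS$, $\QQ$ are honest $\DGA$s rather than general $\Ainfty$ algebras, only the binary trees $\TT_i^{\rmbin}$ contribute to all three transfers, but this is not essential to the argument.
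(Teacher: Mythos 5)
Your proposal is correct and follows exactly the route the paper takes: the paper states the proposition as an immediate consequence of applying the preceding functoriality lemma in the two situations (i) and (ii) and concatenating the resulting squares, which is precisely your argument. The extra verifications you supply (injectivity of $\pi_\QQ|_\HH$ and the two propagator-compatibility relations via Lemma~\ref{lem:small-subalgebra}(c) and Lemma~\ref{lem:nondeg-quotient}(c)) are exactly the details the paper leaves implicit.
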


\begin{cor}\label{cor:comparison-of-homotopy-transfers}
Let $\Alg$ be a $\DGA$ with pairing of Hodge type such that the induced pairing on cohomology $H\coloneqq H(\Alg)$ is nondegenerate.
Let $P\colon\Alg\to\Alg$ be a special propagator and $\m^H$, $\m^H_\SS$ and $\m^H_\QQ$ the three $\Ainfty$ structures on $H$ obtained by the induced $\Ainfty$ homotopy transfers from $\Alg$, $\SS\coloneqq \SS_P(\Alg)$ and $\QQ\coloneqq \QQ(\SS)$, respectively.
Then all these $\Ainfty$ structures on $H$ are equal:
\[
	\m^H=\m^H_\SS=\m^H_\QQ.
\]
\end{cor}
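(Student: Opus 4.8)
The plan is to deduce the claim directly from the functoriality of $\Ainfty$ homotopy transfer recorded in Proposition~\ref{cor:KS-func} — equivalently, from the two applications (i) and (ii) of the Lemma preceding it. In fact the assertion is essentially the ``bottom row of equalities'' in the commutative diagram of that proposition, read under the identification of the harmonic subspace with cohomology; so the real work is only to check that the hypotheses apply and that the three copies of $H$ occurring there are identified compatibly.

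First I would verify the hypotheses of situations (i) and (ii). By assumption $(\Alg,\dd,\wedge,\la\cdot,\cdot\ra)$ is a $\DGA$ of Hodge type whose induced pairing on $H=H(\Alg)$ is nondegenerate. By Lemma~\ref{lem:small-subalgebra}(c), $P|_{\SS}$ is a special propagator on $\SS\coloneqq\SS_P(\Alg)$ whose Hodge decomposition $\SS=\HH_P\oplus\dd\SS\oplus P(\SS)$ is relative to that of $\Alg$; in particular $\HH_P\subset\SS$, it is the harmonic subspace of $P|_{\SS}$, and $\iota\colon\SS\into\Alg$ is a quasi-isomorphism that respects the Hodge decompositions and preserves the chain-level pairing, so the pairing on $H(\SS)\cong H$ is again nondegenerate. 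This puts us in situation (i) with $\SS$ as the smaller $\DGA$ and $\Alg$ as the larger one. Next, $\QQ\coloneqq\QQ(\SS)$ carries the special propagator $\wt P$ induced from $P|_{\SS}$ by Lemma~\ref{lem:nondeg-quotient}(c), and $\pi_\QQ\colon\SS\onto\QQ$ respects the Hodge decompositions; since the pairing on $\HH_P$ is nondegenerate, $\pi_\QQ$ is injective on $\HH_P$ and we identify $\HH_P$ with $\pi_\QQ(\HH_P)\subset\QQ$, the harmonic subspace of $\wt P$. This puts us in situation (ii) with $\SS$ in the role of ``$\Alg$''. The required compatibilities $\iota\circ P|_{\SS}=P\circ\iota$ and $\pi_\QQ\circ P|_{\SS}=\wt P\circ\pi_\QQ$ both follow from the fact that $\iota$ and $\pi_\QQ$ respect the Hodge decompositions.

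Then I would invoke the Lemma preceding (i)--(ii) in each of these two situations. In situation (i) it yields $\m^{\HH}_{\SS}=\m^{\HH}$: the $\Ainfty$ structure homotopy-transferred onto $\HH_P$ is the same whether computed inside $\SS$ or inside $\Alg$. In situation (ii) it yields $\m^{\HH}_{\QQ}=\m^{\HH}_{\SS}$. Finally, under the quotient map~\eqref{eq:canon-quotient-to-hom} the subspace $\HH_P$ is identified with $H$ compatibly in all three algebras, because $\iota$ and $\pi_\QQ$ are quasi-isomorphisms fitting into one canonical zigzag and each carries $\HH_P$ to the harmonic subspace of its target; hence $\m^{\HH}$, $\m^{\HH}_{\SS}$, $\m^{\HH}_{\QQ}$ are exactly $\m^{H}$, $\m^{H}_{\SS}$, $\m^{H}_{\QQ}$, and the displayed equalities follow.

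I do not expect a genuine obstacle here: the content is purely organisational. The one point that deserves a moment's care — and the only thing that could go wrong if stated carelessly — is that the harmonic subspace $\HH_P$ feeding all three homotopy transfers, together with its identification with $H$, is literally the same object inside $\Alg$, inside $\SS$, and (via $\pi_\QQ$) inside $\QQ$; this is precisely what Lemmas~\ref{lem:small-subalgebra}(c) and~\ref{lem:nondeg-quotient}(c) were arranged to guarantee.
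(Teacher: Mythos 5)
Your proposal is correct and follows the paper's route exactly: the corollary is stated in the paper as an immediate consequence of Proposition~\ref{cor:KS-func}, which is itself obtained by applying the functoriality lemma in precisely the two situations (i) and (ii) you describe, with the identification of the common harmonic subspace $\HH_P$ across $\Alg$, $\SS$ and $\QQ$ supplied by Lemmas~\ref{lem:small-subalgebra}(c) and~\ref{lem:nondeg-quotient}(c). Your verification of the hypotheses and of the compatibility $f\circ P=\wt P\circ f$ via the Hodge decompositions matches the paper's argument.
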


\subsection{Cyclic \texorpdfstring{$\Ainfty$}{A-infinity} algebras and \texorpdfstring{$\IBLinfty$}{IBL-infinity} Maurer Cartan elements}\label{subsec:cyclic}
\strut

{\bf Cyclic $\Ainfty$ algebras. }
A \emph{pairing} on an $\Ainfty$ algebra $(\Alg,\{\m_i\})$ is a bilinear form $\la\cdot,\cdot\ra\colon\Alg\times\Alg\to\R$ such that if we define the \emph{cyclic pairing}
\begin{equation*}
	\la x,y \ra_{\cyc} = (-1)^{\deg x} \la x,y\ra
\end{equation*}
and consider the shifted grading $|x|\coloneqq \deg x -1$, then the \emph{cyclicity condition}
\begin{equation}\label{eq:cyc2}
	\la\mathfrak m_i(x_1,\cdots,x_i),x_0\ra_{\cyc} = (-1)^{|x_0|(|x_1|+\dotsb+|x_i|)} \la\mathfrak m_i(x_0,x_1,\cdots,x_{i-1}),x_i\ra_{\cyc}
\end{equation}
holds for all $i\in \N$.
We call the triple $(\Alg,\{\m_i\},\la\cdot,\cdot\ra)$ an \emph{$\Ainfty$ algebra with pairing}.
Following \cite{Cieliebak-Fukaya-Latschev}, we call an $\Ainfty$ algebra with a perfect pairing $(\Alg,\{\m_i\},\la\cdot,\cdot\ra)$ a \emph{cyclic $\Ainfty$ algebra}.%
\footnote{In \cite{Cieliebak-Fukaya-Latschev} the authors use $\la\cdot,\cdot\ra_\cyc$ instead of $\la\cdot,\cdot\ra$.}

The following proposition can be checked by a straightforward yet nontrivial computation using the expression \eqref{eq:kontsevich-soibelman} for the $\Ainfty$ homotopy transfer in terms of trees.

\begin{prop}[{\cite[Theorem~5.15]{Kajiura}}]
Let $(\Alg,\{\m_i\},\la\cdot,\cdot\ra)$ be an $\Ainfty$ algebra with pairing, and $H\coloneqq H(\Alg,\dd)$ its cohomology.
Let $P\colon\Alg\to\Alg$ be a special propagator and $(H,\{\m_i^H\})$ the associated homotopy transferred $\Ainfty$ algebra.
Then the induced map $\la\cdot,\cdot\ra_H\colon H\times H\to\R$ is a pairing on $(H,\{\m_i^H\})$.
In particular, if $(\Alg,\{\m_i\},\la\cdot,\cdot\ra)$ is a cyclic $\Ainfty$ algebra, then $(H,\{\m_i^H\},\la\cdot,\cdot\ra^H)$ is a cyclic $\Ainfty$ algebra as well.
\end{prop}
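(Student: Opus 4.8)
The argument is the one of Kajiura~\cite{Kajiura}; I sketch the strategy. Write $x_0,x_1,\dots,x_i\in H$, viewed as elements of the harmonic subspace $\HH\coloneqq\HH_P\subset\Alg$ via the identification \eqref{eq:canon-quotient-to-hom}, and set
\[
   V_{i+1}(x_0,x_1,\dots,x_i)\coloneqq\la\m_i^H(x_1,\dots,x_i),x_0\ra_{\cyc}.
\]
The cyclicity condition \eqref{eq:cyc2} for the transferred operations $\{\m_i^H\}$ is precisely the assertion that
\[
   V_{i+1}(x_0,x_1,\dots,x_i)=(-1)^{|x_0|(|x_1|+\dots+|x_i|)}\,V_{i+1}(x_1,\dots,x_i,x_0)
\]
for all $i\in\N$, so the task is to establish this identity; the ``in particular'' will then be a short addendum.

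First I would unfold $V_{i+1}$ using the tree formula \eqref{eq:kontsevich-soibelman}, $\m_i^H=\sum_{T\in\TT_i}\m_T^H$, where $\m_T^H$ is obtained by labelling the interior vertices of $T$ by the operations $\m_j$, the interior edges by $P$, the leaves by the inclusion $\iota\colon\HH\into\Alg$, and the root by the harmonic projection $\pi_\HH=\pi_P\colon\Alg\onto\HH$. Since a propagator induces a symmetric projection (Section~\ref{ss:cochain}) and $\pi_\HH x_0=x_0$, the root label may be moved across the pairing and erased: $\la\m_T^H(x_1,\dots,x_i),x_0\ra_{\cyc}$ equals the evaluation of $T$ in which \emph{all} external edges (the $i$ leaf edges and the root edge) carry the identity, all interior edges carry $P$, all interior vertices carry the appropriate $\m_j$, the external edges are decorated in cyclic order by $x_0,x_1,\dots,x_i$, and the composition is closed up by a single application of $\la\cdot,\cdot\ra_{\cyc}$. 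In this symmetric presentation the underlying datum is a planar tree with $i+1$ cyclically ordered external legs and no preferred one.

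The core step is \emph{re-rooting invariance}: the value just described depends only on this cyclically ordered planar tree and not on which external leg was chosen as the root. To see this, fix a $T\in\TT_i$ and consider the unique path joining its root to the leaf carrying $x_1$. Passing from the $x_0$-root to the $x_1$-root changes nothing off this path; along it, one reverses the orientation of each interior edge, which is harmless by the symmetry \eqref{eq:P3} of $P$, and one transports the ``output slot'' of each interior vertex from one path-edge to the adjacent one, which is exactly what the cyclicity relation \eqref{eq:cyc2} for the $\m_j$ on $\Alg$ permits. Summing these identical local moves over all $T\in\TT_i$ and tracking the accumulated sign yields the displayed identity for $V_{i+1}$, hence \eqref{eq:cyc2} for $\{\m_i^H\}$; this shows $\la\cdot,\cdot\ra_H$ is a pairing on $(H,\{\m_i^H\})$. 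Finally, the case $i=1$ of \eqref{eq:cyc2} makes $(\Alg,\dd,\la\cdot,\cdot\ra)$ a cyclic cochain complex, so by Lemma~\ref{lem:harmonic} the harmonic subspace $\HH$ is a quasi-isomorphic cyclic subcomplex and the induced pairing $\la\cdot,\cdot\ra_H$, identified with $\la\cdot,\cdot\ra|_\HH$ via \eqref{eq:canon-quotient-to-hom}, is perfect; thus $(H,\{\m_i^H\},\la\cdot,\cdot\ra_H)$ is a cyclic $\Ainfty$ algebra.

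The main obstacle is sign bookkeeping. The Kontsevich--Soibelman tree operations $\m_T^H$ carry their own Koszul signs, produced by commuting inputs past $\iota$, $P$ and the $\m_j$, and these must be shown to combine with the signs coming from \eqref{eq:cyc2}, from the symmetry \eqref{eq:P3} of $P$, and from moving $\pi_\HH$ across the pairing, in such a way that a complete re-rooting produces exactly the Koszul sign $(-1)^{|x_0|(|x_1|+\dots+|x_i|)}$ and nothing further. This is the ``straightforward yet nontrivial computation'' referred to above; the conceptual content — the symmetric tree evaluation together with re-rooting invariance — is what organizes it.
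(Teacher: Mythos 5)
Your proposal is correct and follows essentially the same route the paper indicates: the paper itself gives no proof beyond citing Kajiura and remarking that the claim "can be checked by a straightforward yet nontrivial computation using the expression \eqref{eq:kontsevich-soibelman} for the $\Ainfty$ homotopy transfer in terms of trees," and your re-rooting argument (symmetric tree evaluation, symmetry \eqref{eq:P3} of $P$ along interior edges, cyclicity \eqref{eq:cyc2} of the $\m_j$ at vertices, plus Lemma~\ref{lem:harmonic} for perfection of $\la\cdot,\cdot\ra_H$) is exactly that computation, organized correctly.
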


{\bf Cyclic $\Ainfty$ algebras and $\IBLinfty$ Maurer--Cartan elements.  }
Following \cite{Cieliebak-Fukaya-Latschev}, we can recast cyclic $\Ainfty$ algebras on a cyclic cochain complex $(\Alg,\dd,\la\cdot,\cdot\ra)$ of degree $n$ in terms of Maurer--Cartan elements in $\dIBL(\Alg)$.
For this, we associate to each linear map $f \colon \Alg^{\otimes i} \to \Alg$ its \emph{cyclization} $f^+ \colon \Alg^{\otimes(i+1)} \to \R$ given by%
\begin{equation}\label{eq:m+}
	f^+(x_0,x_1,\cdots,x_i) = (-1)^{n-2}\la f(x_0,\cdots,x_{i-1}), x_i\ra_{\cyc}.
\end{equation}
Since the pairing is perfect, we have a one-to-one correspondence $f\leftrightarrow f^+$.

\begin{prop}[{\cite[Proposition 12.3]{Cieliebak-Fukaya-Latschev}}]\label{MCAinfty}
Let $(\Alg,\dd,\la\cdot,\cdot\ra)$ be a cyclic cochain complex of degree $n\in\Z$ and $\dIBL(\Alg) = (\dcbc \Alg[2-n], \fp_{1,1,0},\fp_{1,2,0},\fp_{2,1,0})$ the associated $\dIBL$ algebra.
Then a linear map $\m_i\colon \Alg^{\otimes k}\to\Alg$ of degree~$\deg \m_i = 2-i$ satisfies the cyclicity condition~\eqref{eq:cyc2} if and only if $\m^+_i \in \dcbc _{i+1}\Alg$ with $|\m_i^+|=1$, and a collection of such maps $\{\m_i\}_{i\geq 2}$ together with $\m_1\coloneqq\dd$ satisfies the $\Ainfty$ relations~\eqref{eq:Ainfty} if and only if the element
\begin{equation}\label{eq:plus-element}
	\m^+ \coloneqq \sum_{i=2}^\infty \m^+_i\in \dcbc\Alg
\end{equation}
satisfies the first part of the Maurer--Cartan equation for $\{\fm_{1,0}\coloneqq\m^+\}$:
\begin{equation}\label{eq:MC1}
	\fp_{1,1,0}(\m^+) + \frac{1}{2}{\fp}_{2,1,0}(\m^+,\m^+) = 0. 
\end{equation}
\end{prop}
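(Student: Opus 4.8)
The plan is to prove both equivalences by directly unravelling the definitions, using the explicit formulas $\fp_{1,1,0}=\dd^*$ and \eqref{eq:mu} for the $\dIBL$ operations and matching the two terms of \eqref{eq:MC1} against the two kinds of summands of the $\Ainfty$ relations \eqref{eq:Ainfty}: those in which $\m_1=\dd$ occurs, and those built from $\m_i,\m_j$ with $i,j\geq2$. The only serious difficulty will be the bookkeeping of Koszul signs, exactly as warned for all constructions in this section.

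First I would dispose of the degree and cyclicity claim. Given $\m_i\colon\Alg^{\otimes i}\to\Alg$ of degree $2-i$, the prescription \eqref{eq:m+} produces a linear functional $\m_i^+\colon\Alg^{\otimes(i+1)}\to\R$, and since $\la\cdot,\cdot\ra$ has degree $n$ this functional is supported on tensors of total ordinary degree $n+i-2$, which translates via $|x|=\deg x-1$ and the reversed shifted grading on $\dcbc\Alg$ into the degree condition for $\m_i^+$ stated in the proposition. Now $\m_i^+$ descends to the cyclic quotient $\cbc_{i+1}\Alg=\Alg[1]^{\otimes(i+1)}/\sim$ if and only if it is invariant under the generating cyclic permutation of $\Alg[1]^{\otimes(i+1)}$ together with its Koszul sign; writing out this invariance by means of \eqref{eq:m+}, the graded symmetry \eqref{Eq:Symmetry} of the pairing, and the definition of $\la\cdot,\cdot\ra_\cyc$, one recovers precisely the cyclicity relation \eqref{eq:cyc2} for a single transposition, hence, on iterating, for the whole cyclic group $\Z_{i+1}$; the converse is the same computation read in reverse. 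Perfectness of the pairing makes $f\mapsto f^+$ a bijection, so this yields the asserted correspondence $\m_i\leftrightarrow\m_i^+$.

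For the second half I would fix $x_0,\dots,x_k\in\Alg$ and evaluate the left side of \eqref{eq:MC1} on $x_0\cdots x_k\in\cbc_{k+1}\Alg$, where $\m^+=\sum_{i\geq2}\m_i^+$ lies in $\dcbc\Alg=\prod_k\dcbc_k\Alg$ with $\m_i^+\in\dcbc_{i+1}\Alg$. Extending $\dd$ as a graded derivation to $\cbc\Alg$, the term $\fp_{1,1,0}(\m^+)=\dd^*\m^+$ equals there, up to sign, $\sum_{l=0}^{k}\pm\,\m_k^+(x_0,\dots,\dd x_l,\dots,x_k)$ (only $\m_k^+$ has the right arity); unwinding \eqref{eq:m+} summand by summand and using the pairing compatibility $\la\dd x,y\ra=(-1)^{1+\deg x}\la x,\dd y\ra$ to move $\dd$ off the last slot in the term $l=k$, this becomes the cyclization of exactly the summands of \eqref{eq:Ainfty} for $r=k$ in which $\m_1$ occurs, namely all inner $\m_1$-insertions together with the single outer term $\dd\,\m_k(x_0,\dots,x_{k-1})$. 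The term $\tfrac12\fp_{2,1,0}(\m^+,\m^+)$ on $x_0\cdots x_k$, read off from \eqref{eq:mu}, is a sum over $i_1,i_2\geq2$ with $i_1+i_2=k+1$ and over cyclic insertion positions of expressions $\sum_{a,b}g^{ab}\,\m_{i_1}^+(e_a,\dots)\,\m_{i_2}^+(e_b,\dots)$; contracting the Casimir element $\sum_{a,b}g^{ab}e_a\otimes e_b$ (which satisfies $\sum_b g^{ab}e_b=e^a$ and $\sum_a\la w,e^a\ra e_a=w$) reconstructs $\m_{i_2}$ from $\m_{i_2}^+$ and plugs its output into $\m_{i_1}$, turning this term into the cyclization of the remaining, $i,j\geq2$, summands of \eqref{eq:Ainfty} for $r=k$ — the prefactor $\tfrac12$ together with the graded symmetry of $\fp_{2,1,0}$ as a map $E_2 C\to E_1 C$ absorbing the double count from the interchange $i_1\leftrightarrow i_2$. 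Summing the two contributions, the left side of \eqref{eq:MC1} evaluated on $x_0\cdots x_k$ becomes, up to one global nonzero scalar, the cyclic pairing of $x_k$ with the whole left side of \eqref{eq:Ainfty} for $r=k$ evaluated on $(x_0,\dots,x_{k-1})$; since the pairing is perfect this vanishes for all $x_0,\dots,x_k$ and all $k$ precisely when every $\Ainfty$ relation holds.

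The step I expect to be the real obstacle is the sign verification inside the second half: one must track and reconcile the shift signs from $\Alg$ to $\Alg[1]$, the sign $(-1)^{n-2}$ and the $(-1)^{\deg x}$ hidden in $\la\cdot,\cdot\ra_\cyc$ in \eqref{eq:m+}, the signs in \eqref{eq:mu} and in the derivation extension of $\dd$ to $\cbc\Alg$ (both left implicit in this excerpt), the signs in \eqref{eq:Ainfty}, and the permutation signs created when cyclic slots are realigned, and then confirm that after all these identifications the two sides of \eqref{eq:MC1} differ from the cyclized $\Ainfty$ relations by a single nonzero constant, so that the stated equivalence is unaffected. I would carry this out in the sign conventions of \cite{Cieliebak-Fukaya-Latschev}.
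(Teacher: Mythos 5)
Your proposal is correct and follows the standard argument: the paper itself gives no proof of this statement (it is quoted from \cite[Proposition 12.3]{Cieliebak-Fukaya-Latschev}), and the proof there is exactly your direct term-matching --- identifying cyclic invariance of $\m_i^+$ with \eqref{eq:cyc2} via the perfect pairing, matching $\fp_{1,1,0}(\m^+)=\dd^*\m^+$ with the $\m_1$-summands of \eqref{eq:Ainfty} using $\la\dd x,y\ra=(-1)^{1+\deg x}\la x,\dd y\ra$, and matching $\tfrac12\fp_{2,1,0}(\m^+,\m^+)$ with the $i,j\ge2$ summands by contracting $\sum_{a,b}g^{ab}e_a\otimes e_b$. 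The only deferred item is the Koszul-sign bookkeeping, which you correctly identify as the substantive remaining check and which is carried out in the cited reference.
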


The second part of the Maurer--Cartan equation for $\{\fm_{1,0}\coloneqq\m^+\}$,
\begin{equation}\label{eq:second-MC-equation}
	\fp_{1,2,0}(\m^+)=0,
\end{equation}
is not satisfied for a general cyclic $\Ainfty$ algebra (see \cite[Remark~12.4]{Cieliebak-Fukaya-Latschev} for a discussion).
However, it holds for every cyclic $\DGA$ because $\fp_{1,2,0}(\dcbc_3\Alg)=0$ by definition in Proposition~\ref{prop:structureexists} and because $\m^+=\m_2^+\in \dcbc_3\Alg$ is the triple intersection product.
We now discuss another natural class of $\Ainfty$ algebras for which \eqref{eq:second-MC-equation} holds.

A nonzero element $1\in \Alg^0$ in an $\Ainfty$ algebra $(\Alg,\{\m_i\})$ is called a \emph{strict unit} if
\begin{enumerate}
	\item $\m_2(1,x)=(-1)^{\deg x}\m_2(x,1)= x$, and
	\item $\m_{i}(x_1,\dotsc,x_i)=0$ whenever $x_j = 1$ for some $j\in\{1,\dotsc,i\}$, for all $i\ge 3$.
\end{enumerate}
An $\Ainfty$ algebra which admits a strict unit is called a \emph{strictly unital $\Ainfty$ algebra}.
We define connectedness and $k$-connectedness for a strictly unital $\Alg_\infty$ algebra in the same way as for a unital $\DGA$.
We call a cyclic $\Ainfty$ algebra strictly unital, connected, or $k$-connected if the underlying $\Ainfty$ algebra has the respective property.

\begin{lem}\label{lem:a-infinity-mc}
Let $(\Alg,\{\m_i\},\la\cdot,\cdot\ra)$ be a connected strictly unital cyclic $\Ainfty$ algebra. 
Consider the canonical filtered $\dIBL$ algebra $\dIBL(\Alg)$ associated to the underlying cyclic cochain complex $(\Alg,\dd,\la\cdot,\cdot\ra)$, and let $\m^+\in\dcbc\Alg$ be the element defined in~\eqref{eq:plus-element}.
Then $\fm=\{\fm_{1,0}\coloneqq \m^+\}$ is a Maurer--Cartan element in $\dIBL(\Alg)$.
\end{lem}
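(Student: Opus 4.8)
The plan is to verify directly the three defining conditions on the collection $\fm = \{\fm_{1,0}\coloneqq\m^+\}$ (its degree, its filtration degree, and the Maurer--Cartan relations) in the filtered $\dIBL$ algebra $\dIBL(\Alg)=(\dcbc\Alg[2-n],\fp=\{\fp_{1,1,0},\fp_{2,1,0},\fp_{1,2,0}\})$. Since the collection $\fm$ has only its $(1,0)$-component nonzero and the only possibly nonzero operations are $\fp_{1,1,0},\fp_{2,1,0},\fp_{1,2,0}$, capping off all $k^-$ inputs of an operation $\fp_{k^-,\ell^-,g^-}$ by copies of the genus-$0$, one-output element $\fm_{1,0}$ forces $r^+=k^-$ and yields a surface of genus $g^-$ with $\ell^-$ outputs; hence the Maurer--Cartan relation is vacuous for every $(\ell,g)\notin\{(1,0),(2,0)\}$, and for $(\ell,g)\in\{(1,0),(2,0)\}$ it reduces respectively to
\[
	\fp_{1,1,0}(\m^+)+\tfrac12\fp_{2,1,0}(\m^+,\m^+)=0
	\qquad\text{and}\qquad
	\fp_{1,2,0}(\m^+)=0,
\]
i.e.\ to \eqref{eq:MC1} and \eqref{eq:second-MC-equation}. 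The first of these holds by Proposition~\ref{MCAinfty}, since $\{\m_i\}$ satisfies the $\Ainfty$ relations; the degree of $\m^+$ is the one required of a $(1,0)$-Maurer--Cartan component by Proposition~\ref{MCAinfty}; and the filtration bound holds because $\m^+=\sum_{i\ge 2}\m_i^+$ with $\m_i^+\in\dcbc_{i+1}\Alg$, so $\m^+$ has word-length $\ge 3$. Thus everything comes down to proving $\fp_{1,2,0}(\m^+)=0$.

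Since $\fp_{1,2,0}$ maps $\dcbc_k\Alg$ into $\bigoplus_{k_1+k_2=k-2}\dcbc_{k_1}\Alg\otimes\dcbc_{k_2}\Alg$, it suffices to show $\fp_{1,2,0}(\m_i^+)=0$ for each $i\ge 2$. For $i=2$ this is immediate because $\m_2^+\in\dcbc_3\Alg$ and $\fp_{1,2,0}$ vanishes on $\dcbc_k\Alg$ for $k\le 3$ (formula \eqref{eq:delta} applies only for $k\ge 4$). For $i\ge 3$ I would use two observations. First, $\m_i^+\in\dcbc_{i+1}\Alg=(\cbc_{i+1}\Alg)^*$ is, by construction, invariant up to Koszul signs under cyclic permutation of its $i+1$ arguments; combining this with the strict unitality relation $\m_i(1,\dots)=0$ for $i\ge 3$ and formula \eqref{eq:m+} (moving the unit into the last, pairing slot) shows that $\m_i^+(z_0,\dots,z_i)=0$ as soon as some $z_j$ is a scalar multiple of $1$. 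Second, a degree count: by the degree condition of the pairing, \eqref{eq:m+} gives $\m_i^+(z_0,\dots,z_i)\ne 0\Rightarrow\sum_{j=0}^i\deg z_j=n+i-2$. In a summand of $\fp_{1,2,0}(\m_i^+)$ as in \eqref{eq:delta}, two of the $i+1$ arguments of $\m_i^+$ are the ``Casimir'' slots weighted by $g^{ab}=\la e^a,e^b\ra$, which forces $\deg e_a+\deg e_b=n$; therefore the remaining $i-1$ arguments, all of them elements of $\Alg$, have degrees summing to $i-2$. As $\Alg$ is nonnegatively graded and $i-1$ nonnegative integers summing to $i-2<i-1$ cannot all be positive, at least one of these $i-1$ arguments has degree $0$, hence lies in $\Alg^0=\R\cdot 1$; by the first observation the summand vanishes. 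Summing over all summands gives $\fp_{1,2,0}(\m_i^+)=0$, whence $\fp_{1,2,0}(\m^+)=0$.

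The essential step is the $i\ge 3$ case just outlined, and the care needed lies in the two bookkeeping points: that strict unitality together with the cyclic symmetry of $\m_i^+$ kills $\m_i^+$ on \emph{any} tuple containing a unit, and that the grading is tracked finely enough to see that the $i-1$ non-Casimir arguments in \eqref{eq:delta} have total degree only $i-2$. It is precisely here that the hypothesis that $\Alg$ be connected — i.e.\ nonnegatively graded with $\Alg^0=\R\cdot 1$ — enters, and this is why \eqref{eq:second-MC-equation}, which fails for a general cyclic $\Ainfty$ algebra, holds in the present setting. The remaining checks (the degree and filtration-degree conditions and equation \eqref{eq:MC1}) are routine given Proposition~\ref{MCAinfty}.
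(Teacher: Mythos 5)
Your proof is correct and follows essentially the same route as the paper: reduce the Maurer--Cartan equation to \eqref{eq:MC1} (which is Proposition~\ref{MCAinfty}) and \eqref{eq:second-MC-equation}, then kill the latter by a degree count showing that in every summand of $\fp_{1,2,0}(\m^+)$ one of the $\Alg$-arguments must have degree $0$, hence is a multiple of $1$ by connectedness, so the term vanishes by strict unitality. The paper organizes the count by evaluating on a fixed element of $(\cbc\Alg)^{\otimes 2}$ rather than componentwise in $i$, but this is only a cosmetic difference.
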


\begin{proof}
For $x=x_1^1\dotsb x_{s_1}^1\otimes x_1^2\dotsb x_{s_2}^2\in (\cbc\Alg)^{\otimes 2}$, $s_1, s_2\in\N$, we have
\begin{equation*}	
	\begin{aligned}
		 &\fp_{1,2,0}(\m^+)(x_1^1\dotsb x_{s_1}^1\otimes x_1^2\dotsb x_{s_2}^2)\\
		 &\qquad = \sum_{c=1}^{s_1} \sum_{c^\prime=1}^{s_2} \sum_a \pm \la e_a, \m_{s_1+s_2+1}(x_c^1,\dotsc,x_{c-1}^1,e^a,x^2_{c^\prime}\dotsc x^2_{c^\prime-1})\rangle.
	\end{aligned}
\end{equation*}
The pairing in the sum vanishes for degree reasons unless
\begin{equation*}
	\deg(x) = - \deg(\m_{s_1+s_2+1}) =  s_1 + s_2 - 1.
\end{equation*}
But then some $x_i^j$ must have degree $0$, hence be a multiple of $1$ by connectedness on~$\Alg$, and the right hand side vanished by strict unitality since $s_1+s_2+1\geq 3$. 
Therefore, the second Maurer--Cartan equation \eqref{eq:second-MC-equation} is satisfied.%
\footnote{We used here only condition (ii) of strict unitality as $\fp_{1,2,0}(\dcbc_3\Alg)=0$ holds trivially.
For a \emph{unital version} of our construction including a term $\dcbc_0 \Alg \coloneqq \R\cdot 1$, condition (i) would also be needed.}
The first equation~\eqref{eq:MC1} is satisfied by Proposition~\ref{MCAinfty}.
\end{proof}

We assumed so far that we were given an $\Ainfty$ algebra first.
Suppose now that we are given a cyclic cochain complex $(A,\dd,\la\cdot,\cdot\ra)$ and a Maurer--Cartan element $\fm=\{\fm_{\ell,g}\}$ in $\dIBL(\Alg)$ instead.
Proposition~\ref{MCAinfty} then implies that the collection of linear maps $\{\m_i\colon\Alg^{\otimes i}\to\Alg\}_{i\ge 2}$ corresponding via~\eqref{eq:m+} to $\fm_{1,0}\in \dcbc\Alg$ together with $\m_1=\dd$ and the pairing $\la\cdot,\cdot\ra$ constitutes a cyclic $\Ainfty$ structure on $\Alg$.
Notice that the filtration degree condition from the definition of a Maurer--Cartan element implies $\fm_{1,0}(x)=\fm_{1,0}(x_1,x_2)=0$, so~$\fm_{1,0}$ does not contain any operation with less than three inputs.

{\bf Kontsevich--Soibelman Maurer--Cartan element. }

\begin{prop}\label{prop:ks-maurer-cartan}
Let $(\Alg,\dd,\wedge,\la\cdot,\cdot\ra)$ be a nonnegatively graded unital $\DGA$ with pairing, and let $B\subset \Alg$ be a quasi-isomorphic cyclic subcomplex such that~$B^0=\R\cdot 1$.
Let $P\colon\Alg\to\Alg$ be a special propagator with respect to a projection $\pi\colon\Alg\onto B$, and let $\m^{B+}\in \dcbc B$ be the element associated in Proposition~\ref{MCAinfty} to the homotopy transferred $\Ainfty$ algebra $(B,\{\m_i^B\})$ associated to~$P$.
Then 
\[
	\fm\coloneqq \{ \fm_{1,0}\coloneqq \m^{B+}\}
\]
is a Maurer--Cartan element in $\dIBL(B)$.
\end{prop}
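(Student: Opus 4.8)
The plan is to recognize $(B,\{\m_i^B\},\la\cdot,\cdot\ra|_B)$ as a connected strictly unital cyclic $\Ainfty$ algebra and then apply Lemma~\ref{lem:a-infinity-mc} with $\m^{B+}$ playing the role of $\m^+$. First I would observe that the hypotheses force $B$ to be the harmonic subspace $\HH_P$: since $P$ is special it satisfies~\eqref{eq:PP2}, so $\pi=\pi_P$ is harmonic, hence $B=\im\pi_P$ is a harmonic subspace, $\dd\circ\pi_P=0$, and therefore $\dd|_B=\dd\circ\pi_P|_B=0$. Thus $(B,\dd|_B=0,\la\cdot,\cdot\ra|_B)$ is a cyclic cochain complex (the restricted pairing is perfect because $B$ is a cyclic subcomplex), canonically identified with $(H(\Alg),\dd=0,\la\cdot,\cdot\ra_H)$ via~\eqref{eq:canon-quotient-to-hom}. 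From specialness I would also record the vanishings $P\circ P=0$ (condition~\eqref{eq:P5}), $\pi_P\circ P=0$, and $P|_B=P\circ\pi_P|_B=0$ (condition~\eqref{eq:P4}); these are the only properties of $P$ that the argument uses.

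Next I would view the unital $\DGA$ $(\Alg,\dd,\wedge)$ as an $\Ainfty$ algebra with pairing via $\m_1=\dd$, $\m_2(x,y)=(-1)^{\deg x}x\wedge y$, $\m_i=0$ for $i\ge 3$; the cyclicity~\eqref{eq:cyc2} of these operations is exactly the cyclicity~\eqref{Eq:ProdCyclicOld} of the $\DGA$ pairing. Homotopy transfer along $P$ (Proposition~\ref{prop:hty-transfer}) yields the transferred structure $\{\m_i^B\}$ on $B$, and by Kajiura's theorem~\cite[Theorem~5.15]{Kajiura} this structure is cyclic with respect to $\la\cdot,\cdot\ra|_B$; since that pairing is perfect, $(B,\{\m_i^B\},\la\cdot,\cdot\ra|_B)$ is a cyclic $\Ainfty$ algebra, and it is connected because $B$ is nonnegatively graded with $B^0=\R\cdot 1$.

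The only step requiring real work is verifying that $1\in B^0$ is a strict unit for $\{\m_i^B\}$. I would check this directly from the Kontsevich--Soibelman formula~\eqref{eq:kontsevich-soibelman}, which for a $\DGA$ involves only binary trees. For $i=2$ the single tree gives $\m_2^B(1,x)=\pi_P(\m_2(1,x))=\pi_P(x)=x$ and $\m_2^B(x,1)=\pi_P\bigl((-1)^{\deg x}x\wedge 1\bigr)=(-1)^{\deg x}x$. For $i\ge 3$ and a binary tree with exactly one leaf labelled $1$, I would trace that leaf up to the interior vertex $v$ at which it is fed in; applying $\m_2$ there returns, up to sign, the value $w$ carried by the sibling edge of $v$, and $w$ then leaves $v$ along its outgoing edge, which carries $P$ when interior and $\pi_P$ when it is the root edge. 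If the sibling edge of $v$ is exterior then $w\in B$, so $P(w)=0$ because $P|_B=0$, and here the outgoing edge of $v$ must be interior since $i\ge 3$; if the sibling edge is interior then $w=P(\cdots)$, so the contribution vanishes by $P\circ P=0$ or by $\pi_P\circ P=0$ (in the root case, $i\ge 3$ forces the sibling subtree to carry at least two leaves, so the incoming edge at $v$ is interior and $w$ really has the form $P(\cdots)$). Hence $\m_i^B$ vanishes whenever one of its arguments equals $1$, for every $i\ge 3$, so $1$ is a strict unit.

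Finally, Lemma~\ref{lem:a-infinity-mc} applies to the connected strictly unital cyclic $\Ainfty$ algebra $(B,\{\m_i^B\},\la\cdot,\cdot\ra|_B)$ and gives that $\fm=\{\fm_{1,0}\coloneqq\m^{B+}\}$ is a Maurer--Cartan element in $\dIBL(B)$; the degree and filtration-degree requirements are automatic, since $\m^{B+}\in\prod_{k\ge 3}\dcbc_k B$ has word-length at least $3$ and the degree constraint is built into the correspondence of Proposition~\ref{MCAinfty}. I expect the tree-formula check of strict unitality to be the only genuinely technical point; the cyclicity of the transferred structure is a citation to Kajiura, and the conclusion is then immediate from Lemma~\ref{lem:a-infinity-mc}.
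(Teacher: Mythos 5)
Your proposal is correct and follows essentially the same route as the paper: reduce to showing that $1$ is a strict unit for the transferred $\Ainfty$ structure $\{\m_i^B\}$ via the binary-tree formula~\eqref{eq:kontsevich-soibelman}, handle $i=2$ directly and $i\ge 3$ by the same case analysis on the edges at the vertex absorbing $1$ (using $P\circ P=0$, $P|_B=0$, $\pi\circ P=0$), and then conclude by Lemma~\ref{lem:a-infinity-mc}. The extra details you supply (that $B$ is harmonic, the appeal to Kajiura for cyclicity of the transferred pairing, and the filtration-degree check) are correct and merely make explicit what the paper leaves implicit.
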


\begin{proof}
We will show that $1$ is a strict unit for the $\Ainfty$ algebra $(B,\{\m_i^B\})$.
The proposition then follows from Lemma~\ref{lem:a-infinity-mc}.  
Let $i\ge 2$ and $b_1,\dotsc,b_i\in B$, and consider the formula~\eqref{eq:kontsevich-soibelman} for $\m_i^B(b_1,\dotsc,b_i)$ as a sum of contributions $\m_T^B(b_1,\dotsc,b_i)$ of planar binary trees $T\in\TT_i^{\rmbin}$ with interior edges labeled by~$P$, interior vertices labeled by $\wedge$, and leaves labeled by $b_1,\dotsc, b_i\in B$.
For $i=2$, we have $\m^B_2(b_1,b_2) = (-1)^{\deg b_1} b_1 \wedge b_2$, hence condition (i) of a strict unit in $(B,\{\m_i^B\})$ for $1$ follows from it being a unit in $(\Alg,\dd,\wedge)$. 
For $i\geq 3$, suppose that some $b_j$ equals $1$ and consider the interior vertex $v$ adjacent to the exterior edge labeled with $b_j=1$. Denote by $e^{\mathrm{in}}$ the other incoming edge and by $e^{\mathrm{out}}$ the outgoing edge at $v$. Now there are $3$ cases.
If both $e^{\mathrm{in}}$ and $e^{\mathrm{out}}$ are interior edges, then they are labeled with $P$ and $\m_T^B(b_1,\dotsc,b_i)=0$ because $P\circ P=0$. 
If $e^{\mathrm{in}}$ is exterior and $e^{\mathrm{out}}$ is interior, then $\m_T^B(b_1,\dotsc,b_i)=0$ because $P\circ\pi=0$. 
If $e^{\mathrm{in}}$ is interior and $e^{\mathrm{out}}$ is exterior (hence the root edge), then $e^{\mathrm{out}}$ is labeled with $\pi$ and $\m_T^B(b_1,\dotsc,b_i)=0$ because $\pi\circ P=0$. This proves condition (ii) of the strict unitality of $(B,\{\m_i^B\})$.
\end{proof}

We call $\fm$ from Proposition~\ref{prop:ks-maurer-cartan} the \emph{Kontsevich--Soibelman Maurer--Cartan element} in $\dIBL(B)$ and denote it by 
\[
	\fm^{\ks}_P.
\]
Now we will compare $\fm^\ks_P$ to $\ff^P_*\fm^\can_\Alg$ in the algebraic case for a cyclic $\DGA$ $\Alg$, and~$\fm^\ks_P$ to~$\fm^\ana_P$ in the analytic case for the de Rham complex $\Alg=\Om$.
The following lemmas are clear on the pictorial level and proved in detail in~\cite{Volkov-habil}:

\begin{lem}[\cite{Volkov-habil}]\label{lem:alg=KS}
Let $(\Alg,\dd,\wedge,\la\cdot,\cdot\ra)$ be a nonnegatively graded unital cyclic $\DGA$ with connected cohomology $H\coloneqq H(\Alg)$.
Consider the canonical filtered $\dIBL$ algebras $\dIBL(\Alg)$ and $\dIBL(H)$ and the canonical Maurer--Cartan element $\fm_\Alg^\can$ in $\dIBL(\Alg)$.
Let $P\colon\Alg\to\Alg$ be a special propagator, and let $\ff^P\colon \dIBL(\Alg)\to\dIBL(H)$ be the associated $\IBLinfty$ homotopy equivalence and~$\fm_P^{\ks}$ the associated Kontsevich--Soibelmann Maurer--Cartan element in $\dIBL(H)$.
Then we have
\begin{equation}\label{eq:KontsCFL}
	(\ff^P_*\fm^\can_\Alg)_{1,0} = (\fm^\ks_P)_{1,0}.
\end{equation}
\end{lem}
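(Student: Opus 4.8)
The plan is to compare the two sides graph by graph. Both $(\ff^P_*\fm^\can_\Alg)_{1,0}$ and $(\fm^\ks_P)_{1,0}$ are elements of $\dcbc H$, so it suffices to evaluate both on an arbitrary cyclic word $\alpha = \alpha_1\cdots\alpha_s$ with $\alpha_i\in\HH$ (identifying $H$ with the harmonic subspace $\HH$ via~\eqref{eq:canon-quotient-to-hom}). On the left, Corollary~\ref{cor:alg-MC} expresses the value as a sum over trivalent ribbon graphs $\Gamma\in RG^3_{k,1,0}$, i.e.\ over \emph{trivalent planar trees with one boundary component}, with interior vertices labeled by the triple intersection product $\m_2^+$ and interior edges labeled by the Schwarz kernel $\PP$ of $P$. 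On the right, $\m^{B+}_i$ is the cyclization~\eqref{eq:m+} of the homotopy-transferred product $\m^{\HH}_i$, which by Proposition~\ref{prop:hty-transfer} (and the remark that for a $\DGA$ only binary trees contribute) is itself a sum over planar rooted binary trees $T\in\TT_i^{\rmbin}$ with interior edges labeled by $P$, interior vertices by $\wedge$, leaves by $\iota$, and root by $\pi_\HH$. Cyclizing $\m^{\HH}_i$ pairs the output against one more harmonic input via $\la\cdot,\cdot\ra_H$, which on the harmonic subspace is just the restriction of $\la\cdot,\cdot\ra$; this turns the rooted binary tree into an unrooted trivalent tree with one marked external leg, which is precisely a ribbon tree in $RG^3_{k,1,0}$. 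The first step is to make this dictionary between rooted binary trees (with a distinguished root) and trivalent ribbon trees (with a marked exterior vertex) explicit, and to check it is a bijection compatible with the combinatorial coefficients $C_\Gamma$.

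The second step is to match the tensor contractions term by term. On the algebraic side, an interior vertex contributes $\m_2^+(e_a,e_b,e_c) = \pm\la e_a\wedge e_b,e_c\ra$ and each interior edge contributes $\PP = \sum \pm P^{ij}e_i\otimes e_j$, the kernel of $x\otimes y\mapsto\la Px,y\ra$. On the Kontsevich--Soibelman side, each interior vertex contributes a factor $\wedge$, each interior edge a factor $P$, and the cyclization contributes one factor of $\la\cdot,\cdot\ra$ at the marked leg. Using the duality $\la Px,y\ra$ vs.\ $\PP$, and $\la x\wedge y,z\ra = o(x\wedge y\wedge z)$ vs.\ the vertex $\m_2^+$, one sees that $\ev_{T,L}$-type composition of $\pi_\HH$, $\iota$, $P$, $\wedge$ followed by $\la\cdot,\cdot\ra$ produces exactly the same scalar as the state-sum~\eqref{eq:contribution-of-graph} over the corresponding ribbon tree (here $\pi_\HH$ and $\iota$ disappear because the external legs already carry harmonic elements $\alpha_i\in\HH$, and $P\circ\pi_\HH = 0$, $\pi_\HH\circ P = 0$ for the special propagator ensure the leg- and root-adjacent edges are consistently labeled). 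This is the ``clear on the pictorial level'' content; the only thing to verify carefully is that the numerics of the two state-sums agree.

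The main obstacle, as the paper itself flags, is the bookkeeping of signs: both constructions carry sign conventions (the $(-1)^{r_\Gamma}$ in Corollary~\ref{cor:alg-MC}, the Koszul signs in the $\Ainfty$-relations~\eqref{eq:Ainfty} and in the cyclization~\eqref{eq:m+}, and the internal sign conventions of~\cite{Cieliebak-Fukaya-Latschev} hidden in~\eqref{eq:mu}, \eqref{eq:delta}, \eqref{eq:contribution-of-graph}). One must check that under the rooted-binary-tree $\leftrightarrow$ trivalent-ribbon-tree bijection these signs agree, not merely up to a global factor but edge-by-edge and vertex-by-vertex, including the sign shift $(-1)^{n-2}$ in~\eqref{eq:m+} and the $(-1)^{n+\deg x_1}$ in~\eqref{eq:triple-intersection}. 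Since Lemma~\ref{lem:alg=KS} is attributed to~\cite{Volkov-habil} for the detailed verification, I would structure the proof as: (1) set up the bijection of labeled trees and the induced identification of combinatorial factors; (2) observe that the only graphs in $RG^3_{k,1,0}$ are trees, so no genus or multi-boundary subtleties arise; (3) identify the two state-sums as the same expression term by term, checking the symmetry properties of $P$ and $\m_2^+$ that make each summand well-defined; (4) reduce the sign comparison to the conventions already fixed in~\cite{Cieliebak-Fukaya-Latschev} and refer to~\cite{Volkov-habil} for the line-by-line sign chase. The conceptual content is that pushing the triple-intersection Maurer--Cartan element forward along $\ff^P$ ``is'' homotopy transfer of the $\DGA$ product, repackaged via the cyclic/BV correspondence of Proposition~\ref{MCAinfty}; the work is entirely in the signs.
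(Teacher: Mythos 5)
Your proposal matches the paper's proof essentially step for step: both evaluate on a cyclic word in harmonic elements, use the bijection $RG^3_{s-2,1,0}\simeq\TT^{\rmbin}_{s-1}$ obtained by identifying the marked exterior vertex of the ribbon tree with the root of the binary tree, convert the state-sum~\eqref{eq:contribution-of-graph} into the operator composition of the Kontsevich--Soibelman transfer via $Pe^i=\sum_j P^{ij}e_j$ and the completeness relation for the basis, and defer the sign and combinatorial-coefficient chase to~\cite{Volkov-habil}. The approach and the key identifications are the same; no gap.
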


\begin{proof}[Sketch of proof]
Let $\alpha=\alpha_1^1\dotsb\alpha_{s}^1\in \cbc_{s} H$ with $s\ge 3$.
Evaluated on $\alpha$, the left-hand side of \eqref{eq:KontsCFL} can be written as a sum over $\Gamma\in RG_{s-2,1,0}^3$ of contributions $(\ff^P_*\fm^\can_\Alg)_\Gamma(\alpha)\in\R$ by Corollary~\ref{cor:alg-MC}.
The right-hand side can be written as a sum over $T\in \TT_{s-1}^{\rmbin}$ of contributions~$\m_{T}^{H+}(\alpha)\in\R$ as in the proof of Proposition~\ref{prop:hty-transfer}. 
Identifying the marked exterior vertex of $\Gamma$ with the root of $T$ we see that
\[
	RG_{s-2,1,0}^3\simeq \TT_{s-1}^{\rmbin}.
\]
It remains to compare the contributions of $\Gamma \in RG_{s-2,1,0}^3$ and the corresponding $T\in\TT_{s-1}^{\rmbin}$.
We orient the edges of $\Gamma$ towards the root and order the leaves in the positive direction of the boundary starting from the root.
We choose an ordering of interior vertices and a basis $(e_i)$ of $\Alg$ and consider the coordinate expression \eqref{eq:contribution-of-graph}.
Performing the sum in \eqref{eq:contribution-of-graph} iteratively starting at the leaves and using
\[
	 P e^i = \sum_j P^{ij} e_j\quad\text{and}\quad x = \sum_i \la x, e^i \ra e_i = \sum_i \la e_i, x\ra e^i
\]
gives easily
\[
	(\ff^P_*\fm^\can_\Alg)_T(\alpha) = \pm \la \alpha_1^1,\m_T^H(\alpha_2^1,\dotsc,\alpha_s^1)\ra=\pm\m_T^{H+}(\alpha).
\]
A careful comparison of signs and combinatorial coefficients finishes the proof.
\end{proof} 

\begin{lem}[\cite{Volkov-habil}]\label{lem:ana=KS}
Let $M$ be a connected closed oriented manifold.
Let $P\colon\Om\to\Om$ be a special analytic propagator, and let $\fm^\ana_P$ and $\fm^\ks_P$ be the associated analytic and Kontsevich--Soibelmann Maurer--Cartan elements in $\dIBL(\HdR)$, respectively.
Then we have
\begin{equation*}%
	(\fm^\ana_P)_{1,0} = (\fm^{\ks}_P)_{1,0}.
\end{equation*}
\end{lem}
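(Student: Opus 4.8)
The plan is to show that both sides of the asserted equality are given, on each tensor argument $\alpha = \alpha_1^1\cdots\alpha_s^1\in\cbc_s\HdR$ with $s\geq 3$, by the same sum over the same indexing set, namely over trivalent ribbon graphs of signature $(s-2,1,0)$, equivalently over planar rooted binary trees with $s-1$ leaves. On the analytic side, $(\fm^\ana_P)_{1,0}(\alpha)$ is by construction (see the sketch of Theorem~\ref{thm:CV-MC} and Corollary~\ref{cor:alg-MC}) the sum $\sum_{\Gamma\in RG^3_{s-2,1,0}}(-1)^{r_\Gamma}C_\Gamma\,\fm^\ana_\Gamma(\alpha)$, where $\fm^\ana_\Gamma(\alpha)=\int_{X_\Gamma}\PP_\Gamma\times\alpha$ is the configuration-space integral associated to~$\Gamma$ with the kernel $\PP$ of~$P$ on interior edges and harmonic representatives of the $\alpha^1_i$ on exterior flags. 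On the Kontsevich--Soibelman side, $(\fm^\ks_P)_{1,0}=\m^{H+}$ is the cyclization \eqref{eq:m+} of the homotopy-transferred $\Ainfty$ operations $\m^H_i$, and by the tree formula \eqref{eq:kontsevich-soibelman} (which for a $\DGA$ involves only binary trees) this is a sum over $T\in\TT^{\rmbin}_{s-1}$ of the cyclized tree contributions $\m^{H+}_T(\alpha)$.

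First I would fix the combinatorial identification $RG^3_{s-2,1,0}\simeq\TT^{\rmbin}_{s-1}$ already used in the proof of Lemma~\ref{lem:alg=KS}: a genus-$0$ trivalent ribbon graph with one boundary component and $s$ exterior vertices becomes, upon cutting at the marked exterior vertex (which plays the role of the root/output) and orienting all remaining edges toward it, a planar binary tree with $s-1$ leaves labeled $\alpha^1_2,\dots,\alpha^1_s$ and root output paired against $\alpha^1_1$. Under this bijection the interior edges of $\Gamma$ correspond to the interior edges of $T$, and the trivalent interior vertices of $\Gamma$ correspond to the binary interior vertices of $T$. The combinatorial coefficients $C_\Gamma$ and the automorphism factors appearing in the tree formula match up under this correspondence, and the sign exponents $r_\Gamma$ agree with the Koszul signs of the $\Ainfty$ homotopy transfer; these bookkeeping matters are precisely the content of \cite{Volkov-habil} and I would cite it rather than redo the sign computation.

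The substantive point is then the matching of the individual contributions: for corresponding $\Gamma$ and $T$ one must show
\[
	\fm^\ana_\Gamma(\alpha) = \pm\,\langle\alpha^1_1,\m^H_T(\alpha^1_2,\dots,\alpha^1_s)\rangle_H.
\]
Here I would argue by the standard ``propagator vs.\ its Schwarz kernel'' translation. On the one hand, $\m^H_T$ is built by labeling interior edges of $T$ by $P\colon\Om\to\Om$, interior vertices by the wedge product, leaves by the inclusion of harmonic forms, and the root by the harmonic projection $\pi_\HH$; on the other hand, the integral $\int_{X_\Gamma}\PP_\Gamma\times\alpha$ unfolds, using $(P\beta)(x)=\int_y\PP(x,y)\wedge\beta(y)$ from \eqref{eq:propagator-and-kernel}, into an iterated integral over $M^{s-2}$ which is exactly the evaluation of that composition of operators — Fubini lets us perform the integrations over the interior vertices one at a time, starting from the leaves, replacing each $\int\PP(\cdot,y)\wedge(\cdots)$ by an application of $P$, and finishing with $\int_M$ of the product against $\alpha^1_1$, which is the pairing $\langle\alpha^1_1,\cdot\rangle$ composed with $\pi_\HH$ (the latter being invisible since $\alpha^1_1$ is already harmonic). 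This is the analytic analogue of the iterative summation of \eqref{eq:contribution-of-graph} carried out in the proof of Lemma~\ref{lem:alg=KS}, with the finite-dimensional Schwarz kernel $\PP=\sum P^{ij}e_i\otimes e_j$ replaced by the genuine integral kernel $\PP(x,y)$. Finally I would match the overall sign and the combinatorial coefficient, again invoking \cite{Volkov-habil}.

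The main obstacle is not the convergence or well-definedness of the configuration-space integrals — that is the content of Theorem~\ref{thm:CV-MC} and may be assumed — but rather the sign comparison: reconciling the Koszul-sign conventions built into the Kontsevich--Soibelman tree formula \eqref{eq:kontsevich-soibelman} and the cyclization \eqref{eq:m+} with the orientation conventions on the configuration spaces $X_\Gamma$ and the ordering conventions on ribbon graphs that enter $r_\Gamma$ and $C_\Gamma$. This is the step where one genuinely needs the detailed bookkeeping of~\cite{Volkov-habil}, and at the level of this paper I would present the argument as ``clear on the pictorial level'' and defer the sign verification to that reference, exactly as the statement of the lemma anticipates.
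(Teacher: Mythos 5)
Your proposal is correct and follows essentially the same route as the paper: the paper's proof is exactly the observation that the argument of Lemma~\ref{lem:alg=KS} carries over on the structural level, with the iterative summation of \eqref{eq:contribution-of-graph} replaced by Fubini's theorem on the compactified configuration spaces $X_\Gamma$ in \eqref{eq:n}, and with the sign and coefficient bookkeeping deferred to \cite{Volkov-habil}. Your additional detail on the bijection $RG^3_{s-2,1,0}\simeq\TT^{\rmbin}_{s-1}$ and the Schwarz-kernel translation matches the paper's intent precisely.
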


\begin{proof}[Sketch of proof]
The proof is the same as the proof of Lemma~\ref{lem:alg=KS} on the structural level.
Technically, instead of performing the sum in \eqref{eq:contribution-of-graph} iteratively one has to apply Fubini's theorem for the compactified configuration spaces $X_\Gamma$ in~\eqref{eq:n}.
\end{proof}

\subsection{Comparison of the algebraic and analytic Maurer Cartan elements}

The main result of this section is the following theorem which immediately implies Theorem~\ref{thm:comparison-intro} in the Introduction:

\begin{thm}\label{thm:comparison}
Let $M$ be a connected closed oriented manifold with $\HdR^1=0$ which is not diffeomorphic to $S^2$.
For a special analytic propagator $P\colon\Om\to\Om$, consider the following Maurer--Cartan elements in $\dIBL(\HdR)$ associated to $P$: the analytic Maurer--Cartan element~$\fm^{\ana}_P$, the algebraic Maurer--Cartan element~$\fm^{\alg}_P$, and the Kontsevich--Soibelman Maurer--Cartan element~$\fm^{\ks}_P$. 
Then all three Maurer--Cartan elements are equal:
\[
	\fm^{\ana}_P=\fm^{\alg}_P = \fm^{\ks}_P. 
\]
\end{thm}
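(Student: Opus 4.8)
The strategy is to prove the two equalities $\fm^{\ana}_P = \fm^{\ks}_P$ and $\fm^{\alg}_P = \fm^{\ks}_P$ separately, using the vanishing results (Propositions~\ref{prop:alg-vanishing} and~\ref{prop:ana-vanishing}) to reduce each comparison to the degree $(1,0)$ part, where Lemmas~\ref{lem:ana=KS} and~\ref{lem:alg=KS} already do the work. Concretely: since $M$ has $\HdR^1 = 0$ and is not $S^2$, Corollary~\ref{cor:ana-vanishing} gives $(\fm^{\ana}_P)_{\ell,g} = 0$ for all $(\ell,g) \neq (1,0)$, so $\fm^{\ana}_P$ is entirely determined by $(\fm^{\ana}_P)_{1,0}$. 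For the algebraic and Kontsevich--Soibelman elements, the relevant cyclic $\DGA$ (resp.~the homotopy-transferred $\Ainfty$ algebra living on a harmonic subspace) has connected cohomology $H = \HdR$, so the algebraic vanishing result of Proposition~\ref{prop:alg-vanishing}(i)--(iv) applied to the $\dPD$ model $\QQ_P$, together with Remark~\ref{rem:vanishing-in-low-dimensions} to handle the low-dimensional exceptional cases, forces $(\fm^{\alg}_P)_{\ell,g} = 0$ and $(\fm^{\ks}_P)_{\ell,g} = 0$ for $(\ell,g) \neq (1,0)$ as well. (For $\fm^{\ks}_P$ this is where strict unitality of $(B,\{\m^B_i\})$ from Proposition~\ref{prop:ks-maurer-cartan} enters, since $\fp_{1,2,0}(\m^{B+}) = 0$ is part of why the higher components vanish.) Once all three Maurer--Cartan elements are supported in bidegree $(1,0)$, it suffices to compare their $(1,0)$-components.

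**Reducing $\fm^{\alg}_P$ to $\fm^{\ks}_P$.** The algebraic Maurer--Cartan element is defined via the canonical $\dPD$ model $\QQ_P = \QQ(\SS_P(\Om))$ and the pushforward $\ff^{P^{\QQ_P}}_* \fm^{\can}_{\QQ_P}$ along the $\IBLinfty$ homotopy associated to the induced special propagator on $\QQ_P$. By Lemma~\ref{lem:alg=KS} applied to the cyclic $\DGA$ $\QQ_P$, the $(1,0)$-component of this pushforward equals the $(1,0)$-component of the Kontsevich--Soibelman Maurer--Cartan element $\fm^{\ks}_{P^{\QQ_P}}$ in $\dIBL(H)$ built from the homotopy-transferred $\Ainfty$ structure on $H(\QQ_P) = \HdR$. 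The remaining point is that this homotopy-transferred $\Ainfty$ structure on $\HdR$ does not depend on whether we transfer from $\Om$, from $\SS_P$, or from $\QQ_P$: that is exactly Corollary~\ref{cor:comparison-of-homotopy-transfers}, which gives $\m^{\HdR} = \m^{\HdR}_{\SS} = \m^{\HdR}_{\QQ}$. Hence $(\fm^{\ks}_{P^{\QQ_P}})_{1,0} = (\fm^{\ks}_P)_{1,0}$, where $\fm^{\ks}_P$ on the right is built directly from the special propagator $P$ on $\Om$, and therefore $(\fm^{\alg}_P)_{1,0} = (\fm^{\ks}_P)_{1,0}$.

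**Reducing $\fm^{\ana}_P$ to $\fm^{\ks}_P$.** This is the content of Lemma~\ref{lem:ana=KS}: the $(1,0)$-component of the analytic Maurer--Cartan element (a sum of configuration-space integrals over trivalent ribbon graphs with no loops, i.e.~planar binary trees) equals the $(1,0)$-component of $\fm^{\ks}_P$ (a sum over the same index set $\TT^{\rmbin}$ of iterated-composition expressions), the point being that integrating a product of propagator kernels and harmonic forms over $X_\Gamma$ and applying Fubini reproduces exactly the iterated application of $P$, $\iota$, $\pi_{\HH}$, $\wedge$ that defines the Kontsevich--Soibelman transfer, with matching signs and combinatorial coefficients. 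Combining the three bullet points: all three Maurer--Cartan elements vanish outside bidegree $(1,0)$, and their $(1,0)$-components all coincide with $(\fm^{\ks}_P)_{1,0}$, which proves $\fm^{\ana}_P = \fm^{\alg}_P = \fm^{\ks}_P$.

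**The main obstacle.** The structural reduction above is clean; the real work is hidden in two places. First, one must check carefully that the hypotheses of the algebraic vanishing result (Proposition~\ref{prop:alg-vanishing}) genuinely apply to the $\dPD$ model $\QQ_P$ with its induced special propagator — in particular that $\QQ_P$ is $2$-connected-ish enough (its cohomology is $\HdR$, which is connected with $\HdR^1 = 0$) and that one correctly tracks the exceptional cases $n \in \{0,1,2\}$ via Remark~\ref{rem:vanishing-in-low-dimensions}, which is precisely why $S^2$ must be excluded and why $\HdR^1 = 0$ is imposed rather than merely $\HdR$ simply connected. Second, and more seriously, the identities $(\fm^{\ana}_P)_{1,0} = (\fm^{\ks}_P)_{1,0}$ and $(\fm^{\alg}_P)_{1,0} = (\fm^{\ks}_P)_{1,0}$ are asserted ``clear on the pictorial level'' but depend on a delicate bookkeeping of the signs $(-1)^{r_\Gamma}$ and combinatorial coefficients $C_\Gamma$ in Corollary~\ref{cor:alg-MC} matching those arising in the Kontsevich--Soibelman tree formula~\eqref{eq:kontsevich-soibelman} and in the Fubini decomposition of the configuration-space integrals~\eqref{eq:n}; this is the part deferred to~\cite{Volkov-habil}, and it is where any genuine difficulty lies.
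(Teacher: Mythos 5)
Your proposal is correct and follows essentially the same route as the paper: vanishing results (Corollaries~\ref{cor:alg-vanishing} and~\ref{cor:ana-vanishing}) reduce everything to the $(1,0)$-components, which are then identified via Lemma~\ref{lem:ana=KS}, Lemma~\ref{lem:alg=KS} applied to $\QQ_P$, and Corollary~\ref{cor:comparison-of-homotopy-transfers}. One small imprecision: $\fm^{\ks}_P$ is supported in bidegree $(1,0)$ by definition (Proposition~\ref{prop:ks-maurer-cartan} sets all other components to zero), so no vanishing argument is needed for it — strict unitality is used only to verify that this single-component collection satisfies the Maurer--Cartan equation, not to kill higher components.
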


\begin{proof}
Suppose that $n= \dim(M)\ge 3$ (the cases $n\in\{1,2\}$ are impossible due to the assumptions, and the case $n=0$ is trivial).
The algebraic vanishing result from Corollary~\ref{cor:alg-vanishing} implies that $(\fm^{\alg}_P)_{\ell,g} = 0$ for all $(\ell,g)\neq (1,0)$.
The analytic vanishing result from Corollary~\ref{cor:ana-vanishing} implies that $(\fm^{\ana}_P)_{\ell,g}=0$ for all $(\ell,g)\neq (1,0)$.
Therefore, it remains to prove that
\[
	(\fm_P^\ana)_{1,0} = (\fm^{\ks}_P)_{1,0} = (\fm_P^\alg)_{1,0}.
\]
The first equality is Lemma~\ref{lem:ana=KS}.
As for the second one, consider the nondegenerate quotient $\QQ_P\coloneqq\QQ(\SS_P(\Om))$ of the small subalgebra 
$\SS_P\subset \Om$, and identify $\HdR\simeq H\coloneqq H(\QQ_P)$ via the canonical zig-zag~\eqref{eq:canon-zigzag}.
Recall the definition $(\fm^{\alg}_P)_{1,0}=\ff^P_*\fm^\can_{\QQ_P}$, where $\ff^P\colon\dIBL(\QQ_P)\to \dIBL(\HdR)$ is the $\IBL_\infty$ homotopy equivalence associated to~$P$ and $\fm^\can_{\QQ_P}$ is the canonical Maurer--Cartan element in $\dIBL(\QQ_P)$.
Lemma~\ref{lem:alg=KS} asserts that $(\ff^P_*\fm^\can_{\QQ_P})_{1,0}=\m^{\HdR+}_{\QQ_P}$, where $\m^{\HdR +}_{\QQ_{P}}\in\dcbc \HdR$ is associated via Proposition~\ref{MCAinfty} to the $\Ainfty$ homotopy transfer $\QQ_P\leadsto \HdR$ from Proposition~\ref{prop:hty-transfer}. 
On the other hand, $(\fm^{\ks}_P)_{1,0}=\m^{\HdR+}$ is associated via Proposition~\ref{MCAinfty} to the $\Ainfty$ homotopy transfer $\Om\leadsto \HdR$ from Proposition~\ref{prop:hty-transfer}.
The comparison of $\Ainfty$ homotopy transfers $\Om\leadsto \HdR$ and $\QQ_P\leadsto \HdR$ in Corollary~\ref{cor:comparison-of-homotopy-transfers} gives $\m^{\HdR+} = \m^{\HdR+}_{\QQ_P}$, and the theorem follows.
\end{proof}

We summarize the situation for a connected closed oriented manifold $M$.
We associate to $M$ up to $\IBLinfty$ homotopy equivalence the following $\IBLinfty$ algebras whose homology is the cyclic cohomology of $(\Om,\dd,\wedge)$:
\begin{itemize}
	\item the $\IBLinfty$ algebra $\dIBL^{\fm^\ana}(\HdR)$ based on ribbon graphs of all genera;
	\item the $\dIBL$ algebra $\dIBL^{\fm^\ks}(\HdR)$ based on ribbon trees only.
\end{itemize}
If $H^1_{\rm dR}=0$ and $M$ is not diffeomorphic to $S^2$, then Theorem~\ref{thm:comparison} implies that these structures for a special analytic propagator $P$ are equal.
A computation for $M=S^1$ in~\cite{Pavel-thesis} shows on the one hand that
\[
	\fq_{1,2,0}^{\fm^\ana_P} \neq \fq^{\fm^\ks_P}_{1,2,0}=\fq_{1,2,0},
\]
and on the other hand that the $\IBL$ structures induced on homology are equal.
In fact, for any~$M$ we have $H(\dcbc \HdR,\fq_{1,1,0}^{\fm^\ks}) = H(\dcbc \HdR,\fq_{1,1,0}^{\fm^\ana})$ by Lemma~\ref{lem:ana=KS}, so it is plausible that there should be an $\IBLinfty$ homotopy equivalence 
$\dIBL^{\fm^\ana}(\HdR)\simeq \dIBL^{\fm^\ks}(\HdR)$ in the general case.%
\footnote{
This conjecture is supported by ongoing work of the second author relating $\fm^\ana_{\ell,g}(\alpha)$ to integrals of certain forms over the moduli space of metric ribbon graphs considered in \cite{Costello2007}.}
One can also ask the stronger question whether $\fm^\ana$ and $\fm^\ks$ are gauge equivalent.%

\subsection{Formality}\label{ss:formality}

In rational homotopy theory (see, e.g., \cite{Sullivan-infinitesimal}),  a manifold $M$ is called {\em formal} if its de Rham complex $\Om$ is weakly equivalent as a $\CDGA$ to its de Rham cohomology~$\HdR$, i.e., if there exists a zigzag of $\CDGA$ quasi-isomorphisms connecting $\Om$ to $\HdR$. 

Similarly, we say that $M$ is \emph{$\IBLinfty$ formal} if the $\IBLinfty$ algebra $\dIBL^{\fm^\ana}(\HdR)$ from Theorem~\ref{thm:CV-MC} is weakly equivalent as an $\IBLinfty$ algebra to the $\dIBL$ algebra $\dIBL^{\fm_{\HdR}^\can}(\HdR)$ from Proposition~\ref{propIBLI2}.%
\footnote{One could also define a notion of formality as a weak equivalence of $\dIBL^{\fm^\ana}(\HdR)$ and its homology which is an $\IBL$ algebra. However, we do not see a geometric motivation for such a definition.}
Recall that two $\IBLinfty$ algebras are weakly equivalent if and only if they are $\IBLinfty$ homotopy equivalent, i.e, if there exists an $\IBLinfty$ homotopy equivalence in either direction. The following result corresponds to Corollary~\ref{cor:formal-intro} in the Introduction:

\begin{cor}[Formality implies $\IBLinfty$ formality]\label{cor:ibl-formality}
Let $M$ be a closed connected oriented manifold such that $\HdR^1=\HdR^2=0$.
Then $M$ being formal implies $M$ being $\IBLinfty$ formal.
\end{cor}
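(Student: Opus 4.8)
The plan is to deduce Corollary~\ref{cor:ibl-formality} from Theorem~\ref{thm:comparison} together with the algebraic vanishing results, by carefully tracking what formality buys us at the level of the algebraic Maurer--Cartan element. First I would observe that, since $M$ is formal, the de Rham complex $\Om$ is connected by a zigzag of $\CDGA$ quasi-isomorphisms to $\HdR$, viewed as a $\CDGA$ with zero differential. Moreover, because $\HdR$ carries the perfect intersection pairing, $(\HdR,\dd=0,\wedge,\la\cdot,\cdot\ra_{\HdR})$ is a $\dPD$ algebra, and it is $2$-connected by hypothesis. So by Theorem~\ref{thm:alg-construction} (or rather the discussion preceding it, using Theorem~\ref{thm:existence-uniqueness-PDmodel}) the $\dPD$ algebra $\HdR$ is itself a simply connected differential Poincar\'e duality model of $\Om$, and the $\dIBL$ algebra associated to $\Om$ via the algebraic construction is $\IBLinfty$ homotopy equivalent to $\dIBL^{\fm^\can_{\HdR}}(\HdR)$ — which is exactly the structure in Proposition~\ref{propIBLI2} twisted only by the triple intersection product.

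The second step is to identify, for a special analytic propagator $P$ on $\Om$, the algebraic Maurer--Cartan element $\fm^\alg_P$ with $\fm^\can_{\HdR}$. The subtle point is that $\fm^\alg_P$ is built from the \emph{canonical} $\dPD$ model $\QQ_P = \QQ(\SS_P(\Om))$ coming from the small subalgebra, not from $\HdR$ directly; but Theorem~\ref{thm:comparison} tells us that $\fm^\alg_P = \fm^\ks_P = \fm^\ana_P$, all as Maurer--Cartan elements in $\dIBL(\HdR)$, and Corollaries~\ref{cor:alg-vanishing} and~\ref{cor:ana-vanishing} tell us that these have only a $(1,0)$-component, i.e.\ they are determined by their cyclic $\Ainfty$ datum. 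So it suffices to show that the transferred cyclic $\Ainfty$ structure on $\HdR$ is formal, i.e.\ that $\m^{\HdR}_i = 0$ for $i \ge 3$, so that $(\fm^\ks_P)_{1,0} = \m^{\HdR+} = \m_2^+ = (\fm^\can_{\HdR})_{1,0}$. This is where formality enters in an essential way: by Corollary~\ref{cor:comparison-of-homotopy-transfers}, the $\Ainfty$ homotopy transfer $\Om \leadsto \HdR$ agrees with the one through $\QQ_P$, and by the functoriality of homotopy transfer for $\DGA$s (the Lemma in \S\ref{ss:hty-transfer}), a zigzag of $\CDGA$ quasi-isomorphisms $\Om \leftrightarrow \cdots \leftrightarrow \HdR$ induces an $\Ainfty$ isomorphism of the transferred structures compatible with the identity on $H$; but the transfer of the (already-minimal) $\CDGA$ $(\HdR,\dd=0,\wedge)$ is itself, with $\m_i = 0$ for $i \ge 3$. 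Hence the transferred $\Ainfty$ structure on $\HdR$ has vanishing higher products.

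Combining, $\fm^\ana_P = \fm^\alg_P$ has $(\ell,g)$-components zero for $(\ell,g)\neq(1,0)$ and $(1,0)$-component equal to $\m_2^+$, so $\fm^\ana_P = \fm^\can_{\HdR}$, whence $\dIBL^{\fm^\ana}(\HdR) = \dIBL^{\fm^\can_{\HdR}}(\HdR)$ on the nose for this choice of $P$; in particular they are $\IBLinfty$ homotopy equivalent, which is the definition of $M$ being $\IBLinfty$ formal. One should be slightly careful that Theorem~\ref{thm:comparison} requires $M$ not diffeomorphic to $S^2$, but this is automatic since $\HdR^2(S^2)\neq 0$ contradicts the hypothesis $\HdR^2 = 0$; and the cases $n \le 2$ are excluded or trivial for the same reason.

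The main obstacle I anticipate is the second step: making precise that the homotopy-transferred cyclic $\Ainfty$ structure is invariant (up to the relevant notion of equivalence) under a \emph{zigzag} of $\CDGA$ quasi-isomorphisms, not just a single one, and that this invariance is compatible with the identifications of cohomology used throughout. The functoriality Lemma in \S\ref{ss:hty-transfer} is stated for a single $\DGA$ morphism fitting into a commuting triangle with a common subcomplex; to handle a zigzag one must either invoke the standard fact that homotopy transfer produces $\Ainfty$-quasi-isomorphic minimal models that are $\Ainfty$-isomorphic when the cohomology is identified, or argue that along the zigzag one can choose compatible Hodge decompositions so that each leg is covered by the Lemma (using Lemma~\ref{lem:relative-hodge-decomposition} or Lemma~\ref{lem:rel-hodge-star} to propagate the splittings). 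Once one knows the transferred $\Ainfty$ structure on $\HdR$ is the trivial (formal) one, the rest is bookkeeping with the vanishing results already established.
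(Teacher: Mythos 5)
Your overall strategy (reduce to the algebraic side via Theorem~\ref{thm:comparison}, then use that $\HdR$ is itself a simply connected $\dPD$ model of $\Om$ when $M$ is formal) starts out parallel to the paper's proof, but your second step contains a genuine gap. You claim that formality forces the homotopy-transferred $\Ainfty$ structure on $\HdR$ to satisfy $\m^{\HdR}_i=0$ for $i\ge 3$, so that $(\fm^{\ks}_P)_{1,0}=\m_2^+$ and hence $\fm^\ana_P=\fm^\can_{\HdR}$ \emph{on the nose}. This does not follow. Formality only says that the minimal $\Ainfty$ model of $\Om$ is $\Ainfty$-\emph{isomorphic} to $(\HdR,\dd=0,\m_2)$; the transferred structure associated to a particular special propagator $P$ is one representative of that isomorphism class and will in general have nonzero higher products, with the identification to the trivial structure carried by an $\Ainfty$ isomorphism whose higher components $f_i$, $i\ge 2$, are nontrivial. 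The functoriality Lemma of \S\ref{ss:hty-transfer} cannot rescue this: it requires a single $\DGA$ morphism together with compatible propagators ($f\circ P=\wt P\circ f$) over a common subcomplex, and there is no way to arrange such compatibility along an arbitrary zigzag realizing formality. Indeed, the conclusion you are aiming for ($\fm^\ana_P=\fm^\can_{\HdR}$ for some $P$) is exactly what the paper obtains only under the strictly stronger hypothesis of \emph{geometric} formality (Corollary~\ref{cor:ana-vanishing}); for mere formality the paper claims only an $\IBLinfty$ homotopy equivalence. Even if you weakened your claim to ``$\Ainfty$-isomorphic,'' you would still need that isomorphism to be compatible with the cyclic pairing in order to conclude anything about the twisted $\dIBL$ structures, and that is not addressed.

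The paper's actual route avoids this entirely: after noting (with a small orientation argument you also gloss over, upgrading $\CDGA$ quasi-isomorphisms to $\PDGA$ quasi-isomorphisms) that both $\QQ_P=\QQ(\SS_P(\Om))$ and $\HdR$ are simply connected $\dPD$ models of the Poincar\'e $\DGA$ $\Om$, it invokes Theorem~\ref{thm:alg-construction} --- i.e.\ the uniqueness of simply connected $\dPD$ models up to $\IBLinfty$ homotopy equivalence, which is where the hypothesis $\HdR^2=0$ is genuinely used via Theorem~\ref{thm:existence-uniqueness-PDmodel}(b) --- to get $\dIBL^{\fm^\can_{\QQ_P}}(\QQ_P)\simeq\dIBL^{\fm^\can_{\HdR}}(\HdR)$, and combines this with the by-construction equivalence $\dIBL^{\fm^\alg_P}(\HdR)\simeq\dIBL^{\fm^\can_{\QQ_P}}(\QQ_P)$ and with $\fm^\ana_P=\fm^\alg_P$ from Theorem~\ref{thm:comparison}. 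Note that your argument never engages with the $\dPD$ uniqueness theorem and hence makes no essential use of $\HdR^2=0$ beyond excluding $S^2$ --- a sign that something is being claimed for free. To repair your proof, replace the assertion that the transferred $\Ainfty$ structure is literally trivial by the chain of $\IBLinfty$ homotopy equivalences through $\dIBL^{\fm^\can_{\QQ_P}}(\QQ_P)$ just described.
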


\begin{proof}
Let $P\colon\Om\to\Om$ be a special analytic propagator.
We consider the following three Maurer--Cartan elements in $\dIBL(\HdR)$: the analytic Maurer--Cartan element $\fm^\ana_P$ associated to $P$, the algebraic Maurer--Cartan element $\fm^\alg_P$ associated to $P$, and the canonical Maurer--Cartan element $\fm^\can_{\HdR}$.
Theorem~\ref{thm:comparison} asserts that $\fm^\ana_P=\fm^\alg_P$.
Let $\QQ_P\coloneqq\QQ(\SS_P(\Om))$ be the nondegenerate quotient of the small subalgebra $\SS_P\subset \Om$ associated to~$P$.
By construction (cf.~the proof of Theorem~\ref{thm:comparison}), there is an $\IBLinfty$ homotopy equivalence $\dIBL^{\fm^\alg_P}(\HdR)\simeq \dIBL^{\fm^\can_{\QQ_P}}(\QQ_P)$.  
Recall from Proposition~\ref{prop:existence-PDmodel} that $\QQ_P$ is a differential Poincar\'e duality model of~$\Om$.
On the other hand, if $M$ is formal, then~$\HdR$ is also a differential Poincar\'e duality model of~$\Om$. 
Indeed, a simple argument from~\cite[Proposition 6.2.5]{Pavel-thesis} shows that a zig zag of $\CDGA$ quasi-isomorphisms $f\colon\Om\rightsquigarrow \HdR$ induces a zig-zag of $\PDGA$ quasi-isomorphisms $\wt f\colon\Om\rightsquigarrow \HdR$ (one orients the cohomologies of the intermediate $\CDGA$s so that they become $\PDGA$s and the maps between them $\PDGA$ quasi-isomorphisms, and composes $f$ with the inverse of the induced isomorphism on cohomology $H(f)^{-1}\colon \HdR\to \HdR$ to obtain $\wt f$).
Theorem~\ref{thm:alg-construction} applied to~$\QQ_P$ and~$\HdR$ under the assumption that $\HdR^2=0$ then implies the existence of an $\IBLinfty$ homotopy equivalence $\dIBL^{\fm^\can_{\QQ_P}}(\QQ_P)\simeq\dIBL^{\fm^{\can}}(\HdR)$, and the corollary follows.
\end{proof}

\begin{remark}
Consider a compact connected Lie group $G$ with $\HdR^1(G)=0$.
Then~$G$ is formal and satisfies $\HdR^2(G)=0$, hence it is $\IBLinfty$ formal by Corollary~\ref{cor:ibl-formality}.
In fact, $G$ is even geometrically formal, so Corollary~\ref{cor:ana-vanishing} implies the stronger assertion that there exists a special analytic propagator~$P\colon\Om\to\Om$ such that $\fm^\ana_P = \fm^\can_{\HdR}$ in $\dIBL(\HdR)$.
Finally, $G$ is also simply connected, hence $\dIBL^{\fm^\can_{\HdR}}(\HdR)$ induces a chain model for the equivariant string topology of $G$ by Theorem~\ref{thm:Naef-Willwacher}.
\end{remark}

\subsection{Relation to Massey products}\label{ss:Massey}

Massey products are secondary cohomology operations that are trivial for formal manifolds, and thus give obstructions to formality. See~\cite{Deligne-Griffiths-Morgan-Sullivan} for some examples of non-formal manifolds. In this subsection we discuss the following question which would constitute a partial converse to Corollary~\ref{cor:ibl-formality}:

\begin{question}\label{q:Massey}
If a closed connected oriented manifold $M$ is $\IBLinfty$ formal, are then all its Massey products trivial? 
\end{question}

{\bf Massey products. }
We begin by recalling some facts about Massey products, following the presentation in~\cite{Kraines}. Let $(A,d)$ be a DGA and $H_A$ its cohomology. Let us first describe the triple Massey product. 
Consider three cycles $a_1,a_2,a_3\in A$ of homogeneous degree such that $a_1a_2$ and $a_2a_3$ are exact. For each choice of primitives $b,c\in A$ with
\begin{equation*}
  db=a_1a_2, \qquad dc=a_2a_3,
\end{equation*}
the element
\begin{equation*}
   ba_3+(-1)^{\deg a_1+1}a_1c\in A^{\deg a_1+\deg a_2+\deg a_3-1}
\end{equation*}
is closed. We define
\begin{equation*}
  \la a_1,a_2,a_3\ra \subset H_A^{\deg a_1+\deg a_2+\deg a_3-1}
\end{equation*}
as the set of all cohomology classes $[ba_3+(-1)^{\deg a_1+1}a_1c]$ for primitives $b,c$.
One easily checks that this is an affine space over $[a_1]H_A+H_A[a_3]$ which depends only on the cohomology classes $u_i=[a_i]$ of the $a_i$. The set
\begin{equation*}
  \la u_1,u_2,u_3\ra := \la a_1,a_2,a_3\ra \subset H_A^{\deg u_1+\deg u_2+\deg u_3-1}
\end{equation*}
is called the {\em triple Massey product} of the cohomology classes $u_1,u_2,u_3$ with $u_1u_2=u_2u_3=0$.\footnote{
The triple Massey product is often defined as an element of the quotient space $H_A/(u_1H_A+H_Au_3)$, but the definition as a set is more suitable for the generalization to higher Massey products.}

More generally, for each $k\geq 3$ and homogeneous cohomology classes $u_1,\dots,u_k\in H_A$ satisfying suitable conditions one obtains {\em Massey products} 
\begin{equation*}
  \la u_1,\dots,u_k\ra \subset H_A^{\deg u_1+\dots+\deg u_3+2-k}
\end{equation*}
with the following properties:
\begin{enumerate}
\item If $(A,d)$ has vanishing differential $d=0$, then all Massey products are trivial in the sense that $0\in\la u_1,\dots,u_k\ra$. 
\item If $f:A\to B$ is a DGA quasi-isomorphism, then
$$
   f_*\la u_1,\dots,u_k\ra = \la f_*u_1,\dots,f_*u_k\ra.
$$
\end{enumerate}
Let us emphasize that, although the Massey products are subsets of $H_A$, they depend on the DGA $A$ and not just on its homology. 

The following proposition allows us to extend property (ii) to $A_\infty$ morphisms.

\begin{proposition}\label{prop:rectify}
Two DGAs $A$ and $B$ are $A_\infty$ homotopy equivalent if and only if they can be connected by a zigzag of DGA quasi-isomorphisms.
\end{proposition}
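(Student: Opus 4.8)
The plan is to prove both implications, the easy one by abstract nonsense and the hard one by a rectification argument. For the ``if'' direction: a DGA quasi-isomorphism $f\colon A\to B$ is in particular an $A_\infty$ quasi-isomorphism (a DGA is an $A_\infty$ algebra with $\m_i=0$ for $i\geq 3$, and a DGA morphism is an $A_\infty$ morphism concentrated in arity one). By the general homotopy-inverse property of $A_\infty$ algebras (the $A_\infty$ analogue of the \textbf{Homotopy inverse} statement recalled in Section~\ref{sec:IBLinfty}, see~\cite{FOOO-I}), every $A_\infty$ quasi-isomorphism is an $A_\infty$ homotopy equivalence. Since $A_\infty$ homotopy equivalences are composable and invertible up to homotopy, a zigzag of DGA quasi-isomorphisms $A\leftrightsquigarrow B$ composes to a single $A_\infty$ homotopy equivalence $A\to B$. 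This direction should be a couple of sentences.

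For the ``only if'' direction I would argue as follows. Suppose $A$ and $B$ are $A_\infty$ homotopy equivalent. First reduce to the case where the two DGAs are connected by a \emph{direct} $A_\infty$ quasi-isomorphism $F\colon A\to B$ (an $A_\infty$ homotopy equivalence is by definition an $A_\infty$ quasi-isomorphism). Now invoke the standard \emph{rectification} theorem: every $A_\infty$ algebra $A$ admits a DGA $\widehat A$ together with an $A_\infty$ quasi-isomorphism $A\to\widehat A$ (for instance via the cobar--bar resolution $\Omega B A$, or the Kadeishvili/May rectification), and this construction is functorial up to homotopy for $A_\infty$ morphisms. Applying this to $F$ gives a commuting-up-to-homotopy square of $A_\infty$ quasi-isomorphisms
\begin{equation*}
  \begin{tikzcd}
    A \ar[r,"F"] \ar[d] & B \ar[d] \\
    \widehat A \ar[r,"\widehat F"] & \widehat B
  \end{tikzcd}
\end{equation*}
in which the vertical maps are $A_\infty$ quasi-isomorphisms from genuine DGAs. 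When $A$ and $B$ are themselves DGAs one can moreover take $\widehat A\to A$ and $\widehat B\to B$ to be \emph{strict} DGA quasi-isomorphisms (this is the point of using $\Omega B A\to A$, which is a DGA map), so the two vertical legs are already honest DGA quasi-isomorphisms. It then remains to replace the $A_\infty$ quasi-isomorphism $\widehat F\colon\widehat A\to\widehat B$ between cofibrant DGAs by a zigzag of strict DGA quasi-isomorphisms; this is a standard fact in the model category of DGAs (a weak equivalence between a cofibrant object and a fibrant object can be factored, or one uses that $\widehat F$ being a quasi-isomorphism of the underlying complexes together with cofibrancy yields a strict DGA model). Concatenating the three DGA-level zigzags produces the desired chain $A\leftrightsquigarrow B$ of DGA quasi-isomorphisms.

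I expect the main obstacle to be the second half of the ``only if'' direction: one has to be careful that the rectification functor can be chosen to produce DGA quasi-isomorphisms (not merely $A_\infty$ ones) relating $\widehat A$ to $A$, and that the induced map $\widehat F$ of rectifications, although a priori only an $A_\infty$ map, can genuinely be straightened to a zigzag of strict DGA morphisms — i.e. that the homotopy category of DGAs localised at quasi-isomorphisms is equivalent to the homotopy category of $A_\infty$ algebras with $A_\infty$ morphisms. This is classical (it is essentially the statement that the bar--cobar adjunction induces such an equivalence), so the proof should cite the relevant source rather than reprove it; in a paper of this kind a reference to~\cite{FOOO-I} or the standard operadic literature together with the cobar--bar argument sketched above should suffice. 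I would keep the write-up short, emphasising the cobar--bar resolution as the concrete rectification and relegating the model-categorical straightening to a citation.
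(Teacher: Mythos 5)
Your proposal is correct and follows essentially the same route as the paper: the paper simply phrases DGAs as algebras over the nonsymmetric operad $\mathrm{As}$ and cites \cite[Theorem 11.4.9]{Loday-Vallette}, whose proof is precisely the bar--cobar rectification $A\xleftarrow{\ \sim\ }\Omega BA\xrightarrow{\Omega BF}\Omega BB\xrightarrow{\ \sim\ }B$ that you sketch. The only simplification you miss is that $\Omega BF$ is already a \emph{strict} DGA morphism (an $A_\infty$ morphism $F$ is a dg coalgebra map $BA\to BB$, so $\Omega BF$ is a genuine DGA map), so no further model-categorical straightening of $\widehat F$ is needed.
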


\begin{proof}
We can describe DGAs as dg algebras over the nonsymmetric operad As, and $A_\infty$ algebras as infinity dg algebras
over the operad As, see~\cite[Chapter 10]{Loday-Vallette}. The result now follows immediately from~\cite[Theorem 11.4.9]{Loday-Vallette}.
\end{proof}

Proposition~\ref{prop:rectify} together with the discussion above yields

\begin{lemma}\label{lem:masseyainf}
Let $A$ and $B$ be two DGAs which are $A_\infty$ homotopy equivalent. If all Massey products are trivial on $A$, then so they are on $B$. \qed
\end{lemma}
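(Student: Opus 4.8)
The plan is to combine the rectification result of Proposition~\ref{prop:rectify} with the naturality property (ii) of Massey products recalled above. By Proposition~\ref{prop:rectify}, $A$ and $B$ being $A_\infty$ homotopy equivalent is the same as their being connected by a zigzag of DGA quasi-isomorphisms
\[
   A = A_0 \xleftarrow{\ f_1\ } C_1 \xrightarrow{\ g_1\ } A_1 \xleftarrow{\ f_2\ } C_2 \xrightarrow{\ g_2\ } \cdots \xrightarrow{\ g_r\ } A_r = B .
\]
So it suffices to prove the statement when $A$ and $B$ are connected by a single DGA quasi-isomorphism (in either direction) and then iterate along the zigzag.

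First I would record the elementary fact that a DGA quasi-isomorphism $f\colon A\to B$ transports triviality of Massey products in \emph{both} directions. Indeed, $f_*\colon H_A\to H_B$ is an isomorphism of graded algebras, so a tuple $v_1,\dots,v_k\in H_B$ admits a Massey product $\la v_1,\dots,v_k\ra$ precisely when the tuple $u_i\coloneqq f_*^{-1}(v_i)$ admits $\la u_1,\dots,u_k\ra$ (the conditions under which a higher Massey product is defined are algebraic relations between the classes and their lower Massey products, all preserved by the ring isomorphism $f_*$), and then property (ii) gives $\la v_1,\dots,v_k\ra = f_*\la u_1,\dots,u_k\ra$. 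Hence $0\in\la u_1,\dots,u_k\ra$ if and only if $0 = f_*(0)\in\la v_1,\dots,v_k\ra$; that is, $\la u_1,\dots,u_k\ra$ is trivial if and only if $\la v_1,\dots,v_k\ra$ is trivial.

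Applying this fact along each arrow of the zigzag, triviality of all Massey products on $A$ propagates to $A_1$, then to $A_2$, and so on, until we reach $B$, which is the claim. The only point that needs care is the bookkeeping of defining systems for the higher Massey products — one must check that a defining system on one side is carried, by $f$ (or $f^{-1}$) on cochains together with $f_*$ on cohomology, to a defining system on the other side — but this is exactly the content of the standard naturality statement (ii), so no genuine obstacle arises; the lemma is a formal consequence of Proposition~\ref{prop:rectify} and (ii).
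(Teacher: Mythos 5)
Your proposal is correct and follows exactly the paper's route: the paper derives the lemma directly from Proposition~\ref{prop:rectify} combined with the naturality property (ii), transporting triviality along the resulting zigzag of DGA quasi-isomorphisms. Your additional observation that each quasi-isomorphism carries triviality in both directions (because it induces an isomorphism on cohomology) is precisely the bookkeeping the paper leaves implicit.
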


Let now $M$ be a closed connected oriented manifold. By the discussion above, its de Rham complex $\Om^*(M)$ induces Massey products on its de Rham cohomology $\HdR(M)$ which we call the {\em Massey products on $M$}. 
The homotopy transfer in Proposition~\ref{prop:hty-transfer} yields an $A_\infty$ structure $\{\m^H_k\}_{k\ge 1}$ on $\HdR(M)$ which is $A_\infty$ homotopy equivalent to $\Om^*(M)$. 
On the other hand, we can consider $(\HdR(M),d=0,\wedge)$ as a DGA with trivial differential, and thus trivial Massey products by property (i) above. Therefore, Lemma~\ref{lem:masseyainf} implies

\begin{cor}\label{cor:massenontriv}
Let $M$ be a closed connected oriented manifold. If the $A_\infty$ algebra $(\HdR(M),\{\m_k^H\}_{k\ge 1})$ is $A_\infty$ homotopy equivalent to the DGA $(\HdR(M),d=0,\wedge)$, then all Massey products on $M$ are trivial. \qed
\end{cor}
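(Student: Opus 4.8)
The plan is to obtain Corollary~\ref{cor:massenontriv} as a short composition of facts already assembled in this subsection. The key point is that the transferred $A_\infty$ structure $\{\m_k^H\}$ is, by the homotopy transfer of Proposition~\ref{prop:hty-transfer}, $A_\infty$ homotopy equivalent to a \emph{genuine} DGA, namely $\Om^*(M)$; so the hypothesis of the corollary in fact relates two DGAs through an intermediate $A_\infty$ algebra, which is exactly the situation covered by Lemma~\ref{lem:masseyainf}.

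Concretely, first I would record that homotopy transfer from the de Rham complex produces an $A_\infty$ homotopy equivalence $(\HdR(M),\{\m_k^H\})\to\Om^*(M)$, where $\Om^*(M)$ is viewed as an $A_\infty$ algebra via $\m_1=\dd$, $\m_2(x,y)=(-1)^{\deg x}x\wedge y$, $\m_i=0$ for $i\ge 3$ (this is precisely the statement recalled just before the corollary). By hypothesis there is also an $A_\infty$ homotopy equivalence $(\HdR(M),\{\m_k^H\})\to(\HdR(M),\dd=0,\wedge)$. Since $A_\infty$ homotopy equivalences are invertible and composable (see~\cite{FOOO-I}), composing the inverse of the first with the second exhibits the two DGAs $\Om^*(M)$ and $(\HdR(M),\dd=0,\wedge)$ as $A_\infty$ homotopy equivalent.

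Finally I would invoke Lemma~\ref{lem:masseyainf} with $A=(\HdR(M),\dd=0,\wedge)$ and $B=\Om^*(M)$: all Massey products on $A$ vanish by property~(i) of Massey products, its differential being zero, hence all Massey products on $\Om^*(M)$ vanish. By definition the Massey products on $M$ are exactly those induced by the DGA $\Om^*(M)$ on $\HdR(M)$, so they are all trivial. I do not expect a genuine obstacle here: the explicit tree formulas making the transfer a DGA-valued $A_\infty$ equivalence, the rectification Proposition~\ref{prop:rectify} underlying Lemma~\ref{lem:masseyainf}, and the functoriality property~(ii) of Massey products are all already in place, so the only care required is bookkeeping of which object the Massey products are attached to — the DGA $\Om^*(M)$, and not the transferred $A_\infty$ structure $\{\m_k^H\}$.
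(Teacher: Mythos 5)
Your proposal is correct and follows exactly the paper's argument: use the homotopy transfer to relate $(\HdR(M),\{\m_k^H\})$ to the DGA $\Om^*(M)$, compose with the hypothesized equivalence to $(\HdR(M),\dd=0,\wedge)$, and apply Lemma~\ref{lem:masseyainf} together with property~(i) of Massey products. Nothing is missing.
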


Let us now abbreviate $H:=\HdR(M)$ and choose a special analytic propagator $P$ which we will suppress from the notation. By Proposition~\ref{prop:ks-maurer-cartan}, the $A_\infty$ structure $\m^H=\{\m^H_k\}_{k\ge 1}$ gives rise to an element $\m^{H+}\in B^{\rm cyc*}H$ and to the Kontsevich-Soibelman Maurer-Cartan element $\fm^\ks$ with $\fm^\ks_{1,0}=\m^{H+}$ and all other components zero. In view of Lemma~\ref{lem:ana=KS} we have
\begin{equation*}%
	\fm^\ana_{1,0} = \fm^{\ks}_{1,0}=\m^{H+}
\end{equation*}
for the analytic Maurer-Cartan element $\fm^\ana$ from Theorem~\ref{thm:CV-MC}. 
On the other hand, by Proposition~\ref{propIBLI2} the DGA $(\HdR(M),d=0,\wedge)$ gives rise to the canonical Maurer-Cartan element $\fm^\can$ on $B^{\rm cyc*}H$ with $\fm^\can_{1,0}=\m_2^+$ and all other components zero. 
IBL formality of $M$ means $\IBLinfty$ homotopy equivalence of the twisted $\IBLinfty$ structures on $B^{\rm cyc*}H$ defined by $\fm^\ana$ and $\fm^\can$. In view of Corollary~\ref{cor:massenontriv}, Question~\ref{q:Massey} therefore comes down to understanding whether this condition implies $A_\infty$ homotopy equivalence between the $A_\infty$ structures encoded by the $(1,0)$ components of $\fm^\ana$ and $\fm^\can$.

\subsection{Reduced version}

Let $(\Alg,\dd,\wedge,\la\cdot,\cdot\ra)$ be a unital cyclic $\DGA$ of degree $n\in\Z$ and $\dIBL(\Alg)=(\dcbc\Alg[2-n],\fp=\{\fp_{1,1,0},\fp_{2,1,0},\fp_{1,2,0}\})$ the canonical filtered $\dIBL$ algebra associated in Proposition~\ref{prop:structureexists} to the underlying cyclic cochain complex $(\Alg,\dd,\la\cdot,\cdot\ra)$.  
We define the \emph{reduced dual cyclic bar complex}~of $\Alg$ by
\[
	\ol{\dcbc}\Alg \coloneqq \bigl\{\varphi\in\dcbc\Alg \mid \varphi(1\dotsb) = 0\bigr\}\subset\dcbc\Alg.
\]
The operations $\fp_{2,1,0}, \fp_{1,2,0}$ defined by \eqref{eq:mu}, \eqref{eq:delta} clearly restrict to $\ol{\dcbc}\Alg$, and so does $\fp_{1,1,0}=\dd^*$ because $\dd(1)=0$.
The restrictions of $\fp_{1,1,0},\fp_{2,1,0},\fp_{1,2,0}$ then define a filtered $\dIBL$ structure of degree $(n-3)$ on $\ol{\dcbc}\Alg[2-n]$.
We will denote the corresponding filtered $\dIBL$ algebra by 
\[
	\ol{\dIBL}(\Alg)\coloneqq\bigl((\rdcbc\Alg)[2-n],\fp=\{\fp_{1,1,0},\fp_{2,1,0},\fp_{1,2,0}\}\bigr).
\]
We call a Maurer--Cartan element $\fm=\{\fm_{\ell,g}\}$ in $\dIBL(\Alg)$ \emph{strictly unital} if:
\begin{enumerate}
	\item the cyclic $\Ainfty$ algebra $(\Alg,\{\m_i\},\la\cdot,\cdot\ra)$ corresponding to $\fm_{1,0}$ via Proposition~\ref{MCAinfty} is \emph{strictly unital} with unit $1$ (see subsection \ref{subsec:cyclic} for the definition);%
	\item we have $\fm_{\ell,g}(1\dotsb) = 0$ for all $(\ell,g)\neq (1,0)$.
\end{enumerate}%
The following was observed in \cite{Pavel-thesis}:

\begin{lemma}\label{lem:strictly-reduced}
	Let $(\Alg,\dd,\wedge,\la\cdot,\cdot\ra)$ be a unital cyclic $\DGA$ of degree $n\in\Z$, and let $\fm=\{\fm_{\ell,g}\}$ be a strictly unital Maurer--Cartan element in $\dIBL(\Alg)=\bigl(\dcbc\Alg[2-n],\fp=\{\fp_{1,1,0},\fp_{2,1,0},\fp_{1,2,0}\}\bigr)$.
	Then the twisted operations 
	\[
		\fp^\fm_{1,\ell,g}\colon \wh{E}_1(\dcbc\Alg)[2-n]\longrightarrow \wh{E}_{\ell}(\dcbc\Alg)[2-n]\quad\text{for }\ell\ge 1, g\ge 0
	\]
	restrict to operations $\wh{E}_1(\ol{\dcbc}\Alg)[2-n]\to \wh{E}_{\ell}(\ol{\dcbc}\Alg)[2-n]$ which together with the restriction of~$\fp_{2,1,0}$ define an $\IBLinfty$ structure of degree $(n-3)$ on $\ol{\dcbc}\Alg[2-n]$.
\end{lemma}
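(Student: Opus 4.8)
The plan is to verify that each twisted operation $\fp^\fm_{1,\ell,g}$ preserves the reduced subcomplex $\ol{\dcbc}\Alg$, and then observe that the $\IBLinfty$ relations are inherited automatically. Recall from \eqref{eq:twisted-operations} that $\fp^\fm_{1,\ell,g}$ is a sum of connected compositions of $\fp_{2,1,0}$ and $\fp_{1,2,0}$ with several insertions of the Maurer--Cartan components $\fm_{\ell',g'}$, and that the only surviving compositions have the tree-like form recorded in the displayed formula after Proposition~\ref{propIBLI2}: $\fp^\fm_{1,1,0}=\fp_{1,1,0}+\fp_{2,1,0}\circ_1\fm_{1,0}$, $\fp^\fm_{1,2,0}=\fp_{1,2,0}+\fp_{2,1,0}\circ_1\fm_{2,0}$, and for all other $(\ell,g)$ one has $\fp^\fm_{1,\ell,g}=\fp_{2,1,0}\circ_1\fm_{\ell,g}$. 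Since $\fp_{2,1,0}$ and $\fp_{1,2,0}$ already restrict to $\ol{\dcbc}\Alg$ and $\fp_{1,1,0}=\dd^*$ does too, it suffices to check that $\fp_{2,1,0}(\fm_{\ell,g},\varphi)\in\wh E_\ell(\ol\dcbc\Alg)$ whenever $\varphi\in\ol\dcbc\Alg$. This is where the two clauses of strict unitality enter.

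First I would check the base case $(\ell,g)=(1,0)$. Here $\fp^\fm_{1,1,0}=\dd^*+\fp_{2,1,0}(\m^+,\cdot)=\dd^*+\bb^*$, the dual of $\dd+\bb$, where $\bb$ is the Hochschild differential for $(\Alg,\{\m_i\})$. By Proposition~\ref{MCAinfty} the operations $\m_i$ are exactly those encoded by $\fm_{1,0}$, and clause~(i) says $1$ is a strict unit for them. A direct inspection of the Hochschild differential formula shows that if a word $x_1\cdots x_k$ contains an entry equal to $1$, then $\bb(x_1\cdots x_k)$ is again a linear combination of words each containing an entry equal to $1$ (the terms $\m_2(1,x)$ or $\m_2(x,1)$ reinsert $\pm x$ without $1$, but the remaining word still carries the other $1$'s, and for $i\ge 3$ the term $\m_i(\dots 1\dots)$ vanishes); hence $\dd+\bb$ preserves the subspace of $\cbc\Alg$ spanned by words containing~$1$, so its dual $\dd^*+\bb^*$ preserves the annihilator $\ol\dcbc\Alg$. (One must be slightly careful with the cyclic wraparound term in $\bb$ and with the case $k=2$; this is routine bookkeeping with the strict-unit identities.) For the general case $(\ell,g)\neq(1,0)$, the operation is $\fp_{2,1,0}\circ_1\fm_{\ell,g}$ (plus $\fp_{1,2,0}$ when $(\ell,g)=(2,0)$, which already restricts), and looking at formula \eqref{eq:mu} for $\fp_{2,1,0}(\varphi,\psi)$ with $\psi=\fm_{\ell,g}$: each output word is built by splicing a dual-basis element $e_a$ into a word on which $\varphi$ is evaluated and the partner $e_b$ together with the remaining inputs into a word on which $\fm_{\ell,g}$ is evaluated. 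If one feeds a word containing a $1$ into the $\varphi$-slot, then — again because $\varphi\in\ol\dcbc\Alg$ after contracting, or more precisely by expanding the unit in the chosen basis so that $1=e_{i_0}$ and using $\varphi(1\cdots)=0$ — the $\varphi$-factor vanishes; if the $1$ goes into the $\fm_{\ell,g}$-slot, then $\fm_{\ell,g}(1\cdots)=0$ by clause~(ii). So $\fp^\fm_{1,\ell,g}(\varphi)$ again annihilates all words containing $1$, i.e.\ lies in $\wh E_\ell(\ol\dcbc\Alg)$.

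Having established that all the structure maps restrict, the final step is to note that the restricted collection $\{\fp^\fm_{1,\ell,g}|_{\ol\dcbc\Alg},\,\fp_{2,1,0}|_{\ol\dcbc\Alg}\}$ still satisfies the $\IBLinfty$ relations \eqref{eq:ibl-relation}: these are identities among compositions of the maps, and they hold on all of $\dcbc\Alg[2-n]$ by Proposition~\ref{prop:MC}(1) applied to the Maurer--Cartan element $\fm$, hence a fortiori on the invariant subcomplex $\ol\dcbc\Alg[2-n]$. Degrees and filtration degrees are unchanged by restriction, so we obtain a filtered $\IBLinfty$ structure of degree $n-3$ on $\ol\dcbc\Alg[2-n]$, as claimed.

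The main obstacle I anticipate is purely combinatorial/sign bookkeeping: carefully tracking, in the cyclic words and in formula \eqref{eq:mu}, exactly how an input equal to $1$ propagates under $\bb$ and under the connected compositions, including the cyclic wraparound terms and the $i=2$ special case, and confirming that in every term the unit either survives in the output word or triggers one of the two vanishing clauses. There is no conceptual difficulty — the point is simply that strict unitality was defined precisely so as to make $\ol\dcbc\Alg$ a subcomplex — but writing it out cleanly requires patience with the explicit formulas for $\bb$, $\fp_{2,1,0}$ and $\fp_{1,2,0}$ and with the sign conventions of~\cite{Cieliebak-Fukaya-Latschev}.
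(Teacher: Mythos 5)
Your proposal follows essentially the same route as the paper: decompose the twisted operations as $\fp^\fm_{1,1,0}=\fp_{1,1,0}+\fp_{2,1,0}\circ_1\fm_{1,0}$, $\fp^\fm_{1,2,0}=\fp_{1,2,0}+\fp_{2,1,0}\circ_1\fm_{2,0}$ and $\fp^\fm_{1,\ell,g}=\fp_{2,1,0}\circ_1\fm_{\ell,g}$ otherwise, use clause (ii) of strict unitality together with $\varphi\in\ol{\dcbc}\Alg$ to handle $(\ell,g)\neq(1,0)$, use clause (i) for $(1,0)$, and note that the $\IBLinfty$ relations are inherited by restriction. The one place where your justification is off is the intermediate claim in the $(1,0)$ step that $\bb(x_1\cdots x_k)$ is a linear combination of words each still containing an entry equal to $1$: for a word with exactly one entry equal to $1$ (which is the essential case), the two terms $\m_2(x_{j-1},1)$ and $\m_2(1,x_{j+1})$ each produce the word with the $1$ deleted, so the "contains a $1$" subspace is \emph{not} preserved term by term; the correct mechanism is that these two terms cancel against each other, which is precisely the computation displayed in the paper's proof, namely $\varphi(\m_2(1,x_2)x_3\cdots x_i)+(-1)^{|x_i|}\varphi(x_2\cdots\m_2(x_i,1))=0$. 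Your parenthetical about the remaining word "still carrying the other $1$'s" only covers words with at least two units, and the "cyclic wraparound" caveat does not by itself supply the cancellation. With that one step repaired the argument coincides with the paper's.
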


\begin{proof}
	The first two twisted operations can be written as
	\begin{equation*}
		\begin{aligned}
			\fp_{1,1,0}^\fm &= \fp_{1,1,0} + \fp_{2,1,0}\circ_1\fm_{1,0},\\
			\fp_{1,2,0}^\fm &= \fp_{1,2,0} + \fp_{2,1,0}\circ_1\fm_{2,0},
		\end{aligned}
	\end{equation*}
	and the others as 
	\begin{equation*}
		\fp_{1,\ell,g}^\fm = \fp_{2,1,0}\circ_1\fm_{\ell,g}.
	\end{equation*}
	Given $\varphi\in\wh{E}_1(\dcbc \Alg)[2-n]$ and $\alpha=\alpha^1\otimes\dotsb\otimes\alpha^\ell$, $\alpha^i\in \cbc\Alg$, we have
	\begin{equation}\label{eq:twist-0}
		(\fp_{2,1,0}\circ_1\fm_{\ell,g})(\varphi)(\alpha) = \sum_{j=1}^\ell\sum\pm\fm_{\ell,g}^{(1)}(\alpha^{1})\dotsb \fp_{2,1,0}(\fm_{\ell,g}^{(j)},\varphi)(\alpha^{j})\dotsb \fm_{\ell,g}^{(\ell)}(\alpha^{\ell}),
	\end{equation}%
	where we used the following Sweedler's notation for $\fm_{\ell,g}\in\wh{E}_{\ell}(\dcbc\Alg)[2-n]$:
	\[
		\fm_{\ell,g} = \sum \fm_{\ell,g}^{(1)}\wh{\otimes}\dotsb\wh{\otimes}\fm_{\ell,g}^{(\ell)}.
	\]
	Assumption (ii) on $\fm$ gives
	\[
		\fm_{\ell,g}^{(j)}\in\ol{\dcbc}\Alg\quad\text{for all }j\in\{1,\dots,\ell\}, (\ell,g)\neq(1,0),
	\]
	which implies that $\fp_{1,\ell,g}^\fm$ restricts to $\wh{E}_1(\ol{\dcbc}\Alg)[2-n]\to \wh{E}_{\ell}(\ol{\dcbc}\Alg)[2-n]$ for all $(\ell,g)\neq (1,0)$. 
	Given $\varphi\in\wh{E}_1\ol{\dcbc}\Alg$ and $x_2,\dotsc,x_i\in\Alg$, assumption~(i) implies
	\begin{align*}
		&(\fp_{2,1,0}\circ_1\fm_{1,0})(\varphi)(1x_2\dotsb x_i) \\
		&\qquad= \pm \fp_{2,1,0}(\fm_{1,0},\varphi)(1x_2\dotsb x_i)\\
		&\qquad= \pm\bigl(\varphi(\m_2(1,x_2)x_3\dotsb x_i) + (-1)^{|x_i|}\varphi(x_2\dotsb\m_2(x_i,1))\bigr)=0,
	\end{align*}
	hence $\fp^\fm_{1,1,0}$ restricts to $\wh{E}_1(\ol{\dcbc}\Alg)[2-n]\to\wh{E}_1(\ol{\dcbc}\Alg)[2-n]$ as well.
\end{proof}

We denote the $\IBLinfty$ algebra from the previous lemma by
\[
	\ol{\dIBL^\fm}(\Alg)\coloneqq\bigl((\ol{\dcbc}\Alg)[2-n],\fp^\fm=\{\fp^\fm_{2,1,0},\fp_{1,\ell,g}^\fm\text{ for }\ell\ge 1, g\ge 0\}\bigr).
\]
A useful observation from \cite{Pavel-thesis} is that if $\Alg$ is simply connected, then we have 
\[
	\wh{E}_{\ell}\ol{\dcbc}\Alg[2-n] \simeq E_{\ell}\ol{\dcbc}\Alg[2-n]\quad\text{for all }\ell\in\N_0.
\]
The following Maurer--Cartan elements are strictly unital:
\begin{enumerate}
\item the canonical Maurer--Cartan element $\fm^\can_\Alg$ in the canonical filtered $\dIBL$ algebra $\dIBL(\Alg)$ associated to a unital cyclic $\DGA$ $\Alg$;
\item the pushforward $\ff^P_*\fm^\can_\Alg$ in $\dIBL(B)$ for a quasi-isomorphic cyclic cochain subcomplex $B\subset \Alg$ of a nonegatively graded unital cyclic $\DGA$ 
$\Alg$ satisfying $B^0=\R\cdot 1$ along an $\IBLinfty$ homotopy $\ff^P\colon\dIBL(\Alg)\to\dIBL(B)$ associated to a special propagator $P\colon\Alg\to\Alg$ with respect to a projection $\Alg\onto B$ (by the positivity of degrees in Propositions~\ref{prop:alg-vanishing}(i)). 
\item the Kontsevich--Soibelman Maurer--Cartan element $\fm^\ks$ in $\dIBL(B)$ for a quasi-isomorphic cyclic cochain subcomplex $B\subset\Alg$ of a unital $\DGA$ with pairing $\Alg$ satisfying $B^0=\R\cdot 1$.
\end{enumerate}

\begin{prop}
	Let $(\Alg,\dd,\wedge,\la\cdot,\cdot\ra)$ be a nonegatively graded unital cyclic $\DGA$, and let $B\subset\Alg$ be a quasi-isomorphic cyclic subcomplex such that $B^0=\R\cdot 1$ and $B^1 = 0$.
	Let $P\colon\Alg\to\Alg$ be a special propagator with respect to a projection $\Alg\onto B$, and let $\ff^P\colon\dIBL(\Alg)\to\dIBL(B)$ be the associated $\IBLinfty$ homotopy equivalence. 
	Consider the canonical Maurer--Cartan element $\fm^\can_\Alg$ in $\dIBL(\Alg)$ and its pushforward $\ff^P_*\fm^\can_\Alg$ in $\dIBL(B)$.
	Let $\fm^\ks_P$ be the Kontsevich--Soibelman Maurer--Cartan element in $\dIBL(B)$ associated to $P$.
	Then we have:
	\[
		\ol{\dIBL^{\ff^P_*\fm^\can_\Alg}}(B) = \ol{\dIBL^{\fm_P^\ks}}(B).
	\]
\end{prop}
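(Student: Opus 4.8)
The plan is to reduce the whole statement to the comparison of the $(1,0)$ components of the two Maurer--Cartan elements, which is exactly Lemma~\ref{lem:alg=KS}, plus a vanishing argument eliminating all higher components of $\ff^P_*\fm^\can_\Alg$.

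First I would fix the setup. Since $P$ is a special propagator with respect to a projection onto $B$, condition \eqref{eq:PP2} forces $\pi_P$ to be harmonic, so $B=\im\pi_P$ is a harmonic subspace and $\pi_B=\pi_P$ is the unique symmetric projection onto it by Lemma~\ref{lem:harmonic}; in particular $B$ plays the role of the harmonic subspace in Lemma~\ref{lem:alg=KS} and in Proposition~\ref{prop:ks-maurer-cartan}. Both Maurer--Cartan elements $\ff^P_*\fm^\can_\Alg$ and $\fm^\ks_P$ in $\dIBL(B)$ are strictly unital (items (2) and (3) of the list preceding the proposition), so Lemma~\ref{lem:strictly-reduced} applies to each and produces reduced filtered $\IBLinfty$ structures on $(\ol{\dcbc}B)[2-n]$ whose operations are the common $\fp_{2,1,0}$ together with the twisted operations $\fp^\fm_{1,\ell,g}$. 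By the formulas in the proof of Lemma~\ref{lem:strictly-reduced}, $\fp^\fm_{1,1,0}$ is built from $\fp_{1,1,0}$ and $\fm_{1,0}$, $\fp^\fm_{1,2,0}$ from $\fp_{1,2,0}$ and $\fm_{2,0}$, and $\fp^\fm_{1,\ell,g}=\fp_{2,1,0}\circ_1\fm_{\ell,g}$ otherwise. Hence it suffices to show that $\ff^P_*\fm^\can_\Alg$ and $\fm^\ks_P$ feed the same components into these formulas.

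For $(\ell,g)=(1,0)$ this is precisely Lemma~\ref{lem:alg=KS}: $(\ff^P_*\fm^\can_\Alg)_{1,0}=(\fm^\ks_P)_{1,0}=\m^{B+}$. For $(\ell,g)\neq(1,0)$, $\fm^\ks_P$ has no component at all by Proposition~\ref{prop:ks-maurer-cartan}, so I must show that $(\ff^P_*\fm^\can_\Alg)_{\ell,g}$ contributes nothing. Here the key inputs are positivity of degrees from Proposition~\ref{prop:alg-vanishing}(i) (valid since $B^0=\R\cdot 1$ and $P$ is special), the hypothesis $B^1=0$, and the degree identity $\deg\alpha=(n-3)\chi+s$ obtained by eliminating $k$ and $e$ from \eqref{eq:euler-characteristic} and \eqref{eq:trivalency}. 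For $(\ell,g)\neq(1,0)$ every trivalent ribbon graph occurring in $(\ff^P_*\fm^\can_\Alg)_{\ell,g}$ is distinct from the $Y$-tree, so positivity of degrees forces each exterior leg to have degree $\geq 1$, hence $\geq 2$ by $B^1=0$; thus $\deg\alpha\geq 2s$, and together with $\chi=2-2g-\ell$ this gives $(n-3)(2-2g-\ell)\geq s\geq\ell\geq 1$. For $n\geq 3$ this forces $2g+\ell\leq 1$, i.e.\ $(\ell,g)=(1,0)$, so $(\ff^P_*\fm^\can_\Alg)_{\ell,g}=0$ outright and the two reduced (indeed even the full) $\IBLinfty$ structures are literally equal. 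The case $n=1$ is vacuous, since a perfect degree-$1$ pairing with $B^0=\R\cdot 1$ forces $B^1\neq 0$; the case $n=0$ forces $P=0$; and for $n=2$, where $B=\R\cdot 1\oplus B^2$ with $\dim B^2=1$ by fact (ii) in \S\ref{ss:cochain}, I would instead argue directly that the only candidate contributions (with all exterior legs equal to the generator of $B^2$, as in Remark~\ref{rem:vanishing-in-low-dimensions}) are killed by the grading constraint $|\fm_{\ell,g}|=2(g-1)$ for the reversed shifted grading on $\dcbc B$, or else vanish after restriction to $\ol{\dcbc}B$.

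The main obstacle is exactly this last point, $n=2$: unlike for $n\geq 3$ one cannot conclude that $\ff^P_*\fm^\can_\Alg$ has vanishing higher components, and the argument genuinely has to use the passage to the reduced complex — which is presumably why the proposition is phrased for the reduced structures. Everything else is bookkeeping: no sign comparison is needed, since both structures arise by twisting the single $\dIBL$ algebra $\dIBL(B)$ by Maurer--Cartan elements agreeing in all relevant components, and the identifications with the harmonic subspace in Lemmas~\ref{lem:harmonic}, \ref{lem:alg=KS} and Proposition~\ref{prop:ks-maurer-cartan} are automatic because $P$ is special.
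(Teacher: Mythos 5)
Your overall strategy matches the paper's: reduce to comparing the components of the two Maurer--Cartan elements, invoke Lemma~\ref{lem:alg=KS} for the $(1,0)$ component, and kill all $(\ell,g)\neq(1,0)$ components of $\ff^P_*\fm^\can_\Alg$ by the degree count when $n\geq 3$ (your treatment of $n=0$ and $n=1$ is also correct). The genuine gap is the case $n=2$, which you rightly single out as the crux but do not settle. Your first suggestion --- that the remaining contributions are killed by the degree constraint $|\fm_{\ell,g}|=2(g-1)$ --- cannot work: $\ff^P_*\fm^\can_\Alg$ is a Maurer--Cartan element by Proposition~\ref{prop:MC}(ii), so its components automatically carry the prescribed degrees; concretely, the combinatorial constraint $s=2g+\ell-2$ from Remark~\ref{rem:vanishing-in-low-dimensions} is exactly what the degree condition demands, so nothing is excluded. (That remark moreover records that vanishing of these $n=2$ contributions is open except for small $s$, so no soft argument of this kind can succeed.) Your second suggestion, vanishing after restriction to $\ol{\dcbc}B$, is the correct route, but you give no argument for it.

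The missing observation is the following. For $n=2$ the hypotheses force $B=\mathrm{span}_\R\{1,v\}$ with $\deg v=2$ and $\la 1,v\ra$ the only nontrivial pairing, so in the basis $e_1=1$, $e_2=v$ one has $g^{ab}=\la e^a,e^b\ra=0$ unless $\{a,b\}=\{1,2\}$. Hence every nonzero term of $\fp_{2,1,0}(\varphi,\psi)$ in~\eqref{eq:mu} inserts $e_1=1$ into either $\varphi$ or $\psi$, so $\fp_{2,1,0}$ vanishes identically on $(\ol{\dcbc}B)^{\otimes 2}$, and $\fp_{1,2,0}$ vanishes on $\ol{\dcbc}B$ for the same reason. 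Since both Maurer--Cartan elements are strictly unital, their components with $(\ell,g)\neq(1,0)$ lie in $\wh{E}_\ell\ol{\dcbc}B$, so the twisted operations $\fp^\fm_{1,\ell,g}=\fp_{2,1,0}\circ_1\fm_{\ell,g}$ of~\eqref{eq:twist-0} vanish on the reduced complex for \emph{every} strictly unital Maurer--Cartan element, irrespective of whether $\fm_{\ell,g}$ itself vanishes. The reduced twisted structures therefore depend only on $\fm_{1,0}$, and these components agree by Lemma~\ref{lem:alg=KS}. This is precisely the paper's argument, and it is the reason the proposition is stated for the reduced structures rather than as an equality of Maurer--Cartan elements.
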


\begin{proof}
	For $n\ge 3$, the vanishing results in Proposition~\ref{prop:alg-vanishing} imply that $\ff^P_*\fm^\can_\Alg = \fm_P^\ks$.
	The case $n=1$ is impossible and the case $n=0$ is trivial ($P=0$).	
	Suppose therefore that $n=2$ (the argument actually works for $n=3$, too). 
	Since $B$ is a cyclic cochain complex with $B^0=\R\cdot 1$ and $B^1=0$, there is a unique $v\in B$ with $\deg v = 2$ and $\la 1,v\ra = 1$ such that $B=\mathrm{span}\{1,v\}$. 
	Consider the basis $e_1=1$, $e_2=v$ of~$B$.
	Then $g^{ij}=\la e^i,e^j\ra = 0$ unless $\{i,j\}=\{1,2\}$ in \eqref{eq:mu}, hence the restriction $\fp_{2,1,0}\colon (\ol{\dcbc} B)^{\otimes 2}\to \ol{\dcbc} B$ vanishes (for the same reason $\fp_{1,2,0}$ vanishes).
	The restriction of the operation given by \eqref{eq:twist-0} for $(\ell,g)\neq (1,0)$ to $\ol{\dcbc}B$ therefore vanishes for every strictly unital Maurer--Cartan element $\fm$ in $\dIBL(B)$.
The desired equality now follows from Lemma~\ref{lem:alg=KS}.
\end{proof}

The same proof works in the analytic case as well, and we have the following:

\begin{corollary}\label{cor:equality-in-the-reduced-case}
	Let $M$ be a closed oriented manifold such that its de Rham cohomology $\HdR\coloneqq \HdR(M)$ is $1$-connected, and let $P\colon\Om\coloneqq\Om(M)\to\Om$ be a special analytic propagator.
	Consider the following three Maurer--Cartan elements in $\dIBL(\HdR)$ associated to $P$: the analytic Maurer--Cartan element~$\fm^{\ana}_P$, the algebraic Maurer--Cartan element~$\fm^{\alg}_P$, and the Kontsevich-Soibelman Maurer--Cartan element~$\fm^\ks_P$.
	Then we have
	\begin{equation}\label{eq:reduced-equality}
		\ol{\dIBL^{\fm^{\ana}_P}}(\HdR) = \ol{\dIBL^{\fm^{\alg}_P}}(\HdR) = \ol{\dIBL^{\fm^\ks_P}}(\HdR),
	\end{equation}	
	which is the $\dIBL$ algebra
	\begin{equation*}
		\bigl(\ol{\dcbc} \HdR[2-n],\fq^{\fm^\ks_P}=\{\fq_{1,1,0}=\bb^*,\fq_{2,1,0},\fq_{1,2,0}\}\bigr),
	\end{equation*}
	where $\bb\colon\cbc \HdR\to \cbc \HdR$ is the Hochschild differential associated to the homotopy transferred $\Ainfty$ algebra $(\HdR,\{\m_i\})$ associated to $P$.
\end{corollary}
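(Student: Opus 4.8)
The plan is to split on $n=\dim M$. The cases $n\le 1$ are immediate: $n=0$ is trivial (then $P=0$ and $\ol{\dcbc}\HdR$ is degenerate), and $n=1$ cannot occur since a connected closed oriented $1$-manifold is $S^1$, whose de Rham cohomology is not $1$-connected. For $n\ge 3$ the manifold is automatically not diffeomorphic to $S^2$, so Theorem~\ref{thm:comparison} already yields the equality of the \emph{unreduced} Maurer--Cartan elements $\fm^\ana_P=\fm^\alg_P=\fm^\ks_P$ in $\dIBL(\HdR)$. All three are strictly unital in the sense of Lemma~\ref{lem:strictly-reduced} (by items (i)--(iii) of the list preceding the proposition above, together with the positivity of degrees of Propositions~\ref{prop:alg-vanishing}(i) and \ref{prop:ana-vanishing}(i)), so that lemma produces the reduced twisted $\dIBL$ algebras $\ol{\dIBL^\fm}(\HdR)$, and these coincide because the underlying $\fm$'s do. That the common structure has the displayed form, with $\fq_{1,1,0}$ replaced by the dual Hochschild differential $\bb^*$ of the transferred $\Ainfty$ algebra $(\HdR,\{\m_i\})$, follows from $\fq^{\fm^\ks_P}_{1,1,0}=\fq_{1,1,0}+\fq_{2,1,0}\circ_1(\fm^\ks_P)_{1,0}$, the vanishing $\fq_{1,1,0}=0$ (trivial differential on $\HdR$), and Lemma~\ref{lem:ana=KS}, which identifies $(\fm^\ks_P)_{1,0}$ with the cyclization $\m^{\HdR+}$.

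The only case outside the reach of Theorem~\ref{thm:comparison} is therefore $n=2$, i.e.\ $M\cong S^2$, where I would copy the proof of the proposition immediately preceding the corollary (which, as noted there, already works for $n\in\{2,3\}$). Since $\HdR$ is a cyclic cochain complex with $\HdR^0=\R\cdot 1$ and $\HdR^1=0$, Poincar\'e duality forces $\HdR=\mathrm{span}_\R\{1,v\}$ with $\deg v=2$ and $\la 1,v\ra=1$. Taking the basis $e_1=1$, $e_2=v$ (so $e^1=v$, $e^2=1$), one checks that $g^{ij}=\la e^i,e^j\ra$ vanishes unless $\{i,j\}=\{1,2\}$; hence in the defining formulas \eqref{eq:mu} and \eqref{eq:delta} every surviving term inserts $e_1=1$ into one of the dual--cyclic--bar arguments, so $\fp_{2,1,0}$ and $\fp_{1,2,0}$ restrict to $0$ on $\ol{\dcbc}\HdR$. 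Consequently, for \emph{any} strictly unital Maurer--Cartan element $\fm$ in $\dIBL(\HdR)$, formula \eqref{eq:twist-0} together with condition (ii) of strict unitality (which puts each Sweedler factor $\fm^{(j)}_{\ell,g}$ into $\ol{\dcbc}\HdR$ for $(\ell,g)\neq(1,0)$) gives $\fp^\fm_{1,\ell,g}|_{\ol{\dcbc}\HdR}=0$ for all $(\ell,g)\neq(1,0)$, while $\fp^\fm_{1,1,0}|_{\ol{\dcbc}\HdR}=\bb^*$, the dual Hochschild differential of the cyclic $\Ainfty$ algebra attached to $\fm_{1,0}$ by Proposition~\ref{MCAinfty}, depends only on $\fm_{1,0}$. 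Thus on $\ol{\dcbc}\HdR$ the twisted $\dIBL$ structure is determined by $\fm_{1,0}$ alone.

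It then suffices to know $(\fm^\ana_P)_{1,0}=(\fm^\alg_P)_{1,0}=(\fm^\ks_P)_{1,0}$, which I would prove exactly as in Theorem~\ref{thm:comparison}: the first equality is Lemma~\ref{lem:ana=KS}; for the second, identify $\HdR\cong H(\QQ_P)$ through the canonical zigzag \eqref{eq:canon-zigzag} for the canonical differential Poincar\'e duality model $\QQ_P=\QQ(\SS_P(\Om))$, use Lemma~\ref{lem:alg=KS} to write $(\fm^\alg_P)_{1,0}=\m^{\HdR+}_{\QQ_P}$ and $(\fm^\ks_P)_{1,0}=\m^{\HdR+}$, and invoke Corollary~\ref{cor:comparison-of-homotopy-transfers} to get $\m^{\HdR+}_{\QQ_P}=\m^{\HdR+}$. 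Crucially this chain uses only that $\HdR$ is $1$-connected (so that $\QQ_P$ and its induced special propagator exist and are of finite type by Proposition~\ref{prop:existence-PDmodel}) and never the hypothesis $M\not\cong S^2$. I expect the main obstacle to be bookkeeping rather than conceptual: checking that all three Maurer--Cartan elements genuinely satisfy the strict unitality hypotheses of Lemma~\ref{lem:strictly-reduced}, and confirming that the $(1,0)$-component comparison of Theorem~\ref{thm:comparison} indeed goes through under the weakened hypothesis $\HdR^1=0$ in place of $2$-connectedness. No new configuration-space analysis or ribbon-graph combinatorics beyond what is already established should be needed.
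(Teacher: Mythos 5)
Your proposal is correct and follows essentially the same route as the paper: the paper proves the preceding proposition by handling $n\ge 3$ via the vanishing results and $n=2$ via the degeneration of $\fp_{2,1,0},\fp_{1,2,0}$ on $\ol{\dcbc}$ of a two-dimensional cyclic complex plus the $(1,0)$-component identification, and then simply remarks that ``the same proof works in the analytic case''. Your write-up is a faithful unpacking of that remark, correctly noting that the $(1,0)$-comparison chain (Lemmas~\ref{lem:ana=KS}, \ref{lem:alg=KS} and Corollary~\ref{cor:comparison-of-homotopy-transfers}) needs only $1$-connectedness of $\HdR$ and not the exclusion of $S^2$.
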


In the situation of Corollary~\ref{cor:equality-in-the-reduced-case}, we have moreover the following:
\begin{itemize}
	\item If $M$ is in addition geometrically formal, then Corollary~\ref{cor:ana-vanishing} implies that we can choose $P$ such that the $\dIBL$ algebra \eqref{eq:reduced-equality} equals the reduced canonical twisted $\dIBL$ algebra $\ol{\dIBL^{\fm^\can_{\HdR}}}(\HdR)$, so that $\bb$ is the Hochschild differential for $(\HdR,\wedge)$.
	\item If $M$ is formal and $\HdR$ is $2$-connected, then Corollary~\ref{cor:ibl-formality} implies that the $\dIBL$ algebra \eqref{eq:reduced-equality} is $\IBLinfty$ homotopy equivalent to $\ol{\dIBL^{\fm_{\HdR}^\can}}(\HdR)$.
\end{itemize}
Note that for $n=\dim(M)\ge 3$ we even have $\fm_P^\ana=\fm_P^\alg = \fm_P^{\ks}$ by Corollary~\ref{cor:ana-vanishing}. The only reason why we cannot strengthen the equality \eqref{eq:reduced-equality} of reduced twisted $\IBLinfty$ algebras to an equality of the nonreduced versions, or even to an equality of Maurer--Cartan elements, is the lack of vanishing results for $M=S^2$.

\emergencystretch=1em
\printbibliography

\end{document}